\documentclass{article}
\usepackage[frenchb, english]{babel}
\usepackage{tikz}
\usepackage{tkz-graph}
\usepackage{amssymb}
\usepackage{graphicx}
\usetikzlibrary{arrows, decorations.markings}
\usepackage{rotating}
\usepackage{caption}
\usepackage{verbatim}
\usepackage{amsmath}
\usepackage{amsthm}
\usepackage{enumerate}
\usepackage{array}
\newcolumntype{L}[1]{>{\raggedright\let\newline\\\arraybackslash\hspace{0pt}}m{#1}}

\usetikzlibrary{decorations.markings, arrows.meta}
\usepackage{mathtools}
\usepackage{color}   
\usepackage{hyperref}
\hypersetup{
    colorlinks=false, %set true if you want colored links
    linktoc=all,     %set to all if you want both sections and subsections linked
   % linkcolor=black,  %choose some color if you want links to stand out
}

\usepackage{float}
\usepackage{mathrsfs}
\usepackage{wrapfig}
\usepackage{picture}
\usepackage{multirow}
\usepackage{MnSymbol}
\usepackage{relsize}
\usepackage{diagbox}
\usetikzlibrary{arrows}
\usetikzlibrary{shapes,snakes}
\usepackage[applemac]{inputenc}
\usepackage[T1]{fontenc}
\usepackage{geometry}
\geometry{hmargin=2.5cm,vmargin=2.5cm}
\usepackage{stackengine}
\usepackage{scalerel}
\usepackage{bbm}
\usepackage{float}
\usepackage{chngpage}
\usepackage{longtable}
\usepackage{hhline}
\usepackage{titlesec}

\newlength{\bibitemsep}\setlength{\bibitemsep}{.2\baselineskip plus .05\baselineskip minus .05\baselineskip}
\newlength{\bibparskip}\setlength{\bibparskip}{1pt}
\let\oldthebibliography\thebibliography
\renewcommand\thebibliography[1]{%
  \oldthebibliography{#1}%
  \setlength{\parskip}{\bibitemsep}%
  \setlength{\itemsep}{\bibparskip}%
}

\newcommand{\cc}{\ensuremath{\mathbb{C}}\xspace}

\usepackage{tocloft}
\setlength\cftbeforetoctitleskip{15pt}
\setlength\cftaftertoctitleskip{20pt}

\setcounter{secnumdepth}{3}
\setcounter{tocdepth}{2}
\usetikzlibrary{arrows.meta}
\usetikzlibrary{decorations.pathreplacing,angles,quotes}
\addto\captionsenglish{% Replace "english" with the language you use
  \renewcommand{\contentsname}%
    {Table of Contents}%
}
\tikzset{bullet/.style={draw,ellipse, text width = 4cm, text centered}}
\tikzset{rec/.style={draw, text width = 3.5 cm, text centered}}
\tikzset{plain/.style={->,>=stealth}}
\tikzset{
  barbarrow/.style={ % style that just defines the arrow tip
     >={Straight Barb[left,length=5pt, width=5pt]}
  },
  strike through/.style={
    postaction=decorate,
    decoration={
      markings,
      mark=at position 0.5 with {
        \draw[-] (-3pt,-3pt)  --  (3pt, 3pt);
      }
    }
  }
}

\renewcommand{\thesection}{\Roman{section}}

\usepackage{secdot}% Adds . after sectional unit numbers
\usepackage{etoolbox}
\patchcmd{\numberline}{\hfil}{. \quad \hfil}{}{}
\cftsetindents{section}{1em}{2.3em}
\cftsetindents{subsection}{2.3 em}{2.5em}
\cftsetindents{subsubsection}{5em}{3.2em}

\makeatletter
\def\@seccntformat#1{\@ifundefined{#1@cntformat}%
   {\csname the#1\endcsname\space}%    default
   {\csname #1@cntformat\endcsname}}%  enable individual control
\newcommand\section@cntformat{\thesection. \space}       % section-level
\newcommand\subsection@cntformat{\thesubsection. \space} % subsection-level
\newcommand\subsubsection@cntformat{\thesubsubsection. } % subsubsection-level
\makeatother

\setlength\parskip{0.6\baselineskip}

\titlespacing{\section}
  {0pt}{0mm}{0mm}
\titlespacing{\subsection}
  {0pt}{0mm}{0mm}
\titlespacing{\subsubsection}
  {0pt}{0mm}{0mm} 

\makeatletter
\def\thm@space@setup{%
  \thm@preskip=0.6\baselineskip
  \thm@postskip=0.1mm % or whatever, if you don't want them to be equal
}
\makeatother

\makeatletter
\def\@addpunct#1{%
  \relax\ifhmode
    \ifnum\spacefactor>\@m \else#1\fi
  \fi}
\newcommand{\mathclassname}{Mathematical Classification}
\def\@setmathclass{%
  {2010 \itshape \mathclassname:}\enspace \@mathclass\@addpunct.}
\def\mathclass#1{\def\@mathclass{#1}}
\let\@mathclass=\@empty
\g@addto@macro{\maketitle}{\begingroup%
  \let\@makefnmark\relax  \let\@thefnmark\relax%
  \ifx\@mathclass\@mpty\else\@footnotetext{\@setmathclass}\fi%
  \endgroup}
\makeatother

\makeatletter
\def\@addpunct#1{%
  \relax\ifhmode
    \ifnum\spacefactor>\@m \else#1\fi
  \fi}
\newcommand{\keywordsname}{Key words and phrases}
\def\@setkeywords{%
  {\itshape \keywordsname:}\enspace \@keywords\@addpunct.}
\def\keywords#1{\def\@keywords{#1}}
\let\@keywords=\@empty
\g@addto@macro{\maketitle}{\begingroup%
  \let\@makefnmark\relax  \let\@thefnmark\relax%
  \ifx\@keywords\@mpty\else\@footnotetext{\@setkeywords}\fi%
  \endgroup}
\makeatother

\newcommand\restr[2]{{% we make the whole thing an ordinary symbol
  \left.\kern-\nulldelimiterspace % automatically resize the bar with \right
  #1 % the function
  \vphantom{\big|} % pretend it's a little taller at normal size
  \right|_{#2} % this is the delimiter
  }}

 \title{\textsc{Deformations of Inhomogeneous Simple Singularities and Quiver Representations}}
\author{\textsc{Antoine Caradot} \footnote{Université Claude Bernard Lyon 1, France. Email: \texttt{caradot@math.univ-lyon1.fr}}}
\date{}
\keywords{Simple singularities, quiver representations, root systems, foldings, deformations of singularities}
\mathclass{Primary 14B07; Secondary 16G20; Tertiary 17B22.}

\begin{document}

\selectlanguage{english}

\theoremstyle{plain}
\newtheorem{theorem}{Theorem}[section]
\newtheorem{lemma}{Lemma}[section]
\newtheorem{proposition}{Proposition}[section]
\newtheorem{definition}{Definition}[section]
\newtheorem{corollary}{Corollary}[section]

\theoremstyle{remark}
\newtheorem{remark}{Remark}[section]
\theoremstyle{remark}
\newtheorem{remarks}{Remarks}[section]
\theoremstyle{remark}
\newtheorem{example}{Example}[section]
\theoremstyle{remark}
\newtheorem{examples}{Examples}[section]

\newtheoremstyle{dotless}{}{}{\itshape}{}{\bfseries}{}{ }{}
\theoremstyle{dotless}
\newtheorem*{theorem*}{Theorem}
\newtheorem*{lemma*}{Lemma}
\newtheorem*{proposition*}{Proposition}

\maketitle
\vspace{-\topsep}
\begin{abstract}
This article is a summary of the author's unpublished Ph.D thesis (\cite{Cara17}). Its purpose is to generalise a construction by H. Cassens and P. Slodowy of the semiuniversal deformations of the simple singularities of type $A_r$, $D_r$, $E_6$, $E_7$ and $E_8$ to the inhomogeneous simple singularities of type $B_r$, $C_r$, $F_4$ and $G_2$. To a simple homogeneous singularity, one can associate the representation space of a particular quiver. This space is endowed with an action of the symmetry group of the Dynkin diagram associated to the simple singularity. From this we will construct and compute explicitly the semiuniversal deformations of the inhomogeneous simple singularities. By quotienting such maps, we obtain deformations of other simple singularities. In some cases, the discriminants of these last deformations will be computed.
\end{abstract}

\tableofcontents
\addtocontents{toc}{\vspace{-0.7cm}}

\section*{Introduction}
\addcontentsline{toc}{section}{Introduction}

In \cite{Klein84}, F. Klein studied polynomial equations of degree 5. For this he looked at the rotation groups of the five Platonic solids  and at the finite subgroups of $\mathrm{SL}_2(\cc)$, which are (up to conjugacy) the cyclic group $\mathcal{C}_n$, the binary dihedral group $\mathcal{D}_n$ and the binary polyhedral groups $\mathcal{T} $, $\mathcal{O} $ and $\mathcal{I} $. They are exactly the finite subgroups of $\mathrm{SU}_2$. He then proved that for such a group $\Gamma$, the quotient $\cc^2/\Gamma$ is a surface in $\cc^3$ defined by a polynomial equation (cf. Table~\ref{equationssimplesingularities}). The surface has an isolated singularity and is called a simple singularity. Later P. Du Val exhibited a link between the simple singularities and the simple Lie algebras of type $\Delta(\Gamma)=A_r$, $D_r$, $E_6$, $E_7$ or $E_8$ (cf. Table~\ref{diagramssimplesingularities}) using minimal resolutions. 

In \cite{McK80} J. McKay discovered a connection between the finite subgroups of $\mathrm{SU}_2$ and the simply laced Lie algebras which does not involve resolutions. He exhibited a way of constructing the Cartan matrix of the extended Dynkin diagram $\widetilde{\Delta}(\Gamma)$ from the irreducible representations of $\Gamma$. From this correspondence, one can construct the representation space $M(\Gamma)$ of a quiver obtained from $\Delta(\Gamma)$ and called a McKay quiver. It happens that $M(\Gamma)$ can be endowed with a symplectic structure. P.B. Kronheimer took advantage of such a structure and constructed in \cite{Kron89} the semiuniversal deformation of $\cc^2/\Gamma$ using hyperkähler reduction. Later on, H. Cassens and P. Slodowy worked on P.B. Kronheimer's results to obtain the semiuniversal deformation of $\cc^2/\Gamma$ and its minimal resolution in an algebraic-geometric context (cf. \cite{CaSlo98}).

Dynkin diagrams can be separated in two classes: the simply laced (or homogeneous) ones, namely $A_r$ ($r \geq 1$), $D_r$ ($r \geq 4$), $E_6$, $E_7$ and $E_8$, and the non-simply laced (or inhomogeneous) ones $B_r$ ($r \geq 2$), $C_r$ ($r \geq 3$), $F_4$ and $G_2$. In 1978, P. Slodowy extended the definition of simple singularities to the inhomogeneous types in the following way: a simple singularity of type $B_r$, $C_r$, $F_4$ or $G_2$ is a pair $(X_0,\Omega)$ where $X_0$ is a simple singularity of type $A_r$, $D_r$, $E_6$, $E_7$ or $E_8$, and $\Omega$ is a group of automorphisms of the Dynkin diagram associated to $X_0$ (cf. Table~\ref{definhomogeneous}). Let $\Gamma$ be a cyclic group of even order, a binary dihedral group or a binary tetrahedral group, and set $X_0=\cc^2/\Gamma$. It is then possible to find a finite subgroup $\Gamma'$ of $\mathrm{SU}_2$ such that $\Gamma$ is normal in $\Gamma'$ and $\Omega=\Gamma'/\Gamma$ acts on $X_0$. This action can be lifted to the minimal resolution of the singularity, and this lifting induces an action on the exceptional divisors that corresponds to a group of automorphisms of the Dynkin diagram associated to $X_0$. Using this definition, P. Slodowy generalised the McKay correspondence to the inhomogeneous types.  

The aim of this article is to generalise the construction by H. Cassens and P. Slodowy to the inhomogeneous cases, and obtain the semiuniversal deformations of the simple singularities of type $B_r$, $C_r$, $F_4$ and $G_2$. More precisely, we prove and then apply the following result:
\begin{theorem*}\emph{\textbf{\ref{thm:defrestriction}.}}
The restriction over the $\Omega$-fixed points of the semiuniversal deformation of the simple singularity $\cc^2/\Gamma$ obtained from the McKay quiver is a semiuniversal deformation of the inhomogeneous simple singularity of type $\Delta(\Gamma,\Gamma')$.
\end{theorem*}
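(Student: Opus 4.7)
The plan is to enhance the Cassens-Slodowy construction into the $\Omega$-equivariant category and then restrict to the fixed locus of the base. First I would put an $\Omega$-action on all of the Cassens-Slodowy data. Via the McKay correspondence, the $\Omega$-action on the Dynkin diagram $\Delta(\Gamma)$ lifts to a combinatorial action on the McKay quiver $Q(\Gamma)$, and hence to a linear action on the representation space $M(\Gamma)$. One then checks that this action commutes with the gauge group $G(\Gamma)$ and intertwines the moment map $\mu:M(\Gamma)\to\mathfrak{g}(\Gamma)^\vee$. It follows that the Cassens-Slodowy family $\pi:\mathcal{X}\to\mathcal{D}$, obtained as a GIT-style quotient of $\mu$, inherits an $\Omega$-equivariant structure with compatible actions on total space and base.

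Next, I would identify the $\Omega$-fixed locus of the base. Under the Brieskorn-Slodowy identification $\mathcal{D}\cong\mathfrak{h}/W$ for the Cartan $\mathfrak{h}$ and Weyl group $W$ of the simply laced Lie algebra of type $\Delta(\Gamma)$, the induced $\Omega$-action matches the diagram-automorphism action. Standard folding theory then gives $\mathcal{D}^\Omega\cong\mathfrak{h}^\Omega/W^\Omega$, which is the natural base for deformations of the inhomogeneous singularity of type $\Delta(\Gamma,\Gamma')$ and whose dimension equals the rank of the folded root system. Since flatness is preserved under base change, the restriction $\mathcal{X}\times_\mathcal{D}\mathcal{D}^\Omega\to\mathcal{D}^\Omega$ is a flat deformation of $\cc^2/\Gamma$ carrying an $\Omega$-action trivial on the base, i.e.\ a deformation of the pair $(X_0,\Omega)$ in Slodowy's sense.

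The main obstacle is semiuniversality in the $\Omega$-equivariant category. I would begin with the Kodaira-Spencer map. Since the original Cassens-Slodowy deformation is semiuniversal, the map $T_0\mathcal{D}\to T^1_{X_0}$ is an $\Omega$-equivariant isomorphism; and because $\Omega$ acts linearly on the smooth space $\mathfrak{h}$, the tangent space to $\mathcal{D}^\Omega$ at the origin identifies with $(T_0\mathcal{D})^\Omega$. This yields an isomorphism $T_0\mathcal{D}^\Omega\cong(T^1_{X_0})^\Omega$, whose target is precisely the space of $\Omega$-equivariant first-order deformations of $(X_0,\Omega)$. Versality then follows by a standard argument: any $\Omega$-equivariant deformation of $(X_0,\Omega)$ admits a classifying map to $\mathcal{D}$ which must factor through $\mathcal{D}^\Omega$ by functoriality of fixed points, and the pullback of the restricted family recovers the given one. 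The tangent-space isomorphism then promotes versality to semiuniversality, completing the proof.
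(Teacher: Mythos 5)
Your overall architecture coincides with the paper's: equip the Cassens--Slodowy data with an $\Omega$-action, deduce that the quiver-theoretic semiuniversal deformation of $\cc^2/\Gamma$ is $\Omega$-equivariant, and conclude by restricting over $(\mathfrak{h}/W)^{\Omega}$ (the paper concludes by citing Slodowy's equivariant deformation theory, Theorem~\ref{thm:restrictionsud}, where you instead sketch a Kodaira--Spencer argument; that substitution is legitimate in principle). The genuine gap is in your first step. You assert that the combinatorial lift of the diagram automorphism to the McKay quiver ``intertwines the moment map'' after a routine check. It does not in general: by Lemma~\ref{momentequivariant}, equivariance of the moment map is obtained only once the $\Omega$-action on $M(\Gamma)$ is \emph{symplectic}, and the symplectic form $\sum_{a}\epsilon(a)\mathrm{Tr}(\varphi_a\varphi_{\bar a})$ depends on the chosen orientation. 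In type $A_{2r-1}$ the diagram involution necessarily exchanges the two arrows over each edge of the cyclic McKay quiver, so the naive permutation action changes the sign of the form unless one both lets $\Omega$ reverse the orientation and dresses the permutation with scalars $\lambda_i,\delta_i$ subject to the constraints of Table~\ref{ConditionOrientation}. Simultaneously, those scalars must be chosen so that the induced action on the special fibre is the \emph{given} action of $\Gamma'/\Gamma$ on $\cc^2/\Gamma$ (Table~\ref{ConditionFibreSpeciale}); otherwise the restricted family is a semiuniversal deformation of some pair, but not of the inhomogeneous singularity of type $\Delta(\Gamma,\Gamma')$ as defined. Showing that these two sets of constraints are simultaneously solvable is precisely the content of Theorems~\ref{thm:orientation} and~\ref{thm:compatibility}, and it is the real work hidden behind your phrase ``one then checks.''

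A second, smaller soft spot lies in your versality argument: you say the classifying map of an arbitrary $\Omega$-equivariant deformation ``must factor through $\mathcal{D}^{\Omega}$ by functoriality of fixed points.'' A classifying map is not canonical, hence not automatically $\Omega$-equivariant, so functoriality of fixed points does not apply to it directly; one must either average the map using reductivity of $\Omega$ or invoke Slodowy's theorem that equivariant deformations are induced by equivariant base changes. The paper avoids this issue by quoting that theorem outright.
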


In order to do so, we study the representation space $M(\Gamma)$ of a McKay quiver defined by using the Dynkin diagram associated to the simple singularity $\cc^2/\Gamma$. This quiver comes with a symmetry group $\Omega$ based on the inhomogeneous McKay correspondence. The action of the group $\Omega$ on $M(\Gamma)$ is obtained by lifting its action on the underlying graph. The choice of the orientation of the quiver, which did not play any particular role in the homogeneous case, has now to be carefully made. Indeed, the symplectic structure with which is provided the representation space of the McKay quiver depends on the orientation of the quiver. This symplectic structure induces a moment map from which the semiuniversal deformation is obtained. The following lemma implies that, if the action of $\Omega$ is symplectic, then the semiuniversal deformation becomes $\Omega$-equivariant.

\begin{lemma*}\emph{\textbf{\ref{momentequivariant}.}}
Let $\mu:M \rightarrow \mathfrak{g}^*$ be a moment map on a symplectic manifold $(M,\omega)$ with an action of a semisimple Lie group $G$. Assume that a group $\Omega$ acts on $M$ by symplectomorphisms, and that $\Omega$ is a subgroup of the outer automorphism group of $G$. Furthermore, assume that the action of $G$ lifts to an action of $G \rtimes \Omega$. % i.e. $\sigma_\varpi(g.x)=(\sigma_\varpi(g)).(\sigma_\varpi(x))$ for all $x \in M$ and $(g,\varpi) \in G \rtimes \Omega$.
Then $\mu$ is $\Omega$-equivariant.
\end{lemma*}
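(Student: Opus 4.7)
The plan is to introduce, for each $\sigma \in \Omega$, the map $\nu_\sigma : M \to \mathfrak{g}^*$ defined by $\nu_\sigma(p) = \mu(\sigma \cdot p) - \mathrm{Ad}^*(\sigma)\mu(p)$, where $\mathrm{Ad}^*(\sigma)$ denotes the coadjoint-type action on $\mathfrak{g}^*$ induced by the conjugation automorphism of $\mathfrak{g}$ coming from the semidirect product $G \rtimes \Omega$. Proving $\Omega$-equivariance of $\mu$ is then equivalent to showing $\nu_\sigma \equiv 0$ for every $\sigma \in \Omega$.

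First I would check that $d\nu_\sigma = 0$, so that $\nu_\sigma$ is locally constant on $M$. For any $Y \in \mathfrak{g}$, the function $p \mapsto \langle \nu_\sigma(p), Y \rangle$ is a difference of two real-valued functions on $M$. Differentiating $p \mapsto \mu(\sigma \cdot p)$ rests on the key identity $\sigma_\ast Y^{\#} = (\mathrm{Ad}(\sigma) Y)^{\#}$ for fundamental vector fields, which follows from the semidirect product relation $\sigma \exp(tY)\sigma^{-1} = \exp(t\,\mathrm{Ad}(\sigma) Y)$ in $G \rtimes \Omega$, combined with the fact that $\sigma$ acts symplectically (so $\sigma^\ast \omega = \omega$). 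A short computation then yields $d\langle \mu(\sigma \cdot p), Y \rangle = \iota_{(\mathrm{Ad}(\sigma^{-1})Y)^{\#}} \omega$, which by the moment map equation coincides with $d\langle \mathrm{Ad}^*(\sigma)\mu(p), Y\rangle = d\langle \mu(p), \mathrm{Ad}(\sigma^{-1}) Y\rangle$. Hence $\nu_\sigma$ is constant on each connected component of $M$.

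Next I would use the $G$-equivariance of $\mu$ to constrain this constant. Starting from $\mu(g \cdot p) = \mathrm{Ad}^*(g)\mu(p)$ and the identity $\sigma g = (\sigma g \sigma^{-1})\,\sigma$ in $G \rtimes \Omega$, one computes $\nu_\sigma(g \cdot p) = \mathrm{Ad}^*(\sigma g \sigma^{-1})\,\nu_\sigma(p)$. Since $g \mapsto \sigma g \sigma^{-1}$ is an automorphism of the identity component of $G$, the constant value of $\nu_\sigma$ on any component is fixed by the full coadjoint action of that identity component. Semisimplicity now enters: since $[\mathfrak{g},\mathfrak{g}] = \mathfrak{g}$, the invariants $(\mathfrak{g}^*)^{\mathfrak{g}}$ vanish, so the only coadjoint-fixed element of $\mathfrak{g}^*$ is $0$. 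This forces $\nu_\sigma \equiv 0$, and hence $\mu(\sigma \cdot p) = \mathrm{Ad}^*(\sigma)\mu(p)$ for all $p \in M$ and all $\sigma \in \Omega$.

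The main obstacle is the vector-field identity $\sigma_\ast Y^{\#} = (\mathrm{Ad}(\sigma) Y)^{\#}$: this is precisely where the hypothesis that the $G$-action lifts to an action of the whole semidirect product $G \rtimes \Omega$ is genuinely used, rather than the weaker data of an outer action of $\Omega$ on $G$. Without such a lift, the coadjoint action on $\mathfrak{g}^*$ is only well-defined modulo inner automorphisms, and $\nu_\sigma$ would not even make sense as a function valued in $\mathfrak{g}^*$. The role of semisimplicity is more structural: it eliminates a potential cocycle obstruction $\Omega \to (\mathfrak{g}^*)^G$ that would otherwise measure the failure of $\Omega$-equivariance and that is perfectly non-trivial for reductive $G$ with non-trivial centre.
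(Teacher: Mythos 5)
Your proposal is correct and follows essentially the same route as the paper's proof: differentiate the discrepancy $\mu(\sigma\cdot p)-\mathrm{Ad}^*(\sigma)\mu(p)$ using the symplectomorphism hypothesis and the lift of the $G$-action to the semidirect product to see it is locally constant, then use $G$-equivariance of $\mu$ to see the constant is coadjoint-invariant, and finally kill it by semisimplicity. The only cosmetic difference is the last step, where you invoke $[\mathfrak{g},\mathfrak{g}]=\mathfrak{g}$ (so $(\mathfrak{g}^*)^{\mathfrak{g}}=0$) directly, while the paper appeals to Chevalley's theorem on the degrees of the generators of $S(\mathfrak{g}^*)^G$; both are valid and yours is arguably more elementary.
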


We will need to determine the conditions in which the action is symplectic, as well as their compatibility with the action of $\Omega$ on the special fibre of the deformation. These requirements on the action of $\Omega$ are provided in these theorems:

\begin{theorem*}\emph{\textbf{\ref{thm:orientation}.}}
The action of $\Omega=\Gamma'/\Gamma$ on $M(\Gamma)$ is symplectic and induces the natural action on the simple singularity when:
\vspace{-1.5ex}
\begin{itemize}\setlength\itemsep{0.3pt}
\item for $(A_{2r-1},\mathbb{Z}/2\mathbb{Z})$, the group $\Omega$ reverses the orientation of the McKay quiver.
\item for the other cases, the group $\Omega$ preserves the orientation of the McKay quiver.
\end{itemize}

\end{theorem*}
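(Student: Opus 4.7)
The plan is to work case by case over the pairs $(\Delta(\Gamma),\Omega)$ appearing in Table~\ref{definhomogeneous}, namely $(A_{2r-1},\mathbb{Z}/2\mathbb{Z})$, $(D_{r+1},\mathbb{Z}/2\mathbb{Z})$, $(D_4,S_3)$ and $(E_6,\mathbb{Z}/2\mathbb{Z})$. For each pair I would first determine the permutation $\sigma$ of the vertices of $\widetilde{\Delta}(\Gamma)$ induced by $\Omega$ via the inhomogeneous McKay correspondence, then lift $\sigma$ to a linear action on the representation space $M(\Gamma)$ by choosing isomorphisms $V_i \to V_{\sigma(i)}$ at each vertex, and finally test this lift against the symplectic form coming from the chosen orientation of the McKay quiver.

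After fixing an orientation $Q_1 = Q_1^+ \sqcup Q_1^-$ of the doubled quiver, where $a^*\in Q_1^-$ denotes the arrow opposite to $a\in Q_1^+$, the symplectic form reads
$$\omega \;=\; \sum_{a \in Q_1^+} \mathrm{tr}\bigl(dx_a \wedge dx_{a^*}\bigr).$$
If the lift sends a positive arrow $a$ to another positive arrow $\sigma(a)\in Q_1^+$, the scalars coming from the identifications at the head and at the tail combine multiplicatively and can be normalised so that $\omega$ is preserved. If instead $\sigma(a) = b^* \in Q_1^-$, then $dx_a\wedge dx_{a^*}\mapsto dx_{b^*}\wedge dx_b = -dx_b\wedge dx_{b^*}$, so preserving $\omega$ demands an extra factor $-1$ in the gauge of the lift. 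Thus orientation preservation versus orientation reversal yield two inequivalent symplectic conditions on the lift.

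For $(D_{r+1},\mathbb{Z}/2\mathbb{Z})$, $(D_4,S_3)$ and $(E_6,\mathbb{Z}/2\mathbb{Z})$, the permutation $\sigma$ on $\widetilde{\Delta}(\Gamma)$ fixes at least one vertex and permutes symmetric branches; orienting every edge away from a fixed vertex yields an orientation stable under $\sigma$, so the corresponding lift is orientation-preserving, and the normalisation above makes it symplectic. For $(A_{2r-1},\mathbb{Z}/2\mathbb{Z})$, however, $\widetilde{\Delta}(\Gamma)$ is a cycle of length $2r$ and $\sigma$ is a reflection, which necessarily sends any cyclic orientation to its reverse; no lift can preserve the orientation, and symplecticity forces the sign correction described above.

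The last step is to check that the resulting $\Omega$-action on the special fibre of the deformation agrees with the natural action of $\Gamma'/\Gamma$ on $\cc^2/\Gamma$. This follows from P.~Slodowy's inhomogeneous McKay correspondence: the lift of $\sigma$ on $M(\Gamma)$ is built precisely so that, after restriction to the regular representation summand, it reproduces the action of a preimage of $\sigma$ in $\Gamma'$ on $\cc^2$ modulo $\Gamma$. The main obstacle will be the sign bookkeeping in the $(A_{2r-1},\mathbb{Z}/2\mathbb{Z})$ case, where one must show that the minus sign forced by orientation reversal is exactly the one appearing in the action of $\mathcal{D}_n$ on $\cc^2$ relative to its normal subgroup $\mathcal{C}_{2n}$; the three other cases then reduce to essentially combinatorial checks on trees.
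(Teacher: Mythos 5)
Your overall architecture (case-by-case over the pairs of Table~\ref{definhomogeneous}, lift the diagram automorphism to $M(\Gamma)$ with gauge scalars at each vertex, test against the symplectic form $\sum_a \epsilon(a)\mathrm{Tr}(\varphi_a\varphi_{\bar a})$) matches the paper's, and your observation that mapping a positive arrow to a negative one flips the sign of the corresponding term in $\omega$ is the right local computation. But there are two genuine gaps.

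First, your argument for orientation reversal in the $(A_{2r-1},\mathbb{Z}/2\mathbb{Z})$ case is wrong. The reflection of the $2r$-cycle $\widetilde{A}_{2r-1}$ fixes the two antipodal vertices $0$ and $r$ and therefore acts \emph{freely} on the set of $2r$ edges; choosing one arrow per edge in a way compatible with this free action produces an orientation of the doubled quiver that the reflection preserves (orientations here are arbitrary choices of one arrow per pair, not cyclic orientations). So "no lift can preserve the orientation" is false, and orientation reversal is not forced by the combinatorics of the affine diagram. What actually forces it is the requirement that the lift induce the \emph{natural} action on $\cc^2/\mathcal{C}_{2r}$: writing the special fibre as $\{z^{2r}=xy\}$ with $x=\prod_i a_i$, $y=\prod_i b_i$, $z=\frac{1}{2r}\sum_i a_ib_i$ and comparing with the explicit action $\sigma.X=(-1)^rY$, $\sigma.Y=(-1)^rX$, $\sigma.Z=-Z$ of $\mathcal{D}_r/\mathcal{C}_{2r}$ on Klein's generators yields $\lambda_i\delta_i=-1$ for every $i$ (Table~\ref{ConditionFibreSpeciale}); the symplectic condition separately yields $\epsilon(a_i)\epsilon(b_{2r-1-i})=\lambda_i\delta_i$ (Table~\ref{ConditionOrientation}); only the combination gives $\epsilon(a_i)\epsilon(b_{2r-1-i})=-1$, i.e.\ orientation reversal. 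In the other cases the same comparison gives $\lambda\delta=1$ at the permuted vertices, hence orientation preservation.

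Second, you dispose of the compatibility with the natural action on the singularity by appealing to Slodowy's inhomogeneous McKay correspondence, asserting that the lift "is built precisely so that" it reproduces the action of $\Gamma'/\Gamma$. This is not a proof and, more importantly, it is exactly the step where the nontrivial scalar constraints arise: an arbitrary lift of the diagram automorphism does \emph{not} induce the natural action, and the McKay correspondence gives no control over the gauge scalars $\lambda_i,\delta_i$. One must compute the generators of the invariant ring $\cc[\mu_{CS}^{-1}(0)]^{G(\Gamma)}$ in each type (traces of oriented cycles), express Klein's $X,Y,Z$ in terms of them, and match the induced action against the explicit matrix action of $\Gamma'/\Gamma$ computed in Subsection~\ref{sub:groupaction}. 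Without this computation you have neither the conditions of Table~\ref{ConditionFibreSpeciale} nor, consequently, the dichotomy the theorem asserts.
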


\begin{theorem*}\emph{\textbf{\ref{thm:compatibility}.}}
For any McKay quiver built on a Dynkin diagram of type $A_{2r-1}$, $D_{r+1}$ or $E_6$, there exists an action of $\Omega=\Gamma'/\Gamma$ on $M(\Gamma)$ that is symplectic and induces the natural action on the simple singularity $\cc^2/\Gamma$.
\end{theorem*}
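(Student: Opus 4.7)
By Theorem \ref{thm:orientation}, the statement reduces to exhibiting, for each of the three types $A_{2r-1}$, $D_{r+1}$, $E_6$, an orientation of the McKay quiver such that the natural action of $\Omega$ reverses arrows in type $A_{2r-1}$ and preserves them in the other two types. Indeed, once such an orientation is fixed, the lifted action on $M(\Gamma)$ is automatically symplectic and induces the natural action on $\cc^2/\Gamma$. The plan is therefore to handle the three types separately and construct a compatible orientation in each case.

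For $A_{2r-1}$, the extended Dynkin diagram is a $2r$-cycle and $\Omega = \mathbb{Z}/2\mathbb{Z}$ acts by a reflection through two antipodal vertices. I would orient every edge cyclically in a single direction around the cycle; the reflection then sends each arrow to the reverse of another arrow, yielding the required orientation-reversing lift. For $D_{r+1}$, the involution $\Omega$ swaps a pair of extremal vertices sharing a common neighbor on the extended Dynkin diagram; orienting the two corresponding edges so that they both emanate from, or both terminate at, this common neighbor makes $\Omega$ exchange these edges while preserving orientation, and the remaining edges, on which $\Omega$ acts either trivially or by a similar pairwise orbit structure, may be oriented compatibly. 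For $E_6$, the extended Dynkin diagram has a reflection symmetry exchanging two symmetric arms; orienting the edges of one arm arbitrarily and those of the other as the image of the first under the reflection produces an orientation-preserving lift of $\Omega$.

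The main subtle point, and the only potential obstruction, is the possibility of an edge which is fixed setwise by $\Omega$ while having its two endpoints swapped: such an edge cannot be oriented in an $\Omega$-invariant way and would only be compatible with the orientation-reversing prescription of type $A_{2r-1}$. Direct inspection of the three diagrams shows that this phenomenon does not arise in types $D_{r+1}$ and $E_6$, while in type $A_{2r-1}$ it accounts precisely for the need of orientation reversal. Hence the orientations constructed above satisfy the hypotheses of Theorem \ref{thm:orientation}, which establishes the theorem in each case.
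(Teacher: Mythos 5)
There is a genuine gap. Your argument treats the action of $\Omega$ on $M(\Gamma)$ as if it were the tautological permutation of arrow-components induced by the graph automorphism, so that ``once such an orientation is fixed, the lifted action \ldots is automatically symplectic and induces the natural action''. But the lift is neither unique nor automatic: as set up in Subsection~\ref{sub:compatibility}, $\Omega$ acts on each arrow-component through scalars $\lambda_i,\delta_i$, and the requirement that the induced action on the special fibre agree with the natural action of $\Gamma'/\Gamma$ on $\cc^2/\Gamma$ computed in Subsection~\ref{sub:groupaction} imposes nontrivial constraints on these scalars (Table~\ref{ConditionFibreSpeciale}); for instance in type $A_{2r-1}$ one needs $\lambda_i\delta_i=-1$ for all $i$ and $\prod_i\lambda_i=\prod_i\delta_i=(-1)^r$, precisely because $\sigma$ must send $z=\frac{1}{2r}\sum_i a_ib_i$ to $-z$, which the naive permutation lift (all scalars equal to $1$) fails to do. The symplecticity requirement then imposes a second system of constraints linking the orientation $\epsilon$ to the same products $\lambda_i\delta_i$ (Table~\ref{ConditionOrientation}). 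The actual content of the theorem --- and what the paper verifies --- is that these two systems admit a simultaneous solution in each type. Your proof never addresses the scalars at all, so at best it shows that the underlying graph automorphism is compatible with some choice of $Q_1^+$, which is a strictly weaker, purely combinatorial statement and does not by itself yield an action inducing the correct involution on the singularity.

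Relatedly, your ``main subtle point'' is a red herring: in the extended diagram $\widetilde{A}_{2r-1}$ the involution fixes the vertices $0$ and $r$, and no edge has its two endpoints swapped, so the phenomenon you describe occurs there no more than in types $D_{r+1}$ or $E_6$ and cannot ``account for the need of orientation reversal''. That need comes instead from the sign $\lambda_i\delta_i=-1$ forced by the action on the special fibre, combined with the condition $\epsilon(a_i)\epsilon(b_{2r-1-i})=\lambda_i\delta_i$ of Table~\ref{ConditionOrientation}. To repair the proof you would need to exhibit, case by case, scalars and an orientation satisfying the conditions of both tables simultaneously, which is exactly the check the paper performs.
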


We endow the representation space $M(\Gamma)$ with the action of $\Omega$ satisfying both Theorems~\ref{thm:orientation} and~\ref{thm:compatibility}. This makes the semiuniversal deformation of $\cc^2/\Gamma$ an $\Omega$-equivariant morphism, and we can apply Theorem~\ref{thm:defrestriction}. We obtain the semiuniversal deformation of the simple singularity of type $\Delta(\Gamma,\Gamma')$. This semiuniversal deformation is such that the group $\Omega$ induces an action on each of its fibres. By taking quotients of the fibres, we obtain a new deformation, but this time of the simple homogeneous singularity $\cc^2/\Gamma'$. Furthermore, the semiuniversal nature of the deformation is lost in the process. Results regarding the regularity of the fibres of the new deformation will also be proved with these next propositions.
\begin{proposition*}\emph{\textbf{\ref{A3singulier}.}}
Every fibre of the quotient of the semiuniversal deformation of a simple singularity of type $B_2$ by the action of $\Omega$ is singular.
\end{proposition*}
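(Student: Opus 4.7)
The plan is to make everything explicit, following Theorem~\ref{thm:defrestriction}. The $B_2$ case corresponds to $\Gamma = \mathbb{Z}/4\mathbb{Z}$ sitting in $\Gamma' = \mathcal{D}_2$, so the underlying variety is the $A_3$ singularity $\{uv = w^4\} \subset \cc^3$ (taking the $\Gamma$-invariants $u = x^4$, $v = y^4$, $w = xy$). Its semiuniversal deformation is
\[
\{uv = w^4 + aw^2 + bw + c\} \longrightarrow \cc^3.
\]
First I would lift the non-trivial element of $\Omega = \Gamma'/\Gamma$, represented by $j \in \mathcal{D}_2$ with $j(x, y) = (y, -x)$, to this deformation. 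On invariants it acts as $(u, v, w) \mapsto (v, u, -w)$, and requiring that the deformed equation be preserved forces the parameter action $(a, b, c) \mapsto (a, -b, c)$. The $\Omega$-fixed base is thus $\{b = 0\}$, so the $B_2$ semiuniversal deformation is
\[
\chi : \mathcal{X} = \{uv = w^4 + aw^2 + c\} \longrightarrow \cc^2, \qquad (u, v, w) \mapsto (a, c),
\]
with $\Omega$ acting fibrewise by $(u, v, w) \mapsto (v, u, -w)$.

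Next I would carry out the fibrewise quotient by classical invariant theory. The ring of $\Omega$-invariants in $\cc[u, v, w]$ is generated by $\sigma = u + v$, $\pi = uv$, $\alpha = w^2$ and $\beta = (u - v)w$, subject to the single relation $\beta^2 = (\sigma^2 - 4\pi)\alpha$. Eliminating $\pi$ using $\pi = \alpha^2 + a\alpha + c$ on the fibre over $(a, c)$ presents the quotient as the hypersurface
\[
W_{a,c} \ : \ f_{a,c}(\alpha, \beta, \sigma) := \beta^2 + 4\alpha^3 + 4a\alpha^2 + 4c\alpha - \alpha\sigma^2 = 0 \quad \text{in } \cc^3.
\]

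The final step is to exhibit a singular point on $W_{a,c}$ for every $(a, c)$. A direct computation of
\[
\partial_\alpha f_{a,c} = 12\alpha^2 + 8a\alpha + 4c - \sigma^2, \qquad \partial_\beta f_{a,c} = 2\beta, \qquad \partial_\sigma f_{a,c} = -2\alpha\sigma,
\]
shows that any point of the form $(\alpha, \beta, \sigma) = (0, 0, \sigma_0)$ with $\sigma_0^2 = 4c$ lies on $W_{a,c}$ with vanishing Jacobian; such a $\sigma_0$ always exists in $\cc$, so $W_{a,c}$ is singular regardless of the parameters. The conceptual reason is that the $\Omega$-action on $\mathcal{X}$ has a non-empty relative fixed locus (points with $u = v$, $w = 0$, $u^2 = c$), and since $\Omega \hookrightarrow \mathrm{SU}_2$ acts non-trivially on the tangent space at such a fixed point, the smooth fibres acquire a Kleinian $A_1$-singularity under the quotient; on the fibres of $\chi$ that are already singular, those singularities likewise descend to singularities of the quotient. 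I expect the main subtlety to lie in correctly identifying the $\Omega$-action on the parameter $b$, since this is what cuts out the $B_2$ deformation inside the ambient $A_3$ one.
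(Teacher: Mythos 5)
Your proposal is correct and follows essentially the same route as the paper: restrict the $A_3$ semiuniversal deformation over the $\Omega$-fixed base, compute the fibrewise invariant ring (your $\sigma,\pi,\alpha,\beta$ with the relation $\beta^2=(\sigma^2-4\pi)\alpha$ are exactly the paper's generators $x+y$, $xy$, $z^2$, $z(x-y)$), and exhibit the singular points $(\alpha,\beta,\sigma)=(0,0,\pm 2\sqrt{c})$, which are the paper's points $(X,W,Z)=(\pm 2\sqrt{f_4},0,0)$ up to the harmless reparametrisation of the base by flat coordinates. The closing remark about the relative fixed locus forcing an $A_1$-point on the quotient of each smooth fibre is a nice conceptual supplement not present in the paper, but the explicit Jacobian computation already carries the proof.
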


\begin{proposition*}\emph{\textbf{\ref{D4Zsur2Zsingulier}.}}
Every fibre of the quotient of the semiuniversal deformation of a simple singularity of type $C_3$ by the action of $\Omega$ is singular.
\end{proposition*}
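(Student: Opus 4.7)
My plan is to combine the explicit Cassens--Slodowy/McKay description of the $C_3$ semiuniversal deformation with a direct fixed-point analysis, showing that $\Omega$ always has non-empty fixed locus on every fibre, which in turn forces the quotient to be singular.

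First I would unwind Theorem~\ref{thm:defrestriction} in the $C_3$ case: since $C_r$ arises from the folding $A_{2r-1}/(\mathbb{Z}/2\mathbb{Z})$, the total space of the deformation is the restriction of the $A_5$ Cassens--Slodowy deformation over the $\Omega$-invariant subspace of its base. In the classical presentation $xy = \prod_{i=1}^{6}(z-t_i)$ with $\sum t_i = 0$, the involution furnished by Theorem~\ref{thm:orientation} (reversing the orientation of the McKay quiver) acts on parameters by $t_i \mapsto -t_{7-i}$ and on a fibre by a symplectic involution of the form $(x,y,z) \mapsto (y,x,-z)$. The $\Omega$-invariant locus is $t_{7-i} = -t_i$, so a fibre over $s = (t_1,t_2,t_3) \in S$ reads $X_s = \{xy = \prod_{i=1}^{3}(z^2 - t_i^2)\}$.

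Second I would verify that the $\Omega$-fixed locus of every fibre is non-empty. The fixed-point equations $x=y$, $z=0$ reduce on $X_s$ to $x^2 = (t_1 t_2 t_3)^2$, whose solutions $(\pm t_1 t_2 t_3, \pm t_1 t_2 t_3, 0)$ are two distinct smooth points when $t_1 t_2 t_3 \neq 0$ and coalesce into the singular point $(0,0,0)$ of $X_s$ when some $t_i$ vanishes. A Lefschetz count corroborates the calculation: the $A_5$ Milnor fibre has $\chi = 6$, with $\Omega$ acting on $H^2$ through the flip of the $A_5$ root chain (trace $1$), yielding $\chi(X_s^{\Omega}) = 2$ on every smooth fibre.

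Finally I would conclude by cases on a fibre $X_s$. If $X_s$ is singular, any singular point descends to a singular point of the finite quotient $X_s/\Omega$; in particular $X_0/\Omega = \mathbb{C}^2/\Gamma'$ is a homogeneous simple singularity (of type $D_5$). If $X_s$ is smooth, then at an isolated $\Omega$-fixed point the differential of the involution on $T_pX_s$ cannot have the eigenvalue $+1$ (which would propagate to a fixed curve, contradicting isolation), so the local model of the quotient is $\mathbb{C}^2/\langle-\mathrm{id}\rangle$, an $A_1$ singularity of $X_s/\Omega$. In all cases $X_s/\Omega$ is singular. The main obstacle I expect is extracting the precise symplectic involution on the fibres from the McKay-quiver data in a way compatible with Theorem~\ref{thm:compatibility}; once this explicit involution is in hand, the fixed-point analysis and the case split above are routine.
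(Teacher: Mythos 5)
There is a fundamental misidentification at the start of your argument. In this paper's Definition~\ref{def:inhomogeneous} (Table~\ref{definhomogeneous}), the simple singularity of type $C_3$ is the pair $(X_0,\Omega)$ with $X_0$ of type $D_4$ (coming from $\Gamma=\mathcal{D}_2\lhd\Gamma'=\mathcal{D}_4$) and $\Omega=\mathbb{Z}/2\mathbb{Z}$; it is \emph{not} $(A_5,\mathbb{Z}/2\mathbb{Z})$, which the paper calls $B_3$. (The confusion presumably comes from the folding table: $A_{2r-1}$ yields invariants $\mathfrak{g}_0$ of type $C_r$ but a root-lattice folding of type $B_r$, and the singularity is named after the latter.) Consequently your entire computation --- the model $xy=\prod_{i=1}^6(z-t_i)$, the involution $t_i\mapsto -t_{7-i}$, the fixed locus $x^2=(t_1t_2t_3)^2$, the identification of the special quotient fibre as $D_5=\cc^2/\mathcal{D}_3$ --- concerns the quotient of the $B_3$ deformation (the $r=3$ analogue of Proposition~\ref{A3singulier}), not the $C_3$ one, and so does not prove the stated proposition. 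The paper's proof instead starts from the explicit $C_3$ deformation $z^2=xy(x+y)-\tfrac12 t_2xy-\tfrac14 t_4x+\cdots$ with $\sigma.x=x$, $\sigma.y=-x-y+\tfrac12 t_2$, $\sigma.z=-z$, computes the invariant ring in the variables $X=x$, $Y=y'^2$, $Z=z^2$, $W=y'z$ (where $y'=\tfrac12 x+y-\tfrac14 t_2$), obtains a three-parameter deformation of a $D_6$ singularity, and exhibits in every fibre a singular point $(X_s,Y_s,0)$ with $X_s$ a root of an explicit cubic.

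That said, your underlying strategy --- show that the $\Omega$-fixed locus meets every fibre, and observe that at an isolated fixed point of an involution on a smooth surface the differential must be $-\mathrm{id}$, so the quotient acquires an $A_1$ point there --- would transfer to the correct case: for $(D_4,\mathbb{Z}/2\mathbb{Z})$ the fixed locus is $y'=z=0$, which meets each fibre in the roots of a cubic in $x$, hence is always non-empty. This would give a cleaner, less computational proof than the paper's. Two caveats remain even then: you must first extract the correct involution on the correct model (the obstacle you yourself flag), and the step ``any singular point descends to a singular point of the finite quotient'' needs justification when the singular point is itself $\Omega$-fixed --- it is not automatic that a finite quotient of a singular surface germ is singular --- whereas the paper sidesteps this by writing down the quotient equation and locating its singular points directly.
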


\begin{proposition*}\emph{\textbf{\ref{D4S3singulier}.}}
Every fibre of the quotient of the semiuniversal deformation of a simple singularity of type $G_2$ by the action of $\Omega$ is singular.
\end{proposition*}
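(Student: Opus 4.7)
By Theorem~\ref{thm:defrestriction}, Lemma~\ref{momentequivariant} and Theorems~\ref{thm:orientation}--\ref{thm:compatibility}, we have at hand the semiuniversal deformation $\chi_0 : \mathcal{X}_0 \to S_0$ of the $G_2$ singularity together with a fibrewise symplectic action of $\Omega = S_3 = \mathcal{O}/\mathcal{D}_2$ that specialises over $0 \in S_0$ to the natural action on $\cc^2/\mathcal{D}_2$. Forming the quotient family $\bar\chi_0 : \mathcal{X}_0/\Omega \to S_0$ fibrewise, the task is to show that $\bar X_0(t) := X_0(t)/\Omega$ is singular for every $t \in S_0$.

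\textbf{Producing a point with non-trivial stabiliser in each fibre.} The plan is to show that for every $t \in S_0$ there is a point $p = p(t) \in X_0(t)$ whose $\Omega$-stabiliser is non-trivial, the natural candidate being the cyclic subgroup $\mathbb{Z}/3 \subset S_3$ (the image of the triality rotation). The $\mathbb{Z}/3$-fixed locus $\mathcal{X}_0^{\mathbb{Z}/3}$ is a closed subscheme of $\mathcal{X}_0$, and I would verify that its projection to $S_0$ is surjective. The special fibre of this projection contains the origin of $\cc^2/\mathcal{D}_2$, which is $\Omega$-fixed; a Lefschetz-type computation on a generic Milnor fibre (homotopy equivalent to a bouquet of four $2$-spheres on which $S_3$ permutes three and fixes the central one) gives $L(g) = 2$ for a generator $g$ of $\mathbb{Z}/3$, so each generic fibre already contains two isolated $\mathbb{Z}/3$-fixed points. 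A dimension count then implies that $\mathcal{X}_0^{\mathbb{Z}/3} \to S_0$ is finite and surjective.

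\textbf{Singularity at the image of $p$.} Given $p = p(t)$, if $X_0(t)$ is itself singular at $p$ then its singular locus is $\Omega$-invariant and descends to a singular locus of $\bar X_0(t)$. Otherwise $X_0(t)$ is smooth at $p$ and the linearised action of $\mathrm{Stab}_\Omega(p)$ at $p$ lies in $\mathrm{Sp}(T_p X_0(t))$, since the fibrewise action is symplectic by Theorem~\ref{thm:orientation}. Because $\dim_{\cc} T_p X_0(t) = 2$, one has $\mathrm{Sp}(T_p X_0(t)) = \mathrm{SL}_2(\cc)$, and no finite-order element of $\mathrm{SL}_2(\cc)$ other than the identity can be a pseudo-reflection: a pseudo-reflection of $\cc^2$ must have $1$ as an eigenvalue, and a finite-order matrix in $\mathrm{SL}_2(\cc)$ with eigenvalue $1$ is necessarily the identity. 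Chevalley-Shephard-Todd therefore forces $X_0(t)/\mathrm{Stab}_\Omega(p)$, and hence $\bar X_0(t)$, to be singular at the image of $p$.

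\textbf{Main obstacle.} The substantive point is the second step: confirming that the $\mathbb{Z}/3$-fixed locus meets every fibre, not merely the special one or the discriminant. Both the special-fibre check and the Lefschetz count on the generic Milnor fibre are elementary, but gluing them into a uniform surjectivity statement over all of $S_0$ requires the finiteness of $\mathcal{X}_0^{\mathbb{Z}/3} \to S_0$ combined with dimension semi-continuity, which is the most delicate part of the argument. Once surjectivity is established, the final step is purely formal, being a consequence of the symplectic nature of the $\Omega$-action together with Chevalley-Shephard-Todd, and requires no case analysis in $t$.
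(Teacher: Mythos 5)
Your route is genuinely different from the paper's, which is purely computational: it writes down the explicit equation $(\star\star)$ of the quotient fibre, applies the Jacobian criterion, and observes that on the locus $Y=Z=0$ the four equations collapse to a single cubic in $X$, which always has a root $X_s$ over $\cc$; the point $(X_s,0,0)$ is then a singular point of every fibre. Your conceptual alternative (a $\mathbb{Z}/3$-fixed point in every fibre plus Chevalley--Shephard--Todd) is attractive and would explain \emph{why} the singularity persists, but as written it has gaps that are not merely cosmetic. The step you yourself flag --- surjectivity of $\mathcal{X}_0^{\mathbb{Z}/3}\to S_0$ --- carries essentially all of the content of the proposition and is not established: the Lefschetz count is performed on a non-compact Milnor fibre, so you need a Smith-theoretic or compactified fixed-point theorem; you need to identify the $\Omega$-action on $H_2$ of a nearby fibre with the triality action on the $D_4$ root lattice, which is a genuine input and not a tautology; and the finiteness of the fixed locus over the base, which you invoke to pass from generic to everywhere surjectivity, is asserted without proof. (Granting finiteness, closedness of the image would indeed finish this step, but none of the ingredients is supplied.)

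Two of the steps you treat as formal are also not. The claim that a singular point of $X_0(t)$ automatically ``descends to a singular locus of $\bar X_0(t)$'' is false in general: the $A_1$ singularity $\{z^2=xy\}=\cc^2/\{\pm I\}$, quotiented by the residual involution $z\mapsto -z$ coming from $\langle\mathrm{diag}(-1,1),\mathrm{diag}(1,-1)\rangle/\{\pm I\}$, is the smooth plane $\mathrm{Spec}\,\cc[x,y]$. Repairing this requires knowing that the stabiliser acts through $\mathrm{SL}_2$ on a local uniformisation, i.e.\ the same symplectic input as your smooth case. And that input is the third gap: Theorem~\ref{thm:orientation} makes the $\Omega$-action symplectic on the quiver representation space $M(\Gamma)$, not on the individual two-dimensional fibres of $\alpha^{\Omega}$; transporting the form through the reduction $\mu_{CS}^{-1}(Z)//G(\Gamma)$ to each fibre is plausible but is proved nowhere in the paper, so it cannot simply be cited. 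Each of these points is fixable, but until they are fixed the argument does not close, whereas the paper's three-line Jacobian computation does.
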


The first part (\ref{sec:FoldingsandSingularities}) of this article is devoted to the definitions of the folding of Dynkin diagrams, as well as of the simple homogeneous and inhomogeneous singularities. In the second part (\ref{sec:Tanisaki}) we recall results of T. Tanisaki regarding foldings of root systems and see how foldings, simple singularities and quiver representations are linked to each other. The third part (\ref{sec:SlodowyCassens}) presents the construction due to H. Cassens and P. Slodowy. In the fourth part (\ref{sec:InhomogeneousDeformations}), we study how to generalise the previously mentioned construction to the inhomogeneous types, and then realise computations in the fifth part (\ref{sec:computations}). The sixth part (\ref{sec:QuotientInhomogeneousDeformations}) is devoted to the description of the quotients of the semiuniversal deformations of the inhomogeneous simple singularities. 

All throughout this article, the base field will be the complex number field $\cc$.

\section{Simple singularities and Dynkin diagrams}\label{sec:FoldingsandSingularities}
\subsection{Foldings: definition and computations}\label{sec:folding}

Let $\mathfrak{g}$ be a finite dimensional simple Lie algebra and $x \in \mathfrak{g}$. If $\mathrm{ad} \ x$ is nilpotent, then $\exp(\mathrm{ad} \ x)$ is a well-defined automorphism of $\mathfrak{g}$. Any automorphism of $\mathfrak{g}$ that can be written in such a form is called inner, and the group generated by inner automorphisms is denoted by $\mathrm{Inn}(\mathfrak{g})$. It is a connected and normal subgroup of $\mathrm{Aut}(\mathfrak{g})$, the group of automorphisms of $\mathfrak{g}$. An outer automorphism is an element of $\mathrm{Out}(\mathfrak{g}) = \mathrm{Aut}(\mathfrak{g})/\mathrm{Inn}(\mathfrak{g})$. This group turns out to be isomorphic to the automorphism group of the Dynkin diagram of $\mathfrak{g}$. One can verify that the irreducible Dynkin diagrams that have a non-trivial outer automorphism group are those of type $A_r$ ($r \geq 2$), $D_r$ ($r \geq 3$) and $E_6$. Let $\sigma$ be an automorphism of one of these Dynkin diagrams. Then $\sigma$ induces an automorphism $\dot{\sigma}$ of the corresponding Lie algebra. By definition of $\sigma$, the morphism $\dot{\sigma}$ has finite order, say $r \in \mathbb{Z}_{\geq0}$. It follows that $\dot{\sigma}$ induces a gradation on $\mathfrak{g}$ such that $\mathfrak{g}=\bigoplus_{i \in \mathbb{Z}/r\mathbb{Z}}\mathfrak{g}_i$ with $\mathfrak{g}_i=\{x \in \mathfrak{g} \ | \ \dot{\sigma}(x)=\omega^ix\}$ and $\omega$ a primitive $r^{\text{th}}$ root of unity. \iffalse Indeed, because $\dot{\sigma}$ is of finite order, it is semisimple. Hence there exists a base of $\mathfrak{g}$ composed of eigenvectors of $\dot{\sigma}$ and the eigenspaces are the summands of the gradation. \fi The \textit{folding} of $\mathfrak{g}$ consists in computing the invariants of the automorphism $\dot{\sigma}$. One can also realise the folding of the root lattice of $\mathfrak{g}$, i.e. finding the type of the invariant sub-lattice $Q(\mathfrak{g})^\sigma$ of the root lattice $Q(\mathfrak{g})$ of $\mathfrak{g}$.

In the table below are results of different foldings carried out on the simple Lie algebras whose outer automorphism group is not trivial.  
\begin{center}
\renewcommand{\arraystretch}{1.6}
$\begin{array}[t]{|c|c|c|c|}
\hline
\text{Type of }\mathfrak{g} & \text{Type of }\mathfrak{g}_0 & \text{Type of }Q(\mathfrak{g})^{\sigma} & \text{Order of }\sigma  \\
\hhline{|=|=|=|=|}
A_{2r-1} &  C_r  & B_r & 2 \\
\hline
A_{2r} & B_r &  C_r & 2 \\
\hline
D_{r+1} & B_r & C_r &  2 \\
\hline
E_6 & F_4 & F_4 & 2  \\
\hline
D_4 & G_2 & G_2 &  3 \\
\hline
\end{array}$
 \captionof{table}{Foldings of simply laced root lattices and Lie algebras}
 \label{tablefoldings}
\end{center}

One notices that, in all five cases, the types of $\mathfrak{g}_0$ and $Q(\mathfrak{g})^{\sigma}$ are dual to each other. This is due to the fact that the short roots and the long roots are switched when one goes from the Lie algebra to the root lattice, and vice-versa.

\subsection{Simple singularities of type \texorpdfstring{$A_r$, $D_r$ and $E_r$}{Lg}}
\subsubsection{Definition of simple singularities}

F. Klein studied the finite subgroups of $\mathrm{SL}_2(\cc)$ in terms of transformations of $\cc^2$ and classified them in \cite{Klein84}. They are exactly the finite subgroup of $\mathrm{SU}_2$. Up to conjugacy, they are isomorphic to the cyclic group $\mathcal{C}_n$, the binary dihedral group $\mathcal{D}_n$ or one of the binary polyhedral groups $\mathcal{T}$, $\mathcal{O}$ and $\mathcal{I}$. If $\Gamma$ is a finite subgroup of $\mathrm{SU}_2$, its natural action on $\cc^2$ induces an action of $\Gamma$ on the ring $\cc[\cc^2]$ of complex polynomial functions on $\cc^2$. F. Klein computed the ring $\cc[\cc^2]^\Gamma$ of $\Gamma$-invariant polynomials and noticed that it is generated by three elements satisfying a unique relation.

\begin{theorem}\emph{\textbf{(Klein \cite{Klein84}).}}\label{Klein} Let $\Gamma$ be a finite subgroup of $\mathrm{SU}_2$. Then $\cc^2/\Gamma$ injects into $\cc^3$ as the zeros of a polynomial $R \in \cc[X,Y,Z]$, which is given in the following table:
\begin{center}
\renewcommand{\arraystretch}{1.3}
\begin{tabular}{|c|c|}
\hline
$\Gamma$ & $R$ \\
\hhline{|=|=|}
$\mathcal{C}_n$  & $X^n + YZ$ \\  
 \hline
$\mathcal{D}_n$  & $X(Y^2-X^n) + Z^2$ \\
\hline
$\mathcal{T}$  & $X^4 + Y^3 + Z^2$ \\
\hline
$\mathcal{O}$  & $X^3 + XY^3 + Z^2$ \\
\hline
$\mathcal{I}$  & $X^5 + Y^3 + Z^2$ \\
\hline
\end{tabular}
\captionof{table}{Equation of $\cc^2/\Gamma$}
\label{equationssimplesingularities}
\end{center}
\end{theorem}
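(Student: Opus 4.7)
The plan is to analyse each conjugacy class of finite subgroup $\Gamma \subset \mathrm{SU}_2$ separately. Since $\cc^2/\Gamma \cong \mathrm{Spec}\,\cc[x,y]^{\Gamma}$, producing an embedding as a hypersurface in $\cc^3$ amounts to exhibiting three generators $X$, $Y$, $Z$ of the invariant ring $\cc[x,y]^{\Gamma}$ together with a single polynomial relation $R(X,Y,Z) = 0$ among them. My main tool for predicting the degrees of the generators and of the relation would be the Molien series
\[
H_{\cc[x,y]^{\Gamma}}(t) \;=\; \frac{1}{|\Gamma|} \sum_{g \in \Gamma} \frac{1}{\det(1 - t g)},
\]
which, rewritten as a rational function, dictates the numerical profile of a minimal presentation.

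For $\Gamma = \mathcal{C}_n$, diagonalising the generator as $\mathrm{diag}(\zeta,\zeta^{-1})$ with $\zeta = e^{2i\pi/n}$, the invariant monomials are exactly the $x^i y^j$ with $i \equiv j \pmod n$, so the generators $X = xy$, $Y = x^n$, $Z = -y^n$ immediately satisfy $X^n + YZ = 0$. For $\Gamma = \mathcal{D}_n$, I would start from the invariants of the normal cyclic subgroup and symmetrise under the extra involution swapping $x$ and $y$ up to signs; this produces generators in degrees $4$, $2n$ and $2n+2$, and the relation follows by direct polynomial expansion.

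The main obstacle is the three polyhedral cases $\mathcal{T}$, $\mathcal{O}$, $\mathcal{I}$, where a brute-force monomial computation is impractical. Here I would follow Klein's geometric approach. Under the projection $\mathrm{SU}_2 \twoheadrightarrow \mathrm{SO}_3$, the image $\bar{\Gamma}$ acts on $\mathbb{P}^1 = (\cc^2 \setminus \{0\})/\cc^\times$ as the rotation group of the corresponding Platonic solid, and the vertices, edge midpoints and face centres form three distinguished $\bar{\Gamma}$-orbits of sizes $(4,6,4)$, $(6,12,8)$ and $(12,30,20)$ respectively. Each orbit of size $N$ on $\mathbb{P}^1$ is the zero locus of a binary form of degree $N$ on $\cc^2$, unique up to scalar; this form is a $\Gamma$-semi-invariant, and a suitable power is a genuine $\Gamma$-invariant. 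Any three such forms on $\mathbb{P}^1$ are algebraically dependent for dimension reasons, and the Molien series confirms that the syzygy is unique at the expected weighted degree. After normalising the three invariants, this forces the shapes $X^4+Y^3+Z^2$, $X^3+XY^3+Z^2$ and $X^5+Y^3+Z^2$.

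To close the argument in every case, I would compare Krull dimensions: $\cc[x,y]^{\Gamma}$ has dimension $2$, the hypersurface $\{R=0\} \subset \cc^3$ is reduced, irreducible and of dimension $2$, hence the surjection $\cc[X,Y,Z]/(R) \twoheadrightarrow \cc[x,y]^{\Gamma}$ induced by the chosen generators is forced to be an isomorphism. The subtlest step throughout is to pin down the exact numerical coefficients in the icosahedral relation, where invariants of degrees $12$, $20$ and $30$ intervene; a careful normalisation of Klein's classical binary forms attached to the icosahedron is required before the syzygy reduces to the clean expression $X^5+Y^3+Z^2$.
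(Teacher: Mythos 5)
The paper does not prove this statement: it is quoted as Klein's classical theorem with a citation to \cite{Klein84}, and the surrounding text only records that Klein computed $\cc[\cc^2]^\Gamma$ and found three generators with a single relation. So there is no internal proof to compare against; what you have written is the standard classical argument, and in outline it is sound. The cyclic case is complete as stated ($YZ=-x^ny^n=-X^n$), the dihedral case works exactly as you describe by symmetrising the invariants of the index-two cyclic subgroup, and for the polyhedral cases the passage through the three special orbits on $\mathbb{P}^1$, their degree-$N$ orbit forms as semi-invariants, and the weighted-degree count for the syzygy is Klein's own route. Two points deserve more care than your sketch gives them. First, your closing Krull-dimension argument only shows that the single relation generates the full ideal of relations \emph{given} that the map $\cc[X,Y,Z]\to\cc[x,y]^\Gamma$ is surjective; establishing that the three chosen forms actually generate the invariant ring is the step that genuinely uses the Molien series (matching it against the Hilbert series of $\cc[X,Y,Z]/(R)$, or Klein's argument that every generic orbit form lies in the pencil spanned by powers of the special ones), and you should make that dependence explicit rather than fold it into ``dictates the numerical profile''. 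Second, the weighted-degree count does not quite ``force the shape'' in the tetrahedral case: with generators of degrees $6$, $8$, $12$ the monomial $X^2Z$ also has degree $24$, so the syzygy a priori contains such a term and must be normalised away by the substitution $Z\mapsto Z+cX^2$ before it takes the form $X^4+Y^3+Z^2$. Neither point is an obstruction, but both are where the actual work lives.
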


According to the above theorem, the quotient $\cc^2/\Gamma$ can be seen as a hypersurface in $\cc^3$. Furthermore, it has a unique singularity at the origin. The surface $\cc^2/\Gamma$ is called a \textit{simple singularity}.

\begin{remark}
In \cite{Durf79}, A.H. Durfee gave many different names for this type of singularities including rational double points, quotient singularities or absolutely isolated double points among others. In this article, the chosen denomination will be simple singularities.
\end{remark}

Singularities are usually defined as germs of analytic spaces. However, using Artin's algebraisation theorem (cf. \cite{Ishii14} Theorem 4.2.4), simple singularities can be seen as algebraic varieties and one can work with the algebraic settings given by the polynomial equations of the previous theorem. Furthermore, the following proposition (cf. \cite{Lamo86}) tells us that one can work globally using the polynomial equations of Theorem~\ref{Klein}:

\begin{proposition}
Let ${}_{3}\mathcal{O}$ denote the germs of holomorphic functions $(\cc^3,0) \rightarrow \cc$ and let $f \in {}_{3}\mathcal{O}$ be a germ without multiple factors. If $Z(f)=\{x \in \cc^3 \ | \ f(x)=0\}$ is isomorphic to the simple singularity $\cc^2/\Gamma$ defined from a finite subgroup $\Gamma$ of $\mathrm{SU}_2$, then there is a biholomorphic germ $\varphi:(\cc^3,0) \cong (\cc^3,0)$ such that $R=f \circ \varphi$ is the polynomial associated to $\Gamma$ by Theorem~\ref{Klein}.
\end{proposition}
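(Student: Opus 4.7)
The plan is to reduce the claim to a two-step procedure: first realize the analytic isomorphism of germs as the restriction of an ambient biholomorphism (giving a \emph{contact} equivalence between $f$ and $R$), and then use quasi-homogeneity of the Klein equations to upgrade this contact equivalence to a genuine right-equivalence (giving the required $\varphi$).

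The first step goes as follows. Let $\psi : (Z(f),0) \xrightarrow{\sim} (\cc^2/\Gamma,0)$ be the given isomorphism of analytic germs, composed with the embedding $\cc^2/\Gamma \hookrightarrow (\cc^3,0)$ from Theorem~\ref{Klein}, so that $Z(R) = \cc^2/\Gamma$. Let $x,y,z$ be the coordinates on $(\cc^3,0)$ containing $Z(R)$. Since a simple singularity has embedding dimension exactly $3$ at the origin (its Zariski tangent space is $3$-dimensional; otherwise it would be a smooth germ or a plane curve germ), the images $\psi^{\ast}x,\psi^{\ast}y,\psi^{\ast}z$ generate the maximal ideal of $\mathcal{O}_{Z(f),0}$ modulo its square. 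Lift them to elements $\tilde x,\tilde y,\tilde z \in {}_{3}\mathcal{O}$; these form a regular system of parameters, so by the inverse function theorem the map $\varphi_1 :(\cc^3,0)\to(\cc^3,0)$, $p\mapsto(\tilde x(p),\tilde y(p),\tilde z(p))$, is a biholomorphic germ whose restriction to $Z(f)$ is $\psi$. Consequently $f\circ\varphi_1^{-1}$ vanishes on $Z(R)$, and since both $f\circ\varphi_1^{-1}$ and $R$ are reduced with the same zero locus, Nullstellensatz forces $f\circ\varphi_1^{-1}=u\cdot R$ for some unit $u\in{}_{3}\mathcal{O}^{\times}$.

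The second step uses the fact that each Klein polynomial $R$ from Table~\ref{equationssimplesingularities} is \emph{quasi-homogeneous} with strictly positive integer weights (for instance $A_n$ admits weights $(2,n+1,n+1)$, $D_n$ admits $(2,n-1,n)$, and $E_6,E_7,E_8$ admit $(3,4,6)$, $(4,6,9)$, $(6,10,15)$ respectively). By K.~Saito's theorem on quasi-homogeneous isolated hypersurface singularities, the Milnor and Tjurina numbers coincide and $R$ lies in its own Jacobian ideal, which implies that contact-equivalence and right-equivalence coincide for $R$: there exists a biholomorphic germ $\varphi_2:(\cc^3,0)\xrightarrow{\sim}(\cc^3,0)$ such that $u\cdot R = R\circ\varphi_2$. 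Setting $\varphi = \varphi_2\circ\varphi_1^{-1}$ (or its inverse, in order to orient the equality as in the statement) yields the desired relation $R = f\circ\varphi$.

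The main obstacle, I expect, is the invocation of Saito's theorem in the second step. It is a classical but non-trivial input; the alternative would be to carry out the right-equivalence $u\cdot R \sim R$ by hand for each of the five ADE types, using the finite determinacy of simple singularities (each is $k$-determined for small $k$) together with the Mather infinitesimal criterion $\mathfrak{m}^{k+1}\subset \mathfrak{m}^{2}\!\cdot\!\mathrm{Jac}(R)$, which can be checked case by case. The first step is essentially formal once embedding dimension $3$ is noted, so the real content lies in removing the unit $u$.
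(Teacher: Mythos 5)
The paper does not actually prove this proposition: it is quoted with a reference to \cite{Lamo86}, so there is no internal argument to compare against. Your two-step reduction --- (1) extend the germ isomorphism $Z(f)\cong\cc^2/\Gamma$ to an ambient biholomorphism using that the embedding dimension is $3$, deduce $f\circ\varphi_1^{-1}=u\cdot R$ from the Nullstellensatz and reducedness, then (2) kill the unit $u$ using quasi-homogeneity of $R$ --- is the standard argument and is essentially the one found in that reference; I consider it correct. Three small points. First, for step (1) you should record explicitly that $f\in\mathfrak{m}^2$ (forced because $Z(f)\cong\cc^2/\Gamma$ is singular at $0$ for $\Gamma\neq\{1\}$), since that is what makes $\mathfrak{m}_{Z(f)}/\mathfrak{m}_{Z(f)}^2=\mathfrak{m}_3/(\mathfrak{m}_3^2+(f))$ three-dimensional and lets the lifted functions have independent differentials. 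Second, the weights you quote for $D_n$ are off: for the normal form $x^{n-1}+xy^2+z^2$ the positive weights are $(2,\,n-2,\,n-1)$ of total degree $2n-2$, not $(2,n-1,n)$; this is harmless because only the existence of strictly positive weights is used. Third, invoking Saito's $\mu=\tau$ theorem is heavier machinery than the step requires: the Euler identity gives $R\in\mathfrak{m}\cdot\mathrm{Jac}(R)$ directly, the homotopy (Thom--Levine) method then yields $uR\sim_{r}R$ for any unit with $u(0)=1$, and the remaining constant $u(0)$ is absorbed by the weighted $\cc^{*}$-action $(x,y,z)\mapsto(\lambda^{w_1}x,\lambda^{w_2}y,\lambda^{w_3}z)$; spelling this out would make the proof self-contained rather than an appeal to a named theorem.
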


Set $p \in \mathbb{Z}_{>0}$ and choose $T_1, \ldots ,T_p$ among $A_r$ ($r \geq 1)$, $D_r$ $(r \geq 4)$, $E_6$, $E_7$ and $E_8$. A \textit{singular configuration} of type $T_1 +  \ldots   + T_p$ is a complex algebraic surface with $p$ isolated singularities $s_1, \ldots ,s_p$ such that, for each $1 \leq i \leq p$, locally around $s_i$ the surface is a simple singularity of type $T_i$.

\begin{example} Set
\vspace{-\topsep}
\begin{center}
\begin{tikzpicture}[scale=1,  transform shape]
\node (1) at ( 0,0) {$\cc^4$};
\node (2) at ( 5,0) {$\cc^2$};
\node (3) at ( 5,-0.5)  {$(z^2-x^3 + 3xy^2 + t(x^2 + y^2), t)$.};
\node (4) at ( 0,-0.5)  {$(x, y, z, t)$};

\node (9) at (-1,-0.03) {$f:$};

\node (5) at (0.8,0) {};
\node (6) at (2.7,0) {};
\node (7) at (0.8,-0.5) {};
\node (8) at (2.7,-0.5) {};

\draw [decoration={markings,mark=at position 1 with
    {\arrow[scale=1.2,>=stealth]{>}}},postaction={decorate}] (5)  --  (6);
\draw [|-,decoration={markings,mark=at position 1 with
    {\arrow[scale=1.2,>=stealth]{>}}},postaction={decorate}] (7)  --  (8);
\end{tikzpicture}
 \end{center}
The fibre $f^{-1}(0,0)$ is a simple singularity of type $D_4$. Besides the special fibre, the singular fibres of $f$ are of two types: $f^{-1}(0,t)$ and $f^{-1}(\frac{4}{27}t^3,t)$ for $t \neq 0$. The former has a single singularity of type $A_1$ at the origin, whereas the latter has three singular points, namely $(\frac{2}{3}t,0,0)$, $(-\frac{t}{3},\frac{t}{\sqrt{3}},0)$ and $(-\frac{t}{3},-\frac{t}{\sqrt{3}},0)$, each of type $A_1$. Therefore $f^{-1}(0,t)$ is a simple singularity of type $A_1$ and $f^{-1}(\frac{4}{27}t^3,t)$ is a singular configuration of type $A_1 + A_1 + A_1$. Below is an illustration in the real plane $\{z=0\}$ of both fibres for $t=1$:

\begin{center}
\includegraphics[scale=0.3]{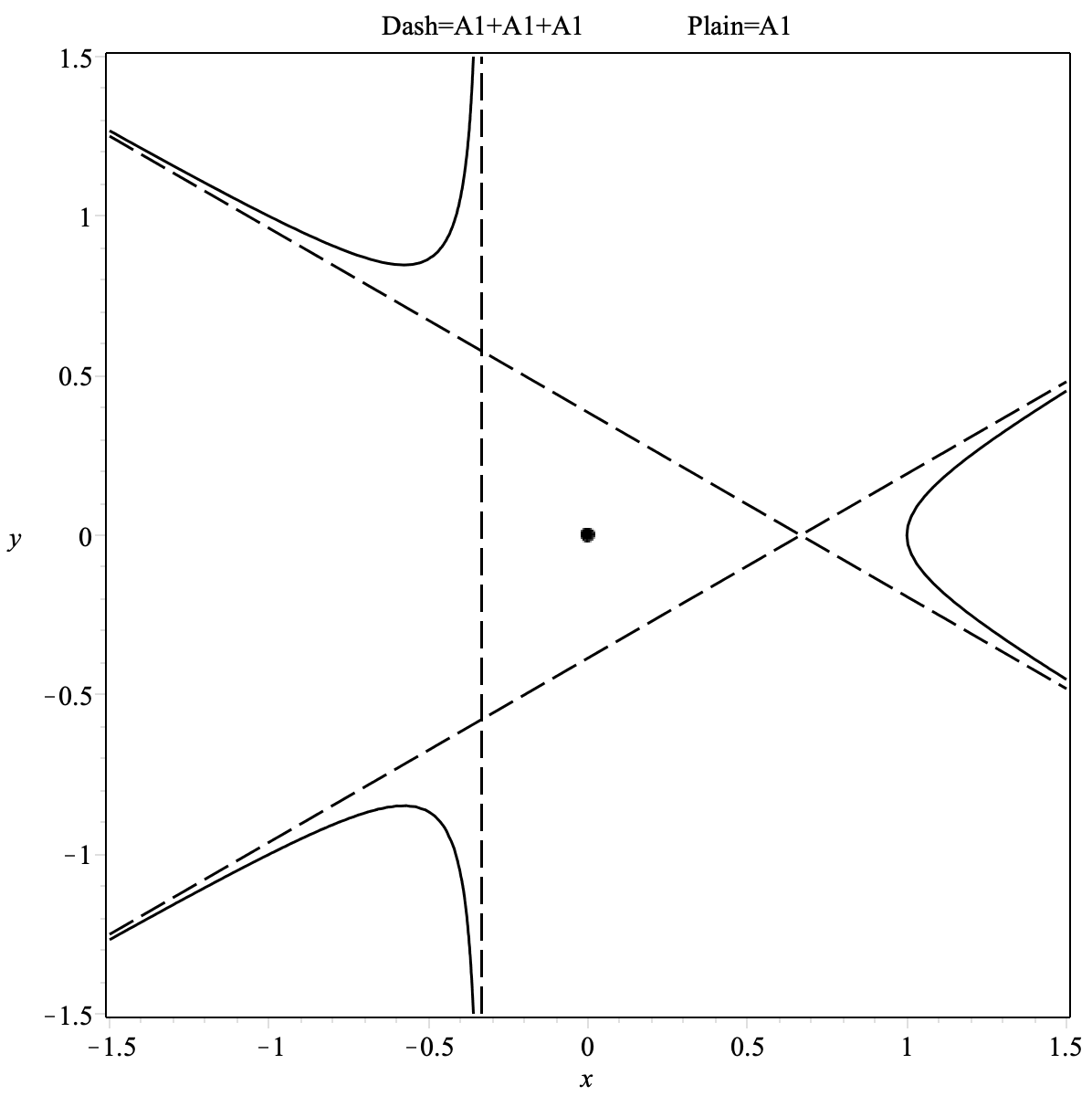}
\end{center}
\end{example}

\subsubsection{Resolutions of simple singularities}\label{subsub:definitionsimplesing}

Let $X=\cc^2/\Gamma$ be a simple singularity. P. Du Val proved in \cite{DuVa34} that if $x$ is the singular point and $\pi:\widetilde{X} \rightarrow X$ is the minimal resolution of $X$, then the preimage of $x$ is a union of projective lines whose intersection matrix is the opposite of a Cartan matrix of type $\Delta(\Gamma)$, according to the following list:
\newpage

\begin{center}
\centering
\begin{tabular}{|c|c|c|}
\hline
 $\Gamma$ & $\Delta(\Gamma)$ & Type of $\Delta(\Gamma)$\\
\hhline{|=|=|=|}
  \multirow{-1.5}{*}{$\mathcal{C}_n$} & 
 \begin{tikzpicture}[scale=0.6,  transform shape]
\tikzstyle{point}=[circle,draw,fill]
\tikzstyle{ligne}=[thick]
\tikzstyle{pointille}=[thick,dotted]

\node (1) at ( -4,0) [point] {};
\node (2) at ( -2,0) [point]{};
\node (3) at ( 0,0) [point] {};
\node (4) at ( 3,0) [point]{} ;
\node (5) at ( 5,0) [point] {};

\node (6) at ( -4,0.5) {1};
\node (7) at ( -2,0.5)  {2};
\node (8) at ( 0,0.5)  {3};
\node (9) at ( 3,0.5) {n-2};
\node (10) at ( 5,0.5) {n-1};

\draw [ligne] (1)  --  (2);
\draw [ligne] (2)  --  (3);
\draw [pointille] (3)  --  (4);
\draw [ligne] (4)  --  (5);

\end{tikzpicture}  & \multirow{-1.5}{*}{$A_{n-1}$} \\
\hline
  \multirow{-4.5}{*}{$\mathcal{D}_n$} &  \begin{tikzpicture}[scale=0.6,  transform shape]
\tikzstyle{point}=[circle,draw,fill]
\tikzstyle{ligne}=[thick]
\tikzstyle{pointille}=[thick,dotted]

\node (1) at ( -6,0) [point] {};
\node (2) at ( -4,0) [point] {};
\node (3) at ( -2,0) [point] {};
\node (4) at ( 0,0) [point] {};
\node (5) at ( 2,1) [point] {};
\node (6) at ( 2,-1) [point] {};

\node (7) at ( -6,0.5)  {1};
\node (8) at ( -4,0.5)  {2};
\node (9) at ( -2,0.5)  {3};
\node (10) at ( 0,0.5) {n};
\node (11) at ( 2,1.5) {n+1};
\node (12) at ( 2,-0.5) {n+2};

\draw [ligne] (1)  --  (2);
\draw [ligne] (2)  --  (3);
\draw [pointille] (3)  --  (4);
\draw [ligne] (4)  --  (5);
\draw [ligne] (4)  --  (6);

\end{tikzpicture} &  \multirow{-4.5}{*}{$D_{n+2}$} \\
\hline
  \multirow{-4}{*}{$\mathcal{T}$} &  \begin{tikzpicture}[scale=0.6,  transform shape]
\tikzstyle{point}=[circle,draw,fill]
\tikzstyle{ligne}=[thick]
\tikzstyle{pointille}=[thick,dotted]

\node (1) at ( -4,0) [point] {};
\node (2) at ( 0,-2) [point] {};
\node (3) at ( -2,0) [point] {};
\node (4) at ( 0,0) [point] {};
\node (5) at ( 2,0) [point] {};
\node (6) at ( 4,0) [point] {};

\node (7) at ( -4,0.5)  {1};
\node (8) at ( -0.5,-2)  {2};
\node (9) at ( -2,0.5)  {3};
\node (10) at ( 0,0.5) {4};
\node (11) at ( 2,0.5) {5};
\node (12) at ( 4,0.5) {6};

\draw [ligne] (1)  --  (3);
\draw [ligne] (3)  --  (4);
\draw [ligne] (4)  --  (2);
\draw [ligne] (4)  --  (5);
\draw [ligne] (5)  --  (6);

\end{tikzpicture} & \multirow{-4}{*}{$E_6$} \\
\hline
  \multirow{-4}{*}{$\mathcal{O}$} &  \begin{tikzpicture}[scale=0.6,  transform shape]
\tikzstyle{point}=[circle,draw,fill]
\tikzstyle{ligne}=[thick]
\tikzstyle{pointille}=[thick,dotted]

\node (1) at ( -4,0) [point] {};
\node (2) at ( 0,-2) [point] {};
\node (3) at ( -2,0) [point] {};
\node (4) at ( 0,0) [point] {};
\node (5) at ( 2,0) [point] {};
\node (6) at ( 4,0) [point] {};
\node (7) at ( 6,0) [point] {};

\node (8) at ( -4,0.5)  {1};
\node (9) at ( -0.5,-2)  {2};
\node (10) at ( -2,0.5)  {3};
\node (11) at ( 0,0.5) {4};
\node (12) at ( 2,0.5) {5};
\node (13) at ( 4,0.5) {6};
\node (14) at ( 6,0.5) {7};

\draw [ligne] (1)  --  (3);
\draw [ligne] (3)  --  (4);
\draw [ligne] (4)  --  (2);
\draw [ligne] (4)  --  (5);
\draw [ligne] (5)  --  (6);
\draw [ligne] (6)  --  (7);

\end{tikzpicture} & \multirow{-4}{*}{$E_7$} \\
\hline
  \multirow{-4.5}{*}{$\mathcal{I}$} &  \begin{tikzpicture}[scale=0.6,  transform shape]
\tikzstyle{point}=[circle,draw,fill]
\tikzstyle{ligne}=[thick]
\tikzstyle{pointille}=[thick,dotted]

\node (1) at ( -4,0) [point] {};
\node (2) at ( 0,-2) [point] {};
\node (3) at ( -2,0) [point] {};
\node (4) at ( 0,0) [point] {};
\node (5) at ( 2,0) [point] {};
\node (6) at ( 4,0) [point] {};
\node (7) at ( 6,0) [point] {};
\node (8) at ( 8,0) [point] {};

\node (9) at ( -4,0.5)  {1};
\node (10) at ( -0.5,-2)  {2};
\node (11) at ( -2,0.5)  {3};
\node (12) at ( 0,0.5) {4};
\node (13) at ( 2,0.5) {5};
\node (14) at ( 4,0.5) {6};
\node (15) at ( 6,0.5) {7};
\node (16) at ( 8,0.5) {8};

\draw [ligne] (1)  --  (3);
\draw [ligne] (3)  --  (4);
\draw [ligne] (4)  --  (2);
\draw [ligne] (4)  --  (5);
\draw [ligne] (5)  --  (6);
\draw [ligne] (6)  --  (7);
\draw [ligne] (7)  --  (8);

\end{tikzpicture} & \multirow{-4.5}{*}{$E_8$} \\
 \hline
 \end{tabular}
 \captionof{table}{Diagrams associated to the simple singularities}
 \label{diagramssimplesingularities}
\end{center}

\begin{remark}
D. Kirby in \cite{Kirby57} has characterized the simple singularities as being the only double points whose minimal resolutions can be obtained by successive blowups. Simple singularities have many characterisations which have been summarised by A.H. Durfee in \cite{Durf79}.
\end{remark}

\subsubsection{Semiuniversal deformation}\label{sub:semiunivdef}

A \textit{deformation} of a complex integral algebraic variety $X_0$ with an isolated singularity $x$ is a flat morphism of germs of algebraic varieties $\varphi:(X,x) \rightarrow (U, u)$ such that there exits an isomorphism $i:(X_0,x) \xrightarrow[]{\cong} (\varphi^{-1}(u),x)$. A deformation $\varphi:(X,x) \rightarrow (U, u)$ of $(X_0,x)$ is called \textit{semiuniversal} if any other deformation $\varphi':(X',x) \rightarrow (T, t)$ of $(X_0,x)$ is isomorphic to a deformation induced from $\varphi$ by a base change $\psi:(T, t) \rightarrow (U, u)$ whose differential at $t \in T$ is uniquely determined. It follows that semiuniversal deformations are unique up to isomorphism. 

The next theorem gives a way of computing the semiuniversal deformation of any simple singularity.

\begin{theorem}\emph{\textbf{(Kas-Schlessinger \cite{KasSchle72}).}}\label{Kas-Schlessinger}
Let $X_0$ be a simple singularity defined in $\cc^3$ by the polynomial equation $f(X,Y,Z)=0$, with the origin being the singular point. Because the singularity is isolated, the vector space $V=\cc[X,Y,Z]/(f, \frac{\partial f}{\partial X}, \frac{\partial f}{\partial Y}, \frac{\partial f}{\partial Z})$ is finite dimensional. Let $(b_i(X,Y,Z))_{1 \leq i \leq k}$ be a base of $V$. Then the semiuniversal deformation of the simple singularity $X_0$ is given by the map
\begin{center}
\begin{tikzpicture}[scale=1,  transform shape]
\node (1) at (-2,-0.03) {$\varphi:$};
\node (2) at (0,0) {$\cc^3\times \cc^{k}$};
\node (3) at (6,0) {$\cc$};
\node (4) at (6,-0.6) {$\displaystyle f(X,Y,Z) + \sum_{i=1}^k a_kb_k(X,Y,Z).$};
\node (5) at (0,-0.6) {$(X,Y,Z,a_1,\ldots,a_k)$};

\draw  [decoration={markings,mark=at position 1 with
    {\arrow[scale=1.2,>=stealth]{>}}},postaction={decorate}] (1.6,0)  --  (3.5,0);
\draw  [|-,decoration={markings,mark=at position 1 with
    {\arrow[|-,scale=1.2,>=stealth]{>}}},postaction={decorate}] (1.6,-0.6)  --  (3.5,-0.6);
\end{tikzpicture}
\end{center}
\end{theorem}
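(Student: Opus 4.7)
The plan is to apply the general deformation theory of isolated hypersurface singularities due to Schlessinger and Tjurina, reducing the statement to two computations: the tangent space $T^1_{X_0,0}$ of the functor $\mathrm{Def}_{X_0}$ is equal to $V$, and the obstruction module $T^2_{X_0,0}$ vanishes. Together these produce an unobstructed versal family whose Kodaira--Spencer map is an isomorphism by construction, hence is semiuniversal.

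First I would establish that $V$ is finite-dimensional. Since $X_0$ has an isolated singularity at the origin, the ideal $(f, \partial_X f, \partial_Y f, \partial_Z f)$ in $\cc[X,Y,Z]$ has only the origin as its common vanishing locus: on $X_0 \setminus \{0\}$ at least one partial derivative is non-zero by smoothness, while off $X_0$ the function $f$ itself is non-zero. The Nullstellensatz then forces this ideal to contain a power of the maximal ideal at the origin, whence $V$ is finite-dimensional. Next I would compute $T^1_{X_0,0}$ from the conormal sequence of $X_0 = V(f) \subset \cc^3$, which yields $T^1_{X_0,0} \cong \cc[X,Y,Z]/(f, \partial_X f, \partial_Y f, \partial_Z f) = V$; the parallel Koszul computation, using that $f$ is a non-zero-divisor, gives $T^2_{X_0,0} = 0$. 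Artin's algebraisation theorem, already invoked in the text, allows us to pass freely between the analytic and algebraic settings throughout.

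Finally, I would verify that $\varphi$ is the semiuniversal deformation. Flatness is automatic because the total space is a hypersurface in $\cc^3 \times \cc^k$ projecting onto the smooth base, and the fibre over $a = 0$ recovers $X_0$. The Kodaira--Spencer map $T_0 \cc^k \to T^1_{X_0,0}$ sends $\partial/\partial a_i$ to the class of $b_i$, so it is an isomorphism by the choice of basis. Versality follows from Schlessinger's criterion via an inductive lifting argument: any deformation over a local Artin base $(T,t)$ is lifted, order by order along the powers of the maximal ideal of $\mathcal{O}_{T,t}$, to a classifying morphism into $\varphi$, with the obstructions at each step lying in $T^2_{X_0,0} = 0$. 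Uniqueness of the differential of this base-change morphism at $t$ is then the bijectivity of the Kodaira--Spencer map. The main obstacle, and the true content of the Kas--Schlessinger theorem, is this inductive lifting: one must check that each obstruction genuinely lives in $T^2$ and that the lift can be chosen with prescribed linear part, which is precisely where the complete-intersection hypothesis is indispensable.
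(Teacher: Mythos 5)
The paper does not prove this statement: it is quoted as a known theorem of Kas and Schlessinger, with a citation in place of a proof, so there is no argument of the author's to compare yours against. Your outline is the standard deformation-theoretic proof and is sound — finite-dimensionality of $V$ via the Nullstellensatz, $T^1 \cong V$ from the conormal sequence, $T^2=0$ for a hypersurface, flatness of the family, bijectivity of the Kodaira--Spencer map by the choice of basis, and versality by obstruction-free inductive lifting — and you correctly identify the inductive lifting as the step carrying the real content, which your proposal sketches rather than carries out, exactly as the cited reference does in full.
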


Simple singularities have been extensively studied using Lie algebras. For example, E. Brieskorn proved the following theorem:
% in the setting of deformations of germs of complex spaces (it was later proved in an algebraic setting in \cite{Slo80} Section 8.7).

\begin{theorem}\emph{\textbf{(Brieskorn \cite{Bries71}).}}\label{Brieskorn} Let $\mathfrak{g}$ be a simple Lie algebra of type $A_r$, $D_r$ or $E_r$, and $G$ be the corresponding simple Lie group. Set $\mathfrak{h}$ a Cartan subalgebra of $\mathfrak{g}$ and $W$ its associated Weyl group. Set $e \in \mathfrak{g}$ a subregular nilpotent element. Let $S_e \ \mathlarger{\mathlarger{\subset}} \ \mathfrak{g}$ be a transversal slice (notion defined below) at $e$ to the $G$-orbit of $e$ in $\mathfrak{g}$. Then the restriction $\restr{\chi}{S_e}:(S, e) \rightarrow (\mathfrak{h}/W,0)$ of the adjoint quotient $\chi:\mathfrak{g} \rightarrow \mathfrak{h}/W$ is a semiuniversal deformation of the simple singularity $(\restr{\chi}{S_e}^{-1}(0),e)$, which is of the same type as $\mathfrak{g}$.
\end{theorem}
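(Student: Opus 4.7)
The plan is to combine Kostant's theory of the adjoint quotient, Grothendieck's simultaneous resolution, and the Kas-Schlessinger criterion (Theorem~\ref{Kas-Schlessinger}). First I would fix an $\mathfrak{sl}_2$-triple $(e,h,f)$ extending $e$ and take the Slodowy slice $S_e := e + \ker(\mathrm{ad}\,f)$. Since $e$ is subregular, $\dim Z_\mathfrak{g}(e)=r+2$, so $\dim S_e=r+2$, while Chevalley's theorem gives $\mathfrak{h}/W\cong\cc^r$; the expected fibre dimension of $\restr{\chi}{S_e}$ is therefore $2$. Flatness of $\restr{\chi}{S_e}$ then follows either from the flatness of $\chi$ itself (Kostant) together with the transversality built into the slice, or from the miracle flatness theorem applied to the smooth source, the smooth target, and the equidimensional fibres.

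Next I would identify the special fibre $\restr{\chi}{S_e}^{-1}(0) = S_e \cap \mathcal{N}$ as a simple singularity of type $\Delta(\mathfrak{g})$. A dimension count combined with the transversality of $S_e$ to each nilpotent orbit it meets shows that only the regular orbit (yielding a smooth open piece of dimension $2$) and the subregular orbit (meeting $S_e$ only at $e$) intersect $S_e$, so $S_e \cap \mathcal{N}$ is a normal affine surface with a unique singularity at $e$. To pin down the type, I would invoke Grothendieck's simultaneous resolution: pulled back along the quotient $\mathfrak{h}\to\mathfrak{h}/W$, the family $\chi$ admits a resolution by $\widetilde{\mathfrak{g}} = \{(x,\mathfrak{b}) : x\in\mathfrak{b}\} \to \mathfrak{h}$; over $0$ this becomes Springer's resolution of $\mathcal{N}$, whose fibre above the subregular $e$ is a union of projective lines whose dual graph is the Dynkin diagram of $\mathfrak{g}$. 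Restriction to $S_e$ yields the minimal resolution of $S_e \cap \mathcal{N}$ with this exceptional divisor, and Table~\ref{diagramssimplesingularities} then identifies the singularity as being of the same type as $\mathfrak{g}$.

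Finally, for semiuniversality I would apply Theorem~\ref{Kas-Schlessinger}. The base of the semiuniversal deformation of a simple singularity of type $A_r$, $D_r$ or $E_r$ has dimension equal to its Tjurina number, which equals the rank $r$, matching $\dim\mathfrak{h}/W = r$. It therefore suffices to prove that the Kodaira-Spencer map $T_0(\mathfrak{h}/W) \to T^1_{S_e \cap \mathcal{N},\,e}$ is surjective, since bijectivity will then follow from the dimension count. I expect this versality check to be the main obstacle: one may either proceed case by case using an explicit description of the slice together with the $r$ fundamental $G$-invariants on $\mathfrak{g}$ (Brieskorn's original approach), or exploit the contracting $\cc^\times$-action coming from the $\mathfrak{sl}_2$-triple to reduce the claim to an equality of graded Hilbert series and rule out a rank drop by weight considerations (Slodowy's approach) --- this is the conceptually most delicate step of the argument, everything else being either a dimension count or an invocation of well-packaged structural results.
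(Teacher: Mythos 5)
The paper does not actually prove this statement: it is quoted as a classical result of Brieskorn, with \cite{Bries71} (and implicitly \cite{Slo80}) as the reference, so there is no internal proof to compare against. Your outline is, in substance, the standard Brieskorn--Slodowy argument as completed in \cite{Slo80}: Slodowy slice, flatness of $\restr{\chi}{S_e}$ by transversality together with equidimensionality of the fibres, identification of $S_e \cap \mathcal{N}$ as a normal surface whose only singular point is $e$ via the parity/dimension count on nilpotent orbits, identification of the type through the subregular Springer fibre (the Dynkin curve) and the restriction of Grothendieck's simultaneous resolution to the slice, and finally semiuniversality via the Kodaira--Spencer map. All of these steps are correctly placed and the numerology ($\dim Z_{\mathfrak{g}}(e)=r+2$, fibre dimension $2$, Tjurina number $=r$ in the ADE cases) is right.

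Two points deserve flagging. First, the claim that the subregular orbit meets $S_e$ \emph{only} at $e$ is not a consequence of transversality alone (which only gives a finite intersection); you need the contracting $\mathbb{G}_m$-action on $S_e$ coming from the $\mathfrak{sl}_2$-triple to force the finite, $\mathbb{G}_m$-stable intersection into the fixed point $e$. Second, you have correctly identified the surjectivity of the Kodaira--Spencer map $T_0(\mathfrak{h}/W)\rightarrow T^1$ as the genuinely hard step, but your proposal only names the two known strategies (Brieskorn's case-by-case computation with the fundamental invariants, or Slodowy's weight comparison for the $\mathbb{G}_m$-equivariant classifying map) without carrying either out; the coincidence between the degrees of the basic $W$-invariants and the weights of a monomial basis of the Tjurina algebra is exactly where the content of the theorem sits, so as written the proposal is a correct and well-organised reduction to that step rather than a complete proof.
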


A \textit{transversal slice} $S_e$ at $e$ to the $G$-orbit of $e$ is a locally closed subvariety $S_e \ \mathlarger{\mathlarger{\subset}} \ \mathfrak{g}$ containing $e$ and such that the morphism 
\begin{center}
$\begin{array}[t]{ccc}
G \times S_e & \rightarrow & \mathfrak{g} \\
(g, s) & \mapsto & (\mathrm{Ad} \ g)s
\end{array}$
\end{center}
is smooth and $\mathrm{dim} \   S_e= \mathrm{codim \ }  G.e$. In our context, as $e$ is nilpotent, a transversal slice is easily obtained by setting $S_e=e + \mathfrak{z}_{\mathfrak{g}}(f)$, with $f \in \mathfrak{g}$ being part of the $\mathfrak{sl}_2$-triple $(e,f,h)$ obtained by the Jacobson-Morozov theorem. The slice $S_e$ is called the \textit{Slodowy slice} at $e$. 

We conclude this section with the following proposition, whose proof can be found in \cite{Slo80}: 

\begin{proposition}\emph{\textbf{(Slodowy \cite{Slo80}).}}\label{propsubDynkin} Let $\chi:X \rightarrow U$ be a deformation of a simple singularity $X_0$ with associated Dynkin diagram $\Delta$. Let $X_\epsilon \neq X_0$ be a non-special fibre of $\chi$. Then there is a proper subdiagram $\Delta' \ \mathlarger{\mathlarger{\subset}} \ \Delta$ and a type-preserving bijection of the components of $\Delta'$ onto the singular points of $X_\epsilon$. This means that each connected component of $\Delta'$ is sent to a singular point of $X_\epsilon$, which is a simple singularity of the corresponding type. Furthermore, if $\chi$ is semiuniversal, then all subdiagrams of $\Delta$ are realised as singular configurations in non-special fibres.
\end{proposition}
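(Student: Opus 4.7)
The plan is to deduce this from Brieskorn's theorem (Theorem~1.4), which realises the semiuniversal deformation of a simple singularity of type $\Delta$ as the restriction $\chi_s = \restr{\chi}{S_e} : S_e \to \mathfrak{h}/W$ of the adjoint quotient to a Slodowy slice at a subregular nilpotent. First I would reduce to the semiuniversal case: an arbitrary deformation $\chi$ is induced from $\chi_s$ by some base change $\psi : U \to \mathfrak{h}/W$, so each non-special fibre $X_\epsilon$ of $\chi$ is isomorphic to the fibre $\chi_s^{-1}(\psi(\epsilon))$. If the statement holds for $\chi_s$, the existence of $\Delta'$ and the type-preserving bijection follow for $\chi$ by pullback; only the ``furthermore'' part genuinely requires semiuniversality.

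I would then identify the singular configuration of each fibre of $\chi_s$ by passing to the Grothendieck--Brieskorn simultaneous resolution. There is a commutative diagram
$$
\begin{array}{ccc}
\widetilde{S}_e & \longrightarrow & \mathfrak{h} \\
\downarrow & & \downarrow \\
S_e & \longrightarrow & \mathfrak{h}/W
\end{array}
$$
in which the top arrow is smooth, the right arrow is the Weyl quotient, and $\widetilde{S}_e \to S_e$ restricts to the minimal resolution over the special fibre. The $(-2)$-curves in $\widetilde{S}_{e,0}$ form the Dynkin diagram $\Delta$. The key technical fact, proved by deforming these curves along $\widetilde{S}_e \to \mathfrak{h}$, is that in the fibre $\widetilde{S}_{e,h}$ the curve associated to a root $\alpha$ persists as a $(-2)$-curve if and only if $\alpha(h) = 0$; otherwise it moves without contributing to the singular locus of $\chi_s^{-1}([h])$.

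For a non-special fibre I would lift the base point to $h \in \mathfrak{h} \setminus \{0\}$ and consider the root subsystem $\Phi(h) = \{\alpha \in \Phi : \alpha(h) = 0\}$. Since $\mathfrak{g}$ has trivial centre, $h \neq 0$ forces $\Phi(h) \subsetneq \Phi$, so its Dynkin diagram $\Delta'$ is a proper subdiagram of $\Delta$. By the previous step, the exceptional configuration of $\widetilde{S}_{e,h} \to \chi_s^{-1}([h])$ consists precisely of the $(-2)$-curves indexed by $\Phi(h)$, grouped into connected components indexed by those of $\Delta'$. Each such component contracts to a single singular point which, locally, is a simple singularity of the corresponding type, yielding the required type-preserving bijection.

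For the final assertion, given any proper subdiagram $\Delta' \subsetneq \Delta$, pick $h \in \mathfrak{h}$ lying on the reflecting hyperplanes of the simple roots of $\Delta'$ and generic otherwise; then $\Phi(h)$ has Dynkin diagram exactly $\Delta'$, and the fibre of $\chi_s$ over $[h]$ realises the singular configuration of type $\Delta'$. The main obstacle in this programme is the existence of the simultaneous resolution $\widetilde{S}_e \to \mathfrak{h}$ together with the fibrewise identification of the persisting $(-2)$-curves with the vanishing-root condition $\alpha(h) = 0$; this is the deepest ingredient, resting on Grothendieck's simultaneous resolution for the full adjoint quotient combined with the transversality of the Slodowy slice.
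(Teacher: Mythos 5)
The paper does not actually prove this proposition --- it only cites Slodowy's lecture notes --- so there is no in-paper argument to measure yours against; what can be assessed is whether your sketch is a viable reconstruction, and it is. Your route through the Grothendieck--Brieskorn simultaneous resolution of the Slodowy slice, with the criterion that the $(-2)$-curve attached to a root $\alpha$ survives in $\widetilde{S}_{e,h}$ exactly when $\alpha(h)=0$, is a classical and correct way to obtain the result. Slodowy's own treatment in the cited reference is organised differently, via the Jordan decomposition and the local identification of $\chi^{-1}(\pi(h))$ near a point $x=x_s+x_n$ with (a smooth factor times) the nilpotent variety of the Levi subalgebra $\mathfrak{z}_{\mathfrak{g}}(x_s)$; but both arguments ultimately reduce the singular configuration to the parabolic subsystem $\Phi(h)=\{\alpha\in\Phi \ | \ \alpha(h)=0\}$, so the difference is one of technique rather than of substance. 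Two points in your write-up deserve an explicit word. First, to conclude that the Dynkin diagram of $\Phi(h)$ is a subdiagram of $\Delta$ (and not merely an abstract diagram of smaller rank) you need the standard fact that a parabolic subsystem is $W$-conjugate to one generated by a subset of the simple roots. Second, the persistence criterion for the $(-2)$-curves is, as you yourself flag, the entire technical content of the proof, and your argument is only as complete as the reference you supply for it; everything else --- the reduction to the semiuniversal case by base change, the fact that distinct components of the exceptional configuration contract to distinct singular points (connectedness of fibres of a proper birational map to a normal surface), and the genericity argument realising an arbitrary proper subdiagram --- is routine.
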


\subsection{Simple singularities of type \texorpdfstring{$B_r$, $C_r$, $F_4$ and $G_2$}{Lg}}\label{BCFGdefinition}

Simple singularities have been defined and linked to the simply laced Lie algebras. However there exist simple Lie algebras of another kind, namely the Lie algebras of inhomogeneous (or non-simply laced) type $B_r$ ($r \geq 2$), $C_r$  ($r \geq 3$), $F_4$ and $G_2$. P. Slodowy gave a definition of simple singularities of inhomogeneous type which requires some preparation. 

\subsubsection{Group action on a simple singularity}\label{sub:groupaction}

Let $\Gamma \ \mathlarger{\mathlarger{\subset}} \ \Gamma' $ be finite subgroups of $\mathrm{SU}_2$. We would like $\Gamma'$ to act on the simple singularity $\cc^2/\Gamma$. If $\Gamma$ is normal in $\Gamma'$, the action of $\Gamma'$ on $\cc^2$ induces an action of $\Gamma'/\Gamma$ on $\cc^2/\Gamma$. It is thus natural to require for $\Gamma$ to be normal in $\Gamma'$. \\
We are looking for $\Gamma'$ in $\mathrm{SU}_2$ such that $\Gamma$ is normal in $\Gamma'$ and $\Gamma'$ acts on $\cc^2/\Gamma$. The action of $\Gamma'$ can be lifted through the minimal resolution of $\cc^2/\Gamma$. As the minimal resolution restricts to an isomorphism on a dense open subset and $\Gamma$ acts trivially on $\cc^2/\Gamma$, it follows that $\Gamma$ acts trivially on the minimal resolution. So the action of $\Gamma'$ on the whole minimal resolution factors through an action of $\Gamma'/\Gamma$. As the origin is a fixed point of $\Gamma'$ in $\cc^2/\Gamma$, the quotient $\Gamma'/\Gamma$ acts on the exceptional locus, which is composed of a union of projective lines whose intersection pattern corresponds to the Dynkin diagram type $\Delta(\Gamma)$. Therefore we want the exceptional locus to be acted upon by $\Gamma'/\Gamma$ in the same way the Dynkin diagram of type $\Delta(\Gamma)$ is acted upon by its symmetry group $\Omega$. This can be achieved by requiring $\Gamma'/\Gamma$ to be isomorphic to $\Omega$ and $\Gamma$ to be normal in $\Gamma'$. 

In the following table are all normal subgroups $\Gamma$ of the finite subgroups $\Gamma'$ of $\mathrm{SU}_2$ (cf. \cite{Cox91}).
\newpage
\renewcommand{\arraystretch}{1.2}
\begin{center} \begin{tabular}{|c|c|c|}
\hline
     $\Gamma'$ & $\Gamma$ & Order of $\Gamma$ \\
 \hhline{|=|=|=|}
     $\mathcal{C}_n$ & $\mathcal{C}_r$, $\forall \ r / n$ & $r$  \\
     \hline
   \multirow{3}{*}{$\mathcal{D}_n$} &  $\mathcal{D}_{\frac{n}{2}} \text{ if }n \text{ even}$ & $2n$ \\ 
    & $\mathcal{C}_{\frac{n}{q}}, \forall \ q / n \text{ and }  \frac{n}{q} \text{ odd}$ & $\frac{n}{q}$  \\
    & $\mathcal{C}_{\frac{2n}{q}}, \forall \ q / n$ & $\frac{2n}{q}$ \\
    \hline
      \multirow{2}{*}{$\mathcal{T}$} & $\mathcal{D}_2$ & $8$  \\
      & $\{\pm1\}$ & $2$ \\
     \hline
   \multirow{3}{*}{$\mathcal{O}$}  &  $\mathcal{T}$ & $24$ \\
    & $\mathcal{D}_2$ & $8$ \\
    & $\{\pm1\}$ & $2$ \\
     \hline
     $\mathcal{I}$ & $\{\pm1\}$ & $2$  \\
     \hline
    \end{tabular}
    \captionof{table}{Normal subgroups of the finite subgroups of $\mathrm{SU}_2$}
    \end{center}

In the rest of this section, $(z_1,z_2)$ will denote the dual of the canonical base of $\cc^2$ (cf. Theorem~\ref{Klein}). We list here the different possibilities of groups $\Gamma'$ verifying our criteria. 

$\bullet$ \underline{Type $A_{2r-1}$}: $\Gamma=\mathcal{C}_{2r}$ and $\Omega=\mathbb{Z}/2\mathbb{Z}$. Based on the previous table, the possibilities are $\Gamma'=\mathcal{C}_{4r}$ or $\mathcal{D}_r$. 
The singularity $\cc^2/\Gamma$ is given by 
\begin{center}
$\cc^2/\Gamma=\{X^{2r}-YZ=0\}$ with $\left\{
    \begin{array}{l}
     X=z_1z_2, \\
     Y=z_1^{2r},\\
     Z=z_2^{2r}.
    \end{array}
\right.$ 
\end{center}
- $\mathcal{C}_{4r}$ is generated by $g=\begin{pmatrix} \zeta & 0 \\ 0 & \zeta^{-1} \end{pmatrix}$ with $\zeta=\exp(\frac{2i\pi}{4r})$. The action on $\cc^2/\Gamma$ is $g.X  =  X$, $g.Y  =  -Y$, $g.Z  =  -Z$. \\
- $\mathcal{D}_{r}$ is generated by $g^2$ and $h=\begin{pmatrix} 0 & i \\ i & 0 \end{pmatrix}$. Then $g^2$ fixes $\cc^2/\Gamma$ and $h.X  =  -X$, $h.Y  =  (-1)^rZ$, $h.Z  =  (-1)^rY$. \\
In order to determine which group is the appropriate one, it is necessary to extend the action of $\Gamma'$ to the minimal resolution of $\cc^2/\Gamma$, and check that $\Gamma'$ permutes the components of the exceptional locus the same way $\Omega$ acts on the Dynkin diagram. This can only be achieved if $\Gamma'=\mathcal{D}_r$. 

$\bullet$ \underline{Type $A_{2r}$}: $\Gamma=\mathcal{C}_{2r+1}$ and $\Omega=\mathbb{Z}/2\mathbb{Z}$. Based on the table, the only possibility is $\Gamma'=\mathcal{C}_{4r + 2}$. 
The singularity $\cc^2/\Gamma$ is defined by 
\begin{center}
$\cc^2/\Gamma=\{X^{2r+1}-YZ=0\}$ with $\left\{
    \begin{array}{l}
     X=z_1z_2, \\ 
     Y=z_1^{2r+1},\\
     Z=z_2^{2r+1}.
    \end{array}
\right.$ 
\end{center}
As $\mathcal{C}_{4r + 2}$ is generated by $g=\begin{pmatrix} \zeta & 0 \\ 0 & \zeta^{-1} \end{pmatrix}$ with $\zeta=\exp(\frac{2i\pi}{4r + 2})$, it follows that $g.X  =  X$, $g.Y  =  -Y$ and $g.Z  =  -Z$. However, by computing the minimal resolution of the simple singularity, one can check that the lift of the action of $\Gamma'$ to the minimal resolution of $\cc^2/\Gamma$ does not permute the components of the exceptional locus the same way $\Omega$ acts on the Dynkin diagram. Hence there is no $\Gamma'$ verifying our criteria. 

$\bullet$ \underline{Type $D_{r+1}$}:  $\Gamma=\mathcal{D}_{r-1}$ and $\Omega=\mathbb{Z}/2\mathbb{Z}$. The only possibility is $\Gamma'=\mathcal{D}_{2(r-1)}$. 
The singularity $\cc^2/\Gamma$ is defined by 
\begin{center}
$\cc^2/\Gamma=\{X(Y^2+(-1)^rX^{r-1}) + Z^2=0\}$ with $\left\{
    \begin{array}{l}
     X=4^{\frac{1}{r}}(z_1z_2)^2, \\ 
     Y=4^{-\frac{1}{2r}}(z_1^{2(r-1)} +(-1)^{r+1} z_2^{2(r-1)}),\\
     Z=iz_1z_2(z_1^{2(r-1)}+(-1)^rz_2^{2(r-1)}).
    \end{array}
\right.$ 
\end{center}
Because $\mathcal{D}_{2(r-1)}$ is generated by $g=\begin{pmatrix} \xi & 0 \\ 0 & \xi^{-1} \end{pmatrix}$ and $h=\begin{pmatrix} 0 & i \\ i & 0 \end{pmatrix}$ with $\xi=\exp(\frac{2i\pi}{4(r-1)})$, it follows that
\begin{center}
$\left\{
    \begin{array}{ccc}
     g.X & = & X, \\
     g.Y & = & -Y, \\
     g.Z & = &-Z,
    \end{array}
\right.$ \hspace{1cm} and \hspace{1cm} $\left\{
    \begin{array}{ccccc}
     h.X & = & X, \\
     h.Y & = & Y, \\ 
     h.Z & = & Z.
    \end{array}
\right.$
\end{center}
\bigskip

$\bullet$ \underline{Type $E_6$}: $\Gamma=\mathcal{T}$ and $\Omega=\mathbb{Z}/2\mathbb{Z}$. According to the previous table, the only possibility is $\Gamma'=\mathcal{O}$. 
The singularity $\cc^2/\Gamma$ is given by 
\begin{center}
$\cc^2/\Gamma=\{X^4 + Y^3 + Z^2=0\}$ with $\left\{
    \begin{array}{l}
     X=108^{\frac{1}{4}}z_1z_2(z_1^4-z_2^4), \\ 
     Y=\exp(\frac{i\pi}{3})(z_1^8 + z_2^8 + 14(z_1z_2)^4),\\
     Z=(z_1^4 + z_2^4)^3-36(z_1z_2)^4(z_1^4 + z_2^4).
    \end{array}
\right.$ 
\end{center}
The group $\mathcal{O}$ is generated by $\mathcal{T}$ and $g=\begin{pmatrix} \epsilon^3 & 0 \\ 0 & \epsilon^5 \end{pmatrix}$ with $\epsilon=\exp(\frac{i\pi}{4})$. It is known that $\mathcal{T}$ fixes $\cc^2/\Gamma$, and one computes $g.X  =  -X$, $ g.Y  =  Y$, $g.Z  =  -Z$. 
\bigskip

$\bullet$ \underline{Type $D_4$}: $\Gamma=\mathcal{D}_2$ and $\Omega=\mathfrak{S}_3$. Then $\Gamma'=\mathcal{O}$. 
The singularity $\cc^2/\Gamma$ is defined by 
\begin{center}
$\cc^2/\Gamma=\{X(Y^2-X^2) + Z^2=0\}$ with $\left\{
    \begin{array}{l}
     X=4^{\frac{1}{3}}(z_1z_2)^2, \\ 
     Y=4^{-\frac{1}{6}}(z_1^{4} + z_2^{4}),\\
     Z=iz_1z_2(z_1^{4}-z_2^{4}).
    \end{array}
\right.$ 
\end{center}
Furthermore, $\mathcal{O}=<\mathcal{D}_2, g, h>$ where $g=\frac{1}{\sqrt{2}}\begin{pmatrix} \epsilon &  \epsilon^3 \\  \epsilon &  \epsilon^7 \end{pmatrix}$, $h=\begin{pmatrix} \epsilon^3 & 0 \\ 0 & \epsilon^5 \end{pmatrix}$ and $\epsilon=\exp(\frac{i\pi}{4})$. Hence $\mathcal{D}_2$ fixes $\cc^2/\Gamma$,
\vspace{-\topsep}
\begin{center}
$\left\{ \renewcommand{\arraystretch}{1.2}
    \begin{array}{ccc}
     g.X & = & \frac{1}{2}(Y-X), \\
     g.Y & = & -\frac{1}{2}(Y + 3X), \\
     g.Z & = & Z,
    \end{array}
\right.$ \hspace{1cm} and \hspace{1cm} $\left\{
    \begin{array}{ccc}
     h.X & = & X, \\
     h.Y & = & -Y, \\
     h.Z & = & -Z.
    \end{array}
\right.$
\end{center}

The results obtained are summarised in the following table:
\begin{center}
{\renewcommand{\arraystretch}{1.3}
 \begin{tabular}{|c|c|>{\centering\arraybackslash}p{1cm}|>{\centering\arraybackslash}p{2cm}|>{\centering\arraybackslash}p{2cm}|}
\hline
    Homogeneous & Inhomogeneous  & $\Gamma$ & $\Gamma'$ & $\Omega$ \\
\hhline{|=|=|=|=|=|}
    $A_{2r-1}$ & $B_r$ & $\mathcal{C}_{2r}$ & $\mathcal{D}_r$ & $\mathbb{Z}/2\mathbb{Z}$ \\
\hline
    $A_{2r}$ & $C_r$ & $\mathcal{C}_{2r+1}$ & $\times$ & $\mathbb{Z}/2\mathbb{Z}$ \\
\hline
    $D_{r+1}$ & $C_r$ & $\mathcal{D}_{r-1}$ & $\mathcal{D}_{2(r-1)}$ & $\mathbb{Z}/2\mathbb{Z}$ \\
\hline
    $E_{6}$ & $F_4$ & $\mathcal{T}$ & $\mathcal{O}$ & $\mathbb{Z}/2\mathbb{Z}$ \\
\hline
    $D_{4}$ & $G_2$ & $\mathcal{D}_{2}$ &  \diagbox[width=2.4cm, innerleftsep=.5cm, innerrightsep=.5cm]{$\mathcal{O}$}{$\mathcal{T}$} &  \diagbox{\quad $\mathfrak{S}_3$}{$\mathbb{Z}/3\mathbb{Z}$}\\
\hline
    \end{tabular}}
     \captionof{table}{Group actions on simple singularities}
\end{center}
%\backslashbox{$\mathcal{O}$}{$\mathcal{T}$}
\medskip
The "Inhomogeneous" column corresponds to the foldings of the root lattices whose types compose the first column. They were computed in Section~\ref{sec:folding}. We add that if we chose $\Omega$ to be $\mathbb{Z}/3\mathbb{Z}$ in the case $\Gamma=\mathcal{D}_2$, it can be proved that $\Gamma'=\mathcal{T}$.

\subsubsection{Definitions of inhomogeneous simple singularities and their deformations}

The definition of the inhomogeneous simple singularities was first given by P. Slodowy in \cite{Slo80}. 

\begin{definition}\label{def:inhomogeneous}
A simple singularity of type $B_r \ (r \geq 2)$, $C_r \ (r \geq 3)$, $F_4$ or $G_2$ is a pair $(X_0,\Omega)$ of a simple singularity $X_0$ (in the former sense) and a group $\Omega$ of automorphisms of $X_0$ according to the following list:
\begin{center}
\renewcommand{\arraystretch}{1.3}
\begin{tabular}[t]{|c|c|c|c|c|}
\hline
Type of $(X_0,\Omega)$ & Type of $X_0$ & $\Gamma$ & $\Gamma'$ & $\Omega=\Gamma'/\Gamma$ \\
\hhline{|=|=|=|=|=|}
$B_r, \ r \geq 2$ & $A_{2r-1}$ & $\mathcal{C}_{2r}$ & $\mathcal{D}_r$ & $\mathbb{Z}/2\mathbb{Z}$\\
\hline
$C_r, \ r \geq 3$ & $D_{r+1}$ & $\mathcal{D}_{r-1}$ & $\mathcal{D}_{2(r-1)}$ & $\mathbb{Z}/2\mathbb{Z}$ \\
\hline
$F_4$ & $E_6$ & $\mathcal{T}$ & $\mathcal{O}$ & $\mathbb{Z}/2\mathbb{Z}$ \\
\hline
$G_2$  & $D_4$ & $\mathcal{D}_{2}$ & $\mathcal{O}$ & $\mathfrak{S}_3$\\
\hline
\end{tabular}
\captionof{table}{Definition of the simple inhomogeneous singularities}
\label{definhomogeneous}
\end{center}
\end{definition}

A simple singularity of inhomogeneous type is thus a simple homogeneous singularity endowed with the symmetry of the associated Dynkin diagram. One notices from the Section~\ref{sec:folding} that the type of $(X_0,\Omega)$ is the same as the type of the folding of the root lattice of the same type as $X_0$ by the action of $\Omega$.

\begin{remark}\label{A2rabsent}
The case where $X_0$ is of type $A_{2r}$ does not appear in the preceding table because, although the Dynkin diagram of type $A_{2r}$ has a non-trivial symmetry group, it was seen in Subsection~\ref{sub:groupaction} that the action of this symmetry group fails to lift correctly to the exceptional locus of the minimal resolution of $X_0$.
\end{remark}

The notion of symmetry has been added to simple singularities, therefore it is necessary to include it in the definition of deformations of simple singularities of type $B_r$, $C_r$, $F_4$ and $G_2$ as well. P. Slodowy gave the following definition:

\begin{definition}
A deformation of a simple singularity $(X_0,\Omega)$ is a deformation $\chi:X \rightarrow U$ of $X_0$ with $\Omega$-actions such that:
\vspace{-1ex}
\begin{itemize}\setlength\itemsep{0.3pt}
\item the action of $\Omega$ on $U$ is trivial, 
\item the action of $\Omega$ on $X$ induces the given one on $X_0$, 
\item the morphism $\chi$ is $\Omega$-invariant.
\end{itemize}

The \textit{semiuniversal deformation} of $(X_0,\Omega)$ is defined in an analogous way from the definition in Subsection~\ref{sub:semiunivdef}.
\end{definition}

In \cite{Slo80}, P. Slodowy constructed the semiuniversal deformation of $(X_0,\Omega)$ where $X_0$ is a hypersurface with an isolated singularity and $\Omega$ a reductive group. More precisely, he showed the existence of a semiuniversal deformation $\chi:X \rightarrow U$ of $X_0$ in the original sense with the following additional property: there are $\Omega$-actions on $X$ and $U$ such that $\chi$ is $\Omega$-equivariant and the restriction of $\Omega$ on $X_0$ is the given one. Furthermore, if $Y \rightarrow T$ is any $\Omega$-equivariant deformation of $X_0$, it can be induced from $\chi$ by an $\Omega$-equivariant morphism. From here, a semiuniversal deformation $\chi_{\Omega}$ of $(X_0,\Omega)$ is obtained through the following diagram: 
\begin{center}
\begin{tikzpicture}[scale=1,  transform shape]

\node (1) at ( -2,0) {$X \times_{U}U^\Omega$};
\node (2) at ( 0,0) {$X$};
\node (3) at ( 0,-2) {$U$};
\node (4) at ( -2,-2) {$U^\Omega$} ;

\draw  [decoration={markings,mark=at position 1 with
    {\arrow[scale=1.2,>=stealth]{>}}},postaction={decorate}] (1)  --  (2);
\draw  [decoration={markings,mark=at position 1 with
    {\arrow[scale=1.2,>=stealth]{>}}},postaction={decorate}] (2)  --   node [right] {$\chi$} (3);
\draw  [decoration={markings,mark=at position 1 with
    {\arrow[scale=1.2,>=stealth]{>}}},postaction={decorate}] (1)  --  node [left] {$\chi_{\Omega}$} (4);
\draw  [decoration={markings,mark=at position 1 with
    {\arrow[scale=1.2,>=stealth]{>}}},postaction={decorate}] (4)  --  (3);
\end{tikzpicture}
\end{center}
by inducing a deformation from $\chi$ using the base change $U^\Omega \rightarrow U$, where $U^\Omega$ is the fixed point set of $\Omega$ in $U$.

\subsubsection{Inhomogeneous simple singularities from Lie algebras of type $B_r$, $C_r$, $F_4$ and $G_2$}\label{sub: Brieskorngeneralised}

Let $\mathfrak{g}$ be a simple Lie algebra with adjoint group $G$, and let $e \in \mathfrak{g}$ be a nilpotent element. The Jacobson-Morozov theorem states that there exist $f, h \in \mathfrak{g}$ such that $(e, f, h)$ is an $\mathfrak{sl}_2$-triple. In \cite{Slo80} Section 7.5, the reductive centraliser of $e$ with respect to $h$ is defined as $C(e) =Z_G(e) \bigcap Z_G(h)$. The proof of the next proposition can be found in \cite{Slo80}.

\iffalse
\begin{lemma}\label{reductcentral}
The reductive centraliser of $e$ centralises the $\mathfrak{sl}_2$-Lie subalgebra of $\mathfrak{g}$ generated by $e$, $f$ and $h$. As a consequence, the Slodowy slice $S_e=e + \mathfrak{z}_\mathfrak{g}(f)$ is $C(e)$-stable.
\end{lemma}

\begin{proof}
Let $c \in C(e)$. Then $(ce, ch, cf)=(e, h, cf)$ is an $\mathfrak{sl}_2$-triple for $e$. Based on the representation theory of the Lie algebra $\mathfrak{sl}_2(\cc)$, the map $\mathrm{ad} \ e:\mathfrak{g}(-2) \rightarrow \mathfrak{g}(0)$ is injective and $h=[e, f]=[e, cf]$. It follows that $cf=f$ and the $\mathfrak{sl}_2$-Lie subalgebra of $\mathfrak{g}$ is stable. It implies naturally that $S_e=e + \mathfrak{z}_\mathfrak{g}(f)$ is $C(e)$-stable. \hfill $\Box$
\end{proof}

\begin{remark}
In fact, the reductive centraliser $C(e)$ is exactly the centraliser of the $\mathfrak{sl}_2$-Lie subalgebra.
\end{remark}
\fi

\begin{proposition}
If $e \in \mathfrak{g}$ is regular nilpotent, then $C(e)=\{1\}$ and in particular $Z_G(e)$ is connected. If $e$ is subregular nilpotent then, according to type, we have
\medskip
\begin{center}
$\begin{array}{|c|c|c|c|c|c|c|c|c|c|}
\hline
\text{Type of } \mathfrak{g} & A_r \ (r \geq 2) & B_r \ (r \geq 2)& C_r \ (r \geq3)& D_r \ (r \geq4)& E_6 & E_7 & E_8 & F_4 & G_2 \\
\hline
C(e) & \mathbb{G}_m & \mathbb{G}_m \rtimes \mathbb{Z}/2\mathbb{Z}  & \mathbb{Z}/2\mathbb{Z} & \{1\} &  \{1\} &  \{1\} &  \{1\} & \mathbb{Z}/2\mathbb{Z} & \mathfrak{S}_3 \\
\hline
\end{array}$.
\captionof{table}{Reductive centralisers of subregular nilpotent elements}
\end{center}
In the semidirect product $\mathbb{G}_m \rtimes \mathbb{Z}/2\mathbb{Z} $,  the cyclic group $\mathbb{Z}/2\mathbb{Z} $ acts on $ \mathbb{G}_m$ by $x \mapsto x^{-1}$.
\end{proposition}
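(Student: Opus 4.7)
The plan is to separate the regular case, which follows from a classical theorem of Springer and Steinberg, from the subregular case, which requires a type-by-type identification of $C(e)$ via its action on the associated Brieskorn slice.

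For the regular case, the theorem of Springer and Steinberg states that for $e$ regular nilpotent in the adjoint simple group $G$, the centraliser $Z_G(e)$ is connected, abelian, and consists entirely of unipotent elements. The subgroup $C(e) = Z_G(e) \cap Z_G(h)$ is reductive because it is the fixed-point set of the one-parameter group generated by $h$ acting on $Z_G(e)$ by conjugation, and a reductive subgroup of a unipotent group must be trivial. Hence $C(e) = \{1\}$ and $Z_G(e)$ is connected.

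For the subregular case, the unifying geometric observation is that $C(e)$ acts on the Slodowy slice $S_e = e + \mathfrak{z}_{\mathfrak{g}}(f)$: any element of $C(e)$ centralises $e$ and $h$, and therefore, by the uniqueness of $f$ in an $\mathfrak{sl}_2$-triple with prescribed $e$ and $h$, also centralises $f$. Thanks to Theorem~\ref{Brieskorn}, the zero fibre $\chi^{-1}(0) \cap S_e$ is the simple singularity $\cc^2/\Gamma$ of type $\Delta$, and the induced $C(e)$-action lifts through the minimal resolution to yield a homomorphism $\rho \colon C(e) \to \mathrm{Aut}(\Delta)$. The image of $\rho$ is the symmetry group appearing in Table~\ref{tablefoldings}: trivial for the simply-laced $D_r$, $E_6$, $E_7$, $E_8$; the $\mathbb{Z}/2\mathbb{Z}$ exchanging the two branches of $A_{2r-1}$ and $D_{r+1}$; and the full $\mathfrak{S}_3$ permuting the three legs of $D_4$ in the $G_2$ case. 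The kernel of $\rho$ contains the $\mathbb{G}_m$-factor which appears only in types $A_r$ and $B_r$, and is otherwise trivial. Combined with the standard dimension count $\dim Z_G(e) = \mathrm{rk}(\mathfrak{g}) + 2$ for subregular nilpotents, this determines $C(e)$ completely.

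The main technical obstacle is verifying the semidirect product structure in type $B_r$: the involution $\sigma \in C(e)$ induced by the non-trivial automorphism of $A_{2r-1}$ must act on the $\mathbb{G}_m$-factor by $t \mapsto t^{-1}$, not trivially. I would address this with the explicit matrix realisation of $\mathfrak{so}_{2r+1}$ and its subregular nilpotent $e$ of Jordan type $[2r-1, 1, 1]$: the $\mathbb{G}_m$ acts with opposite weights on the two one-dimensional summands, while $\sigma$ is realised as a signed permutation matrix interchanging these summands. A short matrix computation then yields $\sigma t \sigma^{-1} = t^{-1}$, giving the required semidirect product. Once this is in place, the remaining entries of the table follow from the Bala--Carter classification of nilpotent orbits together with the foldings of Section~\ref{sec:folding}.
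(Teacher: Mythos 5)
The paper does not actually prove this proposition: it is quoted from \cite{Slo80}, Section~7.5, where it is established by a direct, type-by-type computation of $Z_G(e)\cap Z_G(h)$ (for the classical types via the partition of the subregular nilpotent and the standard description of centralisers in $\mathrm{GL}$, $\mathrm{Sp}$ and $\mathrm{O}$, then passing to the adjoint group; for the exceptional types from known tables). Your treatment of the regular case is correct and is the standard argument: by Springer--Steinberg, $Z_G(e)$ has unipotent identity component and component group $Z(G)$, which is trivial for $G$ adjoint, so the Levi factor $C(e)=Z_G(e)\cap Z_G(h)$ is trivial and $Z_G(e)$ is connected.

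The subregular case as written has two genuine gaps. First, the dimension of $C(e)$: knowing $\dim Z_G(e)=r+2$ does not determine $\dim C(e)$ unless you also compute the dimension of the unipotent radical of $Z_G(e)$, i.e.\ the dimension of the zero $h$-weight space of $\mathfrak{z}_{\mathfrak{g}}(e)$. That the identity component is $\mathbb{G}_m$ exactly in types $A_r$ and $B_r$ and trivial otherwise is precisely what must be proven, and your sketch asserts it rather than derives it. Second, and more seriously, the homomorphism $\rho\colon C(e)\to\mathrm{Aut}(\Delta)$ cannot be used in the direction you want: the triviality of its image for $D_r$ and $E_6$ (whose diagrams \emph{do} carry a nontrivial automorphism) is a consequence of $C(e)=\{1\}$ there, not a means of proving it, and its surjectivity onto $\mathbb{Z}/2\mathbb{Z}$ or $\mathfrak{S}_3$ in the inhomogeneous cases is essentially the content of Theorem~\ref{Brieskorngeneralised}, which Slodowy proves \emph{after}, and using, the computation of $C(e)$. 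So the argument is circular at the decisive step. The one part of your proposal that genuinely works --- and is in fact how the result is obtained --- is the explicit matrix computation you outline for $B_r$: the subregular nilpotent of $\mathfrak{so}_{2r+1}$ has partition $[2r-1,1,1]$, its reductive centraliser is $\mathrm{O}_1\times\mathrm{O}_2$ inside $\mathrm{O}_{2r+1}$, which in the adjoint group becomes $\mathrm{O}_2\cong\mathbb{G}_m\rtimes\mathbb{Z}/2\mathbb{Z}$ with the inversion action. Carrying out the same centraliser computation for $A_r$ (partition $[r,1]$, giving $\mathbb{G}_m$ in $\mathrm{PGL}_{r+1}$), $C_r$ ($[2r-2,2]$, giving $(\mathbb{Z}/2\mathbb{Z})^2$ in $\mathrm{Sp}_{2r}$, hence $\mathbb{Z}/2\mathbb{Z}$ in the adjoint group) and $D_r$ ($[2r-3,3]$, giving $\{\pm I\}$, central, hence trivial in the adjoint group), together with the tables for the exceptional types, is what completes the proof; the detour through the singularity and its minimal resolution cannot replace it.
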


In Subsection~\ref{sub:semiunivdef}, we stated a theorem due to E. Brieskorn (Theorem~\ref{Brieskorn}) which gives the semiuniversal deformation of a simple singularity of type $A_r$, $D_r$ or $E_r$ from the adjoint quotient of a simple Lie algebra of the same type. This theorem can be extended to the inhomogeneous case (cf. \cite{Slo80} for the proof):

\begin{theorem}\emph{\textbf{(Slodowy).}}\label{Brieskorngeneralised}
Let $\mathfrak{g}$ be a simple Lie algebra of type $B_r$, $C_r$, $F_4$ or $G_2$ and $e$ a subregular nilpotent element of $\mathfrak{g}$. Then there exist a finite subgroup $\Xi$ of the reductive centraliser $C(e)$ of $e$, and a $\Xi$-stable transversal slice $S_e$ at $e$ to the $G$-orbit of $e$ such that the ($\Xi$-invariant) restriction of the adjoint quotient map $\chi:\mathfrak{g} \rightarrow \mathfrak{h}/W$ to $S_e$ realises a semiuniversal deformation of a simple singularity of the same type as $\mathfrak{g}$.
\end{theorem}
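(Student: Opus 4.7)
My plan is to deduce this from the simply laced Brieskorn theorem (Theorem~\ref{Brieskorn}) applied after ``unfolding'' the inhomogeneous algebra, combined with Slodowy's construction of equivariant semiuniversal deformations recalled at the end of Subsection~\ref{BCFGdefinition}. To each inhomogeneous $\mathfrak{g}$ I associate a simply laced $\tilde{\mathfrak{g}}$ carrying a diagram automorphism $\sigma$ generating $\Omega$, according to the pairing $(B_r, A_{2r-1})$, $(C_r, D_{r+1})$, $(F_4, E_6)$ and $(G_2, D_4)$ that is forced by the simple singularity list in Definition~\ref{def:inhomogeneous}. I then choose a subregular nilpotent $\tilde{e} \in \tilde{\mathfrak{g}}$ fixed by $\sigma$ (the subregular orbit is $\sigma$-stable, so such a representative exists) and extend it to an $\mathfrak{sl}_2$-triple $(\tilde{e}, \tilde{f}, \tilde{h})$ that is also $\sigma$-stable: uniqueness of $\tilde{f}$ given $\tilde{e}$ and $\tilde{h}$, via the injectivity of $\mathrm{ad}\,\tilde{e}:\tilde{\mathfrak{g}}(-2) \to \tilde{\mathfrak{g}}(0)$, forces $\tilde{f}$ to be $\sigma$-fixed. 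The Slodowy slice $S_{\tilde{e}} = \tilde{e} + \mathfrak{z}_{\tilde{\mathfrak{g}}}(\tilde{f})$ is then automatically $\sigma$-stable.

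By Theorem~\ref{Brieskorn}, the restriction $\restr{\tilde{\chi}}{S_{\tilde{e}}}:(S_{\tilde{e}}, \tilde{e}) \to (\tilde{\mathfrak{h}}/\tilde{W}, 0)$ is a semiuniversal deformation of a simple singularity $X_0$ of type $\Delta(\tilde{\mathfrak{g}})$, and it is $\Omega$-equivariant by construction, with $\Omega$ acting on $X_0$ through $\sigma$; one verifies that this action matches the one attached to the inhomogeneous singularity of the same type as $\mathfrak{g}$ in Definition~\ref{def:inhomogeneous}. Invoking the equivariant semiuniversal construction at the end of Subsection~\ref{BCFGdefinition}, base change along the inclusion $(\tilde{\mathfrak{h}}/\tilde{W})^{\Omega} \hookrightarrow \tilde{\mathfrak{h}}/\tilde{W}$ yields a semiuniversal deformation of the inhomogeneous singularity $(X_0, \Omega)$, i.e.\ of a simple singularity of the same type as $\mathfrak{g}$.

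Finally, I would identify this $\Omega$-fixed base change with the intrinsic map $\restr{\chi}{S_e}$ of the statement. The required identifications are $(\tilde{\mathfrak{h}}/\tilde{W})^{\Omega} \cong \mathfrak{h}/W$ and $S_{\tilde{e}}^{\Omega} \cong S_e$, where $e \in \mathfrak{g}$ is a subregular nilpotent arising from $\tilde{e}$; the first uses the observation following Table~\ref{tablefoldings} that the folding of the algebra $\tilde{\mathfrak{g}}$ and of its root lattice produce Langlands-dual root systems, so that $\mathfrak{h}(\tilde{\mathfrak{g}}_0)/W(\tilde{\mathfrak{g}}_0) \cong \mathfrak{h}(\mathfrak{g})/W(\mathfrak{g})$ (the two Weyl groups coincide as abstract reflection groups), while the second follows from the $\sigma$-stability of the slice together with the identification of $\mathfrak{g}$ with the fixed subalgebra via Langlands duality. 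The subregularity of $e$ in $\mathfrak{g}$ is checked by a dimension count, and the finite subgroup $\Xi \subset C(e)$ is then taken as the image of $\Omega$, which, by the table of reductive centralisers given above, is realisable as $\mathbb{Z}/2\mathbb{Z}$ in each of $B_r,C_r,F_4$ and as $\mathfrak{S}_3$ for $G_2$. I expect the main obstacle to be precisely the Langlands-duality bookkeeping in the identification $(\tilde{\mathfrak{h}}/\tilde{W})^{\Omega} \cong \mathfrak{h}/W$; once it is in place, semiuniversality transfers mechanically from $\tilde{\chi}|_{S_{\tilde{e}}}$ to $\chi|_{S_e}$.
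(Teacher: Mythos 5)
The paper does not reprove this theorem (it is quoted from \cite{Slo80}, where the argument is carried out \emph{intrinsically} in the inhomogeneous algebra: construction of $\Xi$ inside the reductive centraliser $C(e)$, identification of $(S_e\cap\mathcal N,\Xi)$ via the Dynkin curve, then a semiuniversality argument). Your plan runs in the opposite direction, through the folding of an ADE situation, and it has two genuine gaps. The first is the claim that a $\sigma$-fixed subregular representative exists because the subregular orbit is $\sigma$-stable: stability only gives $\sigma(\tilde e)\in\tilde G\cdot\tilde e$, not a fixed point. Concretely, for $\tilde{\mathfrak g}=\mathfrak{sl}_{2r}$ the fixed subalgebra of the diagram involution is $\mathfrak{sp}_{2r}$ (Table~\ref{tablefoldings}), and the subregular Jordan type $(2r-1,1)$ is not a symplectic partition, so the subregular orbit does not meet $\tilde{\mathfrak g}^{\sigma}$ at all. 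One must instead replace $\sigma$ by $\mathrm{Int}(g)\circ\sigma$ in the same outer class so as to fix the whole $\mathfrak{sl}_2$-triple; this costs control of the order of the automorphism (its square lies in $C(\tilde e)$) and needs to be addressed.

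The second gap is more serious. The restriction of $\restr{\tilde\chi}{S_{\tilde e}}$ over $(\tilde{\mathfrak h}/\tilde W)^{\Omega}$ is the preimage of the fixed base, not the fixed locus $S_{\tilde e}^{\Omega}$ of the total space: $\Omega$ acts non-trivially on every fibre, in particular on $X_0$ itself, so $S_{\tilde e}^{\Omega}$ meets the special fibre only in the fixed points of $X_0$ and cannot be the total space of the restricted family. Nor is $\mathfrak g$ (say of type $B_r$) a subalgebra of $\tilde{\mathfrak g}$: the fixed subalgebra has the Langlands-dual type $C_r$, so there is no embedding $S_e\hookrightarrow S_{\tilde e}$ to exploit, and ``taking $\Omega$-fixed points'' produces neither $S_e$ nor the restricted family. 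The correct comparison is exactly the paper's Theorem~\ref{thm:restrictionsud}, which Slodowy proves by \emph{first} establishing the semiuniversality of $\restr{\chi}{S_e}$ (the present theorem) and \emph{then} matching the two semiuniversal objects over $(\tilde{\mathfrak h}/\tilde W)^{\Omega}\cong\mathfrak h/W$ by uniqueness and a comparison of $\mathbb G_m$-weights (Remark~\ref{h0andh1}); invoking it here is circular. What your argument legitimately yields is \emph{some} semiuniversal deformation of the inhomogeneous singularity over $(\tilde{\mathfrak h}/\tilde W)^{\Omega}$; the entire content of the theorem — that this object is realised by the adjoint quotient of the inhomogeneous algebra restricted to a $\Xi$-stable slice — is the part left unproved.
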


As a special case, the theorem states that the intersection $X_0=S \bigcap \mathcal{N}(\mathfrak{g})$ of $S_e$ with the nilpotent variety of $\mathfrak{g}$, endowed with the induced action of $\Xi$, is a simple singularity $(X_0,\Xi)$ of the same inhomogeneous type as $\mathfrak{g}$. 

It was mentioned in Proposition~\ref{propsubDynkin} that for a simple singularity whose Dynkin diagram $\Delta$ is homogeneous, singular configurations whose types correspond to subdiagrams of $\Delta$ can be found in the fibres of the semiuniversal deformation near the singular point. There is a similar result for inhomogeneous singularities. 

If $\Delta$ is an inhomogeneous Dynkin diagram, it was shown in Section~\ref{sec:folding} that it can be obtained as a quotient of a simply laced Dynkin diagram $\Delta_{\mathrm{unfold}}$ by the action of a group $\Xi$ of diagram isometries. The preimages by this quotient map in $\Delta_{\mathrm{unfold}}$ of subdiagrams of $\Delta$ are the $\Xi$-stable subdiagrams of $\Delta_{\mathrm{unfold}}$. The next corollary is also proved in \cite{Slo80}. 

\begin{corollary}
Let $\chi:X \rightarrow U$ be a deformation of a simple singularity $(X_0,\Xi)$ of inhomogeneous type $\Delta$, and let $X_\epsilon \neq X_0$ be a non-special fibre. Then there is a $\Xi$-stable subdiagram $\Delta'_{\mathrm{unfold}}$ of $\Delta_{\mathrm{unfold}}$ and a $\Xi$-equivariant type-preserving bijection of the connected components of $\Delta'_{\mathrm{unfold}}$ onto the singular points of $X_\epsilon$.
\end{corollary}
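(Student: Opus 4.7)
The plan is to combine Proposition~\ref{propsubDynkin} with the $\Xi$-equivariance built into any deformation of $(X_0,\Xi)$. First I would forget the $\Xi$-action and regard $\chi:X\to U$ purely as a deformation of the homogeneous simple singularity $X_0$ of type $\Delta_{\mathrm{unfold}}$. Proposition~\ref{propsubDynkin} then produces a proper subdiagram $\Delta'_{\mathrm{unfold}}\subset\Delta_{\mathrm{unfold}}$ together with a type-preserving bijection $\beta$ from its connected components onto $\mathrm{Sing}(X_\epsilon)$. The remaining task is to upgrade this pair to a $\Xi$-equivariant one.

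Because $\chi$ is $\Xi$-equivariant with trivial action on $U$, the group $\Xi$ acts on $X_\epsilon$, preserves its singular locus, and permutes the singular points while preserving their analytic types. To transport this action to $\Delta_{\mathrm{unfold}}$, I would pass to a simultaneous resolution. Using Brieskorn's theorem, after composing $\chi$ with a base change $U\to\mathfrak{h}/W$ coming from semiuniversality and lifting to the Cartan subalgebra $\mathfrak{h}$, the family acquires a simultaneous resolution $\widetilde{X}\to X$ whose fibrewise exceptional divisor decomposes into components canonically indexed by the nodes of $\Delta_{\mathrm{unfold}}$. The singular points of the non-special fibre $X_\epsilon$ are then in natural bijection with the connected components of the subdiagram $\Delta'_{\mathrm{unfold}}$ collecting exactly those nodes whose corresponding components are contracted over $X_\epsilon$; this gives an intrinsic description of both $\Delta'_{\mathrm{unfold}}$ and $\beta$.

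With this intrinsic description, equivariance comes for free. The $\Xi$-action on $X_0$ already lifts to its minimal resolution and permutes the exceptional components according to the prescribed diagram automorphisms (cf. Subsection~\ref{sub:groupaction}). By the universal property of the minimal resolution and the flatness of $\chi$, the $\Xi$-action on the total space $X$ extends to a compatible action on $\widetilde{X}$ permuting the exceptional components exactly as $\Xi$ permutes the nodes of $\Delta_{\mathrm{unfold}}$. The subset of nodes whose components collapse over $X_\epsilon$ is therefore $\Xi$-stable, and sending a connected component to the singular point over which its curves collapse gives a $\Xi$-equivariant bijection $\beta$, as required.

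The main obstacle is justifying the equivariant lift of the simultaneous resolution: one must verify that the $\Xi$-action on $X$ indeed extends to $\widetilde{X}$ in a way whose restriction to the central fibre recovers the lift of $\Xi$ to the minimal resolution of $X_0$ already used in Subsection~\ref{sub:groupaction}. This is handled by the usual blow-up functoriality argument: in the $ADE$ case the minimal resolution is a canonical iterated blow-up of the singular locus, a construction which commutes with any automorphism preserving that locus, and the relative version applied to the flat family $\chi$ transports this property from $X_0$ to $X$ uniformly.
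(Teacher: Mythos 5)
The paper gives no proof of this corollary at all — it defers entirely to \cite{Slo80} — so your proposal must stand on its own, and it has a genuine gap at its central step. The claim that the fibrewise exceptional divisor of the simultaneous resolution is ``canonically indexed by the nodes of $\Delta_{\mathrm{unfold}}$'', so that $\Delta'_{\mathrm{unfold}}$ is simply ``the set of nodes whose components are contracted over $X_\epsilon$'', is false for non-special fibres. After the base change to $\mathfrak{h}$ and the choice of a lift $h$ of the classifying point of $\epsilon$, the irreducible exceptional curves of the resolution of $X_\epsilon$ have homology classes equal to the simple roots of the parabolic sub-root system $\Phi_h=\{\alpha\in\Phi \mid \alpha(h)=0\}$, and these are in general \emph{not} simple roots of $\Phi$: already in type $A_2$, taking $h$ with $\alpha_1(h)=1=-\alpha_2(h)$ gives $\Phi_h=\{\pm(\alpha_1+\alpha_2)\}$, whose unique positive root is not a node of the diagram. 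To obtain an honest subdiagram of $\Delta_{\mathrm{unfold}}$ one must conjugate $\Phi_h$ by a Weyl group element into a standard parabolic, and for the $\Xi$-equivariant statement two further things are needed: the lift $h$ must be chosen in $\mathfrak{h}^{\Xi}$ (possible because the classifying map of a deformation of $(X_0,\Xi)$ lands in $(\mathfrak{h}/W)^{\Xi}\cong\mathfrak{h}_1/W_1$), so that $\Phi_h$ is $\Xi$-stable, and the conjugating element must be taken in the centralizer $W^{\Xi}=W_1$ — equivalently one applies the conjugacy theorem for parabolic subsystems in the folded root system on $\mathfrak{h}^{\Xi}$ and unfolds. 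This is precisely the content of the corollary and your argument supplies none of it.

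A secondary problem is the ``blow-up functoriality'' justification of the equivariant simultaneous resolution: simultaneous resolutions do not exist over $U$ (only after the base change $\mathfrak{h}\to\mathfrak{h}/W$), and they arise from the Grothendieck--Springer map, not as an iterated blow-up of the critical locus of $\chi$, so the ``relative version'' you invoke does not exist. What you actually need — and what \emph{is} true, by functoriality of the minimal resolution of the single surface $X_\epsilon$, which $\Xi$ preserves since it acts trivially on $U$ — is that $\Xi$ lifts to the minimal resolution of $X_\epsilon$ and permutes its exceptional curves; but this only reduces the problem to the root-theoretic identification above, which remains the missing step. Note also that your opening paragraph (apply Proposition~\ref{propsubDynkin} and then ``upgrade'' $\beta$) does no work, since you subsequently re-derive $\Delta'_{\mathrm{unfold}}$ and $\beta$ from scratch; transporting the $\Xi$-action through a non-equivariantly constructed $\beta$ would face exactly the same obstruction.
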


Let $\mathfrak{h}$ be a Cartan subalgebra of a simple Lie algebra of inhomogeneous type $\Delta$, and let $W$ be the associated Weyl group. Let $\restr{\chi}{S_e} :S_e \rightarrow \mathfrak{h}/W$ be the semiuniversal deformation obtained in Theorem~\ref{Brieskorngeneralised}. Let $\mathfrak{h}_{\mathrm{unfold}}$ be a Cartan subalgebra of a Lie algebra of type $\Delta_{\mathrm{unfold}}$ and denote by $\Xi$ the group of isometries corresponding to the folding $\Delta_{\mathrm{unfold}} \rightarrow \Delta$. We have the following diagram:
\vspace{-\topsep}
\begin{center}
\begin{tikzpicture}[scale=0.9,  transform shape]
\node (1) at (3,2) {$S_e$};
\node (2) at (3,0) {$\mathfrak{h}/W$};
\node (3) at (0,0) {$(\mathfrak{h}_{\mathrm{unfold}})^\Xi \cong \mathfrak{h}$};
\node (4) at (-2.8,0) {$\mathfrak{h}_{\mathrm{unfold}}$};

\draw  [decoration={markings,mark=at position 1 with
    {\arrow[scale=1.2,>=stealth]{>}}},postaction={decorate}] (1)  -- node [right]{$\restr{\chi}{S_e}$}  (2);
\draw  [decoration={markings,mark=at position 1 with
    {\arrow[scale=1.2,>=stealth]{>}}},postaction={decorate}] (3)  -- node [above]{$\pi$} (2);
\draw  [decoration={markings,mark=at position 1 with
    {\arrow[scale=1.2,>=stealth]{>}}},postaction={decorate}] (4)  -- node [above]{$p$} (3);
\end{tikzpicture}
\end{center}
\vspace{-\topsep}
where $p$ is the projection on the $\Xi$-invariant part of $\mathfrak{h}_{\mathrm{unfold}}$. It is natural to wonder if, given $h \in \mathfrak{h}_{\mathrm{unfold}}$, it is possible to anticipate the singular configuration of the fiber $\restr{\chi}{S_e}^{-1}(\pi(p(h)))$. 

Set $h \in  \bigcap_{\alpha \in \Phi}\mathrm{Ker}(\alpha) \subset \mathfrak{h}_{\mathrm{unfold}}$ with $\Phi$ a $\Xi$-stable sub-root system of the root system of type $\Delta_{\mathrm{unfold}}$, and assume $\Phi$ to be maximal. Then it follows that $p(h)$ is also in $\bigcap_{\alpha \in \Phi}\mathrm{Ker}(\alpha)$, so if $p(h)$ is seen as an element of $\mathfrak{h}$, it is in $\bigcap_{\beta \in \mathrm{Fold}(\Phi)}\mathrm{Ker}(\beta)$ where $\mathrm{Fold}(\Phi)$ is the sub-root system of the root system of type $\Delta$ obtained by folding $\Phi$. Hence the root system associated to the singular configuration of the fiber of $\restr{\chi}{S_e}$ above $\pi(p(h))$ will contain the root system $\mathrm{Fold}(\Phi)$. However, as the example below suggests, it is possible that $\mathrm{Fold}(\Phi)$ is not maximal for $p(h)$, and so the root system associated to $h$ is strictly contained in the root system associated to the singular configuration of the fiber of $\restr{\chi}{S_e}$ above $\pi(p(h))$.

\begin{example}\label{ConterexamplewithA5}
Let $\mathfrak{h}_{\mathrm{unfold}}$ be a Cartan subalgebra of a simple Lie algebra of type $\Delta_{\mathrm{unfold}}=A_5$. Let $(e_1,\ldots,e_6)$ be an orthonormal basis of a vector space of dimension $6$ such that $\mathfrak{h}_{\mathrm{unfold}}=\{h=(h_1,\ldots, h_6) \ | \ \sum_{i=1}^6h_i=0\}$, and define $(\epsilon_1,\ldots,\epsilon_6)$ its dual basis. Following \cite{Bou68}, the simple roots of $A_5$ are 
\vspace{-\topsep}
\begin{center}
\renewcommand{\arraystretch}{1.2}
\raisebox{-.55\height}{$\left\lbrace\begin{array}{ccl}
\alpha_1 & = & \epsilon_1-\epsilon_2, \\
\alpha_2 & = & \epsilon_2-\epsilon_3, \\
\alpha_3 & = & \epsilon_3-\epsilon_4, \\
\alpha_4 & = & \epsilon_4-\epsilon_5, \\
\alpha_5 & = & \epsilon_5-\epsilon_6. \\
\end{array}\right.$}
\end{center}
Because $\Xi=\mathbb{Z}/2\mathbb{Z}=<\sigma>$, we take the map $p$ to be the averaging over $\Xi$, i.e. $p(h)=\frac{1}{2}(h+\sigma.h)$. It follows that if $h=\begin{pmatrix} h_1 \\ h_2 \\ -(h_1+h_2) \\ -(h_1+h_2) \\ h_2 \\ h_1 \end{pmatrix}$, then $p(h)=0$. \\
Set $h=(1,2,-3,-3,2,1)$. Then the positive roots of $A_5$ that are canceled by $h$ are $\alpha_3, \alpha_2+\alpha_3+\alpha_4$ and $\alpha_1+ \alpha_2+\alpha_3+\alpha_4+\alpha_5$. Using the negative roots, we obtain $h \in \bigcap_{\alpha \in \Phi}\mathrm{Ker}(\alpha)$ where $\Phi$ is a $\Xi$-stable sub-root system of type $A_1+A_1+A_1$ and $\Phi$ is maximal. But as the projection of $h$ on $(\mathfrak{h}_{\mathrm{unfold}})^\Xi$ is zero, it follows that the singular configuration of the fiber $\restr{\chi}{S_e}^{-1}(\pi(p(h))=\restr{\chi}{S_e}^{-1}(0)$ is of type $\Delta_{\mathrm{unfold}}=A_5$. Therefore the root system associated to $h$ is only contained in the root system associated to the singular configuration of the fiber of $\restr{\chi}{S_e}$ above $\pi(p(h))$.
\end{example}

\subsubsection{Inhomogeneous simple singularities from Lie algebras of type $A_r$, $D_r$ and $E_6$}\label{sub:inhomogenefromADE}
Like in the previous section, let $\Delta$ be a Dynkin diagram of type $A_{2r-1}$, $D_r$ or $E_6$ and $\mathfrak{g}$ a simple Lie algebra of type $\Delta$ with adjoint simple group $G$. Let $e \in \mathfrak{g}$ be a subregular nilpotent element of $\mathfrak{g}$ and $(e, f, h)$ an $\mathfrak{sl}_2$-triple of $\mathfrak{g}$. Define $S_e=e + \mathfrak{z}_\mathfrak{g}(f)$ a Slodowy slice at $e$ and set $\delta$ the restriction to $S_e$ of the adjoint quotient map of $\mathfrak{g}$. One can show that the automorphism group $\mathrm{Aut}(\Delta)$ of the Dynkin diagram acts on $S_e$ as well as on $\mathfrak{h}/W$, and makes $\delta$ equivariant. As a result, there is an action of $\mathrm{Aut}(\Delta)$ on the special fibre $X=\delta^{-1}(0)$. Let $\Delta_0$ be the unique inhomogeneous Dynkin diagram such that $\prescript{}{h}{\Delta_0}=\Delta$ and $AS(\Delta_0)=\mathrm{Aut}(\Delta)$, with $AS(\Delta_0)$ being the associated symmetry group of $\Delta_0$ defined by
\begin{center}
$AS(\Delta_0)=\left\{\renewcommand{\arraystretch}{1.5} \begin{array}{l}
      \mathfrak{S}_3 \text{ if } \Delta_0=G_2, \\
    \mathbb{Z}/2\mathbb{Z} \text{ otherwise}. 
    \end{array}
\right.$
\end{center}
The following two theorems are proved in Section 8.8 of \cite{Slo80}:

\begin{theorem}
$(X,AS(\Delta_0))$ is a simple singularity of type $\Delta_0$.
\end{theorem}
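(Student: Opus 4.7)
The plan is to match the pair $(X, AS(\Delta_0))$ with the model pair $(\mathbb{C}^2/\Gamma, \Omega)$ of Definition~\ref{def:inhomogeneous} indexed by $\Delta_0$. First, Brieskorn's theorem (Theorem~\ref{Brieskorn}) tells us that $\delta:S_e \to \mathfrak{h}/W$ is a semiuniversal deformation of the simple singularity $X=\delta^{-1}(0)$ of type $\Delta$. Hence there is an isomorphism $X \cong \mathbb{C}^2/\Gamma$ where $\Gamma \subset \mathrm{SU}_2$ is the finite subgroup with $\Delta(\Gamma)=\Delta$ (Table~\ref{diagramssimplesingularities}). Only the comparison of actions remains.

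For this comparison, I would first arrange the $\mathfrak{sl}_2$-triple $(e,f,h)$ to be stable under a chosen set-theoretic lift of $\mathrm{Aut}(\Delta)$ to outer automorphisms $\dot{\tau} \in \mathrm{Aut}(\mathfrak{g})$, which is possible because any two $\mathfrak{sl}_2$-triples containing the subregular $e$ are $Z_G(e)$-conjugate. Then $S_e$ and $X$ are $\mathrm{Aut}(\Delta)$-stable, and the $AS(\Delta_0)=\mathrm{Aut}(\Delta)$-action on $X$ is the one induced by the $\dot{\tau}$'s. I would then lift both the Lie-theoretic action and the $\Omega=\Gamma'/\Gamma$-action of Subsection~\ref{sub:groupaction} to the minimal resolution $\pi:\widetilde{X}\to X$ (using the uniqueness of the minimal resolution) and compare them on the exceptional divisor $E$, whose dual graph is $\Delta$. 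An explicit calculation on simple root subspaces shows that $\dot{\tau}$ permutes the components of $E$ by the diagram automorphism $\tau$; on the $\mathbb{C}^2/\Gamma$-side, the choice of $\Gamma'$ in Subsection~\ref{sub:groupaction} was made precisely so that $\Omega \cong \mathrm{Aut}(\Delta)$ acts on $E$ by the same permutation.

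The main obstacle is to upgrade this agreement on $\Delta$ to an actual isomorphism of pairs, because the kernel of the restriction $\mathrm{Aut}(X) \to \mathrm{Aut}(\Delta)$ is nontrivial for $\Delta=A_{2r-1}$ (a one-dimensional torus acting by scaling on $\mathbb{C}^2$), although it is trivial for $\Delta=D_{r+1}$ or $E_6$. To eliminate this ambiguity I would invoke Theorem~\ref{Brieskorngeneralised}, applied to a simple Lie algebra of inhomogeneous type $\Delta_0$: it produces an $AS(\Delta_0)$-equivariant semiuniversal deformation whose special fibre is precisely the model $(\mathbb{C}^2/\Gamma, \Omega)$. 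Uniqueness of $AS(\Delta_0)$-equivariant semiuniversal deformations — a consequence of the standard uniqueness statement together with the equivariance of the base change — then forces our $\delta$ to be isomorphic to it as an equivariant deformation. Restricting to the special fibre yields an isomorphism of pairs $(X, AS(\Delta_0)) \cong (\mathbb{C}^2/\Gamma, \Omega)$, which by Definition~\ref{def:inhomogeneous} is exactly the statement that $(X, AS(\Delta_0))$ is a simple singularity of type $\Delta_0$.
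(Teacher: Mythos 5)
The paper does not actually prove this theorem: it is quoted verbatim from Section 8.8 of \cite{Slo80}, so there is no internal argument to compare yours against. Judged on its own terms, your first two steps are sound, and you are right to insist that matching the two actions on the dual diagram of the exceptional divisor is not enough to identify the pairs. A concrete illustration: on $X_0=\{xy=z^{2r}\}$ both $(x,y,z)\mapsto((-1)^ry,(-1)^rx,-z)$ (the model $B_r$ action of $\Gamma'/\Gamma$) and $(x,y,z)\mapsto(y,x,z)$ reverse the chain of exceptional curves, yet they are not conjugate in $\mathrm{Aut}(X_0)$: the second has smooth quotient (its invariant ring is $\mathbb{C}[x+y,z]$) and negates the natural $2$-form on the smooth locus, while the model action preserves it. (A side remark: your claim that the kernel of $\mathrm{Aut}(X)\to\mathrm{Aut}(\Delta)$ is trivial for $D_{r+1}$ and $E_6$ is false --- the $\mathbb{G}_m$-action coming from the weighted-homogeneous structure always lies in that kernel.)

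The genuine gap is in your final step. The deformation supplied by Theorem~\ref{Brieskorngeneralised} lives over $\mathfrak{h}_0/W_0$, of dimension $\mathrm{rank}\,\Delta_0$, whereas $\delta$ lives over $\mathfrak{h}/W$, of dimension $\mathrm{rank}\,\Delta>\mathrm{rank}\,\Delta_0$, so the two cannot be ``isomorphic as equivariant deformations''; what you presumably intend is that the restriction of $\delta$ over $(\mathfrak{h}/W)^{AS(\Delta_0)}$ is isomorphic to $\delta_0$, which is Theorem~\ref{thm:restrictionsud}. But even granting that reformulation, semiuniversality of deformations of the pair $(X_0,\Omega)$ only compares deformations of \emph{one and the same} pair: to invoke uniqueness you must already know that $X$ equipped with the Lie-theoretic $AS(\Delta_0)$-action is isomorphic, as a pair, to $(\mathbb{C}^2/\Gamma,\Gamma'/\Gamma)$ --- which is precisely the statement to be proved. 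The argument is therefore circular. A non-circular proof has to pin down the action on the special fibre directly, for instance by showing that the $AS(\Delta_0)$-action on $S_e\cap\mathcal{N}(\mathfrak{g})$ preserves the natural symplectic form (equivalently, that it extends equivariantly over the standard $\mathrm{Aut}(\Delta)$-action on $\mathfrak{h}/W$, which the ``wrong'' involutions above do not), together with a rigidity statement that symplectic finite-order actions inducing a prescribed diagram automorphism are unique up to conjugation; this is the kind of input the cited proof in \cite{Slo80} supplies and that your argument is missing.
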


Let $G_0$ denote the simple adjoint group of type $\Delta_0$ with Lie algebra $\mathfrak{g}_0$. Let $(e_0,f_0,h_0)$ be an $\mathfrak{sl}_2$-triple with $e_0$ a subregular nilpotent element of $\mathfrak{g}_0$, and define $S_0=e_0 + \mathfrak{z}_{\mathfrak{g}_0}(f_0)$. Let $\delta_0:S_0 \rightarrow \mathfrak{h}_0/W_0$ denote the restriction to $S_0$ of the adjoint quotient map of $\mathfrak{g}_0$. 

\begin{theorem}\label{thm:restrictionsud}
The $AS(\Delta_0)$-equivariant deformation $\delta:S \rightarrow \mathfrak{h}/W$ of $X$ is $AS(\Delta_0)$-semiuniversal, and the restriction $\delta^{AS(\Delta_0)}$ of $\delta$ over the fixed point space $(\mathfrak{h}/W)^{AS(\Delta_0)}$ is isomorphic to $\delta_0$.
\end{theorem}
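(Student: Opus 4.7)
The strategy is to establish the two assertions separately, both by invoking the uniqueness of semiuniversal deformations. For the first assertion ($AS(\Delta_0)$-semiuniversality of $\delta$), Theorem~\ref{Brieskorn} already gives that $\delta$ is a semiuniversal deformation of $X$ in the homogeneous sense, and the setup of the theorem equips $\delta$ with an $AS(\Delta_0)$-equivariant structure that restricts to the given action on $X$. I would then invoke the general construction of Slodowy recalled in Subsection~\ref{BCFGdefinition}: for a hypersurface singularity with a reductive group action, there exists an equivariant semiuniversal deformation through which every equivariant deformation factors equivariantly. Since ordinary semiuniversal deformations are unique up to isomorphism, $\delta$ must be isomorphic (as a plain deformation) to this abstract model; the isomorphism can then be promoted to an equivariant one because the $AS(\Delta_0)$-action on the base of any such deformation is determined by its induced action on the Zariski tangent space $T^1_X$ of the deformation functor, which is intrinsic to $X$ equipped with its given $AS(\Delta_0)$-action. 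Hence $\delta$ itself inherits the universal factorization property, and the base-change diagram of Subsection~\ref{BCFGdefinition} identifies the semiuniversal deformation of $(X, AS(\Delta_0))$ precisely with $\delta^{AS(\Delta_0)}$.

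For the second assertion, the preceding theorem guarantees that $(X, AS(\Delta_0))$ is a simple singularity of inhomogeneous type $\Delta_0$, while Theorem~\ref{Brieskorngeneralised} applied to $\mathfrak{g}_0$ yields that $\delta_0$ is a semiuniversal deformation of a simple singularity of type $\Delta_0$. By uniqueness of semiuniversal deformations in the inhomogeneous setting, $\delta^{AS(\Delta_0)}$ and $\delta_0$ must be isomorphic. As a sanity check on the base spaces: $\dim(\mathfrak{h}/W)^{AS(\Delta_0)}$ equals the number of $AS(\Delta_0)$-orbits on the simple roots of $\Delta$, which by the foldings tabulated in Section~\ref{sec:folding} is exactly $\mathrm{rk}(\Delta_0) = \dim \mathfrak{h}_0/W_0$, matching the dimension of the base of $\delta_0$.

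The most delicate step is the equivariance argument in the first part: promoting the abstract isomorphism between the two a priori distinct non-equivariant semiuniversal deformations of $X$ to an $AS(\Delta_0)$-equivariant one, equivalently verifying that the action of $AS(\Delta_0)$ on $\mathfrak{h}/W$ coming from the Dynkin diagram symmetry is the same action as the one carried by Slodowy's abstract semiuniversal deformation. This should reduce to comparing the two induced actions on $T^1_X$, tracing the geometric action back through the realisation $X = S \cap \mathcal{N}(\mathfrak{g})$ and observing that both specialize to the prescribed $AS(\Delta_0)$-action on the special fibre.
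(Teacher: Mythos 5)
The paper does not actually prove this theorem: it is stated with the remark that ``the following two theorems are proved in Section 8.8 of \cite{Slo80}'', so the only thing to compare your proposal against is Slodowy's original argument. Your outline reconstructs that argument correctly in its broad strokes --- equivariant semiuniversality of $\delta$ via the general theory of equivariant deformations of hypersurface singularities under a reductive group, followed by uniqueness of semiuniversal deformations of the pair $(X,AS(\Delta_0))$ to identify $\delta^{AS(\Delta_0)}$ with $\delta_0$ --- and your dimension count for the base spaces is the right consistency check.

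Two points deserve sharper treatment than your sketch gives them. First, the ``promotion'' of a plain isomorphism of semiuniversal deformations to an $AS(\Delta_0)$-equivariant one is not obtained merely by comparing induced actions on $T^1_X$: two actions on the base germ inducing the same tangent action are conjugate (the group is finite, so the action linearizes), but one must also conjugate compatibly with the deformation structure. The tool that does this in one stroke is the uniqueness, up to \emph{equivariant} isomorphism, of the $G$-structure on the semiuniversal deformation (Rim's theorem, reproved by Slodowy in the section of \cite{Slo80} that the paper summarizes in Subsection~\ref{BCFGdefinition}); you should cite that uniqueness statement rather than argue through $T^1_X$ alone. Second, for the identification $\delta^{AS(\Delta_0)} \cong \delta_0$, abstract uniqueness of semiuniversal deformations only yields an isomorphism of germs after choosing an identification of the two special fibres; Slodowy's actual proof uses the contracting $\mathbb{G}_m$-actions (the positivity of the weights on $\mathfrak{h}_0/W_0$ and on $(\mathfrak{h}/W)^{AS(\Delta_0)}$, exactly as in Remark~\ref{h0andh1}) to globalize this germ isomorphism and to pin down the identification of base spaces. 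Neither issue invalidates your strategy, but both are where the real work of the cited proof lies.
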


\begin{remark}\label{h0andh1}
The above theorem allows an identification of $\mathfrak{h}_0 /W_0$ with $(\mathfrak{h}/W)^{AS(\Delta_0)}$. However, another identification is possible. The group $AS(\Delta_0)$ acts on the Dynkin diagram $\Delta$ and its action can be naturally extended to $\mathfrak{h}$. Set $\mathfrak{h}_1=\mathfrak{h}^{AS(\Delta_0)}$ and $W_1=\{w \in W \ | \ w\gamma=\gamma w, \forall \gamma \in AS(\Delta_0)\}$. Then the natural map $\mathfrak{h}_1 \rightarrow \mathfrak{h}/W$ induces a $\mathbb{G}_m$-equivariant morphism $\mathfrak{h}_1/W_1 \rightarrow (\mathfrak{h}/W)^{AS(\Delta_0)}$. The $\mathbb{G}_m$-weights of this morphism are the same on $\mathfrak{h}_1/W_1$ as they are on $(\mathfrak{h}/W)^{AS(\Delta_0)}$, and they are strictly positive. Using \cite{Slo80} Section 8.1, Lemma 3, one finds that it is an isomorphism.
\end{remark}

\section{Foldings of root systems and quivers}\label{sec:Tanisaki}

In this section, we present a construction by T. Tanisaki of some kind of folding of the representation space of a quiver whose underlying graph is a simply laced Dynkin diagram. Subsequently, observations on the different types of folding will be given.

\subsection{Foldings and quivers representations}

A theorem due to P. Gabriel states if a connected quiver is of finite type, then its graph is a simply laced Dynkin diagram of type $\Delta$, and there is a bijection from the set of isomorphism classes of indecomposable representations of the quiver onto the set of positive roots of a root system of type $\Delta$ given by the dimension vector function. This result has been extended by T. Tanisaki in \cite{Tani80} to the non-simply laced Dynkin diagrams using an approach based on foldings of root systems. 

Let $Q=(Q_0, Q_1, s, t)$ be a simply laced quiver with the set $Q_0$ of vertices, the set $Q_1$ of arrows, the source map $s$ and the target map $t$. Let us label the vertices of $Q$ by integers. An automorphism of $Q$ is a permutation $\sigma$ of the set $Q_0$ such that a pair $(\sigma(i),\sigma(j))$ forms an edge of $Q$ if and only if the pair $(i, j)$ is an edge of $Q$. The set of automorphisms of $Q$ is a group $\mathrm{Aut}(Q)$ given by
\begin{center}
$\mathrm{Aut}(Q)=\{\sigma=(\sigma_0,\sigma_1) \in \mathfrak{S}^{Q_0} \times \mathfrak{S}^{Q_1} \ | \ s(\sigma_1(\alpha))=\sigma_0(s(\alpha)), \ t(\sigma_1(\alpha))=\sigma_0(t(\alpha)) \text{ for all } \alpha \in Q_1\}$,
\end{center}
where $\mathfrak{S}^{Q_i}$ denotes the permutation group of the set $Q_i$, $i=0$, $1$. For any $\sigma \in \mathrm{Aut}(Q)$, one defines the functor $F^\sigma:\mathrm{Rep}(Q) \rightarrow \mathrm{Rep}(Q)$ in the following way:

\renewcommand{\labelitemi}{$\bullet$}
\begin{itemize}\setlength\itemsep{0.3pt}
\item For any representation $(V_i, f_\alpha)_{i \in Q_0,\alpha \in Q_1}$ of $Q$, set $F^\sigma((V_i, f_\alpha)_{i \in Q_0,\alpha \in Q_1})=(W_i, g_\alpha)_{i \in Q_0,\alpha \in Q_1}$ such that $W_i=V_{\sigma_0^{-1}(i)}$ for any $i \in Q_0$, and $g_\alpha=f_{\sigma_1^{-1}(\alpha)}$ for any $\alpha \in Q_1$.
\item For any morphism $\varphi:(V_i, f_\alpha)_{i \in Q_0,\alpha \in Q_1} \rightarrow (W_i, g_\alpha)_{i \in Q_0,\alpha \in Q_1}$ of representations of $Q$, the morphism $F^\sigma(\varphi):F^\sigma(V_i, f_\alpha) \rightarrow F^\sigma(W_i, g_\alpha)$ is defined by $(F^\sigma(\varphi))_i=\varphi_{\sigma_0^{-1}(i)}$ for all $i \in Q_0$.
\end{itemize}

\begin{definition}
Let $G$ be a subgroup of $\mathrm{Aut}(Q)$. Define the full subcategory $\mathrm{Rep}(Q)^G$ of $\mathrm{Rep}(Q)$ as follows: set $(V_i, f_\alpha)_{i \in Q_0,\alpha \in Q_1} \in \mathrm{Rep}(Q)$. Then $(V_i, f_\alpha)_{i \in Q_0,\alpha \in Q_1} \in \mathrm{Rep}(Q)^G$ if, for any $g \in G$, the representation $F^g((V_i, f_\alpha)_{i \in Q_0,\alpha \in Q_1})$ is isomorphic to $(V_i, f_\alpha)_{i \in Q_0,\alpha \in Q_1}$ in the category $\mathrm{Rep}(Q)$. 
\end{definition} 

One can see $\mathrm{Rep}(Q)^G$ as the invariants up to isomorphism of $\mathrm{Rep}(Q)$ by the action of $G$ through the functor $F$. 

We are now able to state Tanisaki's theorem (cf. \cite{Tani80}).

\begin{theorem}\emph{\textbf{(Tanisaki).}}\label{Tanisaki}
Let $Q$ be a connected quiver and $G$ a subgroup of $\mathrm{Aut}(Q)$ which preserves the orientation of $Q$. Then the following assertions are verified:

\begin{enumerate}[(i)]\setlength\itemsep{0.3em}
\item In the category $\mathrm{Rep}(Q)^G$, the Krull-Remak-Schmidt theorem (which states the unicity of the decomposition of a module into a direct sum of indecomposable submodules) is valid. 

\item There are only finitely many isomorphism classes of indecomposable objects in $\mathrm{Rep}(Q)^G$ if and only if: 
\vspace{-\topsep}
\begin{itemize}\setlength\itemsep{0.3pt}
\item Either $Q$ is a simply laced Dynkin diagram and $G$ is trivial,
\item Or $Q$ and $G$ are as in the following table:
\end{itemize}
\begin{center}
\begin{tabular}[h]{|c|c|c|}
\hline
 $\Delta(Q,G)$ & Graph of $Q$ & $G$\\
\hhline{|=|=|=|}
 \multirow{-1.5}{*}{$B_r$ ($r\geq 2$)}& 
 \begin{tikzpicture}[scale=0.6,  transform shape]
\tikzstyle{point}=[circle,draw,fill]
\tikzstyle{ligne}=[thick]
\tikzstyle{pointille}=[thick,dotted]

\node (1) at ( -4,0) [point] {};
\node (2) at ( -2,0) [point]{};
\node (3) at ( 0,0) [point] {};
\node (4) at ( 3,0) [point]{} ;
\node (5) at ( 5,0) [point] {};

\node (6) at ( -4,0.5) {1};
\node (7) at ( -2,0.5)  {2};
\node (8) at ( 0,0.5)  {3};
\node (9) at ( 3,0.5) {2r-2};
\node (10) at ( 5,0.5) {2r-1};

\draw [ligne] (1)  --  (2);
\draw [ligne] (2)  --  (3);
\draw [pointille] (3)  --  (4);
\draw [ligne] (4)  --  (5);

\end{tikzpicture}&  \multirow{-1.5}{*}{$\{1, \tau\}$ with $\tau(i)=2r-i$} \\

\hline
 \multirow{-3.8}{*}{$C_r$ ($r\geq 3$)} & 
  \begin{tikzpicture}[scale=0.6,  transform shape]
\tikzstyle{point}=[circle,draw,fill]
\tikzstyle{ligne}=[thick]
\tikzstyle{pointille}=[thick,dotted]

\node (1) at ( -6,0) [point] {};
\node (2) at ( -4,0) [point] {};
\node (3) at ( -2,0) [point] {};
\node (4) at ( 0,0) [point] {};
\node (5) at ( 2,1) [point] {};
\node (6) at ( 2,-1) [point] {};

\node (7) at ( -6,0.5)  {1};
\node (8) at ( -4,0.5)  {2};
\node (9) at ( -2,0.5)  {3};
\node (10) at ( 0,0.5) {r-1};
\node (11) at ( 2,1.5) {r};
\node (12) at ( 2,-0.5) {r+1};

\draw [ligne] (1)  --  (2);
\draw [ligne] (2)  --  (3);
\draw [pointille] (3)  --  (4);
\draw [ligne] (4)  --  (5);
\draw [ligne] (4)  --  (6);

\end{tikzpicture}  &  \multirow{-3.8}{*}{{\small \begin{tabular}{c} $\{1, \tau\}$ with  \\ $\tau(r)={r+1}$, \\ $\tau({r+1})={r}$, \\ $\tau(i)=i$ if $i \neq r, r+1$. \end{tabular}}} \\
\hline
 \multirow{-3.8}{*}{$F_4$} &  \begin{tikzpicture}[scale=0.6,  transform shape]
\tikzstyle{point}=[circle,draw,fill]
\tikzstyle{ligne}=[thick]
\tikzstyle{pointille}=[thick,dotted]

\node (1) at ( -4,0) [point] {};
\node (2) at ( 0,-2) [point] {};
\node (3) at ( -2,0) [point] {};
\node (4) at ( 0,0) [point] {};
\node (5) at ( 2,0) [point] {};
\node (6) at ( 4,0) [point] {};

\node (7) at ( -4,0.5)  {1};
\node (8) at ( -0.5,-2)  {2};
\node (9) at ( -2,0.5)  {3};
\node (10) at ( 0,0.5) {4};
\node (11) at ( 2,0.5) {5};
\node (12) at ( 4,0.5) {6};

\draw [ligne] (1)  --  (3);
\draw [ligne] (3)  --  (4);
\draw [ligne] (4)  --  (2);
\draw [ligne] (4)  --  (5);
\draw [ligne] (5)  --  (6);

\end{tikzpicture} &  \multirow{-3.9}{*}{{\small $\begin{array}{c} \{1, \tau\} \text{ with} \\ \tau(1)=6, \tau(2)=2, \\ \tau(3)=5, \tau(4)=4, \\ \tau(5)=3, \tau(6)=1. \end{array}$}} \\
\hline
 \multirow{-5}{*}{$G_2$} & 
 \begin{tikzpicture}[scale=0.6,  transform shape]
\tikzstyle{point}=[circle,draw,fill]
\tikzstyle{ligne}=[thick]
\tikzstyle{pointille}=[thick,dotted]

\node (3) at ( -2.1213,0) [point] {};
\node (4) at ( 0,0) [point] {};
\node (5) at ( 1.5,1.5) [point] {};
\node (6) at ( 1.5,-1.5) [point] {};

\node (9) at ( -2.1213,0.5)  {1};
\node (10) at ( 0,0.5) {2};
\node (11) at ( 1.5,2) {3};
\node (12) at ( 1.5,-1) {4};

\draw [ligne] (3)  --  (4);
\draw [ligne] (4)  --  (5);
\draw [ligne] (4)  --  (6);

\end{tikzpicture}  &  \multirow{-5}{*}{\begin{tabular}{l} $G$ acts transitively \\ on $\{1,3,4\}$ and \\  fixes $2$. \end{tabular}}  \\
 \hline
 \end{tabular}
  \captionof{table}{Foldings by T. Tanisaki}
  \label{FoldingsTanisaki}
\end{center}
If $Q$ is a simply laced Dynkin diagram and $G$ is trivial, set $\Delta(Q,G)$ as the type of the Dynkin diagram.

\item Let $(Q,G,\Delta(Q,G))$ be a triple as in (ii). Then there exists a natural one-to-one correspondence between the set of isomorphism classes of indecomposable objects of $\mathrm{Rep}(Q)^G$ and the set of positive roots of the root system of type $\Delta(Q,G)$.
\end{enumerate}
\end{theorem}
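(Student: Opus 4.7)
The plan is to use Gabriel's theorem (which identifies indecomposable representations of a simply laced Dynkin quiver with positive roots) together with the classical folding theory of root systems already recalled in Table~\ref{tablefoldings}, and reduce the whole statement to a combinatorial assertion about $G$-orbits. Because $G$ preserves the orientation of $Q$, the functor $F^g$ is a covariant endofunctor of $\mathrm{Rep}(Q)$ that permutes isomorphism classes of indecomposables; everything will be controlled by the combinatorics of these permutations.

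For part (i), I would start from an $M \in \mathrm{Rep}(Q)^G$ and decompose it into indecomposables in $\mathrm{Rep}(Q)$, where Krull-Remak-Schmidt is classical; the condition $F^g(M) \cong M$ forces the multiset of indecomposable summands of $M$ to be $G$-invariant, i.e.\ each $G$-orbit of indecomposable isomorphism classes appears with a constant multiplicity in $M$. I would then identify the indecomposable objects of $\mathrm{Rep}(Q)^G$ with the ``orbit sums'' $B_O = N_1 \oplus \cdots \oplus N_k$, where $\{[N_1], \ldots, [N_k]\}$ is a single $G$-orbit of indecomposables of $Q$: any splitting $B_O = P \oplus P'$ with $P,P' \in \mathrm{Rep}(Q)^G$ would, by the constant-multiplicity condition on orbits, force one of $P$, $P'$ to be zero. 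The Krull-Remak-Schmidt property for $\mathrm{Rep}(Q)^G$ then transfers directly from that of $\mathrm{Rep}(Q)$ through this orbit description.

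For part (ii), combining (i) with Gabriel's theorem shows that $\mathrm{Rep}(Q)^G$ has only finitely many isomorphism classes of indecomposables if and only if $\mathrm{Rep}(Q)$ does, i.e.\ exactly when the underlying graph of $Q$ is a simply laced Dynkin diagram. Enumerating the orientation-preserving automorphism subgroups $G \subset \mathrm{Aut}(Q)$ on each simply laced Dynkin quiver then yields precisely the four non-trivial entries of the table, coming from the $\mathbb{Z}/2\mathbb{Z}$ symmetries of $A_{2r-1}$, $D_{r+1}$, $E_6$ and the $\mathfrak{S}_3$ symmetry of $D_4$. For part (iii), I would invoke the classical bijection, in the setting of Table~\ref{tablefoldings}, between $G$-orbits on positive roots of the simply laced root system of type $\Delta$ and positive roots of the folded root system of type $\Delta(Q,G)$. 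Chained with Gabriel's bijection and with the orbit description from (i), this delivers the desired one-to-one correspondence, sending $B_O$ to the positive root of $\Delta(Q,G)$ attached to the orbit $O$.

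The main technical obstacle is the orbit-to-positive-root bijection in (iii): although straightforward to verify case-by-case using the explicit root data of $A_{2r-1}$, $D_{r+1}$, $E_6$ and $D_4$, a uniform proof requires relating the averaged $G$-orbit sums of simple roots in $\Delta$ to simple roots of $\Delta(Q,G)$, and then extending the identification to all positive roots in a way compatible with the $\mathbb{Z}_{\geq 0}$-expansions. This is where the orientation-preserving hypothesis enters crucially; without it, $F^g$ would involve the opposite quiver $Q^{\mathrm{op}}$, and neither the orbit analysis of (i) nor the root-theoretic bijection of (iii) would apply as stated.
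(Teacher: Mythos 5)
This theorem appears in the paper only as a citation of Tanisaki's article \cite{Tani80}; no proof is given in the text, so your attempt can only be measured against the cited source rather than against an argument contained in the paper itself. That said, your outline is sound and reconstructs the essential mechanism correctly. The key observation --- that an object of $\mathrm{Rep}(Q)^G$ is precisely a representation whose multiset of indecomposable summands in $\mathrm{Rep}(Q)$ has multiplicity constant along $G$-orbits, so that the indecomposable objects of the subcategory are exactly the orbit sums $B_O$ --- is the right one. It rests on the fact that each $F^g$ is an additive self-equivalence of $\mathrm{Rep}(Q)$ (with quasi-inverse $F^{g^{-1}}$) permuting isomorphism classes of indecomposables; this is where the hypothesis that $G$ preserves the orientation enters, exactly as you say, and it is also what excludes $A_{2r}$ from the table in (ii), as the remark following the theorem in the paper explains. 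Parts (i) and (ii) then follow from the classical Krull--Remak--Schmidt and Gabriel theorems together with the finiteness of $G$-orbits, and your argument that a splitting $B_O = P \oplus P'$ inside $\mathrm{Rep}(Q)^G$ forces one factor to vanish is correct.

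Two points deserve care in part (iii), which you rightly flag as the technical heart. First, be sure to use the correct column of Table~\ref{tablefoldings}: the type $\Delta(Q,G)$ in Table~\ref{FoldingsTanisaki} is that of the folded root lattice $Q(\mathfrak{g})^{\sigma}$ (e.g.\ $B_r$ for $A_{2r-1}$), not that of the fixed subalgebra $\mathfrak{g}_0$ (which would be the dual $C_r$); concretely, the bijection sends a $G$-orbit of positive roots to its orbit sum, and one checks that the orbit sums of the simple roots form a base of type $\Delta(Q,G)$, with fixed roots staying short and nontrivial orbits producing long roots. Second, the orbit counts do match ($r^2$ orbits on the positive roots of $A_{2r-1}$ against $r^2$ positive roots of $B_r$; $12+12=24$ for $E_6/F_4$; $6$ for $D_4/G_2$), but a bijection of sets is weaker than the natural correspondence asserted in (iii); to obtain the latter you should note that the dimension vector of $B_O$ is the orbit sum of the positive root attached to $O$ by Gabriel's theorem, which is immediate from the definition of $F^g$. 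With these two points made explicit I see no gap in your strategy.
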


\begin{remark}
In the case where $Q$ is of type $D_4$ and $\Delta(Q,G)$ is of type $G_2$, the group $\mathfrak{S}_3$ can be replaced by the smaller group $\mathbb{Z}/3\mathbb{Z}$ and the results of Theorem~\ref{Tanisaki} remain valid.
\end{remark}

\begin{remarks}
\begin{enumerate}\setlength\itemsep{0.3em}
\item One notices that $\Delta(Q,G)$ is the type obtained by folding the root lattice associated with the graph of $Q$ by the action of $G$.
\item The type $A_{2r}$ with the symmetry given by $\sigma(i)={2r+1-i}$ where $G=\mathbb{Z}/2\mathbb{Z}=<\sigma>$ does not appear in the previous theorem.
\begin{center}
 \begin{tikzpicture}[scale=0.5,  transform shape,>=angle 60]
\tikzstyle{point}=[circle,draw,fill]
\tikzstyle{ligne}=[thick]
\tikzstyle{pointille}=[thick,dotted]

\node (1) at ( -2,0) [point]{};
\node (2) at ( 0,0) [point] {};
\node (3) at ( 3,0) [point]{} ;
\node (4) at ( 5,0) [point] {};
\node (5) at ( 8,0) [point] {};
\node (6) at ( 10,0) [point,] {};

\node (7) at ( -2,0.5)  {1};
\node (8) at ( 0,0.5)  {2};
\node (9) at ( 3,0.5) {r};
\node (10) at ( 5,0.5) {r+1};
\node (11) at ( 8,0.5) {2r-1};
\node (12) at ( 10,0.5) {2r};

\node (13) at ( -1.7,-0.3) {};
\node (14) at ( 0.3,-0.3) {};
\node (15) at ( 3.3,-0.3) {} ;
\node (16) at ( 4.7,-0.3) {};
\node (17) at ( 7.7,-0.3) {};
\node (18) at ( 9.7,-0.3) {};

\node (19) at ( 4,-4) {{\Large $\sigma$}};

\draw [ligne] (1)  --  (2);
\draw [pointille] (2)  --  (3);
\draw [ligne] (3)  --  (4);
\draw [pointille] (4)  --  (5);
\draw [ligne] (5)  --  (6);

\draw[decoration={markings, mark=at position 1 with {\arrow[scale=1.2,>=stealth]{>}}},postaction={decorate}]  (13.west) to[out=-60, in=-120] (18.east);
\draw [decoration={markings, mark=at position 1 with {\arrow[scale=1.2,>=stealth]{>}}},postaction={decorate}]  (14.west) to[out=-60, in=-120] (17.east);
\draw [decoration={markings, mark=at position 1 with {\arrow[scale=1.2,>=stealth]{>}}},postaction={decorate}]  (15.west) to[out=-60, in=-120] (16.east);

\draw [decoration={markings, mark=at position 1 with {\arrow[scale=1.2,>=stealth]{>}}},postaction={decorate}] (9.71,-0.495) -- (9.88,-0.2);
\draw [decoration={markings, mark=at position 1 with {\arrow[scale=1.2,>=stealth]{>}}},postaction={decorate}] (7.71,-0.495) -- (7.89,-0.2);
\draw [decoration={markings, mark=at position 1 with {\arrow[scale=1.2,>=stealth]{>}}},postaction={decorate}] (4.71,-0.46) -- (4.92,-0.2);

\draw [decoration={markings, mark=at position 1 with {\arrow[scale=1.2,>=stealth]{>}}},postaction={decorate}] (-1.71,-0.495) -- (-1.88,-0.2);
\draw [decoration={markings, mark=at position 1 with {\arrow[scale=1.2,>=stealth]{>}}},postaction={decorate}] (0.29,-0.495) -- (0.12,-0.2);
\draw [decoration={markings, mark=at position 1 with {\arrow[scale=1.2,>=stealth]{>}}},postaction={decorate}] (3.29,-0.46) -- (3.08,-0.2);

\end{tikzpicture}
\end{center}
Theorem~\ref{Tanisaki} is based upon choosing an orientation of the quiver $Q$ and a subgroup $G$ of $\mathrm{Aut}(Q)$ which preserves the orientation. However, the non-trivial symmetry group of $A_{2r}$ is $\mathbb{Z}/2\mathbb{Z}$ and the action of $\sigma$ would send the arrow $r \rightarrow r+1$ to $r+1 \rightarrow r$, thus reversing the orientation of this edge. It is the same if we choose to orient the edge by $r+1 \rightarrow r$. Therefore there cannot be a subgroup $G$ of $\mathrm{Aut}(Q)$ which preserves the orientation and so Tanisaki's result is not valid anymore.
\end{enumerate}
\end{remarks}

\subsection{Observations on foldings}

In Section~\ref{sec:folding}, we realised foldings on simply laced Lie algebras and their root lattices. The results were dual to each others (cf. Table~\ref{tablefoldings}). Then the definition by P. Slodowy of the inhomogeneous simple singularities was given (cf. Table~\ref{definhomogeneous}). This definition is based on the symmetries of the Dynkin diagrams. P. Slodowy also computed two variants of the McKay correspondence (\cite{McK80}). The first one is called "by restriction" and works as follows: start with a pair $(\Gamma, \Gamma')$ as in Table~\ref{definhomogeneous}. The irreducible representations of $\Gamma'$ can be restricted to $\Gamma$. Applying the same procedure as in the McKay correspondence to the equivalence classes of these restricted representations leads to the Cartan matrix of the extended Dynkin diagram of type $\widetilde{\Delta^\vee}(\Gamma,\Gamma')$. The second variant is called "by induction". Here we induce representations of $\Gamma'$ from the irreducible representations of $\Gamma$, apply the regular procedure of the McKay correspondence and get the Cartan matrix dual to the one obtained by restriction (details can be found in \cite{Slo80} Annex III). Finally, T. Tanisaki realised foldings on representation spaces of Dynkin quivers (cf. Table~\ref{FoldingsTanisaki}). 

The way P. Slodowy defined inhomogeneous singularities is in adequacy with foldings of root lattices as well as with foldings using quiver representations. Furthermore, the McKay correspondences by induction and restriction establish a similar link between the homogeneous and the inhomogeneous Dynkin diagrams, however less direct because of the extension of the Dynkin diagrams as well as their duals.  

The connections between the different foldings are summarised in the following diagram:
\begin{figure}[H]
\resizebox{0.96\textwidth}{!}{\rotatebox{0}{
\begin{tikzpicture}[every text node part/.style={align=center}]
\node[draw,ellipse,text width=4cm](F) at (0,0) {Foldings};
\node[draw,ellipse,text width=4cm](HMC) at ( -9,0) {Homogeneous \\ McKay correspondence} ; 
\node[draw,ellipse,text width=4cm](IMC) at (9,0) {Inhomogeneous \\ McKay correspondence}; 
\node[draw,rectangle,text width=3cm](EDD) at ( -9, -5) {Extended Dynkin \\ diagrams $\widetilde{\Delta(\Gamma)}$  \\ $\widetilde{A_r}, \ \widetilde{D_r}, \ \widetilde{E_r}$}; 

\node[draw,rectangle,text width=3cm](Gamma) at ( -3.5, -2.5) {$\Gamma \subset \ \mathrm{SU}_2$  \\ finite}; 
\node[draw,rectangle,text width=3cm](singularity) at ( -3.5, -7.5) {Singularity $\cc^2 / \Gamma$  \\ resolution $\rightarrow \Delta(\Gamma)$ \\ $A_r, \ D_r, \ E_r$  };
 
\node[draw,rectangle,text width=3cm](lie algebra) at ( -5, -12.5) {Lie algebra $\mathfrak{g}_\Phi$}; 

\node[draw,rectangle,text width=3cm](root system) at ( -2, -10) {Root system $\Phi$}; 

\node[draw,rectangle,text width=3cm](inv) at (2.5, -10) {Invariants $\Phi^{\sigma}$}; 
\node[draw,rectangle,text width=3cm](dual inv) at (2.5, -12.5) {Invariants $(\mathfrak{g}_\Phi)^\sigma$}; 

\node[draw,rectangle,text width=3cm](Gamma') at (3.5, -2.5) {$\Gamma \lhd \Gamma' \subset \ \mathrm{SU}_2$  \\ finite}; 
\node[draw,rectangle,text width=4cm](singularity') at (3.5, -7.5) {Singularity $\left(\cc^2/ \Gamma, \Gamma'/\Gamma \right)$  \\ $\Delta(\Gamma, \Gamma')$ \\ $B_r, \ C_r, \ F_4, \ G_2$  }; 

\node[draw, text width = 4.2 cm, text centered](Delta) at (9.5, -2.5) {$\widetilde{\Delta^\vee(\Gamma,\Gamma')}$  \\ $\widetilde{C_r}, \ \widetilde{B_r}, \ \widetilde{F_4}, \ \widetilde{G_2}$}; 
\node[draw, text width = 4.2 cm, text centered](Delta*) at (9.5, -7.5) {$\left(\widetilde{\Delta^\vee(\Gamma,\Gamma')}\right)^\vee$  \\ $\left(\widetilde{C_r}\right)^\vee \hspace{-0.2cm}, \left(\widetilde{B_r}\right)^\vee \hspace{-0.2cm}, \left(\widetilde{F_4}\right)^\vee \hspace{-0.2cm}, \left(\widetilde{G_2}\right)^\vee$}; 

\draw[->,>=stealth] (Gamma) --  (EDD)node[above, pos=0.7, rotate = 25]{Homogeneous \\ McKay};
\draw[->,>=stealth] (singularity) --  (EDD)node[below, pos=0.5, rotate =  -25]{extended};

\draw[->,>=stealth] (Gamma) --  (singularity)  node[right, pos=0.5]{quotient};

\draw[->,>=stealth, dashed] (Gamma) -- (Gamma');
\draw[->,>=stealth, dashed] (singularity) -- (singularity');

\draw[->,>=stealth] ( -5, -8.1) -- (lie algebra);
\draw[->,>=stealth] ( -2, -8.1) -- (root system);

\draw[->,>=stealth, dashed] (root system) -- (inv);
\draw[->,>=stealth, dashed] (lie algebra) -- (dual inv);

\draw[->,>=stealth] (Gamma') --  (Delta)node[above, pos=0.5]{McKay \\ restriction};

\draw[->,>=stealth] (5.8, -2.5)|-  (Delta*)node[above, pos=0.22, rotate = 90]{McKay induction};

\draw[<->,>=stealth] (inv) -- (dual inv)node[right, pos=0.5]{dual};

\draw (inv)  -- (6.7, -10)node[below, pos=0.5]{affinisation \\ of the dual};
\draw[->,>=stealth] (6.7, -10) |- (7.26, -2.8);

\draw[->,>=stealth] (dual inv) -| (Delta*)node[below, pos=0.25]{dual of the affinisation};

\draw[<->,>=stealth] (Delta) -- (Delta*)node[right, pos=0.5]{dual};

\end{tikzpicture}}}
\caption{Foldings and McKay correspondences}
\end{figure}
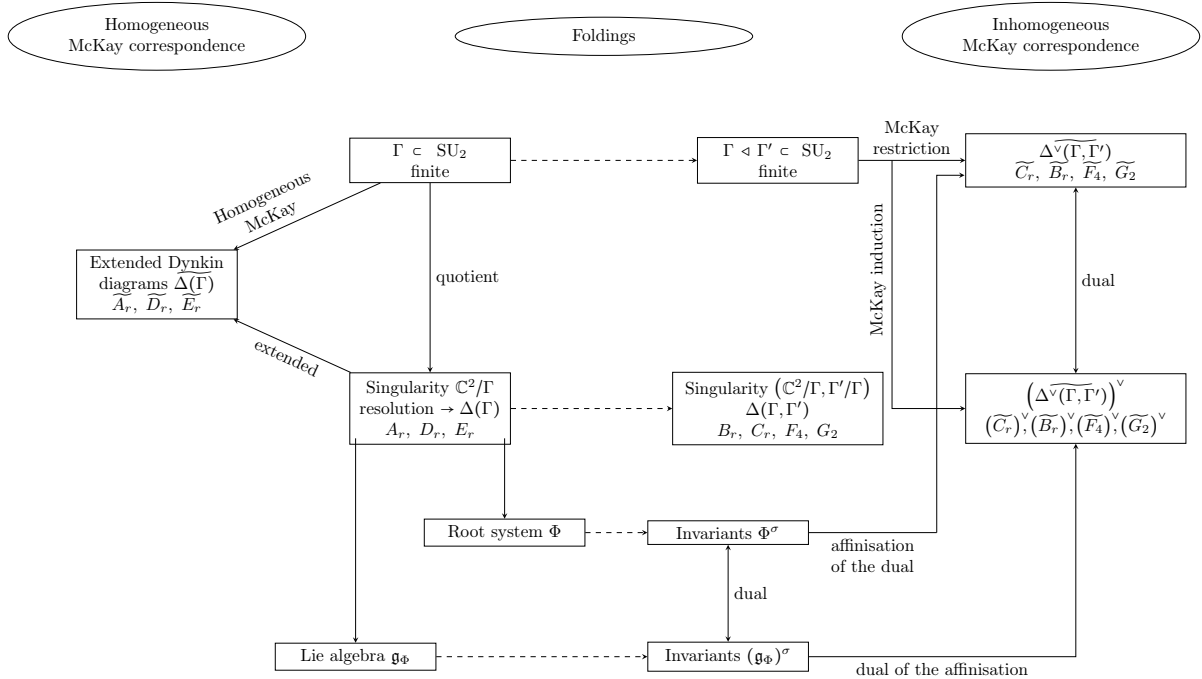

In the next section, we are going to present a construction by H. Cassens and P. Slodowy of the semiuniversal deformations of the homogeneous simple singularities using quiver representations and symplectic geometry. Their method will then be generalised to obtain the semiuniversal deformations of the inhomogeneous simple singularities. The process is based on the affinisation of the Dynkin diagram associated to the singularity through its minimal resolution, as illustrated below:

\begin{figure}[H]
\resizebox{0.96\textwidth}{!}{\rotatebox{0}{
\begin{tikzpicture}[every text node part/.style={align=center}]
\node[draw,ellipse,text width=4cm](QS) at (-4.5,0) {Quivers and symmetries};
\node[draw,ellipse,text width=4cm](DRwQ) at (4.5,0) {Deformations and resolutions with quivers} ;

\node[draw,rectangle,text width=3cm](A) at (-9,-2.5) {Quiver of type \\ $\Delta = A_r, \ D_r, \ E_r$};
\node[draw,rectangle,text width=3cm](B) at (-9,-10.5) {$B_r, \ C_r, \ F_4, \ G_2 $ \\ = quiver of type \\ $\Delta = A_r, \ D_r, \ E_r  +  \mathrm{Aut}(\Delta)$  };

\node[draw,rectangle,text width=3cm](McK) at (0,-2.5) {McKay quiver \\ $\widetilde{\Delta} = \widetilde{A_r}, \ \widetilde{D_r}, \ \widetilde{E_r}$};
\node[draw,rectangle,text width=3cm](tilde) at (0,-6.5) {$\widetilde{\Delta} = \widetilde{A_r}, \ \widetilde{D_r}, \ \widetilde{E_r}  +  \mathrm{Aut}(\widetilde{\Delta})$  };
\node[draw,rectangle,text width=5.8cm](tildeMcK) at (0,-10.5) {\stackon[0pt]{$\Delta = A_r, \ D_r, \ E_r  +  \mathrm{Aut}(\Delta)$}{\vstretch{1.5}{\hstretch{25.0}{\sim}}} };

\node[draw,rectangle,text width=3cm](SDRA) at (9,-2.5) {Semiuniversal deformations and \\ resolutions of type $A_r, \ D_r, \ E_r$}; 
\node[draw,rectangle,text width=3cm](SDRB) at (9,-10.5) {Semiuniversal deformations and \\ resolutions (in progress) of type \\ $B_r, \ C_r, \ F_4, \ G_2$}; 

\draw[->,>=stealth] (A) --  (McK)node[above, pos=0.5]{affinisation};
\draw[->,>=stealth] (McK) --  (SDRA)node[above, pos=0.5]{Section~\ref{sec:SlodowyCassens}};

\draw[->,>=stealth] (B) --  (tildeMcK)node[above, pos=0.5]{affinisation};
\draw[->,>=stealth] (tildeMcK) --  (SDRB)node[above, pos=0.5]{Section~\ref{sec:InhomogeneousDeformations}};

\draw[->,>=stealth] (McK) --  (tilde);
\draw[double, double distance=2.5pt, line width=1pt]  (tilde) -- (tildeMcK);
\draw[-, line width=1pt]  (-0.2,-8.6) -- (0.2,-8.4);

\draw[->,>=stealth] (A) --  (B)node[right, pos=0.5]{folding \\ (root system)};
\draw[->,>=stealth] (SDRA) --  (SDRB)node[left, pos=0.5]{folding \\ ( + $\mathrm{Aut}(\Delta)$)};

\end{tikzpicture}}}
\caption{Quivers, symmetries and simple singularities}
\end{figure}
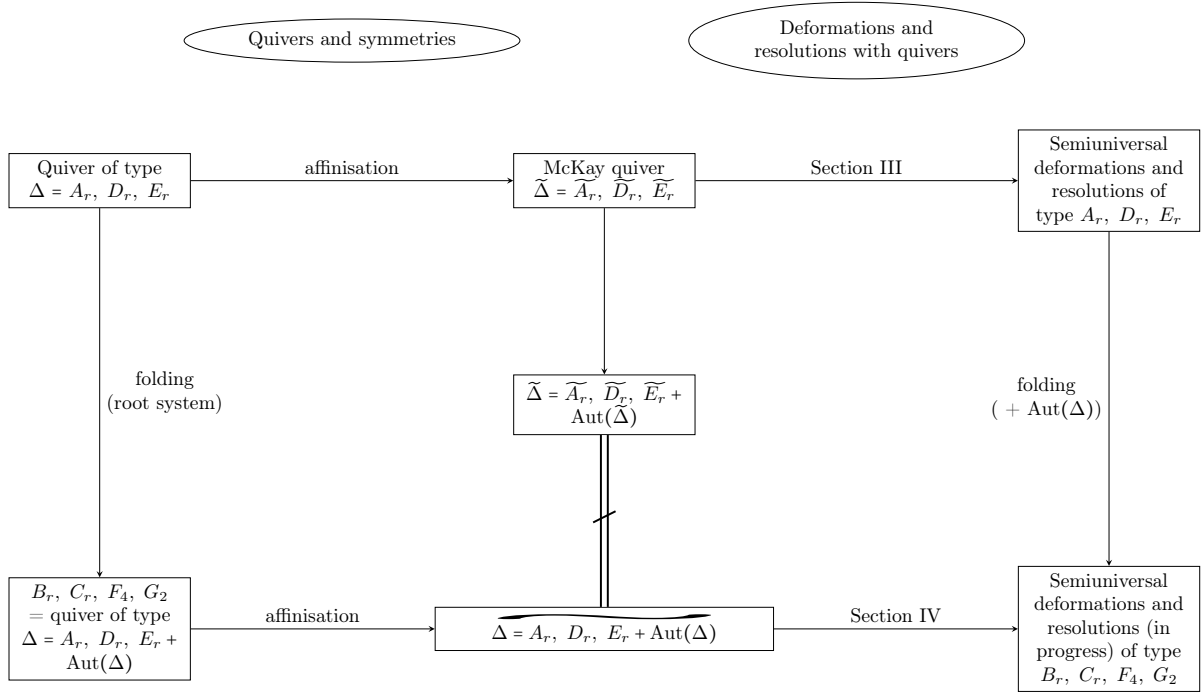 

\section{Deformations of homogeneous simple singularities}\label{sec:SlodowyCassens}

In this section, we present a construction by H. Cassens and P. Slodowy (cf. \cite{CaSlo98}) of the semiuniversal deformations of the simple singularities of type $A_r$, $D_r$ and $E_r$. 

Let $\Gamma$ be a finite subgroup of $\mathrm{SU}_2$ and $\Delta(\Gamma)$ the associated Dynkin diagram (cf. Subsection~\ref{subsub:definitionsimplesing}). Let $R_0,\ldots ,R_r$ be the irreducible representations of $\Gamma$ with $R_0$ the trivial one, and let $N$ be the natural representation obtained through the inclusion $\Gamma \ \mathlarger{\mathlarger{\subset}} \ \mathrm{SU}_2$. The regular representation of $\Gamma$ is $R= \bigoplus_{i=0}^{r} d_i R_i$ with $d_i=\mathrm{dim} \  R_i$. Then define
\begin{center}
\renewcommand{\arraystretch}{1.2}
$\begin{array}[t]{ccl}
M(\Gamma) & = & (\mathrm{End}(R) \otimes N)^\Gamma=(R^* \otimes R \otimes N)^\Gamma, \\
 & = &\displaystyle \mathrm{Hom}_\Gamma(R,R \otimes N)=\mathrm{Hom}_\Gamma(\bigoplus_{i=0}^rR_i \otimes \cc^{d_i},\bigoplus_{j=0}^rR_j \otimes N \otimes \cc^{d_j}), \\
 & = &\displaystyle \bigoplus_{i, j=0}^r\mathrm{Hom}_{\Gamma}(R_i ,R_j \otimes N) \otimes \mathrm{Hom}(\cc^{d_i},\cc^{d_j}), \\
 & = & \displaystyle\bigoplus_{\begin{tikzpicture}[scale=0.3,  transform shape]
\tikzstyle{point}=[circle,draw,fill]
\tikzstyle{ligne}=[thick]
\clip (-1.7,-0.2) rectangle (0.2, 0.9);
\node (1) at ( -1.5,0) [point]{};
\node (2) at ( 0,0) [point] {};
\node (3) at ( -1.5,0.6)  {{\huge i}};
\node (4) at ( 0,0.6)  {{\huge j}};
\draw [ligne] (1)  --  (2);

\end{tikzpicture}} \mathrm{Hom}(\cc^{d_i},\cc^{d_j}) \text{ by McKay's correspondence}, \\
 & = & \mathrm{Rep}(Q,\underline{d}),
\end{array}$
\end{center}
for a quiver $Q$ whose vertices are the vertices of the extended Dynkin diagram $\widetilde{\Delta}(\Gamma)$, with two arrows (one in each direction) for any edge in $\widetilde{\Delta}(\Gamma)$, and whose dimension vector is $\underline{d}=(d_0,d_1,\ldots .,d_r)$. Such a quiver is called a \textit{McKay quiver}. For every arrow $a:i \rightarrow j$ of $Q$, the opposite arrow $j \rightarrow i$ is denoted $\bar{a}$.

\begin{remark}
The dimension vector $\underline{d}$ associated to the McKay quiver $Q$ based on $\widetilde{\Delta}(\Gamma)$ is the same as the minimal imaginary root of the extended root system of type $\widetilde{\Delta}(\Gamma)$ (cf. \cite{Kac90}).
\end{remark}

\begin{example} If $\Gamma= \mathcal{O}$, then $\Delta(\Gamma)=E_7$ and the quiver $Q$ is
\begin{center}
\begin{tikzpicture}[scale=0.6, transform shape,>=angle 60]
\tikzstyle{point}=[circle,draw,fill]
\tikzstyle{ligne}=[thick]

\node (1) at (-4,0) [point]{};
\node (2) at ( -2,0) [point] {};
\node (3) at ( 0,0) [point]{};
\node (4) at ( 2,0) [point] {};
\node (5) at ( 4,0) [point]{};
\node (6) at ( 6,0) [point] {};
\node (7) at ( 8,0) [point]{};
\node (8) at ( 2,-2) [point] {};

\node (9) at (-4,0.5) []{0};
\node (10) at ( -2,0.5) [] {1};
\node (11) at ( 0,0.5) []{3};
\node (12) at ( 2,0.5) [] {4};
\node (13) at ( 4,0.5) []{5};
\node (14) at ( 6,0.5) [] {6};
\node (15) at ( 8,0.5) []{7};
\node (16) at ( 2,-2.5) [] {2};

\draw [decoration={markings,mark=at position 1 with
    {\arrow[scale=1.2,>=stealth]{>}}},postaction={decorate}] (-3.8,0.1)  --  (-2.2,0.1);
\draw [decoration={markings,mark=at position 1 with
    {\arrow[scale=1.2,>=stealth]{>}}},postaction={decorate}] (-2.2,-0.1) -- (-3.8,-0.1);
\draw [decoration={markings,mark=at position 1 with
    {\arrow[scale=1.2,>=stealth]{>}}},postaction={decorate}] (-1.8,0.1)  --  (-0.2,0.1);
\draw [decoration={markings,mark=at position 1 with
    {\arrow[scale=1.2,>=stealth]{>}}},postaction={decorate}] (-0.2,-0.1) -- (-1.8,-0.1) ;
\draw [decoration={markings,mark=at position 1 with
    {\arrow[scale=1.2,>=stealth]{>}}},postaction={decorate}] (0.2,0.1)  --  (1.8,0.1);
\draw [decoration={markings,mark=at position 1 with
    {\arrow[scale=1.2,>=stealth]{>}}},postaction={decorate}]  (1.8,-0.1) -- (0.2,-0.1);
\draw [decoration={markings,mark=at position 1 with
    {\arrow[scale=1.2,>=stealth]{>}}},postaction={decorate}] (2.2,0.1)  --  (3.8,0.1);
\draw [decoration={markings,mark=at position 1 with
    {\arrow[scale=1.2,>=stealth]{>}}},postaction={decorate}]  (3.8,-0.1) -- (2.2,-0.1);
\draw [decoration={markings,mark=at position 1 with
    {\arrow[scale=1.2,>=stealth]{>}}},postaction={decorate}] (4.2,0.1)  --  (5.8,0.1);
\draw [decoration={markings,mark=at position 1 with
    {\arrow[scale=1.2,>=stealth]{>}}},postaction={decorate}] (5.8,-0.1) -- (4.2,-0.1);
\draw [decoration={markings,mark=at position 1 with
    {\arrow[scale=1.2,>=stealth]{>}}},postaction={decorate}] (6.2,0.1)  --  (7.8,0.1);
\draw [decoration={markings,mark=at position 1 with
    {\arrow[scale=1.2,>=stealth]{>}}},postaction={decorate}] (7.8,-0.1) -- (6.2,-0.1);
\draw [decoration={markings,mark=at position 1 with
    {\arrow[scale=1.2,>=stealth]{>}}},postaction={decorate}] (2.1,-0.2)  --  (2.1,-1.8);
\draw [decoration={markings,mark=at position 1 with
    {\arrow[scale=1.2,>=stealth]{>}}},postaction={decorate}] (1.9,-1.8) -- (1.9,-0.2);
\end{tikzpicture}
\end{center}
with dimension vector $\underline{d}=\{1,2,2,3,4,3,2,1 \}$. 
\end{example}

The group $\prod_{i=0}^{r}\mathrm{GL}_{d_i}(\cc)$ acts on $M(\Gamma)$ by simultaneous conjugation and $\cc^*$ acts trivially. Therefore an action of $G(\Gamma)=(\prod_{i=0}^{r}\mathrm{GL}_{d_i}(\cc))/\cc^*$ on $M(\Gamma)$ is induced. 

Let $Q_1$ be the set of arrows of $Q$. For each pair of arrows between two vertices of $Q$, select one and then define a subset $Q_1^+$ of $Q_1$ composed of these selected arrows. An \textit{orientation} of $Q$ is a function $\epsilon:Q_1 \rightarrow \cc^*$ such that $\epsilon(a)=-\epsilon(\overline{a})=1$ for every arrow $a \in Q_1^+$. 

Fix an orientation of $Q$ and, for every pair $\varphi=(\varphi_a, a \in Q_1), \psi=(\psi_a, a \in Q_1)$ of elements of $M(\Gamma)$, set 
\begin{center}
$\langle \varphi,\psi  \rangle=\displaystyle \sum_{a \in Q_1} \epsilon(a) \mathrm{Tr}(\varphi_a \psi_{\overline{a}})$.
\end{center}
One can verify that $\langle.,.\rangle$ is a non-degenerate $G(\Gamma)$-invariant symplectic form on $M(\Gamma)$ and induces a moment map
\begin{center}
$\mu_{CS}:M(\Gamma) \rightarrow (\mathrm{Lie \ }G(\Gamma))^* \ \mathlarger{\mathlarger{\subset}} \ \displaystyle \bigoplus_{i=0}^{r}M_{d_i}(\cc)$
\end{center}
given by
\begin{center}
$\mu_{CS}(\varphi)= \displaystyle (\ldots ,\raisebox{-0.7\height}{\begin{tikzpicture}[scale=1.1,  transform shape]
\node (0) at ( 0,0) []{$\displaystyle \sum_{\begin{tikzpicture}[scale=0.35,  transform shape]
\tikzstyle{point}=[circle,draw,fill]
\tikzstyle{ligne}=[thick]

\node (1) at ( -2,0) [point]{};
\node (2) at ( 0,0) [point] {};
\node (3) at ( -2,0.6)  {{\huge j}};
\node (4) at ( 0,0.6)  {{\huge i}};
\node (5) at ( -1,0.6)  {{\huge a}};
\draw [ligne] (1)  --  (2);
\draw[decoration={markings, mark=at position 1 with {\arrow[scale=1.5,>=stealth]{>}}},postaction={decorate}] (-0.9,0)--(-0.7,0);
\end{tikzpicture}}\epsilon(a)\varphi_a \varphi_{\overline{a}} $};
\draw[decoration={brace,mirror,raise=14pt},decorate]
  (-1.2,0)  --  node[below=12pt] {\footnotesize $i^{\text{th}}$ entry} (1.2,0);
\end{tikzpicture}},\ldots )$.
\end{center} 
Here $\mathrm{Lie \ }G(\Gamma)$ is identified with its dual $(\mathrm{Lie \ }G(\Gamma))^*$ using the bilinear form defined from the trace. 

Let $Z$ be the dual of the center of $\mathrm{Lie \ }G(\Gamma)$. One can identify $Z$ with the subspace
\begin{center}
$\displaystyle\{ (\mu_0\mathrm{Id}_{d_0},\mu_1\mathrm{Id}_{d_1},\ldots ,\mu_r\mathrm{Id}_{d_r}) \in \prod_{i=0}^r M_{d_i}(\cc) \ | \ \mu_i \in \cc, \sum_{i=0}^{r}d_i \mu_i=0 \}$
\end{center}
of $\mathrm{Lie \ }G(\Gamma)$. As the moment map is $G(\Gamma)$-equivariant, the group $G(\Gamma)$ acts on the fibre $\mu_{CS}^{-1}(z)$ for any $z \in Z$, and so the quotient $\mu_{CS}^{-1}(z)//G(\Gamma)$ is well-defined. Finally, based on results by P.B. Kronheimer (\cite{Kron89}), H. Cassens and P. Slodowy proved that the map
 \begin{center}
\begin{tikzpicture}[scale=1,  transform shape]
\node (1) at ( 0,0) {$\mu_{CS}^{-1} (Z)// G(\Gamma)$};
\node (2) at ( 2.5,0) {$Z$};

\draw  [decoration={markings,mark=at position 1 with
    {\arrow[scale=1.2,>=stealth]{>}}},postaction={decorate}] (1.2,0)  --  (2.3,0);

\end{tikzpicture}
\end{center}
induced from the moment map is the pullback of the semiuniversal deformation of the simple singularity $\cc^2/\Gamma$. We hence have a construction of the pullback of the semiuniversal deformation of $\cc^2/\Gamma$ purely in terms of invariant theory:

 \begin{center}
\begin{tikzpicture}[scale=1,  transform shape]
\tikzstyle{point}=[circle,draw,fill]
\tikzstyle{ligne}=[thick]

\node (1) at ( 0,0) {$\mu_{CS}^{-1} (Z)//G(\Gamma)$};
\node (2) at ( 2.8,0) {$X \times_{\mathfrak{h}/W} \mathfrak{h}$};
\node (3) at ( 2.8,-2.5)  {$\mathfrak{h}$};
\node (4) at ( 0,-2.5)  {$Z$};

\draw  [decoration={markings,mark=at position 1 with
    {\arrow[scale=1.2,>=stealth]{>}}},postaction={decorate}] (1)  -- node[above] {$\cong$} (2);
\draw  [decoration={markings,mark=at position 1 with
    {\arrow[scale=1.2,>=stealth]{>}}},postaction={decorate}] (2)  --  (3);
\draw  [decoration={markings,mark=at position 1 with
    {\arrow[scale=1.2,>=stealth]{>}}},postaction={decorate}] (1)  --  (4);
\draw  [decoration={markings,mark=at position 1 with
    {\arrow[scale=1.2,>=stealth]{>}}},postaction={decorate}] (4)  -- node[below] {$\cong$}  (3);
\end{tikzpicture}
\end{center}
with $\mathfrak{h}$ a Cartan subalgebra of a Lie algebra of type $\Delta(\Gamma)$, and $W$ the associated Weyl group.

\begin{remark}
H. Cassens and P. Slodowy also used the representation space $M(\Gamma)$ to study resolutions of simple singularities. Indeed, by linearising the quotient $\mu_{CS}^{-1}(Z)//G(\Gamma)$ using a well-suited character 
$\chi:G(\Gamma)\rightarrow\cc^*$ (cf. \cite{CaSlo98} Section 7), they obtained the following diagram:
 \begin{center}
\begin{tikzpicture}[scale=1,  transform shape]
\tikzstyle{point}=[circle,draw,fill]
\tikzstyle{ligne}=[thick]

\node (1) at ( 0,0) {$\mu_{CS}^{-1} (Z)//^\chi G(\Gamma)$};
\node (2) at ( 3.5,0) {$\mu_{CS}^{-1} (Z)// G(\Gamma)$};
\node (3) at ( 1.75,-2)  {$Z$};

\draw  [decoration={markings,mark=at position 1 with
    {\arrow[scale=1.2,>=stealth]{>}}},postaction={decorate}] (1)  --  (2);
\draw  [decoration={markings,mark=at position 1 with
    {\arrow[scale=1.2,>=stealth]{>}}},postaction={decorate}] (2)  --  (3);
\draw  [decoration={markings,mark=at position 1 with
    {\arrow[scale=1.2,>=stealth]{>}}},postaction={decorate}] (1)  --  (3);
\end{tikzpicture}
\end{center}
which is a simultaneous minimal resolution. In particular, the space $\mu_{CS}^{-1} (0)//^\chi G(\Gamma)$ is a minimal resolution of the simple singularity $\cc^2/ \Gamma$.
\end{remark}

\section{Deformations of inhomogeneous simple singularities}\label{sec:InhomogeneousDeformations}

\subsection{Objectives}\label{sub:objectives}

Set $\Delta(\Gamma)$ a Dynkin diagram of type $A_{2r-1}$, $D_{r+1}$ or $E_6$ with $\Gamma$ the associated finite subgroup of $\mathrm{SU}_2$. The notations and results of Section~\ref{sec:SlodowyCassens} give the following diagram:

\begin{center}
\begin{tikzpicture}[scale=1,  transform shape,>=angle 60]
\tikzstyle{point}=[circle,draw,fill]
\tikzstyle{ligne}=[thick]

\node (1) at ( 0,0) {$X \times_{\mathfrak{h}/W}\mathfrak{h}$};
\node (2) at ( 2,0) {$X$};
\node (3) at ( 2,-2)  {$\mathfrak{h}/W$};
\node (4) at ( 0,-2)  {$\mathfrak{h}$};
\node (10) at ( -2.1,0)  {$\mu_{CS}^{-1} (Z)//G(\Gamma)= \ $};
\node (11) at ( -5,0)  {$\mu_{CS}^{-1} (Z)$};
\node (13) at ( -0.5,-2)  {$Z \cong$};

\node (9) at ( 1,-1) {$\circlearrowleft$};

\draw  [decoration={markings,mark=at position 1 with
    {\arrow[scale=1.2,>=stealth]{>}}},postaction={decorate}] (1)  -- node[above] {$\Psi$}  (2);
\draw  [decoration={markings,mark=at position 1 with
    {\arrow[scale=1.2,>=stealth]{>}}},postaction={decorate}] (2)  -- node[right] {$\alpha$} (3);
\draw  [decoration={markings,mark=at position 1 with
    {\arrow[scale=1.2,>=stealth]{>}}},postaction={decorate}] (1)  -- node[left] {$\widetilde{\alpha}$}  (4);
\draw  [decoration={markings,mark=at position 1 with
    {\arrow[scale=1.2,>=stealth]{>}}},postaction={decorate}] (4)  -- node[below] {$\pi$} (3);
\draw  [decoration={markings,mark=at position 1 with
    {\arrow[scale=1.2,>=stealth]{>}}},postaction={decorate}] (11)  --  (10);
\draw  [decoration={markings,mark=at position 1 with
    {\arrow[scale=1.2,>=stealth]{>}}},postaction={decorate}] (11)  --  (-3.5,0);    
\end{tikzpicture}
\end{center}
with $\alpha$ the semiuniversal deformation of the simple singularity $X_0=\cc^2/\Gamma$ of type $\Delta(\Gamma)$, and $\widetilde{\alpha}$ its pullback by the natural quotient map $\pi$. Let $\Gamma'$ be the finite subgroup of $\mathrm{SU}_2$ such that there exists an inhomogeneous simple singularity of type $\Delta(\Gamma,\Gamma')$ (cf. Definition~\ref{def:inhomogeneous}). Then $\Omega=\Gamma'/\Gamma$ acts on the simple singularity $X_0=\alpha^{-1}(0)$. Our aim is to define natural actions of $\Omega$ on $X$ and $\mathfrak{h}/W$ such that $\alpha$ becomes $\Omega$-equivariant, which would lead to the next theorem being a direct consequence of Theorem~\ref{thm:restrictionsud}.

\begin{theorem}\label{thm:defrestriction}
The restriction $\restr{\alpha}{\alpha^{-1}((\mathfrak{h}/W)^{\Omega})}$ over the fixed points $(\mathfrak{h}/W)^{\Omega}$ of the semiuniversal deformation of the simple singularity $\cc^2/\Gamma$ is a semiuniversal deformation of the inhomogeneous simple singularity of type $\Delta(\Gamma,\Gamma')$.
\end{theorem}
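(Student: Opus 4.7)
The plan is to equip the Cassens-Slodowy deformation $\alpha: X \to \mathfrak{h}/W$ with an $\Omega$-equivariant structure whose restriction to the special fiber $X_0 = \cc^2/\Gamma$ coincides with the natural action of $\Omega$, and then to invoke Theorem~\ref{thm:restrictionsud} to identify the restriction of $\alpha$ over $(\mathfrak{h}/W)^{\Omega}$ with the semiuniversal deformation of the inhomogeneous simple singularity of type $\Delta(\Gamma,\Gamma')$. The heart of the argument is the passage from the quiver-theoretic side (where $\Omega$ first appears as a symmetry of the McKay quiver) to the Lie-theoretic side (where Theorem~\ref{thm:restrictionsud} is stated).

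First I would use Theorems~\ref{thm:orientation} and~\ref{thm:compatibility} to fix an orientation of the McKay quiver $Q$ and an action of $\Omega$ on $M(\Gamma)=\mathrm{Rep}(Q,\underline{d})$ that is symplectic for the Cassens-Slodowy form and induces the natural action on $X_0$. The group $G(\Gamma)$ acts by symplectomorphisms, and the action of $\Omega$ on $G(\Gamma)$ permuting the factors $\mathrm{GL}_{d_i}$ according to its action on the vertices of $\widetilde{\Delta}(\Gamma)$ realises $\Omega$ as a subgroup of the outer automorphism group of the semisimple part of $G(\Gamma)$. Lemma~\ref{momentequivariant} then applies and yields $\Omega$-equivariance of the moment map $\mu_{CS}$ with respect to an induced action on $Z$. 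Consequently $\Omega$ acts fibrewise on $\mu_{CS}^{-1}(Z) \to Z$, and passing to the categorical quotient $\mu_{CS}^{-1}(Z)//G(\Gamma) \to Z$ produces an $\Omega$-equivariant deformation which, by the Cassens-Slodowy isomorphism, is identified with the pullback $\widetilde{\alpha}: X \times_{\mathfrak{h}/W} \mathfrak{h} \to \mathfrak{h}$.

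Second, I would descend the $\Omega$-action from the pullback to $\alpha$ itself. The identification $Z \cong \mathfrak{h}$ intertwines the action of $\Omega$ on $Z$ with the natural action of $\Omega \subset \mathrm{Aut}(\Delta(\Gamma))$ on $\mathfrak{h}$; since this action normalises the Weyl group $W$, it descends to an action on $\mathfrak{h}/W$ making $\pi$ equivariant. By the universal property of the fibre product (together with uniqueness of the semiuniversal deformation up to isomorphism), the $\Omega$-action on $X \times_{\mathfrak{h}/W} \mathfrak{h}$ comes from a unique $\Omega$-action on $X$ making $\alpha$ equivariant, whose restriction to the special fibre agrees with the natural action on $X_0$ by Theorem~\ref{thm:compatibility}. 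At this point $\alpha$ is an $\Omega$-equivariant semiuniversal deformation of $X_0$, so by uniqueness it is isomorphic (as an $\Omega$-equivariant deformation) to the Brieskorn--Slodowy model $\delta:S \to \mathfrak{h}/W$ appearing in Theorem~\ref{thm:restrictionsud}. That theorem then identifies $\restr{\alpha}{\alpha^{-1}((\mathfrak{h}/W)^{\Omega})}$ with $\delta_0$, proving the claim.

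The main obstacle is ensuring that the descent from $\widetilde{\alpha}$ to $\alpha$ is genuinely $\Omega$-equivariant and that the induced action on $X_0$ is the natural one rather than a twist. This is precisely the content of Theorem~\ref{thm:compatibility}: an arbitrary orientation-preserving symplectic lift of $\Omega$ to $M(\Gamma)$ need not descend to the natural action on $\cc^2/\Gamma$, and one must verify case by case for $\Delta(\Gamma) \in \{A_{2r-1}, D_{r+1}, E_6, D_4\}$ that such a lift exists. Once these compatibilities are secured, the theorem follows formally by combining the Cassens--Slodowy identification of $\widetilde{\alpha}$ with the output of the moment map and Slodowy's equivariant version of Brieskorn's theorem.
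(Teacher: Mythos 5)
Your proposal follows essentially the same route as the paper: fix a symplectic $\Omega$-action on $M(\Gamma)$ inducing the natural action on $\cc^2/\Gamma$ (Theorems~\ref{thm:orientation} and~\ref{thm:compatibility}), use Lemma~\ref{momentequivariant} and the $\Omega$-equivariance of $\tau:Z\to\mathfrak{h}$ to make the whole pullback diagram, and hence $\alpha$ itself, $\Omega$-equivariant with the correct action on the special fibre, and then conclude by Theorem~\ref{thm:restrictionsud}. The only cosmetic difference is that you descend from $\widetilde{\alpha}$ to $\alpha$ via the universal property of the fibre product and uniqueness of semiuniversal deformations, whereas the paper carries out this descent by writing the action on $X_\Gamma$ explicitly case by case; this does not change the substance of the argument.
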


A natural way to accomplish this is to turn $\widetilde{\alpha}$ into an $\Omega$-equivariant morphism. By Lemma~\ref{momentequivariant} below, it is sufficient to analyse when the action of $\Omega$ on $M(\Gamma)$ is symplectic.

\begin{lemma}\label{momentequivariant}
Let $\mu:M \rightarrow \mathfrak{g}^*$ be a moment map on a symplectic manifold $(M,\omega)$ with an action of a semisimple Lie group $G$. Assume that a group $\Omega$ acts on $M$ by symplectomorphisms, and that $\Omega$ is a subgroup of the outer automorphism group of $G$. Furthermore, assume that the action of $G$ lifts to an action of $G \rtimes \Omega$. % i.e. $\sigma_\varpi(g.x)=(\sigma_\varpi(g)).(\sigma_\varpi(x))$ for all $x \in M$ and $(g,\varpi) \in G \rtimes \Omega$. 
Then $\mu$ is $\Omega$-equivariant.
\end{lemma}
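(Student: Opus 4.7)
The strategy is to exhibit a second moment map, twisted by $\Omega$, and use uniqueness. Fix an element $\sigma \in \Omega$; using the lift of the $G$-action to a $(G \rtimes \Omega)$-action, conjugation by $\sigma$ in $G \rtimes \Omega$ determines an actual automorphism of $G$ (hence of $\mathfrak{g}$) in the outer class represented by $\sigma$. Denote this automorphism also by $\sigma$ and let $\sigma$ act on $\mathfrak{g}^*$ by the transpose of $\sigma^{-1}$, so that $\langle \sigma \cdot \xi, X \rangle = \langle \xi, \sigma^{-1}X \rangle$. What we want is $\mu(\sigma \cdot m) = \sigma \cdot \mu(m)$ for every $m \in M$ and every $\sigma \in \Omega$.

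Define $\mu' : M \to \mathfrak{g}^*$ by $\mu'(m) = \sigma^{-1} \cdot \mu(\sigma \cdot m)$, equivalently $\langle \mu'(m), X \rangle = \mu^{\sigma X}(\sigma \cdot m)$ for $X \in \mathfrak{g}$. I first would verify that $\mu'$ is itself a moment map for the $G$-action. For $G$-equivariance, the semidirect product relation $\sigma g = (\sigma \cdot g)\sigma$ in $G \rtimes \Omega$ gives $\sigma \cdot (g \cdot m) = (\sigma \cdot g) \cdot (\sigma \cdot m)$; combined with the $G$-equivariance of $\mu$ and the intertwining $\sigma^{-1} \circ \mathrm{Ad}^*(h) = \mathrm{Ad}^*(\sigma^{-1}h) \circ \sigma^{-1}$ on $\mathfrak{g}^*$, this yields $\mu'(g \cdot m) = \mathrm{Ad}^*(g)\mu'(m)$. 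For the moment condition, differentiating the same relation gives $(\sigma X)_M(\sigma \cdot m) = d\sigma|_m(X_M(m))$; using that $\sigma$ acts by a symplectomorphism,
\begin{align*}
d\mu'^X|_m(v) &= d\mu^{\sigma X}|_{\sigma \cdot m}(d\sigma|_m v) = \omega_{\sigma \cdot m}\bigl((\sigma X)_M(\sigma \cdot m),\, d\sigma|_m v\bigr) \\
 &= \omega_{\sigma \cdot m}\bigl(d\sigma|_m(X_M(m)),\, d\sigma|_m v\bigr) = \omega_m(X_M(m), v) = d\mu^X|_m(v).
\end{align*}
Hence $\mu'$ satisfies both axioms of a moment map for the $G$-action on $(M,\omega)$.

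Finally, I would invoke uniqueness of the moment map for a semisimple group: two moment maps for the same symplectic action on a connected manifold differ by a constant element of $(\mathfrak{g}^*)^G$, and this space vanishes when $\mathfrak{g}$ is semisimple (because $\mathfrak{g} = [\mathfrak{g},\mathfrak{g}]$, so no nonzero linear form on $\mathfrak{g}$ can be $G$-invariant). Therefore $\mu' = \mu$, which is exactly $\mu(\sigma \cdot m) = \sigma \cdot \mu(m)$, i.e.\ the $\Omega$-equivariance of $\mu$. The main subtlety to watch out for is that the action of $\Omega$ on $\mathfrak{g}^*$ must be the one coming from the lift to $G \rtimes \Omega$ (not an arbitrary choice of representatives in the outer class), and that $M$ is connected so that the uniqueness argument applies; both are automatic in the applications to $M(\Gamma)$ in the sequel.
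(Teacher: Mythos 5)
Your proposal is correct and follows essentially the same route as the paper: both verify, using the symplectomorphism property and the lift to $G \rtimes \Omega$, that $\mu$ and its $\sigma$-twist have the same differential, deduce that they differ by a coadjoint-invariant constant in $\mathfrak{g}^*$, and kill that constant by semisimplicity (you via $\mathfrak{g}=[\mathfrak{g},\mathfrak{g}]$, the paper via Chevalley's theorem on invariant polynomials — the same fact). Your explicit remark that connectedness of $M$ is needed for the "differ by a constant" step is a point the paper leaves implicit.
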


\begin{proof}
The action of $\Omega$ on $G$ induces an action on the dual $\mathfrak{g}^*$ of the Lie algebra $\mathfrak{g}$. If $\varpi \in \Omega$, let $\sigma_\varpi:M \rightarrow M$ and $\sigma_\varpi:\mathfrak{g}^* \rightarrow \mathfrak{g}^*$ denote the actions of $\varpi$ on $M$ and $\mathfrak{g}^*$, respectively. The $\mathfrak{g}^* \times \mathfrak{g}$ pairing is denoted by $\langle.,.\rangle$. By definition of the moment map, it is known that for any $x \in M, v \in T_xM$ and $\xi \in \mathfrak{g}$,
\begin{center}
$\langle d_x\mu(v),\xi \rangle = \omega_x(V_\xi(x),v)$,
\end{center}
with $V_\xi$ the vector field on $M$ induced by $\xi$. Hence 
\begin{center}
$\renewcommand{\arraystretch}{1.3} \begin{array}[t]{cl}
\langle d_{\sigma_\varpi(x)}\mu(d\sigma_\varpi(v)),\xi \rangle & =  \omega_{\sigma_\varpi(x)}(V_\xi(\sigma_\varpi(x)),d\sigma_\varpi(v)), \\
 & =  \omega_x((d\sigma_\varpi)^{-1}V_\xi(\sigma_\varpi(x)),v)  \text{ because }  \sigma_\varpi  \text{ preserves the symplectic form}.
 \end{array}$
 \end{center}
 But $\renewcommand{\arraystretch}{1.3} \begin{array}[t]{ccl}
 (d\sigma_\varpi)^{-1}V_\xi(\sigma_\varpi(x)) &=& (d\sigma_\varpi)^{-1} \left. \frac{d}{dt}  \right\vert_{t=0} \exp(t\xi).(\sigma_\varpi(x)) \\
 & = & \left.  \frac{d}{dt}  \right\vert_{t=0} \sigma_{\varpi^{-1}}(\exp(t\xi).(\sigma_\varpi(x))), \\
  & = & \left.  \frac{d}{dt}  \right\vert_{t=0} \exp(t\sigma_{\varpi^{-1}}(\xi)).(x)  \text{ because the action of $G$ lifts}, \\
   & = & V_{\sigma_{\varpi^{-1}}(\xi)}(x).
\end{array}$

\noindent So $\langle d_{\sigma_\varpi(x)}\mu(d\sigma_\varpi(v)),\xi \rangle$ $\renewcommand{\arraystretch}{1.3} \begin{array}[t]{ccl}
 & = & \omega_x(V_{\sigma_{\varpi^{-1}}(\xi)}(x),v), \\
 & = & \langle d_x\mu(v),\sigma_{\varpi^{-1}}(\xi) \rangle, \\
 & = & \langle \sigma_\varpi \circ d_x\mu(v), \xi \rangle \text{ by the definition of the action on the dual space}.
\end{array}$

\noindent Thus $d_{\sigma_\varpi(x)}\mu \circ d\sigma_\varpi=\sigma_\varpi \circ d_x\mu$. It follows that $\mu(\varpi.x)=\varpi.\mu(x)+f_\varpi$, with $f_\varpi$ an element of $\mathfrak{g}^*$ (we replaced $\sigma_\varpi$ by $\varpi.$ in order to lighten the notation). \\
Let us prove that $f_\varpi=0$. For any $g \in G$, we have 
\begin{center}$\renewcommand{\arraystretch}{1.3} \begin{array}[t]{rcl}
f_\varpi & = & \mu(\varpi.(g.x))-\varpi.(\mu(g.x)), \\
& =  & \mu(\varpi.(g.x))-\varpi.(g.\mu(x))  \text{ because }\mu \text{ is }G\text{-equivariant}, \\
& =  & \mu((\varpi.g).(\varpi.x))-\varpi.(g.\mu(x)) \text{ because the action of $G$ lifts}, \\ 
& =  & (\varpi.g).\mu(\varpi.x)-\varpi.(g.\mu(x)) \text{ because }\mu \text{ is }G\text{-equivariant}, \\
& =  & (\varpi.g).(\varpi.\mu(x)+f_\varpi)-\varpi.(g.\mu(x)), \\
& =  & (\varpi.g).(\varpi.\mu(x))+ (\varpi.g).f_\varpi-\varpi.(g.\mu(x)), \\
& =  & \varpi.(g.\mu(x))+ (\varpi.g).f_\varpi-\varpi.(g.\mu(x)), \\
& =  & (\varpi.g).f_\varpi.
 \end{array}$
 \end{center}
Hence for any $g \in G$, we have $g.f_\varpi=f_\varpi$, implying that $f_\varpi$ is a $G$-invariant element of $\mathfrak{g}^*$. But as $G$ is semisimple, a theorem by C. Chevalley states that the ring $S(\mathfrak{g}^*)^G$ is a polynomial ring generated by $\mathrm{rank}(G)$ homogeneous polynomials of degrees at least 2. As $f_\varpi$ is of degree 1, it follows that $f_\varpi=0$. Therefore $\mu(\varpi.x)=\varpi.\mu(x)$ for any $x \in M$, $\varpi \in \Omega$, and so $\mu$ is $\Omega$-equivariant. \hfill $\Box$
\end{proof}

We need to define actions of $\Omega$ on every object in the diagram at the beginning of this section such that:
\vspace{-1ex}
\begin{enumerate}\setlength\itemsep{0.3pt}
\item The restriction of the action on $X$ to the singularity $X_0$ coincides with the natural action computed in Subsection~\ref{sub:groupaction}.
\item The action on $M(\Gamma)$ is symplectic. 
\item The action on $G(\Gamma)$ is induced from the one on $M(\Gamma)$. 
\item The action on $\mathrm{Lie}(G(\Gamma))$ stabilises $Z$. Hence the isomorphism $\tau:Z \rightarrow \mathfrak{h}$ becomes $\Omega$-equivariant, with the action on $\mathfrak{h}$ coming from the action on the Dynkin diagram $\Delta(\Gamma)$.
\end{enumerate}

In Subsection~\ref{sub:compatibility}, we will compute the conditions required on the action of $\Omega$ so that it restricts to the natural action on the singularity $\cc^2/\Gamma$ seen in Subsection~\ref{sub:groupaction}. In order for $\Omega$ to act symplectically on $M(\Gamma)$, it has to preserve the symplectic form $\langle.,.\rangle$. However, by definition, this form depends on the orientation of the McKay quiver based on $\Delta(\Gamma)$. The orientation will then have to be chosen accordingly, which will be done in Subsection~\ref{sub:orientation}. The questions regarding the action of $\Omega$ on $G(\Gamma)$ as well as the $\Omega$-equivariance of $\tau: Z \rightarrow \mathfrak{h}$ will be handled in Subsection~\ref{sub:compatibilitysurgroupe}. \\ 
Once the semiuniversal deformation $\alpha$ becomes $\Omega$-equivariant, the restriction $\restr{\alpha}{\alpha^{-1}((\mathfrak{h}/W)^{\Omega})}$ turns into an $\Omega$-invariant morphism, and as such each of its fibre is acted upon by $\Omega$. In order to analyse the quotients of these fibres, an explicit description of the semiuniversal deformation is required. A system of coordinates on the base space $\mathfrak{h}/W$ of the semiuniversal deformation called flat coordinates will be introduced in Subsection~\ref{flatcoordinates}. This system of coordinates will help with the computations because it makes the action of $\Omega$ on $\mathfrak{h}/W$ linear.

\subsection{Action of \texorpdfstring{$\Gamma'/\Gamma$}{Lg} on \texorpdfstring{$\cc^2/\Gamma$}{Lg}}\label{sub:compatibility}

The action of $\Omega=\Gamma'/\Gamma$ on $M(\Gamma)$ induces an action on $\mu_{CS}^{-1}(Z)//G(\Gamma)$ and it is known that the simple singularity $\cc^2/\Gamma$ is in $\mu_{CS}^{-1}(Z)//G(\Gamma)$. However, it has been seen in Subsection~\ref{sub:groupaction} that $\Gamma'/\Gamma$ acts on the singularity in a natural way. We therefore want to impose that the action of $\Gamma'/\Gamma$ on $M(\Gamma)$ shall induce the right action on the singularity $\cc^2/\Gamma$.

 \subsubsection{ Computations for $\cc^2/\Gamma$ of type $A_{2r-1}$}\label{computationactiontypeA}
Set $\Gamma=\mathcal{C}_{2r}$. It follows that $\Gamma'=\mathcal{D}_r$ and $\Gamma'/\Gamma=\mathbb{Z}/2\mathbb{Z}=<\sigma>$. The McKay quiver when $\Delta(\Gamma)=A_{2r-1}$ is 

\begin{center}
 \begin{tikzpicture}[scale=0.8,  transform shape]
\tikzstyle{point}=[circle,draw,fill]
\tikzstyle{ligne}=[thick]
\tikzstyle{pointille}=[thick,dotted]

\node (1) at ( 5,0) [point]{};
\node (2) at ( 4,2) [point] {};
\node (3) at ( 2,2) [point]{} ;
\node (4) at ( -2,2) [point] {};
\node (5) at ( -4,2) [point] {};
\node (6) at ( -5,0) [point] {};
\node (7) at ( -4,-2) [point] {};
\node (8) at ( -2,-2) [point]{} ;
\node (9) at ( 2,-2) [point] {};
\node (10) at ( 4,-2) [point] {};

\node at ( -3,2.4)  {$b_{1}$};
\node at ( 3,2.4)  {$b_{r-2}$};
\node at ( -3,1.6)  {$a_{1}$};
\node at ( 3,1.6)  {$a_{r-2}$};

\node at ( -3,-2.4)  {$b_{2r-2}$};
\node at ( 3,-2.4)  {$b_{r+1}$};
\node at ( -3,-1.6)  {$a_{2r-2}$};
\node at ( 3,-1.6)  {$a_{r+1}$};

\node at ( -4.1,0.8)  {$a_{0}$};
\node at ( -4.8,1.3)  {$b_{0}$};
\node at ( -3.9,-0.8)  {$a_{2r-1}$};
\node at ( -5,-1.3)  {$b_{2r-1}$};

\node at ( 4,0.8)  {$a_{r-1}$};
\node at ( 5,1.3)  {$b_{r-1}$};
\node at ( 4.1,-0.8)  {$a_{r}$};
\node at ( 4.8,-1.3)  {$b_{r}$};

\node at (-5.5,0)  {\small{$0$}};
\node at (-4,2.4)  {\small{$1$}};
\node at (-2,2.4)  {\small{$2$}};
\node at ( 2,2.4)  {\small{$r-2$}};
\node at ( 4,2.4)  {\small{$r-1$}};
\node at (5.5,0)  {\small{$r$}};
\node at ( 4,-2.4)  {\small{$r+1$}};
\node at ( 2,-2.4)  {\small{$r+2$}};
\node at (-2,-2.4)  {\small{$2r-2$}};
\node at (-4,-2.4)  {\small{$2r-1$}};

\draw [pointille] (1.8,2.1)  --  (-1.8,2.1);
\draw [pointille] (1.8,1.9)  --  (-1.8,1.9);
\draw [pointille] (1.8,-1.9)  --  (-1.8,-1.9);
\draw [pointille] (1.8,-2.1)  --  (-1.8,-2.1);

\draw [decoration={markings,mark=at position 1 with
    {\arrow[scale=1.2,>=stealth]{>}}},postaction={decorate}] (3.8,2.1)  --  (2.2,2.1);
\draw [decoration={markings,mark=at position 1 with
    {\arrow[scale=1.2,>=stealth]{>}}},postaction={decorate}](2.2,1.9)  -- (3.8,1.9) ;
\draw [decoration={markings,mark=at position 1 with
    {\arrow[scale=1.2,>=stealth]{>}}},postaction={decorate}] (-2.2,2.1) --   (-3.8,2.1);
\draw [decoration={markings,mark=at position 1 with
    {\arrow[scale=1.2,>=stealth]{>}}},postaction={decorate}] (-3.8,1.9) -- (-2.2,1.9) ;

\draw [decoration={markings,mark=at position 1 with
    {\arrow[scale=1.2,>=stealth]{>}}},postaction={decorate}] (2.2,-2.1)  --  (3.8,-2.1);
\draw [decoration={markings,mark=at position 1 with
    {\arrow[scale=1.2,>=stealth]{>}}},postaction={decorate}](3.8,-1.9)  -- (2.2,-1.9) ;
\draw [decoration={markings,mark=at position 1 with
    {\arrow[scale=1.2,>=stealth]{>}}},postaction={decorate}] (-3.8,-2.1) --  (-2.2,-2.1);
\draw [decoration={markings,mark=at position 1 with
    {\arrow[scale=1.2,>=stealth]{>}}},postaction={decorate}](-2.2,-1.9)  -- (-3.8,-1.9) ;

\draw [decoration={markings,mark=at position 1 with
    {\arrow[scale=1.2,>=stealth]{>}}},postaction={decorate}] (4,1.8)  --   (4.8,0) ;
\draw [decoration={markings,mark=at position 1 with
    {\arrow[scale=1.2,>=stealth]{>}}},postaction={decorate}]  (5,0.2)  --  (4.2,2) ;
\draw [decoration={markings,mark=at position 1 with
    {\arrow[scale=1.2,>=stealth]{>}}},postaction={decorate}] (4.8,0)   --   (4,-1.8);
\draw [decoration={markings,mark=at position 1 with
    {\arrow[scale=1.2,>=stealth]{>}}},postaction={decorate}] (4.2,-2)   --  (5,-0.2) ;

\draw [decoration={markings,mark=at position 1 with
    {\arrow[scale=1.2,>=stealth]{>}}},postaction={decorate}]   (-4.8,0) -- (-4,1.8) ;
\draw [decoration={markings,mark=at position 1 with
    {\arrow[scale=1.2,>=stealth]{>}}},postaction={decorate}]   (-4.2,2) -- (-5,0.2)  ;
\draw [decoration={markings,mark=at position 1 with
    {\arrow[scale=1.2,>=stealth]{>}}},postaction={decorate}] (-4,-1.8) -- (-4.8,0);
\draw [decoration={markings,mark=at position 1 with
    {\arrow[scale=1.2,>=stealth]{>}}},postaction={decorate}]  (-5,-0.2) -- (-4.2,-2) ;

\end{tikzpicture}
\end{center}
and its dimension vector is $(1, \ldots, 1)$.  The action of $\Omega$ on the Dynkin diagram exchanges the vertices $i$ and $2r-i$ for $1 \leq i \leq 2r-1$. We extend this action to the extended Dynkin diagram of type $\widetilde{\Delta}(\Gamma)$ by making $\Omega$ fix the additional vertex labelled $0$. The action of $\Omega$ on $M(\Gamma)$ arises naturally from its action on the extended Dynkin diagram. Hence the action of $\Omega$ on $M(\Gamma)$ needs to verify  
\begin{center}
$\sigma.(a_0,\ldots ,a_{2r-1},b_0,\ldots, b_{2r-1})= (\lambda_{2r-1}b_{2r-1},\ldots,\lambda_0b_0,\delta_{2r-1}a_{2r-1},\ldots,\delta_0a_0)$.
\end{center}
with $\lambda_i,\delta_i \in \cc$. 

The action of $\mathbb{Z}/2\mathbb{Z}$ needs to be consistent with the action on the singularity. Recall that the singularity $\cc^2/\Gamma$ is defined by $\{Z^{2r}-XY=0\}$ with $Z=z_1z_2,X=z_1^{2r},Y=z_2^{2r}$ (cf. Subsection~\ref{sub:groupaction}), and 

\begin{center}
$\renewcommand{\arraystretch}{1.3} \raisebox{-.6\height}{$\left\lbrace \begin{array}[h]{l}
\sigma.X=(-1)^rY, \\
\sigma.Y=(-1)^rX, \\
\sigma.Z=-Z.
\end{array}\right.$}$ 
\end{center} 

After computation of the special fibre of the moment map, we find that $\cc^2/\Gamma \cong \{z^{2r}-xy=0 \}$ with $x=\prod_{i=0}^{2r-1}a_i$, $y=\prod_{i=0}^{2r-1}b_i$, and $z=\frac{1}{2r}\sum_{i=0}^{2r-1}a_i b_i$, which implies 
\begin{center}
$\renewcommand{\arraystretch}{1.5} \raisebox{-.6\height}{$\left\lbrace \begin{array}[h]{l}
\sigma.x= (\prod_{i=0}^{2r-1}\lambda_i) y, \\
\sigma.y=(\prod_{i=0}^{2r-1}\delta_i) x, \\
\sigma.z=\frac{1}{2r}\sum_{i=0}^{2r-1}\lambda_i\delta_i a_i b_i.
\end{array}\right.$}$ 
\end{center}
For the action to satisfy our conditions, it needs to verify $\renewcommand{\arraystretch}{1.5} \raisebox{-.4\height}{$\left\lbrace \begin{array}[h]{l}
\lambda_i\delta_i=-1, \quad 0 \leq i \leq 2r-1, \\
 \prod_{i=0}^{2r-1}\lambda_i=\prod_{i=0}^{2r-1}\delta_i=(-1)^r.
\end{array}\right.$}$

 \subsubsection{ Computations for $\cc^2/\Gamma$ of type $D_{r+1}$} 
By setting $\Gamma=\mathcal{D}_{r-1}$, one gets $\Gamma'=\mathcal{D}_{2(r-1)}$ from Table~\ref{definhomogeneous}, and so $\Gamma'/\Gamma=\mathbb{Z}/2\mathbb{Z}=<\sigma>$. The McKay quiver when $\Delta(\Gamma)=D_{r+1}$ is 

\begin{center}
\begin{tikzpicture}[scale=0.8,  transform shape,>=angle 60]
\tikzstyle{point}=[circle,draw,fill]

\node (0) at ( -5.4,1.4) [point] {};
\node (1) at ( -5.4,-1.4) [point] {};
\node (2) at ( -4,0) [point] {};
\node (3) at ( -2,0) [point] {};
\node (4) at ( 1,0) [point] {};
\node (5) at ( 3,0) [point] {};
\node (6) at ( 4.4,1.4) [point] {};
\node (7) at ( 4.4,-1.4) [point] {};

\node (8) at ( -5.4,1.9)  {\small{$0$}};
\node (9) at ( -5.4,-1.9)  {\small{$1$}};
\node (10) at ( -4,0.5)  {\small{$2$}};
\node (11) at ( -2,0.5)  {\small{$3$}};
\node (12) at ( 1,0.5)  {\small{$r-2$}};
\node (13) at ( 2.8,0.5) {\small{$r-1$}};
\node (14) at ( 4.4,1.9) {\small{$r$}};
\node (15) at ( 4.4,-1.9) {\small{$r+1$}};

\node (16) at ( -4.5,1.1)  {$\varphi_0^b$};
\node (17) at ( -5.1,0.5)  {$\varphi_0^a$};
\node (18) at ( -4.3,-0.9)  {$\varphi_1^a$};
\node (19) at ( -5,-0.4)  {$\varphi_1^b$};
\node (20) at ( -3,0.5)  {$\varphi_2^b$};
\node (21) at ( -3,-0.5) {$\varphi_2^a$};
\node (22) at ( 2,0.5) {$\varphi_{r-2}^b$};
\node (23) at ( 2,-0.5) {$\varphi_{r-2}^a$};
\node (24) at ( 3.6,1.2)  {$\varphi_r^a$};
\node (25) at ( 4.1,0.5) {$\varphi_r^b$};
\node (26) at ( 3.4,-0.9) {$\varphi_{r+1}^b$};
\node (27) at ( 4.1,-0.3) {$\varphi_{r+1}^a$};

\draw  [decoration={markings,mark=at position 1 with
    {\arrow[scale=1.2,>=stealth]{>}}},postaction={decorate}]  (-5.2,-1.4) -- (-4,-0.2);
\draw  [decoration={markings,mark=at position 1 with
    {\arrow[scale=1.2,>=stealth]{>}}},postaction={decorate}] (-4.2,0) --  (-5.4,-1.2);

\draw  [decoration={markings,mark=at position 1 with
    {\arrow[scale=1.2,>=stealth]{>}}},postaction={decorate}](-4,0.2) --  (-5.2,1.4) ;
\draw  [decoration={markings,mark=at position 1 with
    {\arrow[scale=1.2,>=stealth]{>}}},postaction={decorate}] (-5.4,1.2) --  (-4.2,0);

\draw [decoration={markings,mark=at position 1 with
    {\arrow[scale=1.2,>=stealth]{>}}},postaction={decorate}]  (-2.2,0.1) -- (-3.8,0.1);
\draw [decoration={markings,mark=at position 1 with
    {\arrow[scale=1.2,>=stealth]{>}}},postaction={decorate}] (-3.8,-0.1)  --  (-2.2,-0.1);
    
\draw [-,dashed]  (0.8,0.1) -- (-1.7,0.1);
\draw  [decoration={markings,mark=at position 1 with
    {\arrow[scale=1.2,>=stealth]{>}}},postaction={decorate}] (-1.7,0.1) -- (-1.75,0.1);
    
\draw [-,dashed] (-1.8,-0.1)  --  (0.8,-0.1);  
\draw  [decoration={markings,mark=at position 1 with
    {\arrow[scale=1.2,>=stealth]{>}}},postaction={decorate}] (0.8,-0.1) -- (0.85,-0.1);  
  
\draw [decoration={markings,mark=at position 1 with
    {\arrow[scale=1.2,>=stealth]{>}}},postaction={decorate}]  (2.8,0.1) -- (1.2,0.1);
\draw [decoration={markings,mark=at position 1 with
    {\arrow[scale=1.2,>=stealth]{>}}},postaction={decorate}]  (1.2,-0.1) -- (2.8,-0.1);
         
\draw  [decoration={markings,mark=at position 1 with
    {\arrow[scale=1.2,>=stealth]{>}}},postaction={decorate}] (4.2,1.4)  --  (3,0.2);
\draw  [decoration={markings,mark=at position 1 with
    {\arrow[scale=1.2,>=stealth]{>}}},postaction={decorate}]   (3.2,0) -- (4.4,1.2);   
    
\draw  [decoration={markings,mark=at position 1 with
    {\arrow[scale=1.2,>=stealth]{>}}},postaction={decorate}]  (3,-0.2) --  (4.2,-1.4);
\draw  [decoration={markings,mark=at position 1 with
    {\arrow[scale=1.2,>=stealth]{>}}},postaction={decorate}]  (4.4,-1.2) --  (3.2,0);           
\end{tikzpicture}
\end{center}
and the dimension vector is $(1,1,2, \ldots, 2,1,1)$. The action of $\Omega$ on the Dynkin diagram exchanges the vertices $r$ and $r+1$ and fixes the others. We extend this action to the extended Dynkin diagram of type $\widetilde{\Delta}(\Gamma)$ by making $\Omega$ fix the additional vertex labelled $0$. The action of $\Omega$ on $M(\Gamma)$ arises naturally from its action on the extended Dynkin diagram. Hence the action of $\Omega$ on $M(\Gamma)$ needs to verify    
\begin{center}
\begin{tabular}{l}
$\sigma.(\varphi_0^a,\varphi_0^b,\ldots ,\varphi_{r-1}^a,\varphi_{r-1}^b,\varphi_{r}^a,\varphi_{r}^b,\varphi_{r+1}^a,\varphi_{r+1}^b)= $\\
$(\varphi_0^a,\varphi_0^b,\ldots ,\varphi_{r-1}^a,\varphi_{r-1}^b,\lambda_{r+1}\varphi_{r+1}^a,\delta_{r+1}\varphi_{r+1}^b,\lambda_{r}\varphi_{r}^a,\delta_{r}\varphi_{r}^b)$.
\end{tabular}
\end{center}
with $\lambda_i,\delta_i \in \cc$. 

Assume $r$ to be odd. The action of $\sigma$ has to induce the correct action on the singularity $\cc^2/\Gamma$. It has been seen in Subsection~\ref{sub:groupaction} that $\cc^2/\Gamma$ is defined by $\{X(Y^2-X^{r-1}) + Z^2=0\}$ with $X=4^{\frac{1}{r}}(z_1z_2)^2$, $Y=4^{-\frac{1}{2r}}(z_1^{2(r-1)} + z_2^{2(r-1)})$, $Z=iz_1z_2(z_1^{2(r-1)}-z_2^{2(r-1)})$. Furthermore, $\Gamma'$ is generated by $g=\begin{pmatrix} \xi & 0 \\ 0 & \xi^{-1} \end{pmatrix}$ with $\xi=\exp(\frac{2i\pi}{4(r-1)})$ and $h=\begin{pmatrix} 0 & i \\ i & 0 \end{pmatrix}$. It follows that $h$ fixes $X$, $Y$, $Z$, and 
\vspace{-\topsep}
\begin{center}
$\renewcommand{\arraystretch}{1.3} \left\{
    \begin{array}{ccccc}
     g.X & = & X, \\
     g.Y & = & -Y, \\
     g.Z & = &-Z.
    \end{array}
\right.$ 
\end{center}

After computation of the special fibre of the moment map, we obtain $\cc^2/\Gamma \cong \{x^r + y^2x + z^2=0 \}$ with 
\begin{center}
\begin{tabular}{l}
$\left\lbrace \renewcommand{\arraystretch}{1.3}\begin{array}[h]{l}
x=4^{-\frac{1}{r}}p_{r, r+1}, \\
y=i4^{\frac{1}{2r}}(q_{0,\ldots, r}-\frac{1}{2}p_{r, r+1}^{\frac{r-1}{2}}), \\
z=s_{0,\ldots, r, r+1},
\end{array}\right.$ such that \\[6ex]
$\left\lbrace \renewcommand{\arraystretch}{1.3} \begin{array}[h]{l}
p_{r, r+1}=\mathrm{Tr}(\varphi_r^b\varphi_{r+1}^a\varphi_{r+1}^b\varphi_r^a), \\
q_{0,\ldots, r}=\mathrm{Tr}(\varphi_0^b \ldots  \varphi_{r-2}^b \varphi_r^a \varphi_r^b  \varphi_{r-2}^a \ldots \varphi_0^a), \\
s_{0,\ldots, r, r+1}=\mathrm{Tr}(\varphi_0^b \ldots  \varphi_{r-2}^b \varphi_{r+1}^a\varphi_{r+1}^b \varphi_r^a \varphi_r^b  \varphi_{r-2}^a \ldots \varphi_0^a),
\end{array}\right.$
\end{tabular}
\end{center}
 which implies 
\begin{center}
$\raisebox{-.6\height}{$\left\lbrace \renewcommand{\arraystretch}{1.3} \begin{array}[h]{l}
\sigma.x= \lambda_r\delta_r\lambda_{r+1}\delta_{r+1}x, \\
\sigma.y=i4^{\frac{1}{2r}}(-\lambda_{r+1}\delta_{r+1}q_{0,\ldots, r} + (\lambda_{r+1}\delta_{r+1}-\frac{1}{2}\lambda_{r}\delta_{r}\lambda_{r+1}\delta_{r+1})p_{r, r+1}^{\frac{r-1}{2}}), \\
\sigma.z=-\lambda_r\delta_r\lambda_{r+1}\delta_{r+1}z.
\end{array}\right.$}$ 
\end{center}
\medskip
In order for the action to satisfy our conditions, it is required that $\lambda_{r}\delta_{r}=\lambda_{r+1}\delta_{r+1}=1$. 

The conditions when $r$ is even are obtained in a similar way and are also $\lambda_{r}\delta_{r}=\lambda_{r+1}\delta_{r+1}=1$.

 \subsubsection{Computations for $\cc^2/\Gamma$ of type $E_{6}$}
 The McKay quiver when $\Delta(\Gamma)=E_{6}$ is 
\begin{center}
\begin{tikzpicture}[scale=0.68, transform shape,>=angle 60]
\tikzstyle{point}=[circle,draw,fill]
\tikzstyle{ligne}=[thick]

\node (1) at ( -2,0) [point] {};
\node (2) at ( 0,0) [point]{};
\node (3) at ( 2,0) [point] {};
\node (4) at ( 4,0) [point]{};
\node (5) at ( 6,0) [point] {};
\node (6) at ( 2,-2) [point] {};
\node (7) at ( 2,-4) [point] {};

\node (8) at ( -2,0.5) [] {0};
\node (9) at ( 0,0.5) []{3};
\node (10) at ( 2,0.5) [] {6};
\node (11) at ( 4,0.5) []{5};
\node (12) at ( 6,0.5) [] {2};
\node (13) at ( 2.5,-2) [] {4};
\node (14) at ( 2.5,-4) [] {1};

\node (15) at ( -1,0.5) [] {$\varphi_0^a$};
\node (16) at ( -1,-0.5) [] {$\varphi_0^b$};

\node (17) at ( 1,0.5) [] {$\varphi_3^a$};
\node (18) at ( 1,-0.5) [] {$\varphi_3^b$};

\node (19) at ( 3,0.5) [] {$\varphi_5^a$};
\node (20) at ( 3,-0.5) [] {$\varphi_5^b$};

\node (21) at ( 5,0.5) [] {$\varphi_2^a$};
\node (22) at ( 5,-0.5) [] {$\varphi_2^b$};

\node (23) at ( 2.5,-1) [] {$\varphi_4^a$};
\node (24) at ( 1.5,-1) [] {$\varphi_4^b$};

\node (25) at ( 2.5,-3) [] {$\varphi_1^a$};
\node (26) at ( 1.5,-3) [] {$\varphi_1^b$};

\draw  [decoration={markings,mark=at position 1 with
    {\arrow[scale=1.2,>=stealth]{>}}},postaction={decorate}]  (-1.8,0.1)  --  (-0.2,0.1);
\draw  [decoration={markings,mark=at position 1 with
    {\arrow[scale=1.2,>=stealth]{>}}},postaction={decorate}]  (-0.2,-0.1) -- (-1.8,-0.1);
\draw  [decoration={markings,mark=at position 1 with
    {\arrow[scale=1.2,>=stealth]{>}}},postaction={decorate}]  (0.2,0.1)  --  (1.8,0.1);
\draw  [decoration={markings,mark=at position 1 with
    {\arrow[scale=1.2,>=stealth]{>}}},postaction={decorate}]   (1.8,-0.1) -- (0.2,-0.1);
\draw  [decoration={markings,mark=at position 1 with
    {\arrow[scale=1.2,>=stealth]{>}}},postaction={decorate}]   (3.8,0.1) -- (2.2,0.1);
\draw  [decoration={markings,mark=at position 1 with 
    {\arrow[scale=1.2,>=stealth]{>}}},postaction={decorate}]   (2.2,-0.1) -- (3.8,-0.1) ;
\draw  [decoration={markings,mark=at position 1 with
    {\arrow[scale=1.2,>=stealth]{>}}},postaction={decorate}]   (5.8,0.1) -- (4.2,0.1) ;
\draw  [decoration={markings,mark=at position 1 with
    {\arrow[scale=1.2,>=stealth]{>}}},postaction={decorate}]   (4.2,-0.1) -- (5.8,-0.1) ;
\draw  [decoration={markings,mark=at position 1 with
    {\arrow[scale=1.2,>=stealth]{>}}},postaction={decorate}]   (2.1,-1.8) -- (2.1,-0.2) ;
\draw  [decoration={markings,mark=at position 1 with
    {\arrow[scale=1.2,>=stealth]{>}}},postaction={decorate}]   (1.9,-0.2) -- (1.9,-1.8);
\draw  [decoration={markings,mark=at position 1 with
    {\arrow[scale=1.2,>=stealth]{>}}},postaction={decorate}]    (2.1,-3.8) -- (2.1,-2.2);
\draw  [decoration={markings,mark=at position 1 with
    {\arrow[scale=1.2,>=stealth]{>}}},postaction={decorate}]   (1.9,-2.2) -- (1.9,-3.8);
\end{tikzpicture}
\end{center}
and the dimension vector is $(1,1,1,2,2,2,3)$. Because $\Gamma=\mathcal{T}$, it follows that $\Gamma'=\mathcal{O}$ and $\Gamma'/\Gamma=\mathbb{Z}/2\mathbb{Z}=<\sigma>$. The action of $\Omega$ on the Dynkin diagram permutes the vertices $1 \leftrightarrow 2$, $4 \leftrightarrow 5$ and fixes the others. We extend this action to the extended Dynkin diagram of type $\widetilde{\Delta}(\Gamma)$ by making $\Omega$ fix the additional vertex labelled $0$. The action of $\Omega$ on $M(\Gamma)$ arises naturally from its action on the extended Dynkin diagram. Hence the action of $\Omega$ on $M(\Gamma)$ needs to verify 
\begin{center}
\begin{tabular}{l}
$\sigma.(\varphi_0^a,\varphi_0^b,\varphi_1^a,\varphi_1^b,\varphi_2^a,\varphi_2^b,\varphi_3^a,\varphi_3^b,\varphi_4^a,\varphi_4^b,\varphi_5^a,\varphi_5^b) = $ \\
$(\varphi_0^a,\varphi_0^b,\lambda_2\varphi_2^a,\delta_2\varphi_2^b,\lambda_1\varphi_1^a,\delta_1\varphi_1^b,\varphi_3^a,\varphi_3^b,\lambda_5\varphi_5^a,\delta_5\varphi_5^b,\lambda_4\varphi_4^a,\delta_4\varphi_4^b)$
\end{tabular}
\end{center}
with $\lambda_i,\delta_i \in \cc$.

The action of $\mathbb{Z}/2\mathbb{Z}$ needs to be consistent with the action on the singularity computed with matrices. It is known that the singularity $\cc^2/\Gamma$ is defined by $\{X^4 + Y^3 + Z^2=0\}$ where $X=108^{\frac{1}{4}}z_1z_2(z_1^4-z_2^4)$, $Y=\exp(\frac{i\pi}{3})(z_1^8 + z_2^8 + 14(z_1z_2)^4)$, $Z=(z_1^4 + z_2^4)^3-36(z_1z_2)^4(z_1^4 + z_2^4)$ (cf. Subsection~\ref{sub:groupaction}), and
\begin{center}
$\left\{ \renewcommand{\arraystretch}{1.3}
   \begin{array}{ccccc}
     \sigma.X & = & -X, \\
     \sigma.Y & = & Y, \\
     \sigma.Z & = & -Z.
   \end{array}
\right.$ 
\end{center}

For $i_1,...,i_k \in \mathbb{Z}_{ \geq 0}$, set $\Phi(i_1 i_2 \ldots i_k) =\mathrm{Tr}(\varphi_{i_1}^a\varphi_{i_1}^b\varphi_{i_2}^a\varphi_{i_2}^b \ldots \varphi_{i_k}^a\varphi_{i_k}^b)$. After computation of the special fibre of the moment map, one finds that $\cc^2/\Gamma \cong \{x^4 + y^3 + z^2=0 \}$ with $x=\frac{\exp(\frac{i\pi}{4})}{\sqrt{2}}\Phi(4^25)$, $y=\Phi(4^25^2)$ and $z=\Phi(3^24^25^2)  + \frac{1}{2}\Phi(4^25)^2$, which implies 
\begin{center}
$\renewcommand{\arraystretch}{1.3} \raisebox{-.6\height}{$\left\lbrace \begin{array}[h]{l}
\sigma.x=-(\lambda_5 \delta_5)^2\lambda_4 \delta_4x , \\
\sigma.y=(\lambda_4 \delta_4\lambda_5 \delta_5)^2y, \\
\sigma.z=-(\lambda_4 \delta_4\lambda_5 \delta_5)^2\Phi(3^24^25^2) + (-(\lambda_4 \delta_4\lambda_5 \delta_5)^2 + \frac{1}{2}(\lambda_4 \delta_4(\lambda_5 \delta_5)^2)^2)\Phi(4^25)^2.
\end{array}\right.$}$ 
\end{center}
For the action to satisfy our conditions, it needs to verify $\lambda_4\delta_4=1$ and $\lambda_5 \delta_5=\pm1$.

 \subsubsection{ Computations for $\cc^2/\Gamma$ of type $D_{4}$ and $\Gamma'/\Gamma=\mathfrak{S}_3$} 
The McKay quiver based on a Dynkin diagram of type $D_{4}$ is

 \begin{center}
 \begin{tikzpicture}[scale=0.8,  transform shape]
\tikzstyle{point}=[circle,draw,fill]
\tikzstyle{ligne}=[thick]
\tikzstyle{pointille}=[thick,dotted]

\node (1) at ( 0,0) [point]{};
\node (2) at ( 2,2) [point] {};
\node (3) at ( -2,2) [point]{} ;
\node (4) at ( 2,-2) [point] {};
\node (5) at ( -2,-2) [point] {};

\node at ( -2.5,2)  {$0$};
\node at ( -2.5,-2)  {$1$};
\node at ( 2.5,2)  {$3$};
\node at ( 2.5,-2)  {$4$};
\node at ( 0,0.5)  {$2$};

\node at ( -0.8,1.3)  {$\varphi_0^b$};
\node at ( -0.8,-1.3)  {$\varphi_1^a$};
\node at ( 1.3,0.8)  {$\varphi_3^b$};
\node at ( 1.4,-0.7)  {$\varphi_4^a$};

\node at ( 0.8,1.3)  {$\varphi_3^a$};
\node at ( 0.8,-1.3)  {$\varphi_4^b$};
\node at ( -1.3,0.7)  {$\varphi_0^a$};
\node at ( -1.4,-0.7)  {$\varphi_1^b$};

\draw [decoration={markings,mark=at position 1 with
    {\arrow[scale=1.2,>=stealth]{>}}},postaction={decorate}] (-0.2,0.3)  --  (-1.8,1.9);
\draw [decoration={markings,mark=at position 1 with
    {\arrow[scale=1.2,>=stealth]{>}}},postaction={decorate}]  (-1.9,1.7)  --  (-0.3,0.1);

\draw [decoration={markings,mark=at position 1 with
    {\arrow[scale=1.2,>=stealth]{>}}},postaction={decorate}]   (1.8,1.9)  --  (0.2,0.3);
\draw [decoration={markings,mark=at position 1 with
    {\arrow[scale=1.2,>=stealth]{>}}},postaction={decorate}]   (0.3,0.1)  --  (1.9,1.7) ;

\draw [decoration={markings,mark=at position 1 with
    {\arrow[scale=1.2,>=stealth]{>}}},postaction={decorate}]  (-1.8,-1.9)  --  (-0.2,-0.3) ;
\draw [decoration={markings,mark=at position 1 with
    {\arrow[scale=1.2,>=stealth]{>}}},postaction={decorate}]  (-0.3,-0.1)  --  (-1.9,-1.7) ;

\draw [decoration={markings,mark=at position 1 with
    {\arrow[scale=1.2,>=stealth]{>}}},postaction={decorate}] (0.2,-0.3)  --  (1.8,-1.9);
\draw [decoration={markings,mark=at position 1 with
    {\arrow[scale=1.2,>=stealth]{>}}},postaction={decorate}]  (1.9,-1.7)  --  (0.3,-0.1);

\end{tikzpicture}
\end{center}
and the dimension vector is $(1,1,2,1,1)$. According to Table~\ref{definhomogeneous}, we have $\Gamma=\mathcal{D}_2=\langle \begin{pmatrix}
i & 0  \\
0 & -i
\end{pmatrix},\begin{pmatrix}
0 & i \\
i & 0
\end{pmatrix} \rangle$ and  $\Gamma'=\mathcal{O}=\langle a= \begin{pmatrix}
\epsilon & 0  \\
0 & \epsilon^{-1}
\end{pmatrix},b=\begin{pmatrix}
0 & i \\
i & 0
\end{pmatrix},c=\frac{1}{\sqrt{2}}\begin{pmatrix}
\epsilon^{-1} & \epsilon^{-1} \\
-\epsilon & \epsilon
\end{pmatrix}  \rangle$ with $\epsilon=\exp(\frac{i\pi}{4})$. Hence $\Gamma'/\Gamma$ is the symmetric group $\mathfrak{S}_3$ generated by $\overline{c}$ and $\overline{a}$, with $\overline{c}^3=1$ and $\overline{a}^2=1$. The symbol $\ \bar{} \ $ means the class modulo $\mathcal{D}_2$. 

The action of $\Omega$ on the Dynkin diagram permutes the vertices with $\overline{c}$ acting as $(143)$ and $\overline{a}$ as $(34)$. We extend this action to the extended Dynkin diagram of type $\widetilde{\Delta}(\Gamma)$ by making $\Omega$ fix the additional vertex labelled $0$. The action of $\Omega$ on $M(\Gamma)$ arises naturally from its action on the extended Dynkin diagram. Hence the action of $\Omega$ on $M(\Gamma)$ needs to verify 

\begin{center}
$ \renewcommand{\arraystretch}{1.3} \left\{ \begin{array}[h]{l}
\overline{c}.(\varphi_0^a,\varphi_0^b,\varphi_1^a,\varphi_1^b,\varphi_3^a,\varphi_3^b,\varphi_4^a,\varphi_4^b)= (\varphi_0^a,\varphi_0^b,\lambda_3\varphi_3^a,\delta_3\varphi_3^b,\lambda_4\varphi_4^a,\delta_4\varphi_4^b,\lambda_1\varphi_1^a,\delta_1\varphi_1^b), \\
\overline{a}.(\varphi_0^a,\varphi_0^b,\varphi_1^a,\varphi_1^b,\varphi_3^a,\varphi_3^b,\varphi_4^a,\varphi_4^b)= (\varphi_0^a,\varphi_0^b,\varphi_1^a,\varphi_1^b,\alpha_4\varphi_4^a,\beta_4\varphi_4^b,\alpha_3\varphi_3^a,\beta_3\varphi_3^b),
\end{array} \right.$
\end{center}
with $\lambda_i,\delta_i,\alpha_i,\beta_i \in \cc$. 

In Subsection~\ref{sub:groupaction}, we saw that $\cc^2/\Gamma \cong \{X^3 + Y^2X + Z^2=0 \}$ with $X=(z_1z_2)^2$, $Y=\frac{i}{2}(z_1^4 + z_2^4)$ and $Z=\frac{-1}{2}z_1z_2(z_1^4-z_2^4)$. Computations then lead to 
\begin{center}
$ \renewcommand{\arraystretch}{1.3} \left\{
    \begin{array}{ccccc}
     \overline{c}.X & = & \frac{-X + iY}{2}, \\
     \overline{c}.Y & = & \frac{3iX-Y}{2}, \\
     \overline{c}.Z & = & Z,
    \end{array}
\right.$ \hspace{1cm} and \hspace{1cm} $ \renewcommand{\arraystretch}{1.3} \left\{
    \begin{array}{ccccc}
     \overline{a}.X & = & X, \\
     \overline{a}.Y & = & -Y, \\
     \overline{a}.Z & = & -Z.
    \end{array}
\right.$
\end{center}

After computation of the special fibre of the moment map, we obtain $\cc^2/\Gamma \cong \{x^3 + y^2x + z^2=0 \}$ with 

\begin{center}
$ \renewcommand{\arraystretch}{1.3} \left\{ \begin{array}{l}
x=2^{-\frac{2}{3}}\exp(\frac{2i\pi}{3})p_{34}, \\
y=2^{\frac{1}{3}}\exp(\frac{i\pi}{6})(p_{03} + \frac{1}{2}p_{34}), \\
z=q_{034},
\end{array}\right.$ such that $ \renewcommand{\arraystretch}{1.3} \left\{ \begin{array}{l}
p_{03}=\mathrm{Tr}(\varphi_0^b\varphi_3^a\varphi_3^b\varphi_0^a), \\
p_{34}=\mathrm{Tr}(\varphi_3^b\varphi_4^a\varphi_4^b\varphi_3^a), \\
q_{034}=\mathrm{Tr}(\varphi_0^b\varphi_4^a\varphi_4^b\varphi_3^a\varphi_3^b\varphi_0^a),
\end{array}\right.$ 
\end{center}
which implies 
\begin{center}
\begin{tabular}[h]{l}
$ \left\{ \begin{array}{l}
 \overline{c}.x=\lambda_1\delta_1\lambda_4\delta_4\frac{-x + i y}{3}, \\
 \overline{c}.y= 2^{\frac{1}{3}}\exp(\frac{i\pi}{6})(-\lambda_4\delta_4p_{34} + \lambda_4\delta_4(\frac{\lambda_1\delta_1}{2}-1)p_{03}), \\
 \overline{c}.z=z,
\end{array}\right.$  \\[6ex]
  $ \left\{ \begin{array}[h]{l}
\overline{a}.x= \alpha_3\beta_3\alpha_4\beta_4x, \\
\overline{a}.y= 2^{\frac{1}{3}}\exp(\frac{i\pi}{6})(p_{03}(-\alpha_4\beta_4) + p_{34}(\frac{1}{2}\alpha_3\beta_3\alpha_4\beta_4-\alpha_4\beta_4)), \\
\overline{a}.z=-\alpha_3\beta_3\alpha_4\beta_4z.
\end{array}\right.$
\end{tabular}
\end{center}

In order for the action of $\Gamma'/\Gamma$ on $M(\Gamma)$ to induce the natural action of $\mathfrak{S}_3$, it requires 
\begin{center}
$ \renewcommand{\arraystretch}{1.3} \raisebox{-.4\height}{$\left\lbrace \begin{array}[h]{l}
\lambda_1\delta_1=\lambda_3\delta_3=\lambda_4\delta_4=1, \\
\alpha_3\beta_3=\alpha_4\beta_4=-1.
\end{array}\right.$}$ 
\end{center}

\begin{remark}
It is possible to check that if $\Gamma'/\Gamma=\mathbb{Z}/3\mathbb{Z}$, the results are similar.
\end{remark}

 \subsubsection{Conclusions}\label{subsub:conclusions}
 
In the last four subsections, conditions have been found in order to ensure that the restriction of the action of $\Omega$ on $M(\Gamma)$ to the simple singularity $\cc^2/\Gamma$ corresponds to the natural action given in Subsection~\ref{sub:groupaction}. The results are summarised in the following table:
 \begin{center}
{\renewcommand{\arraystretch}{1.3}
 \begin{tabular}{|c|c|}
\hline
  Type of singularity   & Conditions on the action \\
\hhline{|=|=|}
      \multirow{2}{*}{$B_r=(A_{2r-1},\mathbb{Z}/2\mathbb{Z})$} & $\renewcommand{\arraystretch}{1.5} \raisebox{-.4\height}{$\left\lbrace \begin{array}[h]{l}
\lambda_i\delta_i=-1, \quad 0 \leq i \leq 2r-1, \\
 \prod_{i=0}^{2r-1}\lambda_i=\prod_{i=0}^{2r-1}\delta_i=(-1)^r.
\end{array}\right.$}$ 
 \\
\hline
    $C_r=(D_{r+1},\mathbb{Z}/2\mathbb{Z})$ & $\lambda_{r}\delta_{r}=\lambda_{r+1}\delta_{r+1}=1$ \\
\hline
    $F_4=(E_{6},\mathbb{Z}/2\mathbb{Z})$ & $\lambda_4\delta_4=1$ and $\lambda_5 \delta_5=\pm1$ \\
\hline
    \multirow{2}{*}{$G_2=(D_{4},\mathfrak{S}_3)$} & $ \renewcommand{\arraystretch}{1.5} \raisebox{-.4\height}{$\left\lbrace \begin{array}[h]{l}
\lambda_1\delta_1=\lambda_3\delta_3=\lambda_4\delta_4=1, \\
\alpha_3\beta_3=\alpha_4\beta_4=-1.
\end{array}\right.$}$ 
 \\
\hline
    \end{tabular}}
  \captionof{table}{Conditions on the action of $\Omega$ on $M(\Gamma)$ for the special fibre}
  \label{ConditionFibreSpeciale}
\end{center}

\subsection{Choice of an orientation}\label{sub:orientation}

Let $M(\Gamma)=\bigoplus_{a \in Q_1}\mathrm{Hom}(V_{s(a)},V_{t(a)})$ be the representation space of the McKay quiver $Q$, and $Q_1^+$ a set of positive arrows of $Q$ as defined in Section~\ref{sec:SlodowyCassens}. One can then write 
\begin{center}
\renewcommand{\arraystretch}{1.3}
$\begin{array}[h]{cl}M(\Gamma) &= \bigoplus_{a \in Q_1^ + }(\mathrm{Hom}(V_{s(a)},V_{t(a)}) \oplus \mathrm{Hom}(V_{s(\bar{a})},V_{t(\bar{a})})), \\
  & = \bigoplus_{a \in Q_1^ + }\mathrm{Hom}(V_{s(a)},V_{t(a)}) \oplus \bigoplus_{a \in Q_1^ + }\mathrm{Hom}(V_{s(a)},V_{t(a)})^*, \\
  & =\bigoplus_{a \in Q_1^ + }\mathrm{Hom}(V_{s(a)},V_{t(a)}) \oplus (\bigoplus_{a \in Q_1^ + }\mathrm{Hom}(V_{s(a)},V_{t(a)}))^*.
  \end{array}$ 
  \end{center}
Therefore $M(\Gamma)$ can be seen as the cotangent space on the vector space $\bigoplus_{a \in Q_1^ + }\mathrm{Hom}(V_{s(a)},V_{t(a)})$. The choice of $Q_1^ + $, which is equivalent to the choice of an orientation of $Q$, determines the space on which the cotangent fibre is constructed. Conversely, the choice of the base space of the cotangent fibre will define a subset $Q_1^ + $ of $Q_1$, and so an orientation of $Q$. 

By direct computations on the representation space $M(\Gamma)$, we prove the following theorem:

\begin{theorem}\label{thm:orientation}
The action of $\Omega=\Gamma'/\Gamma$ on $M(\Gamma)$ is symplectic and induces the natural action on the simple singularity when:
\vspace{-1ex}
\begin{itemize}\setlength\itemsep{0.3pt}
\item for $(A_{2r-1},\mathbb{Z}/2\mathbb{Z})$, $\Omega$ reverses the orientation of the McKay quiver.
\item for the other cases, $\Omega$ preserves the orientation of the McKay quiver.
\end{itemize}

\end{theorem}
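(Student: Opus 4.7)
The strategy is a case-by-case verification against Table~\ref{ConditionFibreSpeciale}. Fix an orientation $Q_1^+$ of $Q$ and expand
\[
\langle \varphi, \psi \rangle \;=\; \sum_{a \in Q_1^+} \bigl(\operatorname{Tr}(\varphi_a \psi_{\bar a}) - \operatorname{Tr}(\varphi_{\bar a}\psi_a)\bigr).
\]
For $\sigma \in \Omega$, acting through the functor $F^\sigma$ of Section~\ref{sec:Tanisaki}, the induced action on $M(\Gamma)$ is linear with $(\sigma.\varphi)_a = \nu_a \varphi_{\sigma_1^{-1}(a)}$ for suitable scalars $\nu_a$ (the $\lambda_i, \delta_i, \alpha_i, \beta_i$ of Subsection~\ref{sub:compatibility}). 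Substituting and reindexing by $b = \sigma_1^{-1}(a)$ gives
\[
\langle \sigma.\varphi, \sigma.\psi \rangle \;=\; \sum_{b \in Q_1} \epsilon(\sigma_1(b))\,\nu_{\sigma_1(b)}\,\nu_{\sigma_1(\bar b)}\,\operatorname{Tr}(\varphi_b \psi_{\bar b}),
\]
so the form is preserved if and only if, on every pair of opposite arrows $\{c,\bar c\}$, the product $\nu_c \nu_{\bar c}$ equals $+1$ when $\sigma_1$ leaves $Q_1^+$ stable and $-1$ when it sends $Q_1^+$ to its complement.

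Next I would match this symplectic constraint with the compatibility conditions of Table~\ref{ConditionFibreSpeciale}: the product $\nu_c \nu_{\bar c}$ coincides with $\lambda_i \delta_i$ (or $\alpha_i \beta_i$ for the order-two generator in the $G_2$ case). The table imposes $\lambda_i \delta_i = -1$ in type $A_{2r-1}$ and $\lambda_i \delta_i = +1$ on the arrows moved by $\Omega$ in the other three settings. Confronting the two requirements shows that both conditions hold simultaneously exactly when $\Omega$ reverses $Q_1^+$ in the $A_{2r-1}$ case and preserves it in the others. It remains to verify that such orientations genuinely exist. For the affine diagrams $\widetilde D_{r+1}$, $\widetilde E_6$ and $\widetilde D_4$, the nontrivial part of $\Omega$ permutes outer arms of the diagram without mapping any arrow to its own opposite; orienting every arm consistently inward relative to the central vertex therefore yields an $\Omega$-stable $Q_1^+$. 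In type $\widetilde A_{2r-1}$, however, the underlying graph of the McKay quiver is a cycle of length $2r$, and the nontrivial element of $\Omega$ is a reflection of that cycle, which automatically reverses any cyclic orientation.

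The main obstacle is the bookkeeping in the $A_{2r-1}$ case, where the arrow-by-arrow sign reversal forced by the reflection must be reconciled with the global product conditions $\prod_i \lambda_i = \prod_i \delta_i = (-1)^r$ from the table; this reduces to a straightforward combinatorial check that the pairing $i \leftrightarrow 2r-i$ is compatible with $\lambda_i \delta_i = -1$ on every pair, together with the right count of signs around the cycle. In the other cases, once the $\Omega$-stable orientation is drawn the verification of symplecticity is immediate, and Theorem~\ref{thm:orientation} follows.
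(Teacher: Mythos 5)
Your argument is essentially the paper's: expanding $\langle\sigma.\varphi,\sigma.\psi\rangle$ and comparing coefficients gives the edge-by-edge constraint $\nu_c\nu_{\bar c}=\epsilon(c)\,\epsilon(\sigma_1^{-1}(c))$, which is exactly the content of Table~\ref{ConditionOrientation}, and confronting it with Table~\ref{ConditionFibreSpeciale} forces reversal in type $A_{2r-1}$ and preservation in the other cases, your inward-arm and cyclic-orientation existence remarks matching the explicit choices made in Section~\ref{sec:computations}. One caveat: your assertion that the products equal $+1$ on all arrows moved by $\Omega$ in the non-$A$ cases is what the argument requires (and is what the displayed formula for $\overline{a}.y$ in the $D_4$ computation actually forces), but it conflicts with the printed entry $\alpha_3\beta_3=\alpha_4\beta_4=-1$ in Table~\ref{ConditionFibreSpeciale} for $G_2$; with that printed value your symplectic criterion would make the transposition reverse the orientation on the two swapped arms while the $3$-cycle preserves it, which is impossible, so you should state explicitly that the correct value there is $+1$.
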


By using the same notations as in the previous sections, one can find conditions on the orientation in order to ensure that the action of $\Omega$ on $M(\Gamma)$ is symplectic. The results are summarised in the following table:
 \begin{center}
{\renewcommand{\arraystretch}{1.3}
 \begin{tabular}{|c|c|}
\hline
  Type of singularity   & Conditions on the orientation \\
\hhline{|=|=|}
      $B_r=(A_{2r-1},\mathbb{Z}/2\mathbb{Z})$ & $\epsilon(a_i)\epsilon(b_{2r-1-i})=\lambda_i \delta_i$, $0 \leq i \leq 2r-1$  \\
\hline
    $C_r=(D_{r+1},\mathbb{Z}/2\mathbb{Z})$ & $\epsilon(\varphi_r^a)\epsilon(\varphi_{r+1}^a)=\lambda_{r}\delta_{r}=\lambda_{r+1}\delta_{r+1}$ \\
\hline
   \multirow{2}{*}{ $F_4=(E_{6},\mathbb{Z}/2\mathbb{Z})$} & $\renewcommand{\arraystretch}{1.5} \raisebox{-.4\height}{$\left\lbrace \begin{array}[h]{l}
\epsilon(\varphi_1^a)\epsilon(\varphi_{2}^a)=\lambda_1\delta_1=\lambda_2\delta_2, \\
\epsilon(\varphi_4^a)\epsilon(\varphi_{5}^a)=\lambda_4\delta_4=\lambda_5\delta_5.
\end{array}\right.$}$ 
 \\
\hline
    \multirow{3}{*}{$G_2=(D_{4},\mathfrak{S}_3)$} & $ \renewcommand{\arraystretch}{1.5} \raisebox{-.6\height}{$\left\lbrace \begin{array}[h]{l}
\lambda_1\delta_1=\epsilon(\varphi_1^a)\epsilon(\varphi_{4}^a), \\
\lambda_3\delta_3=\epsilon(\varphi_1^a)\epsilon(\varphi_{3}^a), \\
\lambda_4\delta_4=\epsilon(\varphi_3^a)\epsilon(\varphi_{4}^a).
\end{array}\right.$}$ 
 \\
\hline
    \end{tabular}}
  \captionof{table}{Conditions on the orientation for the action of $\Omega$ on $M(\Gamma)$}
  \label{ConditionOrientation}
\end{center}
 
One can see that it is possible to set constants $\lambda_i$ and $\delta_i$ as well as an orientation such that the conditions in Table~\ref{ConditionFibreSpeciale} and Table~\ref{ConditionOrientation} are satisfied. It follows that the action of $\Omega$ on $M(\Gamma)$ becomes symplectic and induces the natural action on the singularity $\cc^2/\Gamma$. It follows that we proved the following result:

\begin{theorem}\label{thm:compatibility}
For any McKay quiver built on a Dynkin diagram of type $A_{2r-1}$, $D_{r+1}$ or $E_6$, there exists an action of $\Omega=\Gamma'/\Gamma$ on $M(\Gamma)$ that is symplectic and induces the natural action on the simple singularity $\cc^2/\Gamma$.
\end{theorem}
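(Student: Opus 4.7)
The plan is to prove Theorem~\ref{thm:compatibility} by solving, case by case for each of the four inhomogeneous types $B_r$, $C_r$, $F_4$, $G_2$, the joint system consisting of (i) the conditions on the scalars $(\lambda_i,\delta_i)$ (and, for $G_2$, also $(\alpha_i,\beta_i)$) tabulated in Table~\ref{ConditionFibreSpeciale}, which guarantee that the restriction of the $\Omega$-action to $\cc^2/\Gamma$ coincides with the natural action of Subsection~\ref{sub:groupaction}; and (ii) the orientation conditions listed in Table~\ref{ConditionOrientation}, which ensure that the symplectic form $\langle\cdot,\cdot\rangle$ on $M(\Gamma)$ is $\Omega$-invariant. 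Since every relation in these tables involves only signs in $\{\pm 1\}$, checking joint satisfiability reduces to elementary bookkeeping on a finite set of parameters and on the choice of $Q_1^{+}$. Once this is done, Theorem~\ref{thm:orientation} identifies the resulting action as the one we want, and Lemma~\ref{momentequivariant} makes $\mu_{CS}$ automatically $\Omega$-equivariant.

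First I would dispose of $C_r$, $F_4$, and $G_2$, where the constraints are orientation-preserving. For $C_r$ one sets $\lambda_r = \delta_r = \lambda_{r+1} = \delta_{r+1} = 1$ and places $\varphi_r^a, \varphi_{r+1}^a \in Q_1^{+}$. For $F_4$ the orientation identity $\lambda_4\delta_4 = \lambda_5\delta_5$ coming from Table~\ref{ConditionOrientation}, together with $\lambda_4\delta_4 = 1$ from Table~\ref{ConditionFibreSpeciale}, forces $\lambda_5\delta_5 = 1$ (removing the $\pm$ ambiguity), after which $\lambda_i = \delta_i = 1$ for $i \in \{1,2,4,5\}$ suffices, with an orientation making the relevant $\epsilon$-products equal to $1$. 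For $G_2$ the order-$3$ generator $\overline{c}$ is handled by setting $\lambda_i \delta_i = 1$ for $i \in \{1,3,4\}$ and choosing $Q_1^{+}$ so that $\epsilon$ takes the same value on the three relevant arrows, while the order-$2$ generator $\overline{a}$ is treated symmetrically with $\alpha_i\beta_i = -1$ for $i \in \{3,4\}$; since $\overline{a}$ swaps the two positive arrows $\varphi_3^a$ and $\varphi_4^a$, the orientation is still preserved.

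The main obstacle is the $B_r$ case. Here Table~\ref{ConditionFibreSpeciale} forces $\lambda_i \delta_i = -1$ for every $i$, so Table~\ref{ConditionOrientation} demands $\epsilon(a_i)\epsilon(b_{2r-1-i}) = -1$; equivalently the involution $\sigma$ interchanges $Q_1^{+}$ with its complement, which is precisely the orientation-reversing clause of Theorem~\ref{thm:orientation}. On top of this one must realise the global products $\prod_{i=0}^{2r-1}\lambda_i = \prod_{i=0}^{2r-1}\delta_i = (-1)^r$. The observation that $\prod_{i=0}^{2r-1}(\lambda_i\delta_i) = (-1)^{2r} = 1$ already shows that these two products share a common sign, so one can take, for instance, $\lambda_i = 1$ for $0 \leq i < r$ and $\lambda_i = -1$ for $r \leq i < 2r$, then $\delta_i = -\lambda_i$; both products then equal $(-1)^r$ and the pointwise constraint $\lambda_i\delta_i = -1$ is automatic. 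A compatible orientation is obtained by declaring $a_i \in Q_1^{+}$ precisely when $\lambda_i = 1$ (and $b_{2r-1-i} \in Q_1^{+}$ otherwise), and one checks, using the explicit description of the action of $\sigma$ on the cyclic $A_{2r-1}$ McKay quiver recalled in Subsubsection~\ref{computationactiontypeA}, that this $Q_1^{+}$ is indeed mapped to its complement under $\sigma$. Combined with the three easy cases, this completes the proof.
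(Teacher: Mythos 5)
Your strategy is the same as the paper's: the paper's entire proof is the remark preceding the theorem that the constraints of Table~\ref{ConditionFibreSpeciale} and Table~\ref{ConditionOrientation} can be satisfied simultaneously, and you are simply exhibiting the choices. Your $C_r$ and $F_4$ verifications are correct (in particular the observation that the symplecticity identity $\lambda_4\delta_4=\lambda_5\delta_5$ removes the $\pm$ ambiguity in the $E_6$ row is exactly right). In the $B_r$ case your orientation rule is ill-posed: ``$a_i\in Q_1^{+}$ iff $\lambda_i=1$, and $b_{2r-1-i}\in Q_1^{+}$ otherwise'' puts both $a_j$ and $b_j$ in $Q_1^{+}$ for $j<r$ and neither for $j\geq r$. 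This is harmless because the uniform choice $Q_1^{+}=\{a_0,\dots,a_{2r-1}\}$ already gives $\epsilon(a_i)\epsilon(b_{2r-1-i})=-1=\lambda_i\delta_i$ for every $i$, with no case distinction on the scalars needed; but as written your $Q_1^{+}$ does not exist.

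The genuine gap is in the $G_2$ case. You impose $\alpha_3\beta_3=\alpha_4\beta_4=-1$ from Table~\ref{ConditionFibreSpeciale} and then claim $\overline{a}$ is symplectic ``since $\overline{a}$ swaps the two positive arrows $\varphi_3^a$ and $\varphi_4^a$.'' Preserving the set $Q_1^{+}$ is not the same as preserving the form: writing $\langle\varphi,\psi\rangle=\sum_i\epsilon(\varphi_i^a)\bigl(\mathrm{Tr}(\varphi_i^a\psi_i^b)-\mathrm{Tr}(\varphi_i^b\psi_i^a)\bigr)$ and applying $\overline{a}$, invariance forces $\alpha_3\beta_3=\alpha_4\beta_4=\epsilon(\varphi_3^a)\epsilon(\varphi_4^a)$ --- the exact analogue for $\overline{a}$ of the conditions that the $G_2$ row of Table~\ref{ConditionOrientation} records only for the $3$-cycle $\overline{c}$. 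Since the $\overline{c}$-conditions already force $\epsilon(\varphi_3^a)\epsilon(\varphi_4^a)=\lambda_4\delta_4=1$, your choice $\alpha_i\beta_i=-1$ multiplies the $i=3,4$ terms of the form by $-1$ while fixing the $i=0,1$ terms, so $\overline{a}$ is not a symplectomorphism and the system you set up is infeasible; no choice of orientation rescues it. (Citing Theorem~\ref{thm:orientation} here begs the question, since that theorem is itself only the qualitative summary of these scalar conditions.)

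The repair is to revisit the constraint on $\alpha_i\beta_i$ rather than the orientation: substituting into the displayed formula $\overline{a}.y=2^{1/3}e^{i\pi/6}\bigl(-\alpha_4\beta_4\,p_{03}+(\tfrac12\alpha_3\beta_3\alpha_4\beta_4-\alpha_4\beta_4)p_{34}\bigr)$, the requirement $\overline{a}.y=-y$ gives $\alpha_4\beta_4=1$ and then $\alpha_3\beta_3=1$, and $\overline{a}.x=x$, $\overline{a}.z=-z$ follow. With $\alpha_3\beta_3=\alpha_4\beta_4=+1$ the $\overline{a}$-symplecticity condition above is satisfied alongside all the $\overline{c}$-conditions with the all-$\varphi^a$-positive orientation, and the $G_2$ case closes. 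So the feasibility claim is true, but your argument for it cannot go through with the sign $-1$ as you (and the table) state it; you must either rederive that sign or at least record and check the $\overline{a}$-symplecticity equation explicitly.
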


\subsection{Compatibility of the action of \texorpdfstring{$\Gamma'/\Gamma$}{Lg} on \texorpdfstring{$G(\Gamma)$}{Lg}}\label{sub:compatibilitysurgroupe}

Our aim is now to define an action of $\Omega$ on $G(\Gamma)$ such that the action of $G(\Gamma)$ on $M(\Gamma)$ lifts to an action of the semidirect product $G(\Gamma) \rtimes \Omega$, i.e $\varpi.(g.\varphi)=(\varpi.g).(\varpi.\varphi)$ for all $\varpi \in \Omega$, $g \in G(\Gamma)$ and $\varphi \in M(\Gamma)$. Furthermore, the morphism $\tau:Z \rightarrow \mathfrak{h}$ has to be $\Omega$-equivariant. The action will be found by a case by case analysis.  

Set $\Delta(\Gamma)=A_{2r-1}$, and define the orientation of the quiver $Q$ by setting $Q_1^+=\{a_i \ | \ 0 \leq i \leq 2r-1\}$ (cf. Subsection~\ref{computationactiontypeA} for notations). Let $g=(g_0,\ldots ,g_{2r-1})$ be a representative of an element in $G(\Gamma)$ and let $\varphi=(a_0,\ldots ,a_{2r-1},b_0,\ldots ,b_{2r-1}) \in M(\Gamma)$. Define the action of $\Omega=\mathbb{Z}/2\mathbb{Z}=<\sigma>$ on $M(\Gamma)$ by  
\begin{center}
$\sigma.\varphi=(-b_{2r-1},\ldots ,-b_r, b_{r-1},\ldots ,b_0,a_{2r-1},\ldots ,a_r,-a_{r-1},\ldots ,-a_0)$.
\end{center}
It can be verified that this action satisfies the conditions of the previous section and is then symplectic, induces the natural action on the singularity $\cc^2/\Gamma$, and verifies $\sigma^2=\text{Id}$. 

Let $\Omega$ act on $G(\Gamma)$ by 
 \begin{center}
$\sigma.g=(g_0,g_{2r-1},g_{2r-2}, \ldots , g_2 ,g_1)$, for any $g \in G(\Gamma)$.
\end{center}
One may check explicitly that the desired relation $\sigma.(g.\varphi)=(\sigma.g).(\sigma.\varphi)$ is satisfied. This action also verifies the following proposition:

\begin{proposition}
The action of $\Omega$ on $G(\Gamma)$ defined above induces the correct action on the Cartan subalgebra $\mathfrak{h}$ of type $A_{2r-1}$.
\end{proposition}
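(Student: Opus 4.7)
The plan is to trace the action of $\sigma$ from the group $G(\Gamma)$ through its Lie algebra into the dual of the centre $Z$, and then through the Cassens--Slodowy isomorphism $\tau : Z \to \mathfrak{h}$, checking that the resulting automorphism of $\mathfrak{h}$ is precisely the one induced by the diagram automorphism of $A_{2r-1}$.

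First I would differentiate the action of $\sigma$ on $G(\Gamma)$. Since $\sigma$ permutes the $\mathrm{GL}_1(\cc)$-factors by $\sigma.(g_0,g_1,\ldots,g_{2r-1}) = (g_0,g_{2r-1},g_{2r-2},\ldots,g_1)$, its differential at the identity is the same coordinate permutation on $\mathrm{Lie}\,G(\Gamma)$. In particular $Z \subset \mathrm{Lie}\,G(\Gamma)^*$, identified via the trace form with $\{(\mu_0,\mu_1,\ldots,\mu_{2r-1}) \in \cc^{2r} \mid \sum_i \mu_i = 0\}$, is stable and the induced action reads
\[
\sigma.(\mu_0,\mu_1,\mu_2,\ldots,\mu_{2r-1}) \;=\; (\mu_0,\mu_{2r-1},\mu_{2r-2},\ldots,\mu_2,\mu_1).
\]

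Next I would invoke the Cassens--Slodowy identification $\tau : Z \xrightarrow{\sim} \mathfrak{h}$. By construction of this isomorphism, the coordinate $\mu_i$ corresponds to the value of the $i$-th simple root $\alpha_i$ of the extended root system $\widetilde{A}_{2r-1}$ on an element $h \in \mathfrak{h}$, with the vertex labelling matching the one on the McKay quiver. On the other hand, the diagram automorphism of $\widetilde{A}_{2r-1}$ coming from $\Omega = \mathbb{Z}/2\mathbb{Z}$ acts on the vertices of the extended Dynkin diagram by fixing the affine vertex $0$ and sending $i \mapsto 2r-i$ for $1 \leq i \leq 2r-1$; dually it acts on simple roots by $\alpha_0 \mapsto \alpha_0$ and $\alpha_i \mapsto \alpha_{2r-i}$. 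Pulling this back through $\tau$, the induced action on the $\mu$-coordinates is exactly $(\mu_0,\mu_1,\ldots,\mu_{2r-1}) \mapsto (\mu_0,\mu_{2r-1},\ldots,\mu_1)$, which coincides with the action derived from $G(\Gamma)$.

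Thus the two actions on $\mathfrak{h}$ agree, which is what was claimed. The only delicate point, and the one I would check most carefully, is the compatibility of the vertex labelling: the convention used in Section~\ref{computationactiontypeA} for the McKay quiver (vertices $0,1,\ldots,2r-1$ around the cycle, with $\sigma$ fixing $0$ and swapping $i \leftrightarrow 2r-i$) must coincide with the labelling used in Cassens--Slodowy's identification $\tau$ of $Z$ with $\mathfrak{h}$ via the simple roots of $\widetilde{A}_{2r-1}$. Once this bookkeeping is verified, the equivariance is automatic from the two observations above.
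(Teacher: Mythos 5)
Your proposal is correct and follows essentially the same route as the paper: differentiate the coordinate permutation on $G(\Gamma)$ to get the action on $\mathrm{Lie}\,G(\Gamma)$ and hence on $Z$, then use the Cassens--Slodowy identification $\tau$ (given by $\alpha_i(h)=-z_i$) together with the fact that the diagram automorphism sends $\alpha_i$ to $\alpha_{2r-i}$ to conclude that $\tau$ intertwines the two actions. Your emphasis on checking the compatibility of vertex labellings is exactly the bookkeeping the paper carries out via the fundamental root and coroot systems.
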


\begin{proof}
By definition $G(\Gamma)=\prod_{i=0}^{2r-1}\mathrm{GL}_{d_i}(\cc)/\cc^*$ with $d_i=1$ for all $i$. The group $\Omega$ acts on $G(\Gamma)$ by permuting the $i^{\text{th}}$ and $(2r-i)^{\text{th}}$ coordinates for all $i \geq 1$, and by fixing the $0^{\text{th}}$ coordinate. As the action is linear, it stays the same on $\mathfrak{g}(\Gamma)=\mathrm{Lie}(G(\Gamma))=\ \bigoplus_{i=0}^{2r-1}\mathfrak{gl}_{d_i}(\cc)/\cc$. Recall that $Z$ is defined as the dual of the center of $\mathfrak{g}(\Gamma)$. If $z \in Z$, then there exist $z_0,\ldots ,z_{2r-1} \in \cc$ such that $\sum_{i=0}^{2r-1}d_iz_i=0$ and $z=(z_0\mathrm{Id}_{d_0},\ldots ,z_{2r-1}\mathrm{Id}_{d_{2r-1}})$. Afterwards, the action of $\Omega$ on $Z$ is given by 
\begin{center}
$\sigma.z=(z_0,z_{2r-1},\ldots ,z_1)$.
\end{center}
Thus $\Omega$ acts on $Z$ by permuting the coordinates $i$ and $2r-i$. 

Let $(\alpha_1,\ldots ,\alpha_{2r-1})$ be a set of simple roots of the root system of type $A_{2r-1}$, and consider $(\alpha_1^\vee,\ldots ,\alpha_{2r-1}^\vee)$ the corresponding coroots. The action of $\Omega$ on $\mathfrak{h}$ is the permutation of the $i^{\text{th}}$ and $(2r-i)^{\text{th}}$ coordinates, with $1 \leq i \leq 2r-1$. The isomorphism $\tau:Z \xrightarrow{\cong} \mathfrak{h}$ of Section~\ref{sec:SlodowyCassens} is given by
\begin{center}
\begin{tikzpicture}[scale=1,  transform shape]
\node (1) at ( 0,0) {$Z$};
\node (2) at ( 2,0) {$\mathfrak{h}$};
\node (3) at ( 2,-0.6)  {$h$};
\node (4) at ( 0,-0.6)  {$z$};
\node (9) at (-0.5,0) {$\tau:$};

\node (5) at (0.2,0) {};
\node (6) at (1.8,0) {};
\node (7) at (0.2,-0.6) {};
\node (8) at (1.8,-0.6) {};

\draw [decoration={markings,mark=at position 1 with
    {\arrow[scale=1.2,>=stealth]{>}}},postaction={decorate}] (5)  --  (6);
\draw [|-,decoration={markings,mark=at position 1 with
    {\arrow[scale=1.2,>=stealth]{>}}},postaction={decorate}] (7)  --  (8);
\end{tikzpicture}

\end{center}
with $\alpha_i(h)=-z_i$, for all $1 \leq i \leq 2r-1$. Using the fundamental root system and its coroot system, it can be verified that $\tau(\sigma.z)=\sigma.\tau(z)$ for all $z \in Z$, and so $\tau$ is $\Omega$-equivariant. \hfill $\Box$
\end{proof}

The cases $D_{r+1}$, $E_6$ as well as $(D_4,\mathfrak{S}_3)$ are dealt with in a similar fashion. 

\subsection{Flat coordinates}\label{flatcoordinates}

\subsubsection{Definition of flat coordinates}

Flat coordinates were defined by K. Saito in \cite{Saito79} and \cite{Saito83} in the context of flat structures of tangent bundles of parameter spaces of universal unfoldings. In this article, we restrict ourselves to the context of simple singularities. 

Let $\mathfrak{h}$ be a Cartan subalgebra of a simple Lie algebra $\mathfrak{g}$ of rank $r$ with simply laced Dynkin diagram, and $W$ the associated Weyl group. According to Chevalley's Theorem, the ring $S(\mathfrak{h}^*)^W$ of $W$-invariant polynomials on $\mathfrak{h}$ is generated by $r$ algebraically independent homogeneous polynomials $P_1,\ldots, P_r$ of degrees $m_1 + 1=2 <\ldots<m_r+1=h$, with $h$ the Coxeter number of $\mathfrak{g}$. Let $\Delta$ be the product of all the linear functions defining reflection hyperplanes of reflections in $W$ ($\Delta^2$ is called the discriminant of $\mathfrak{h}$). Then $\Delta$ is a fundamental anti-invariant of $W$ and is a homogeneous polynomial of degree $\frac{r h}{2}$ in $S(\mathfrak{h})$. The polynomial $\Delta^2$ is $W$-invariant and can thus be written as $\Delta^2=a_0P_r^r + a_1P_r^{r-1} + \ldots  + a_r$, with $a_i$ a polynomial in $P_1,\ldots, P_{r-1}$ of degree $hi$ for $0 \leq i \leq r$. Let $s$ be a Coxeter element of $W$. The eigenvalues of $s$ are $h^{\text{th}}$ roots of unity. Let $\xi \in \mathfrak{h}$ be an eigenvector of $s$ associated to the eigenvalue $\exp(\frac{2i\pi}{h})$. Because $\mathrm{deg} \ P_i<h$ for $1 \leq i<r$, one has $P_i(\xi)=0$ for $1 \leq i <r$. Therefore $a_i(\xi)=0$ for $1 \leq i \leq r$ and $\Delta(\xi)^2=a_0P_r(\xi)^r$. One can show (cf. Proposition 3, \S 6, V in \cite{Bou68}) that $\Delta(\xi) \neq 0$, hence $a_0 \neq 0$. 

The Killing form $\kappa$ on $\mathfrak{h}$ induces an inner product 
\begin{center}
$I(dP_i, dP_j)= \displaystyle \sum_{m, n}\frac{\partial P_i}{\partial  x_m}\frac{\partial P_j}{\partial  x_n}\kappa(x_m, x_n)$
\end{center}
on the cotangent vectors $dP_i$ ($i=1,\ldots, r$) on $\mathfrak{h}/W$. The following proposition can be found in \cite{SekYan79}.

\begin{proposition}
There exists a constant $c$ such that
\begin{center}
$\det(I(dP_i,dP_j)_{1 \leq i, j \leq r})=c\Delta^2$.
\end{center}
\end{proposition}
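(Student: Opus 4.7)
The plan is to reduce the identity to the classical fact that the Jacobian of the fundamental invariants of a reflection group is, up to a nonzero scalar, the anti-invariant $\Delta$. Fix a basis $(x_1, \ldots, x_r)$ of $\mathfrak{h}^*$, and introduce the Jacobian matrix $J = \left(\partial P_i / \partial x_m\right)_{1 \leq i, m \leq r}$ together with the constant (Gram) matrix $K = \left(\kappa(x_m, x_n)\right)_{1 \leq m, n \leq r}$ of the Killing form. The definition of $I$ rewrites directly as the matrix product $I(dP_i, dP_j) = (J K J^T)_{ij}$, so taking determinants yields
\[
\det\!\left(I(dP_i, dP_j)_{1 \leq i,j \leq r}\right) = \det(K)\cdot \det(J)^2.
\]
Since $\kappa$ is nondegenerate on $\mathfrak{h}$, the constant $\det(K)$ is nonzero, and it suffices to show that $\det(J) = c'\,\Delta$ for some nonzero constant $c'$.

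For this, I would first observe that $\det(J)$ is a $W$-anti-invariant polynomial. Indeed, for any $w \in W$ acting linearly on $\mathfrak{h}^*$, the invariance $w \cdot P_i = P_i$ together with the chain rule shows that $w \cdot \det(J) = \det(w)\,\det(J)$; since $W$ is generated by reflections, which have determinant $-1$, the polynomial $\det(J)$ is $W$-anti-invariant. Any such anti-invariant must vanish on every reflection hyperplane, so is divisible by each linear form defining a reflecting hyperplane, and therefore divisible by $\Delta$.

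The identification is completed by a degree count. The Jacobian $\det(J)$ is homogeneous of degree $\sum_{i=1}^r m_i$, which by the standard theory of exponents of a finite reflection group equals $rh/2 = \deg(\Delta)$. Hence $\det(J)/\Delta$ is a constant, and this constant is nonzero by the algebraic independence of $P_1, \ldots, P_r$ (which forces $\det(J)$ not to vanish identically). Setting $c = \det(K)\cdot (c')^2$ gives the stated identity.

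The main obstacle, although entirely standard, is the identification $\det(J) = c'\,\Delta$: it packages together the anti-invariance argument, the description of $W$-anti-invariants as multiples of $\Delta$, and the degree equality $\sum m_i = rh/2$. Once this is granted, the proposition reduces to the trivial determinant computation $\det(JKJ^T) = \det(K)\det(J)^2$.
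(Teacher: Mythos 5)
Your proof is correct. The paper does not actually prove this proposition --- it simply cites \cite{SekYan79} --- and what you have written is precisely the standard argument behind that reference: the factorisation $I = JKJ^{T}$ reduces everything to the classical identity $\det(J)=c'\Delta$ for the Jacobian of the fundamental invariants, which you establish by the usual three steps (anti-invariance via the chain rule, divisibility of any anti-invariant by $\Delta$, and the degree count $\sum_i m_i = rh/2 = \deg\Delta$ together with algebraic independence to rule out $\det(J)\equiv 0$). All the ingredients you invoke are consistent with the paper's conventions (it records $\deg\Delta = rh/2$ just before the proposition), and $\det(K)\neq 0$ follows from the nondegeneracy of the Killing form restricted to $\mathfrak{h}$. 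No gaps.
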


The proposition implies that the inner product degenerates along the discriminant. Because the degree of $P_r$ is maximal among the generators of $S(\mathfrak{h}^*)^W$, the operator $D= \frac{\partial}{\partial P_r}$ is unique up to scalar multiplication. We define a new inner product $J(dP_i, dP_j)=DI(dP_i, dP_j)$ on $\mathfrak{h}/W$. The next theorem comes from \cite{SYS80}.

\begin{theorem}
The following assertions are verified:
\vspace{-1ex}
\begin{enumerate}
\item The bilinear form $J$ is non-degenerate and $\det(J(dP_i, dP_j)_{1 \leq i, j \leq r})=a_0$.
\item The inner product $J$ does not depend on the coordinates $P_1, \ldots, P_r$.
\item There exists a set of coordinates $\psi_1, \ldots , \psi_r$ on $\mathfrak{h}/W$ such that, for any $1 \leq i, j \leq r$, the expression $J(d\psi_i, d\psi_j)$ is constant. These coordinates are called \textit{flat coordinates}.
\end{enumerate}
\end{theorem}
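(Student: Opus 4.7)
For (1) and (2), the plan is a weighted-degree analysis of the matrix $I$ as a polynomial in $P_r$ over $\mathbb{C}[P_1,\ldots,P_{r-1}]$. Assign $P_k$ the weight $m_k+1$, so that each entry $I_{ij}=\sum_{m,n}(\partial P_i/\partial x_m)(\partial P_j/\partial x_n)\kappa(x_m,x_n)$ is weighted-homogeneous of weight $m_i+m_j$. Since $m_i+m_j\le 2m_r=2h-2<2h$ while $P_r^k$ has weight $kh$, one concludes that
\begin{equation*}
I_{ij}=A_{ij}\,P_r+B_{ij},\qquad A_{ij},\,B_{ij}\in\mathbb{C}[P_1,\ldots,P_{r-1}].
\end{equation*}
Hence $J_{ij}=DI_{ij}=A_{ij}$ is independent of $P_r$, which is exactly (2). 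For (1), multilinearity of the determinant shows that the coefficient of $P_r^r$ in $\det(I_{ij})$ is precisely $\det(A_{ij})=\det J$. On the other hand $\det I=c\,\Delta^2=c\,a_0P_r^r+\cdots$ by the preceding proposition, so $\det J=c\,a_0$, a non-zero constant; rescaling $D$ absorbs the factor $c$.

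For (3), the plan is to construct $\psi_1,\ldots,\psi_r$ as another system of homogeneous generators of $S(\mathfrak{h}^*)^W$, of the same respective degrees $m_1+1<\cdots<m_r+1$, obtained from $P_1,\ldots,P_r$ by weighted-homogeneous modifications. Concretely, set $\psi_r=P_r$, so that $D=\partial/\partial\psi_r$, and then inductively $\psi_k=P_k+Q_k(\psi_1,\ldots,\psi_{k-1})$ with $Q_k$ weighted-homogeneous of weight $m_k+1$. Any such family remains a free generating system of $S(\mathfrak{h}^*)^W$ and so defines a valid global coordinate system on $\mathfrak{h}/W$. The $Q_k$ are then to be chosen so as to kill the positively-weighted homogeneous components of the entries $J(d\psi_i,d\psi_j)$, leaving only their constant terms. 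The induction is organised by weighted degree, and is controlled by the fact, already visible from the degree analysis above, that $J(dP_i,dP_j)$ lies in $\mathbb{C}[P_1,\ldots,P_{r-1}]$ and is weighted-homogeneous of weight $m_i+m_j-h$.

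The main obstacle is showing that the inductive scheme is actually solvable, i.e. that the required $Q_k$ exist at each step. This is equivalent to the flatness of $J$ viewed as a (non-degenerate) metric on the affine space $\mathfrak{h}/W$: only under flatness are the $1$-forms $J(d\psi_i,\cdot)$ closed and integrable to coordinate functions on $\mathfrak{h}/W$. Flatness is the substantive content of the theorem; it is a manifestation of the Frobenius-manifold structure carried by $\mathfrak{h}/W$ for a simply laced root system, and follows from the interplay between $J$, the Euler field $E=\sum_k(m_k+1)P_k\,\partial_{P_k}$, and the unit field $D=\partial/\partial P_r$, together with the weighted homogeneity of the $P_i$. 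For the detailed verification of flatness, and for the resulting existence of the flat coordinates $\psi_i$ (unique up to an affine transformation respecting the grading), we refer to \cite{Saito79,Saito83,SekYan79,SYS80}.
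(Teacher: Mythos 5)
The paper does not actually prove this theorem: it is imported verbatim from \cite{SYS80}, so there is no in-paper argument to compare yours against. Judged on its own terms, your treatment of (1) is sound: the isobaric-weight bound $m_i+m_j\le 2h-2<2h$ does force $I_{ij}=A_{ij}P_r+B_{ij}$ with $A_{ij},B_{ij}\in\mathbb{C}[P_1,\ldots,P_{r-1}]$, the coefficient of $P_r^r$ in $\det I$ is $\det A=\det J$, and since each product $\prod_i A_{i\sigma(i)}$ has weight $2\sum_i m_i-rh=0$ (using $\sum m_i=rh/2$), $\det J$ is a genuine constant equal to $c\,a_0\neq 0$. (Note the normalisation: rescaling $D$ by $\lambda$ rescales $\det J$ by $\lambda^r$, so you need $\lambda^r=c^{-1}$, which is available over $\mathbb{C}$.)

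There is, however, a genuine gap at (2). You prove that the entries $J(dP_i,dP_j)$ do not involve the variable $P_r$, and declare this to be assertion (2). But (2) asserts that $J$ does not depend on the \emph{choice} of the generator system $P_1,\ldots,P_r$ — i.e.\ that $J$ is a well-defined bilinear form on the cotangent space of $\mathfrak{h}/W$, which is exactly what is needed for assertion (3) to be meaningful when one passes to the new generators $\psi_i$. Independence of $P_r$ alone is neither reading of the statement. To close the gap you must use that the top degree $h$ is attained by a unique basic invariant, so any other system satisfies $\tilde P_r=cP_r+Q(P_1,\ldots,P_{r-1})$ and $\tilde P_i\in\mathbb{C}[P_1,\ldots,P_{r-1}]$ for $i<r$, whence $\partial/\partial\tilde P_r=c^{-1}\partial/\partial P_r$; combined with the observation that $J=L_D I$ is a Lie derivative of the intrinsic tensor $I$ along the (intrinsic up to scalar) field $D$, this gives coordinate-independence. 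Finally, for (3) you correctly identify that everything reduces to the flatness of $J$, but you do not prove it — you defer to \cite{Saito79,Saito83,SYS80}, which is no more than the paper itself does, so (3) remains unestablished in your write-up. Also, a small slip: $J(d\psi_i,\cdot)$ is a vector field, not a $1$-form; the integrability you need is that of the Levi-Civita connection of $J$, whose affine coordinates are the $\psi_i$.
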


The next proposition is proved in \cite{Yano80} and explains why we take interest in flat coordinates.

\begin{proposition}
Let $\mathfrak{h}$ be a Cartan subalgebra of a simply laced simple Lie algebra $\mathfrak{g}$, and $W$ the associated Weyl group. Then the action of any automorphism of the Dynkin diagram of $\mathfrak{g}$ on $\mathfrak{h}/W$ is linear relative to flat coordinates.
\end{proposition}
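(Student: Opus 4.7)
The plan is to show that $\overline{\sigma}^* J = \lambda J$ for some scalar $\lambda \in \mathbb{C}^*$, from which linearity in flat coordinates will follow. The diagram automorphism $\sigma$ lifts to a linear automorphism of $\mathfrak{h}$ preserving the Killing form $\kappa$ and the root system, so it normalizes $W$ and descends to a graded biholomorphism $\overline{\sigma}$ of $\mathfrak{h}/W$. From the formula defining $I$ in terms of $\kappa$, we immediately have $\overline{\sigma}^* I = I$.

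The first key step is to understand how $\overline{\sigma}$ interacts with the operator $D = \partial / \partial P_r$. I claim that $D$ is, up to scalar, the unique homogeneous derivation of $S(\mathfrak{h}^*)^W = \mathbb{C}[P_1,\ldots,P_r]$ of degree $-h$. Indeed, such a derivation $\delta$ is determined by its values $\delta(P_i)$, which must lie in the degree-$(m_i+1-h)$ component of the ring; since the exponents $m_1 < \cdots < m_r = h-1$ are distinct, this component vanishes for $i < r$ and equals $\mathbb{C}$ for $i = r$. Because $\overline{\sigma}^*$ is a graded $\mathbb{C}$-algebra automorphism, conjugation by it preserves this one-dimensional space of derivations, so $\overline{\sigma}^* \circ D = \lambda\, D \circ \overline{\sigma}^*$ for some $\lambda \in \mathbb{C}^*$. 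Combining this with $\overline{\sigma}^* I = I$ yields, for all $P, Q \in S(\mathfrak{h}^*)^W$,
\[
\overline{\sigma}^*\bigl(J(dP,dQ)\bigr) = \overline{\sigma}^*\bigl(D\, I(dP,dQ)\bigr) = \lambda\, D\bigl(\overline{\sigma}^* I(dP,dQ)\bigr) = \lambda\, J(d\overline{\sigma}^*P,\, d\overline{\sigma}^*Q),
\]
which translates into the tensorial identity $\overline{\sigma}^* J = \lambda J$.

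For the conclusion, observe that in flat coordinates $\psi_1,\ldots,\psi_r$ the tensor $J$ has constant coefficients and so defines a holomorphic flat non-degenerate symmetric form on $\mathfrak{h}/W \cong \mathbb{C}^r$ whose Levi-Civita connection $\nabla$ has vanishing Christoffel symbols. Since $\nabla$ depends on $J$ only up to a nonzero scalar, $\overline{\sigma}^* J = \lambda J$ implies that $\overline{\sigma}$ preserves $\nabla$ and therefore sends $\nabla$-geodesics to $\nabla$-geodesics. In flat coordinates these geodesics are the affine lines of $\mathbb{C}^r$, so $\overline{\sigma}$ carries lines to lines and is an affine map in flat coordinates. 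Finally, $\overline{\sigma}$ fixes the image of $0 \in \mathfrak{h}$, where all homogeneous flat coordinates $\psi_i$ vanish, so the affine map has no translational part and is linear.

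The main obstacle is to justify the assertion in the last paragraph that a polynomial automorphism of $\mathbb{C}^r$ preserving a flat holomorphic non-degenerate bilinear form up to a constant scalar is necessarily affine in coordinates where the form is constant. This is the holomorphic analog of the classical fact that Euclidean isometries are rigid motions, and it follows from the vanishing of the Christoffel symbols in flat coordinates together with the fact that $\overline{\sigma}$ is a globally defined biholomorphism.
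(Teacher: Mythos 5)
Your argument is correct. Note that the paper does not actually prove this proposition: it is quoted from Yano's work (\cite{Yano80}) and used as a black box, so there is no internal proof to compare against. Your proof is a sound self-contained reconstruction, and it follows what is essentially the standard line of reasoning behind the cited result: a diagram automorphism $\sigma$ is a $\kappa$-isometry of $\mathfrak{h}$ normalizing $W$, so the induced graded automorphism $\overline{\sigma}$ of $\mathfrak{h}/W$ preserves $I$; the one-dimensionality of the space of homogeneous degree $-h$ derivations of $S(\mathfrak{h}^*)^W$ (which is exactly the paper's remark that $D=\partial/\partial P_r$ is unique up to scalar, valid because the top exponent $m_r=h-1$ is always simple) forces $\overline{\sigma}^*\circ D=\lambda\, D\circ\overline{\sigma}^*$, whence $\overline{\sigma}^*J=\lambda J$; and since flat coordinates are characterized up to affine transformation as homogeneous coordinates in which $J$ is constant, $\overline{\sigma}$ must be affine, hence linear by homogeneity. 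The one step you flag yourself — that a biholomorphism scaling a constant-coefficient nondegenerate form is affine — is indeed standard and can be made fully elementary without invoking geodesics: differentiating the identity $\sum_{k,l}g_{kl}\,\partial_i\phi^k\,\partial_j\phi^l=\lambda^{-1}g_{ij}$ with respect to the third index and applying the usual cyclic symmetrization yields $\sum_{k,l}g_{kl}\,(\partial_i\partial_m\phi^k)\,\partial_j\phi^l=0$, and nondegeneracy of $g$ together with invertibility of the Jacobian kills the Hessian of $\phi$. The only cosmetic imprecision is that $J$ lives on cotangent vectors, so the metric whose Levi-Civita connection you use is its inverse; this does not affect the argument since inverting commutes with constant rescaling.
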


In the next subsections, flat coordinates for $\mathfrak{g}$ of type $A_{2r-1}$, $D_{r+1}$ and $E_6$ will be explicitly given for later purposes. For more details on flat coordinates, the reader may consult \cite{Saito93} and \cite{Saito01}.

 \subsubsection{Flat coordinates for $A_{2r-1}$}\label{sub:flatA2r-1}
 
Let $\mathfrak{g}$ be a real simple Lie algebra of type $A_{2r-1}$, with $\mathfrak{h}$ a Cartan subalgebra of $\mathfrak{g}$ and $W$ the associated Weyl group. Let $(\xi_1, \ldots, \xi_{2r})$ be an orthonormal basis of a Euclidian space $V \cong \mathbb{R}^{2r}$. The subalgebra $\mathfrak{h}$ is isomorphic to the subspace of $V$ defined by $\{ \sum_{i=1}^{2r}y_i\xi_i \in V \ | \ \sum_{i=1}^{2r}y_i=0\}$. The set of roots of $A_{2r-1}$ consists of $\xi_i-\xi_j$ ($1 \leq i, j \leq 2r$ and $i \neq j$). The Weyl group $W$ is $\mathfrak{S}_{2r}$  and permutes the $\xi_i$'s. Hence the ring of $W$-invariant polynomials is generated by $\epsilon_2, \epsilon_3, \ldots, \epsilon_{2r}$ with $\epsilon_{i}(\xi)$ being the $i^{\text{th}}$ elementary symmetric polynomial in $2r$ variables $\xi=(\xi_1,\ldots, \xi_{2r})$. Set $I =\{2, 3,\ldots, 2r\}$. 

According to \cite{SYS80}, flat coordinates for $A_{2r-1}$ are $\psi_2,\psi_3, \ldots, \psi_{2r}$ with 
\begin{center}
$\psi_i=\displaystyle \sum_{d \geq 1}\frac{(-1)^{d-1}(\frac{1}{h}(h-i + 1),d-1)}{d!}X_i^d$
\end{center} 
where
\begin{center}
$X_i^d=\displaystyle \sum_{\begin{array}[h]{c}
i_1 + \ldots + i_d=i \\
i_j \in I
\end{array}}\epsilon_{i_1}\ldots \epsilon_{i_d}$,
\end{center}
$(a, n)=a(a + 1)\ldots(a + n-1)$, and $h$ is the Coxeter number of $\mathfrak{g}$. For any $i \in I$, the polynomial $\psi_i$ is homogeneous of degree $i$.

\subsubsection{Flat coordinates for $D_{r+1}$}\label{sub:flatD}

Let $\mathfrak{g}$ be a real simple Lie algebra of type $D_{r+1}$, with $\mathfrak{h}$ a Cartan subalgebra of $\mathfrak{g}$ and $W$ the associated Weyl group. Let $(\xi_1, \ldots, \xi_{r+1})$ be an orthonormal basis of $\mathbb{R}^{r+1}$. The set of roots of $D_{r+1}$ consists of $ \pm \xi_i \pm \xi_j$ ($1 \leq i < j \leq r+1)$.  The Weyl group $W$ is the semi-direct product $ (\mathbb{Z}/2\mathbb{Z})^{r} \rtimes \mathfrak{S}_{r+1}$. The group $\mathfrak{S}_{r+1}$ permutes the $\xi_i$'s and $(\mathbb{Z}/2\mathbb{Z})^r$ acts by $\xi_i \mapsto (\pm1)_i\xi_i$ such that $\prod_i (\pm1)_i=1$. Hence the ring of $W$-invariant polynomials is generated by $x_2, x_4, \ldots, x_{2r},\psi$ where $x_{2i}=\epsilon_i(\xi^2)$ with $\xi^2=(\xi_1^2,\ldots, \xi_{r+1}^2)$ and $\epsilon_i$ is the $i^{\text{th}}$ elementary symmetric polynomial in $r+1$ variables, and $\psi=\prod_{i=1}^{r+1}\xi_i$. Set $I =\{2, 4,\ldots, 2r\}$. 

According to \cite{SYS80}, flat coordinates for $D_{r+1}$ are $\psi_2,\psi_4, \ldots, \psi_{2r}, \psi$ with 
\begin{center}
$\psi_i=\displaystyle \sum_{d \geq 1}\frac{(-1)^{d-1}(\frac{1}{h}(h-i + 1),d-1)}{d!}X_i^d$
\end{center} 
where
\begin{center}
$X_i^d=\displaystyle \sum_{\begin{array}[h]{c}
i_1 + \ldots + i_d=i \\
i_j \in I
\end{array}}x_{i_1}\ldots x_{i_d}$,
\end{center}
$(a, n)=a(a + 1)\ldots(a + n-1)$, and $h$ is the Coxeter number of $\mathfrak{g}$ (the formulae are similar to the case $A_{2r-1}$, except for $X_i^d$ which is now a function of the $x_i$'s rather than the $\epsilon_i$'s). For any $i \in I$, the polynomial $\psi_i$ is homogeneous of degree $i$. 

According to the preceding formulae, flat coordinates for $D_4$ are $\psi_2, \psi_4, \psi_6$ and $\psi$ with: 

\begin{center}
$\raisebox{-.8\height}{$\left\lbrace \renewcommand{\arraystretch}{1} \begin{array}[h]{l}
\psi_2=x_2=\displaystyle \sum_{i=1}^4\xi_i^2, \\
\psi_4=x_4-\frac{1}{4}x_2^2=\displaystyle \sum_{1 \leq i<j \leq 4}\xi_i^2\xi_j^2-\frac{1}{4}(\sum_{i=1}^4\xi_i^2)^2,\\
\psi_6=x_6-\frac{1}{6}x_2x_4 + \frac{7}{216}x_2^3=\displaystyle \sum_{1 \leq i<j<k \leq 4}\xi_i^2\xi_j^2\xi_k^2-\frac{1}{6}(\sum_{i=1}^4\xi_i^2)(\sum_{1 \leq i<j \leq 4}\xi_i^2\xi_j^2) + \frac{7}{216}(\sum_{i=1}^4\xi_i^2)^3, \\
\psi=\xi_1\xi_2\xi_3\xi_4.
 \end{array}\right.$}$
\end{center}
These formulae will be used in Subsections~\ref{C3} and ~\ref{G2}.

\subsubsection{Flat coordinates for $E_6$}\label{sub:flatE6}

The computation of flat coordinates for type $E_6$ requires a more delicate approach than the previous cases. In \cite{Frame51}, J.S. Frame studied the Weyl group $W$ of type $E_6$ as the group of automorphisms of the 27 lines on a non-singular cubic surface. He described the 27 lines as 27 complex triples:
\begin{center}
$\raisebox{-.8\height}{$\left\lbrace \renewcommand{\arraystretch}{1.5} \begin{array}[h]{l}
(0,\omega^\lambda,\omega^\mu) \text{ with } \lambda, \mu=1,2,3, \\
(-\omega^\mu,0,\omega^\lambda)  \text{ with } \lambda, \mu=1,2,3, \\
(\omega^\lambda,-\omega^\mu,0)  \text{ with } \lambda, \mu=1,2,3,
\end{array}\right.$}$
\end{center}
with $\omega=\exp(\frac{2i\pi}{3})$. By defining $x_1,y_1,x_2,y_2,x_3,y_3$ as the real and imaginary parts of the complex triples, the 27 lines can be identified with the 27 vertices of a polyhedron in $\mathbb{R}^6$ and $W$ as its group of symmetries. The polyhedron has 36 hyperplanes of symmetries, each given by its normal vector which is one of the following:
\begin{center}
$ \left\lbrace \renewcommand{\arraystretch}{1.5} \begin{array}[h]{l}
D_{\kappa,\lambda,\mu}=\frac{1}{\sqrt{3}} (c_\kappa, s_\kappa, c_\lambda, s_\lambda, c_\mu, s_\mu)^T \text{ with } \kappa,\lambda,\mu=1,2 \text{ or } 3, \\
D_{\kappa,0,0}= (-s_\kappa, c_\kappa,0,0,0,0)^T \text{ with } \kappa =1,2 \text{ or } 3, \\
D_{0,\lambda,0}= (0,0,-s_\lambda, c_\lambda,0,0)^T \text{ with } \lambda =1,2 \text{ or } 3, \\
D_{0,0,\mu}= (0,0,0,0,-s_\mu, c_\mu)^T \text{ with } \mu =1,2 \text{ or } 3,
\end{array}\right.$
\end{center}
with $c_a=\cos(\frac{2a\pi}{3})$ and $s_a=\sin(\frac{2a\pi}{3})$. Let $s_k=\mathrm{Id}-2D_k D_k^T$ denote the reflection in $\mathbb{R}^6$ of hyperplane whose normal vector is $D_k$, with $k$ a triple. J.S. Frame proved that $W$ is generated by $s_{1,0,0}, s_{0,1,0}, s_{0,0,1}, s_{3,0,0}, s_{0,0,3}$ and $s_{3,3,3}$. 

Let $\alpha_1, \ldots, \alpha_6$ be a set of simple roots of the root system of type $E_6$. Index the Dynkin diagram in the following way (it is the indexation that will be used for the computation of the semiuniversal deformation):
\begin{center}
\begin{tikzpicture}[scale=0.6,  transform shape]
\tikzstyle{point}=[circle,draw,fill]
\tikzstyle{ligne}=[thick]

\node (1) at ( -4,0) [point] {};
\node (2) at ( 0,-2) [point] {};
\node (3) at ( -2,0) [point] {};
\node (4) at ( 0,0) [point] {};
\node (5) at ( 2,0) [point] {};
\node (6) at ( 4,0) [point] {};

\node (7) at ( -4,0.5)  {1};
\node (8) at ( -0.5,-2)  {3};
\node (9) at ( -2,0.5)  {4};
\node (10) at ( 0,0.5) {6};
\node (11) at ( 2,0.5) {5};
\node (12) at ( 4,0.5) {2};

\draw [ligne] (1)  --  (3);
\draw [ligne] (3)  --  (4);
\draw [ligne] (4)  --  (2);
\draw [ligne] (4)  --  (5);
\draw [ligne] (5)  --  (6);

\end{tikzpicture}
\end{center}

\begin{remark}
The indexation used here is not the standard one used in reference books like \cite{Bou68}, which is
\begin{center}
\begin{tikzpicture}[scale=0.6,  transform shape]
\tikzstyle{point}=[circle,draw,fill]
\tikzstyle{ligne}=[thick]

\node (1) at ( -4,0) [point] {};
\node (2) at ( 0,-2) [point] {};
\node (3) at ( -2,0) [point] {};
\node (4) at ( 0,0) [point] {};
\node (5) at ( 2,0) [point] {};
\node (6) at ( 4,0) [point] {};

\node (7) at ( -4,0.5)  {1};
\node (8) at ( -0.5,-2)  {2};
\node (9) at ( -2,0.5)  {3};
\node (10) at ( 0,0.5) {4};
\node (11) at ( 2,0.5) {5};
\node (12) at ( 4,0.5) {6};

\draw [ligne] (1)  --  (3);
\draw [ligne] (3)  --  (4);
\draw [ligne] (4)  --  (2);
\draw [ligne] (4)  --  (5);
\draw [ligne] (5)  --  (6);

\end{tikzpicture}
\end{center}
\end{remark}

If $s_{\alpha_i}$ denote the simple reflection in $W$ associated to $\alpha_i$, then $s_{\alpha_i}(\alpha_j)=\alpha_j-c_{ji}\alpha_i$, with $(c_{ij})_{1 \leq i \leq 6}$ the Cartan matrix of $E_6$. With the Cartan matrix, one can identify the $s_{\alpha_i}$'s with their corresponding $s_k$'s, and as such identify the $\alpha_i$'s with the $D_k$'s. The correspondence is as follows:
\begin{center}
$ \begin{array}[h]{lcl}
\alpha_1 \longleftrightarrow \sqrt{2}D_{3,0,0}=\sqrt{2}(0, 1,0,0,0,0)^T &;& \alpha_2 \longleftrightarrow \sqrt{2}D_{0,0,3}=\sqrt{2}(0,0,0,0,0,1)^T, \\
\alpha_3 \longleftrightarrow \sqrt{2}D_{0,1,0}=\sqrt{2}(0,0,-\frac{\sqrt{3}}{2},-\frac{1}{2},0,0)^T &;& \alpha_4 \longleftrightarrow \sqrt{2}D_{1,0,0}=\sqrt{2}(-\frac{\sqrt{3}}{2},-\frac{1}{2},0,0,0,0)^T, \\
\alpha_5 \longleftrightarrow \sqrt{2}D_{0,0,1}=\sqrt{2}(0,0,0,0,-\frac{\sqrt{3}}{2},-\frac{1}{2})^T &;& \alpha_6 \longleftrightarrow \sqrt{2}D_{3,3,3}=\sqrt{\frac{2}{3}}(1, 0,1,0,1,0)^T.
\end{array}$
\end{center} 

Set $p_i=x_i^2 + y_i^2$, $q_i=\frac{1}{3}x_i^3- x_i y_i^2$ for $1 \leq i \leq 3$, and define two differential operators $\Theta$ and $\Delta$ by
\begin{center}
$\left\lbrace   \begin{array}[h]{l}
\Theta=\displaystyle \sum_{i=1}^3(3q_i(p_j-p_k)-2p_i(q_j-q_k))\frac{\partial}{\partial p_i} + (\frac{1}{2}p_i^2(p_j-p_k)-3q_i(q_j-q_k))\frac{\partial}{\partial q_i} \\
 \hspace{6cm}\text{ for }(ijk)=(123), (231) \text{ and } (312) ; \\
\Delta=\displaystyle \sum_{i=1}^34\frac{\partial}{\partial p_i}p_i\frac{\partial}{\partial p_i} + 12q_i\frac{\partial^2}{\partial p_i \partial q_i} + p_i^2\frac{\partial^2}{\partial q_i^2}.
\end{array}\right.$
\end{center}
By defining $A=p_1 + p_2 + p_3$, $B=\frac{1}{5}\Theta A$, $H=\Theta B$, $C=\frac{1}{16}\Delta H$, $J=\frac{1}{9}(\Theta C-3A^2B)$ and $K=\frac{2}{3}\Theta J$, it is proved in \cite{SYS80} that flat coordinates of type $E_6$ are then given by
\begin{center}
$\left\lbrace  \renewcommand{\arraystretch}{1.3} \begin{array}[h]{l}
\psi_2=A,\\
\psi_5=B,\\
\psi_6=C-\frac{1}{8}A^3,\\
\psi_8=H-\frac{1}{4}AC + \frac{5}{192}A^4,\\
\psi_9=J,\\
\psi_{12}=K-\frac{1}{8}A^2H-\frac{1}{8}C^2 + \frac{5}{96}A^3C-AB^2-\frac{1}{256}A^6.
 \end{array}\right.$
\end{center}

\section{Computations for the inhomogeneous cases}\label{sec:computations}
In previous sections we have determined the necessary requirements on the action of $\Omega$, on the McKay quiver and on the coordinates of the base space $\mathfrak{h}/W$ to construct the semiuniversal deformations of the inhomogeneous simple singularities. The following subsections will present the results of the ensuing computations.

\subsection{Semiuniversal deformation for the type \texorpdfstring{$B_r=(A_{2r-1},\mathbb{Z}/2\mathbb{Z})$}{Lg}}\label{defA2r-1}

In this subsection, the groups $\Gamma$ and $\Gamma'$ are $\mathcal{C}_{2r}$ and $\mathcal{D}_r$, respectively, and so $\Omega =\mathbb{Z}/2\mathbb{Z}=<\sigma>$. The McKay quiver $Q$ is 

\begin{center}
 \begin{tikzpicture}[scale=0.8,  transform shape]
\tikzstyle{point}=[circle,draw,fill]
\tikzstyle{ligne}=[thick]
\tikzstyle{pointille}=[thick,dotted]

\node (1) at ( 5,0) [point]{};
\node (2) at ( 4,2) [point] {};
\node (3) at ( 2,2) [point]{} ;
\node (4) at ( -2,2) [point] {};
\node (5) at ( -4,2) [point] {};
\node (6) at ( -5,0) [point] {};
\node (7) at ( -4,-2) [point] {};
\node (8) at ( -2,-2) [point]{} ;
\node (9) at ( 2,-2) [point] {};
\node (10) at ( 4,-2) [point] {};

\node at ( -3,2.4)  {$b_{1}$};
\node at ( 3,2.4)  {$b_{r-2}$};
\node at ( -3,1.6)  {$a_{1}$};
\node at ( 3,1.6)  {$a_{r-2}$};

\node at ( -3,-2.4)  {$b_{2r-2}$};
\node at ( 3,-2.4)  {$b_{r+1}$};
\node at ( -3,-1.6)  {$a_{2r-2}$};
\node at ( 3,-1.6)  {$a_{r+1}$};

\node at ( -4.1,0.8)  {$a_{0}$};
\node at ( -4.8,1.3)  {$b_{0}$};
\node at ( -3.9,-0.8)  {$a_{2r-1}$};
\node at ( -5,-1.3)  {$b_{2r-1}$};

\node at ( 4,0.8)  {$a_{r-1}$};
\node at ( 5,1.3)  {$b_{r-1}$};
\node at ( 4.1,-0.8)  {$a_{r}$};
\node at ( 4.8,-1.3)  {$b_{r}$};

\draw [pointille] (1.8,2.1)  --  (-1.8,2.1);
\draw [pointille] (1.8,1.9)  --  (-1.8,1.9);
\draw [pointille] (1.8,-1.9)  --  (-1.8,-1.9);
\draw [pointille] (1.8,-2.1)  --  (-1.8,-2.1);

\draw [decoration={markings,mark=at position 1 with
    {\arrow[scale=1.2,>=stealth]{>}}},postaction={decorate}] (3.8,2.1)  --  (2.2,2.1);
\draw [decoration={markings,mark=at position 1 with
    {\arrow[scale=1.2,>=stealth]{>}}},postaction={decorate}](2.2,1.9)  -- (3.8,1.9) ;
\draw [decoration={markings,mark=at position 1 with
    {\arrow[scale=1.2,>=stealth]{>}}},postaction={decorate}] (-2.2,2.1) --   (-3.8,2.1);
\draw [decoration={markings,mark=at position 1 with
    {\arrow[scale=1.2,>=stealth]{>}}},postaction={decorate}] (-3.8,1.9) -- (-2.2,1.9) ;

\draw [decoration={markings,mark=at position 1 with
    {\arrow[scale=1.2,>=stealth]{>}}},postaction={decorate}] (2.2,-2.1)  --  (3.8,-2.1);
\draw [decoration={markings,mark=at position 1 with
    {\arrow[scale=1.2,>=stealth]{>}}},postaction={decorate}](3.8,-1.9)  -- (2.2,-1.9) ;
\draw [decoration={markings,mark=at position 1 with
    {\arrow[scale=1.2,>=stealth]{>}}},postaction={decorate}] (-3.8,-2.1) --  (-2.2,-2.1);
\draw [decoration={markings,mark=at position 1 with
    {\arrow[scale=1.2,>=stealth]{>}}},postaction={decorate}](-2.2,-1.9)  -- (-3.8,-1.9) ;

\draw [decoration={markings,mark=at position 1 with
    {\arrow[scale=1.2,>=stealth]{>}}},postaction={decorate}] (4,1.8)  --   (4.8,0) ;
\draw [decoration={markings,mark=at position 1 with
    {\arrow[scale=1.2,>=stealth]{>}}},postaction={decorate}]  (5,0.2)  --  (4.2,2) ;
\draw [decoration={markings,mark=at position 1 with
    {\arrow[scale=1.2,>=stealth]{>}}},postaction={decorate}] (4.8,0)   --   (4,-1.8);
\draw [decoration={markings,mark=at position 1 with
    {\arrow[scale=1.2,>=stealth]{>}}},postaction={decorate}] (4.2,-2)   --  (5,-0.2) ;

\draw [decoration={markings,mark=at position 1 with
    {\arrow[scale=1.2,>=stealth]{>}}},postaction={decorate}]   (-4.8,0) -- (-4,1.8) ;
\draw [decoration={markings,mark=at position 1 with
    {\arrow[scale=1.2,>=stealth]{>}}},postaction={decorate}]   (-4.2,2) -- (-5,0.2)  ;
\draw [decoration={markings,mark=at position 1 with
    {\arrow[scale=1.2,>=stealth]{>}}},postaction={decorate}] (-4,-1.8) -- (-4.8,0);
\draw [decoration={markings,mark=at position 1 with
    {\arrow[scale=1.2,>=stealth]{>}}},postaction={decorate}]  (-5,-0.2) -- (-4.2,-2) ;

\end{tikzpicture}
\end{center}

The orientation has been chosen as $\epsilon(a_i)=-\epsilon(b_i)=1$, for any $0 \leq i \leq 2r-1$, so that it is reversed by the action of $\Omega$. Define $M(\Gamma)$ as the representation space of this quiver with dimension vector $(1,\ldots, 1)$:
\vspace{-\topsep}
 \begin{center}
$\renewcommand{\arraystretch}{1.3} \begin{array}[t]{cl}
M(\Gamma) & =\cc^{2r }\oplus \cc^{2r}, \\
 & =\{(a_0,\ldots ,a_{2r-1},b_0,\ldots ,b_{2r-1}) \ | \ a_i, b_i \in \cc \}.
 \end{array}$
 \end{center}
 Furthermore $G(\Gamma)=(\cc^*)^{2r}/\cc^* \cong \cc^{2r-1}$ and the action of $g \in G(\Gamma)$ on $M(\Gamma)$ is done by conjugation:
 \begin{center}
 $g.(a_0,\ldots a_{2r-1},b_0,\ldots ,b_{2r-1})=(g_1g_0^{-1}a_0,\ldots g_0g_{2r-1}^{-1}a_{2r-1},g_0g_1^{-1}b_0,\ldots ,g_{2r-1}g_0^{-1}b_{2r-1})$.
 \end{center}
 
The procedure described in Section~\ref{sec:SlodowyCassens} is done explicitly in \cite{CaSlo98}. The authors obtained that the morphism
 
\begin{center}
\begin{tikzpicture}[scale=1,  transform shape]
\node (6) at ( -2.8,0) {$M(\Gamma)//G(\Gamma)=$};
\node (1) at ( 0,0) {$\{\displaystyle \prod_{i=0}^{2r-1}(z-\lambda_i)=xy\}$};
\node (2) at ( 5,0) {$ \mathfrak{h}$};
\node (3) at ( 5,-0.6)  {$(\lambda_0,\ldots ,\lambda_{2r-1})$};
\node (4) at ( 0,-0.6)  {$(\lambda_0,\ldots ,\lambda_{2r-1},x, y, z)$};

\draw [decoration={markings,mark=at position 1 with
    {\arrow[scale=1.2,>=stealth]{>}}},postaction={decorate}] (1.7,0)  -- node[above] {$\widetilde{\alpha}$} (3.8,0);
\draw [|-,decoration={markings,mark=at position 1 with
    {\arrow[scale=1.2,>=stealth]{>}}},postaction={decorate}] (1.7,-0.6)  --  (3.8,-0.6);
\end{tikzpicture}
\end{center}
where $\mathfrak{h}$ is a Cartan subalgebra of a Lie algebra of type $A_{2r-1}$, is the pullback of the semiuniversal deformation of $\widetilde{\alpha}^{-1}(0)=\{(x, y, z) \in \cc^3 \ | \ z^{2r}=xy\}$, which is a simple singularity of type $A_{2r-1}$. Hence the following diagram:

\begin{center}
\begin{tikzpicture}[scale=1,  transform shape]
\tikzstyle{point}=[circle,draw,fill]
\tikzstyle{ligne}=[thick]

\node (1) at ( 0,0) {$X_\Gamma \times_{\mathfrak{h}/W}\mathfrak{h}$};
\node (2) at ( 2,0) {$X_\Gamma$};
\node (3) at ( 2,-2)  {$\mathfrak{h}/W$};
\node (4) at ( 0,-2)  {$\mathfrak{h}$};
\node (10) at ( -2.4, 0)  {$M(\Gamma)//G(\Gamma)$};

\node (9) at ( 1,-1) {$\circlearrowright$};
\node (11) at ( -1.1,0) {$\cong$};

\draw [decoration={markings,mark=at position 1 with
    {\arrow[scale=1.2,>=stealth]{>}}},postaction={decorate}] (1)  -- node[above] {$\Psi$} (2);
\draw [decoration={markings,mark=at position 1 with
    {\arrow[scale=1.2,>=stealth]{>}}},postaction={decorate}] (2)  -- node[right] {$\alpha$} (3);
\draw [decoration={markings,mark=at position 1 with
    {\arrow[scale=1.2,>=stealth]{>}}},postaction={decorate}] (1)  -- node[left] {$\widetilde{\alpha}$} (4);
\draw [decoration={markings,mark=at position 1 with
    {\arrow[scale=1.2,>=stealth]{>}}},postaction={decorate}] (4)  -- node[below] {$\pi$} (3);

\end{tikzpicture}
\end{center}
is the pullback of the semiuniversal deformation $\alpha:X_\Gamma \longrightarrow  \mathfrak{h}/W$ of a simple singularity of type $A_{2r-1}$. 

It is known that $\cc[\mathfrak{h}]=\cc[x_0,\ldots ,x_{2r-1}]/(x_0 + \ldots  + x_{2r-1})$ and $\cc[\mathfrak{h}/W]=\cc[\psi_2(x),\ldots ,\psi_{2r}(x)]$ with the $\psi_i$'s being the flat coordinates from Subsection~\ref{sub:flatA2r-1}. Because \begin{tikzpicture}[scale=1,  transform shape, baseline=-0.5ex]
\node (1) at ( 0,0) {$\cc[\mathfrak{h}/W]$};
\node (2) at ( 2,0) {$\cc[\mathfrak{h}]$};

\node (3) at (0.5,0) {};
\node (4) at (1.5,0) {};

\draw [right hook-,decoration={markings,mark=at position 1 with
    {\arrow[scale=1.2,>=stealth]{>}}},postaction={decorate}] (1)  --  (2);

\end{tikzpicture} and $W$ is finite, there is a surjection 
\vspace{-\topsep}
\begin{center}
\begin{tikzpicture}[scale=1,  transform shape]
\node (1) at ( 0,0) {$ \mathfrak{h}$};
\node (2) at ( 4,0) {$\mathfrak{h}/W$};
\node (3) at ( 4,-0.6)  {$(\psi_2(\lambda),\ldots ,\psi_{2r}(\lambda))$.};
\node (4) at ( 0,-0.6)  {$(\lambda_0,\ldots ,\lambda_{2r-1})$};

\node (5) at (1.2,0) {};
\node (6) at (2.5,0) {};
\node (7) at (2.4,0) {};
\node (8) at (1.2,-0.6) {};
\node (9) at (2.5,-0.6) {};

\draw [decoration={markings,mark=at position 1 with
    {\arrow[scale=1.2,>=stealth]{>}}},postaction={decorate}] (5)  --  (6);
\draw [decoration={markings,mark=at position 1 with
    {\arrow[scale=1.2,>=stealth]{>}}},postaction={decorate}] (5)  -- node[above] {$\pi$} (7);
\draw [|-,decoration={markings,mark=at position 1 with
    {\arrow[scale=1.2,>=stealth]{>}}},postaction={decorate}] (8)  --  (9);

\end{tikzpicture}
\end{center}

If $\lambda \in \mathfrak{h}$, then one computes $\prod_{i=0}^{2r-1}(z-\lambda_i)=z^{2r}+\sum_{i=2}^{2r}(-1)^i\epsilon_i(\lambda)z^{2r-i}$ with $\epsilon_i$ the $i^{\text{th}}$ elementary symmetric polynomial in $2r$ variables. It is proved in \cite{SYS80} that  \begin{center}
$\epsilon_i=\displaystyle \sum_{d \geq 1}\frac{(2r-i+1,d-1)}{d! (2r)^{d-1}}Y_i^d$
\end{center} 
where
\begin{center}
$Y_i^d=\displaystyle \sum_{\begin{array}[h]{c}
i_1 + \ldots + i_d=i \\
i_j \in \{2,\ldots, 2r\}
\end{array}}\psi_{i_1}\ldots \psi_{i_d}$.
\end{center}
and $(a, n)=a(a + 1)\ldots(a + n-1)$.
Hence for any $i \in  \{2,\ldots, 2r\}$, there exists a polynomial $f_i$ such that $\epsilon_i=f_i(\psi)$, with $\psi=(\psi_2, \ldots, \psi_{2r})$. Set 
\begin{center}
$X_\Gamma=\{ (x, y, z, t_2,\ldots ,t_{2r}) \in \cc^3 \times \mathfrak{h}/W \ | \ \displaystyle z^{2r}+\sum_{i=2}^{2r}(-1)^i f_i(t_2,\ldots ,t_{2r}) z^{2r-i}=xy\}$,
\end{center}
and define two maps 
\begin{center}
\begin{tabular}{l}
\begin{tikzpicture}[scale=1,  transform shape]
\node (1) at ( 0,0) {$X_\Gamma$};
\node (2) at ( 4,0) {$\mathfrak{h}/W$};
\node (3) at ( 4,-0.4)  {$(t_2,\ldots ,t_{2r})$};
\node (4) at ( 0,-0.4)  {$(x, y, z, t_2,\ldots ,t_{2r}) $};

\node (11) at (-1.6,0) {$\alpha:$};
\node (10) at (5.5,-0.1) {,};

\node (5) at (1.4,0) {};
\node (6) at (3,0) {};
\node (7) at (1.4,-0.4) {};
\node (8) at (3,-0.4) {};

\draw [decoration={markings,mark=at position 1 with
    {\arrow[scale=1.2,>=stealth]{>}}},postaction={decorate}] (5)  --  (6);
\draw [|-,decoration={markings,mark=at position 1 with
    {\arrow[scale=1.2,>=stealth]{>}}},postaction={decorate}] (7)  --  (8);
\end{tikzpicture} \\

\begin{tikzpicture}[scale=1,  transform shape]
\node (1) at ( 0,0) {$ \mathfrak{h} $};
\node (2) at ( 4,0) {$\mathfrak{h}/W$};
\node (3) at ( 4,-0.4)  {$(\psi_2(\lambda),\ldots ,\psi_{2r}(\lambda)).$};
\node (4) at ( 0,-0.4)  {$(\lambda_0,\ldots ,\lambda_{2r-1})$};

\node (11) at (-1.4,0) {$\pi:$};

\node (5) at (1.1,0) {};
\node (6) at (2.6,0) {};
\node (9) at (2.7,0) {};
\node (7) at (1.1,-0.4) {};
\node (8) at (2.6,-0.4) {};

\draw [decoration={markings,mark=at position 1 with
    {\arrow[scale=1.2,>=stealth]{>}}},postaction={decorate}] (5)  --  (6);
\draw [decoration={markings,mark=at position 1 with
    {\arrow[scale=1.2,>=stealth]{>}}},postaction={decorate}] (5)  --  (9);
\draw [|-,decoration={markings,mark=at position 1 with
    {\arrow[scale=1.2,>=stealth]{>}}},postaction={decorate}] (7)  --  (8);
\end{tikzpicture}
\end{tabular}
\end{center}
Then the pullback of these maps is \\
\resizebox{\textwidth}{!}{$\renewcommand{\arraystretch}{1.5} \begin{array}[t]{rl}
  X_\Gamma \times_{\mathfrak{h}/W} \mathfrak{h} &= \{((x, y, z, t_2,\ldots ,t_{2r}),(\lambda_0,\ldots ,\lambda_{2r-1})) \in X_\Gamma \times \mathfrak{h} \ | \ \alpha(x, y, z, t_2,\ldots ,t_{2r})=\pi(\lambda_0,\ldots ,\lambda_{2r-1})\} ,\\
 &= \{((x, y, z, t_2,\ldots ,t_{2r}),(\lambda_0,\ldots ,\lambda_{2r-1})) \in X_\Gamma \times \mathfrak{h} \ | \ (t_2,\ldots ,t_{2r})=(\psi_2(\lambda),\ldots ,\psi_{2r}(\lambda))\}, \\
 &= \{(x, y, z,\lambda_0,\ldots ,\lambda_{2r-1}) \in \cc^3 \times \mathfrak{h} \ | \ \displaystyle  \sum_{i=0}^{2r}(-1)^i f_i(\psi_2(\lambda),\ldots ,\psi_{2r}(\lambda)) z^{2r-i}=xy\}, \\
 &= \{(x, y, z,\lambda_0,\ldots ,\lambda_{2r-1}) \in \cc^3 \times \mathfrak{h} \ | \ \displaystyle \prod_{i=0}^{2r-1}(z-\lambda_i) =xy\} ,\\
 &= M(\Gamma)//G(\Gamma),
\end{array}$}

\noindent and $X_{\Gamma,0}:=\alpha^{-1}(0)=\{  (x, y, z) \in \cc^3 \ | \ z^{2r}=xy\}$ is indeed a simple singularity of type $A_{2r-1}$. 

It remains to check that every map in the pullback diagram is $\Omega$-equivariant. Let us first verify the surjectivity of the map
 \begin{center}
 \begin{tikzpicture}[scale=1,  transform shape]
\node (1) at ( 0,0) {$X_\Gamma \times_{\mathfrak{h}/W} \mathfrak{h}$};
\node (2) at ( 5,0) {$X_\Gamma$};
\node (3) at ( 5,-0.5)  {$ (x, y, z, \psi_2(\lambda),\ldots , \psi_{2r}(\lambda))$};
\node (4) at ( 0,-0.5)  {$(x, y, z,\lambda_0,\ldots ,\lambda_{2r-1})$};

\node (9) at (-1.6,0) {$\Psi:$};

\node (5) at (1.6,0) {};
\node (6) at (3,0) {};
\node (7) at (1.6,-0.5) {};
\node (8) at (3,-0.5) {};

\draw [decoration={markings,mark=at position 1 with
    {\arrow[scale=1.2,>=stealth]{>}}},postaction={decorate}] (5)  --  (6);
\draw [|-,decoration={markings,mark=at position 1 with
    {\arrow[scale=1.2,>=stealth]{>}}},postaction={decorate}] (7)  --  (8);
\end{tikzpicture}   
\end{center}
Set $(x, y, z, t_2,\ldots ,t_{2r}) \in X_\Gamma$. Then the relation $z^{2r} + \sum_{i=2}^{2r}(-1)^i f_i(t_2, \ldots, t_{2r}) z^{2r-i} = xy$ is satisfied. Our aim is to find $\lambda \in \mathfrak{h}$ such that $\psi_2(\lambda)=t_2,\ldots , \psi_{2r}(\lambda)=t_{2r}$. It is a system of $2r-1$ polynomial equations in $2r$ variables with the relation $\sum_{i=0}^{2r-1}\lambda_i=0$. As the base field $\cc$ is algebraically closed and the $\psi_i$'s are algebraically independent, there exists a solution. Hence $\Psi$ is surjective. 

The action of $\Omega$ on $M(\Gamma)$ is defined by
\begin{center}
$\sigma.(a_0,\ldots ,a_{2r-1},b_0,\ldots ,b_{2r-1})=(-b_{2r-1},\ldots ,-b_r, b_{r-1},\ldots ,b_0,a_{2r-1},\ldots ,a_r,-a_{r-1},\ldots ,-a_0)$.
\end{center}
One can check that this action verifies the conditions in Tables~\ref{ConditionFibreSpeciale} and~\ref{ConditionOrientation}. This action is therefore symplectic and induces the natural action on the simple singularity of type $A_{2r-1}$.  

$\bullet$ Set $(x, y, z,\lambda_0,\ldots ,\lambda_{2r-1}) \in M(\Gamma)//G(\Gamma)$. It follows from the definition of the action and the construction of $M(\Gamma)//G(\Gamma)$ that
\begin{center}
$\sigma.(x, y, z,\lambda_0,\ldots ,\lambda_{2r-1})=((-1)^r y,(-1)^r x,-z,-\lambda_{2r-1},\ldots ,-\lambda_0) \in X_\Gamma \times_{\mathfrak{h}/W} \mathfrak{h}$.
\end{center}
Therefore $\widetilde{\alpha}(\sigma.(x, y, z,\lambda_0,\ldots ,\lambda_{2r-1}))=(-\lambda_{2r-1},\ldots ,-\lambda_0) \renewcommand{\arraystretch}{1.2} \begin{array}[t]{l} = \sigma.(\lambda_0,\ldots ,\lambda_{2r-1}) \text{ because of the} \\ \hspace*{0.25cm} \text{ isomorphism between } Z \text{ and }\mathfrak{h}, \\ =\sigma.\widetilde{\alpha}(\lambda_0,\ldots ,\lambda_{2r-1}),\end{array}$ \\
and $\widetilde{\alpha}$ is $\Omega$-equivariant. 
 
$\bullet$ Set $(x, y, z, t_2, \ldots ,t_{2r}) \in X_\Gamma$ and define the action of $\Omega$ on $X_\Gamma$ by 
\begin{center}
$\sigma.(x, y, z, t_2, \ldots ,t_{2r})=((-1)^r y,(-1)^r x,-z, t_2, -t_3,t_4,-t_5,\ldots ,t_{2r}) \in X_\Gamma$.
\end{center}
Therefore if $(x, y, z,\lambda_0,\ldots ,\lambda_{2r-1}) \in M(\Gamma)//G(\Gamma)$ one obtains
\begin{center}
$\renewcommand{\arraystretch}{1.3} \begin{array}[t]{rl}
\Psi(\sigma.(x, y, z,\lambda_0,\ldots ,\lambda_{2r-1})) &=\Psi((-1)^r y,(-1)^r x,-z,-\lambda_{2r-1},\ldots ,-\lambda_0), \\
& =((-1)^r y,(-1)^r x,-z, \psi_2(-\lambda),\psi_3(-\lambda), \ldots , \psi_{2r}(-\lambda)), \\
 & =((-1)^r y,(-1)^r x,-z, \psi_2(\lambda),-\psi_3(\lambda), \ldots , \psi_{2r}(\lambda)), \\
 & =\sigma.(x, y, z, \psi_2(\lambda), \ldots , \psi_{2r}(\lambda)), \\
  & = \sigma.\Psi(x, y, z,\lambda_0,\ldots ,\lambda_{2r-1}).
\end{array}$
\end{center}
This action of $\Omega$ on $X_\Gamma$ turns $\Psi$ into an $\Omega$-equivariant map. 

$\bullet$ If $(\lambda_0,\ldots ,\lambda_{2r-1}) \in \mathfrak{h}$, then $\renewcommand{\arraystretch}{1.3} \begin{array}[t]{rl}
\pi(\sigma.(\lambda_0,\ldots ,\lambda_{2r-1}))& =\pi(-\lambda_{2r-1},\ldots ,-\lambda_0) , \\
 & =(\psi_2(-\lambda),\ldots , \psi_{2r}(-\lambda)), \\
& =(\psi_2(\lambda),-\psi_3(\lambda),\ldots , \psi_{2r}(\lambda)),  \\
& =:\sigma.(\psi_2(\lambda), \psi_3(\lambda),\ldots , \psi_{2r}(\lambda)), \\
&=\sigma.\pi(\lambda_0,\ldots ,\lambda_{2r-1}).
\end{array}$ \\
The action of $\Omega$ on $\mathfrak{h}$ is naturally carried to $\mathfrak{h}/W$ by making $\pi$ an $\Omega$-equivariant map. 

We have just defined the action of $\Omega$ on $X_\Gamma$. However it needs to induce the natural action on the special fibre $X_{\Gamma,0}=\alpha^{-1}(0) \ \mathlarger{\mathlarger{\subset}} \ X_\Gamma$, i.e. the one obtained in Subsection~\ref{sub:groupaction}. The simple singularity $X_{\Gamma,0}$ is defined by $\{(x,y,z) \in \cc^3 \ | \  z^{2r}=xy\}$ and $\Omega$ acts on $X_{\Gamma,0}$ by 
\begin{center}
$\raisebox{-.6\height}{$\left\lbrace \renewcommand{\arraystretch}{1.3} \begin{array}[h]{l}
\sigma.x=(-1)^r y,\\
\sigma.y=(-1)^r x,\\
\sigma.z=-z,
\end{array}\right.$}$
\end{center}
which corresponds to the restriction to $X_{\Gamma,0}$ of action of $\Omega$ on $X_\Gamma$, ensuring that the action is well-defined. 

$\bullet$ For any $(x, y, z, t_2,\ldots ,t_{2r}) \in X_\Gamma$ one has 
\begin{center}
$\renewcommand{\arraystretch}{1.3} \begin{array}[h]{rl}
\alpha(\sigma.(x, y, z, t_2,\ldots ,t_{2r})) & =\alpha((-1)^r y,(-1)^r x,-z, t_2,-t_3,t_4,-t_5,\ldots ,t_{2r}), \\
 & = (t_2,-t_3,t_4,-t_5,\ldots ,t_{2r}), \\
 &=\sigma.(t_2,\ldots ,t_{2r}), \\
 & =\sigma.\alpha(x, y, z, t_2,\ldots ,t_{2r}).
 \end{array}$
\end{center}
It follows that $\alpha$ is $\Omega$-equivariant, and so the pullback diagram from the beginning of the subsection is $\Omega$-equivariant.

Let us look at what happens above the $\Omega$-fixed points $(\mathfrak{h}/W)^\Omega$. If $(x, y, z, t_2,\ldots ,t_{2r})$ is a point in $\alpha^{-1}((\mathfrak{h}/W)^\Omega)$, then $t_{2i + 1}=0$ for any $1 \leq i \leq r-1$.  \\
Set $X_{\Gamma, \Omega}=\{ (x, y, z, t_2, 0,t_4, 0, \ldots ,t_{2r}) \in X_\Gamma\}$. Then the restriction $\alpha^{\Omega}:X_{\Gamma,\Omega} \rightarrow (\mathfrak{h}/W)^\Omega$ of $\alpha$ is $\Omega$-invariant and $(\alpha^{\Omega})^{-1}(0) = X_{\Gamma,0}$. Therefore, due to Theorem~\ref{thm:defrestriction}, the morphism $\alpha^{\Omega}$ is a semiuniversal deformation of a simple singularity of type $(A_{2r-1},\mathbb{Z}/2\mathbb{Z})=B_r$.

\begin{remark}
As the elementary symmetric polynomials $\epsilon_i$ appear naturally in the expression of $M(\Gamma)//G(\Gamma)$, instead of flat coordinates we could have chosen $(\epsilon_2, \ldots, \epsilon_{2r})$ as a system of coordinates for $\mathfrak{h}/W$. 
\end{remark}

\subsection{Semiuniversal deformation for the type \texorpdfstring{$C_3=(D_4,\mathbb{Z}/2\mathbb{Z})$}{Lg}}\label{C3}
Set $\Gamma=\mathcal{D}_2$ and $\Gamma'=\mathcal{D}_4$. The McKay quiver $Q$ is then 

 \begin{center}
 \begin{tikzpicture}[scale=0.8,  transform shape]
\tikzstyle{point}=[circle,draw,fill]
\tikzstyle{ligne}=[thick]
\tikzstyle{pointille}=[thick,dotted]

\node (1) at ( 0,0) [point]{};
\node (2) at ( 2,2) [point] {};
\node (3) at ( -2,2) [point]{} ;
\node (4) at ( 2,-2) [point] {};
\node (5) at ( -2,-2) [point] {};

\node at ( -2.5,2)  {$0$};
\node at ( -2.5,-2)  {$1$};
\node at ( 2.5,2)  {$3$};
\node at ( 2.5,-2)  {$4$};
\node at ( 0,0.5)  {$2$};

\node at ( -0.8,1.3)  {$\varphi_0^b$};
\node at ( -0.8,-1.3)  {$\varphi_1^a$};
\node at ( 1.3,0.8)  {$\varphi_3^b$};
\node at ( 1.4,-0.7)  {$\varphi_4^a$};

\node at ( 0.8,1.3)  {$\varphi_3^a$};
\node at ( 0.8,-1.3)  {$\varphi_4^b$};
\node at ( -1.3,0.7)  {$\varphi_0^a$};
\node at ( -1.4,-0.7)  {$\varphi_1^b$};

\draw[decoration={markings,mark=at position 1 with
    {\arrow[scale=1.2,>=stealth]{>}}},postaction={decorate}] (-0.2,0.3)  --  (-1.8,1.9);
\draw[decoration={markings,mark=at position 1 with
    {\arrow[scale=1.2,>=stealth]{>}}},postaction={decorate}]  (-1.9,1.7)  --  (-0.3,0.1);

\draw[decoration={markings,mark=at position 1 with
    {\arrow[scale=1.2,>=stealth]{>}}},postaction={decorate}]   (1.8,1.9)  --  (0.2,0.3);
\draw[decoration={markings,mark=at position 1 with
    {\arrow[scale=1.2,>=stealth]{>}}},postaction={decorate}]   (0.3,0.1)  --  (1.9,1.7) ;

\draw[decoration={markings,mark=at position 1 with
    {\arrow[scale=1.2,>=stealth]{>}}},postaction={decorate}]  (-1.8,-1.9)  --  (-0.2,-0.3) ;
\draw[decoration={markings,mark=at position 1 with
    {\arrow[scale=1.2,>=stealth]{>}}},postaction={decorate}]  (-0.3,-0.1)  --  (-1.9,-1.7) ;

\draw[decoration={markings,mark=at position 1 with
    {\arrow[scale=1.2,>=stealth]{>}}},postaction={decorate}] (0.2,-0.3)  --  (1.8,-1.9);
\draw[decoration={markings,mark=at position 1 with
    {\arrow[scale=1.2,>=stealth]{>}}},postaction={decorate}]  (1.9,-1.7)  --  (0.3,-0.1);

\end{tikzpicture}
\end{center}

The dimension vector of the representation space $M(\Gamma)$ of this quiver is $(1,1,2,1,1)$, which corresponds to the coordinates of the highest root of the root system of type $D_4$. The orientation is fixed so that all $\varphi^a$'s are positive and all $\varphi^b$'s are negative, which implies that the orientation is preserved by $\Omega$. One can check that 

\begin{center}
\begin{tabular}{l}
$Z=\{ (\mu_0,\mu_1,\mu_2 \mathrm{Id}_2,\mu_3,\mu_4) \ | \ \mu_i \in \cc \text{ and } \mu_0 + \mu_1 + 2\mu_2 + \mu_3 + \mu_4=0\}$ and \\
$\mu_{CS}(\varphi)=(-\varphi_0^b\varphi_0^a,-\varphi_1^b\varphi_1^a,\varphi_0^a\varphi_0^b + \varphi_1^a\varphi_1^b + \varphi_3^a\varphi_3^b + \varphi_4^a\varphi_4^b,-\varphi_3^b\varphi_3^a,-\varphi_4^b\varphi_4^a)$. 
\end{tabular}
\end{center}

In \cite{LeBryPro90}, it is proved that the ring of invariants of the representation space of a quiver by the product of general linear groups associated with vertices, is generated by the traces of the oriented cycles of said quiver. Let us compute the traces of the oriented cycles of $Q$.

$\bullet$ Cycles of degree 2: $\mathrm{Tr}(\varphi_i^a\varphi_i^b)=\varphi_i^b\varphi_i^a=-\mu_i$ for $i=0,1,3,4$.

$\bullet$ Cycles of degree 4: $p_{ij}=\mathrm{Tr}(\varphi_i^a\varphi_i^b\varphi_j^a\varphi_j^b)$. Because $\varphi_i^a\varphi_i^b + \varphi_j^a\varphi_j^b + \varphi_k^a\varphi_k^b + \varphi_l^a\varphi_l^b=\mu_2\mathrm{Id}_2$, it follows that
\begin{center}
$\raisebox{-.6\height}{$\left\lbrace \renewcommand{\arraystretch}{1.3}  \begin{array}[h]{lcl}
p_{01} + p_{03} + p_{04} & = & -\mu_0 (\mu_0 + \mu_2),\\
p_{01} + p_{13} + p_{14} & = & -\mu_1 (\mu_1 + \mu_2),\\
p_{03} + p_{13} + p_{34} & = & -\mu_3 (\mu_3 + \mu_2),\\
p_{04} + p_{14} + p_{34} & = & -\mu_4 (\mu_4 + \mu_2).
\end{array}\right.$}$
\end{center}
This system only has 2 independent elements, and we choose $p_{03}$ and $p_{34}$.

$\bullet$ Cycles of degree 6: They are $q_{ijk}=\mathrm{Tr}(\varphi_k^a\varphi_k^b\varphi_j^a\varphi_j^b\varphi_i^a\varphi_i^b)=q_{j ki }=q_{k i j}$. One can verify that $q_{i j i}= -\mu_i p_{ij}$ and $q_{ijk} + q_{ijl} + q_{i j i} + q_{i j j}=  \mu_2 p_{ij}$. It follows that $q_{ijk} + q_{ijl}= (\mu_i + \mu_j + \mu_2) p_{ij}$. The system is then
\begin{center}
 $\raisebox{-.8\height}{$\left\lbrace\renewcommand{\arraystretch}{1.3} \begin{array}[h]{lcccl}
q_{013} + q_{014} & = &  (\mu_0 + \mu_1 + \mu_2) p_{01} & = & q_{103} + q_{104},\\
q_{103} + q_{034} & = &  (\mu_0 + \mu_3 + \mu_2) p_{03} & = & q_{013} + q_{304},\\
q_{104} + q_{304} & = &  (\mu_0 + \mu_4 + \mu_2) p_{04} & = & q_{014} + q_{034},\\
q_{134} + q_{013} & = &  (\mu_1 + \mu_3 + \mu_2) p_{13} & = & q_{314} + q_{103},\\
q_{014} + q_{314} & = &  (\mu_1 + \mu_4 + \mu_2) p_{14} & = & q_{104} + q_{134},\\
q_{134} + q_{034} & = &  (\mu_3 + \mu_4 + \mu_2) p_{34} & = & q_{314} + q_{304}.\\
\end{array}\right.$}$
\end{center}
There is only one independent element of degree 6, and we choose $q_{034}$.

$\bullet$ Cycles of degree 8: They are $z_{i j k l}=\mathrm{Tr}(\varphi_l^a\varphi_l^b\varphi_k^a\varphi_k^b\varphi_j^a\varphi_j^b\varphi_i^a\varphi_i^b)$. Like previously, computations lead to $z_{i j k k}= -\mu_k q_{ijk}$, $z_{i j k i}= -\mu_i q_{ijk}$, $z_{i j k j}= p_{jk} p_{ij}$ and $z_{i j k l}= (\mu_i + \mu_k + \mu_2) q_{ijk}-p_{ij} p_{jk}$ for $i, j, k, l$ pairwise distinct. Thus all degree 8 cycles are determined by elements with inferior degrees. 

$\bullet$ Cycles of degree $\geq 10$: Starting from degree 10, for any cycle, there is at least one repetition in the indices. Hence any element of degree $k \geq 10$ breaks down as the product of two elements with degrees strictly smaller than $k$. 

Eventually there are only 3 linearly independent elements: $p_{03}$, $p_{34}$ and $q_{034}$. By \cite{LeBryPro90}, they generate the ring of invariants. 

Because the space we want to exhibit is the pullback of the semiuniversal deformation of a simple singularity, and that such a deformation is defined by a single equation (cf. \cite{KasSchle72}), a unique relation has to link the generators (or one can compute the dimension of the quotient variety and see that it is a hypersurface). Let us find this relation. 
\begin{center}
$\renewcommand{\arraystretch}{1.3} \begin{array}[t]{rcl}
q_{034}^2 &= & q_{034}q_{034}, \\
& = & q_{034}( (\mu_0 + \mu_3 + \mu_2) p_{03}- (\mu_0 + \mu_1 + \mu_2) p_{01} +  (\mu_0 + \mu_4 + \mu_2) p_{04}-q_{304}), \\
& = &q_{034}( (\mu_0 + \mu_3 + \mu_2) p_{03}- (\mu_0 + \mu_1 + \mu_2) p_{01} +  (\mu_0 + \mu_4 + \mu_2) p_{04})-q_{034}q_{304}, \\
&  = &q_{034}( (\mu_0 + \mu_3 + \mu_2) p_{03}- (\mu_0 + \mu_1 + \mu_2) p_{01} +  (\mu_0 + \mu_4 + \mu_2) p_{04})-p_{03}p_{04}p_{34}.
\end{array}$ 
\end{center}
Using relations between the cycles of degree 4, one sees that $p_{01}$ and $p_{04}$ can be expressed using $p_{03}$ and $p_{34}$. By reinjecting these expressions in the preceding equation, one finds

\begin{center}
$q_{034}^2=q_{034}(p_{03}(\mu_3-\mu_4) + p_{34}(\mu_3-\mu_0) + C) + p_{03}p_{34}(p_{03} + p_{34}) + p_{03}p_{34}E$,
\end{center}
with $C=\mu_3(\mu_2 + \mu_3)(\mu_1 + \mu_2 + \mu_3)$ and $E=\frac{1}{2}(\mu_0 (\mu_0 + \mu_2)-\mu_1 (\mu_1 + \mu_2) + \mu_3 (\mu_3 + \mu_2) + \mu_4 (\mu_4 + \mu_2))$. 

Set $q_{034}'=q_{034}-\frac{1}{2}(p_{03}(\mu_3-\mu_4) + p_{34}(\mu_3-\mu_0) + C)$, $p_{03}'=p_{03} +  \frac{1}{4}(\mu_3-\mu_0)^2$ as well as $p_{34}'=p_{34} + \frac{1}{4}(\mu_3-\mu_4)^2$. The equation becomes
\begin{center}
$q_{034}'^2=p_{03}'p_{34}'(p_{03}' + p_{34}') + \mathcal{A}p_{03}'p_{34}' + \mathcal{B}p_{03}' + \mathcal{C}p_{34}' + \mathcal{D}$,
\end{center}
with  
\begin{center}$\raisebox{-.8\height}{$\left\lbrace  \renewcommand{\arraystretch}{1.5} \begin{array}[h]{l}
\mathcal{A} =-\mu_1\mu_2-\mu_2\mu_3-\mu_2\mu_4-\mu_2^2-\frac{1}{2}\mu_1\mu_4-\frac{1}{2}\mu_1\mu_3-\frac{1}{2}\mu_3\mu_4-\frac{1}{2}\mu_1^2-\frac{1}{2}\mu_3^2-\frac{1}{2}\mu_4^2,\\
\mathcal{B}= \frac{1}{16}(\mu_3-\mu_4)(\mu_3 + \mu_4)(2\mu_2 + \mu_3 + \mu_4)(2\mu_1 + 2\mu_2 + \mu_3 + \mu_4),\\
\mathcal{C}=\frac{1}{16}(\mu_1-\mu_4)(\mu_1 + \mu_4)(2\mu_2 + \mu_1 + \mu_4)(2\mu_3 + 2\mu_2 + \mu_1 + \mu_4), \\
\mathcal{D}=-\frac{1}{32}(2\mu_1\mu_2 + \mu_1\mu_3 + \mu_1\mu_4 + 2\mu_2^2 + 2\mu_2\mu_3 + 2\mu_2\mu_4 + \mu_3\mu_4 + \mu_4^2)(\mu_1\mu_3 + \mu_1\mu_4 \\
\hspace*{1.3cm} + 2\mu_2\mu_4 + \mu_3\mu_4 + \mu_4^2)(\mu_1\mu_3-\mu_1\mu_4-2\mu_2\mu_4-\mu_3\mu_4-\mu_4^2).
\end{array}\right.$}$ 
\end{center}
It is indeed the equation of a deformation of a simple singularity of type $D_4$. However, we want to look at the fibres over $\mathfrak{h}/W$, so the coefficients of the equation need to be invariant by the Weyl group $W$.  
 
From \cite{Kron89}, it is known that there is an isomorphism
\begin{center}
\begin{tikzpicture}[scale=1,  transform shape, baseline=-0.5ex]
\node (1) at ( 0,0) {$Z$};
\node (2) at ( 4,0) {$\mathfrak{h}$};
\node (3) at ( 4,-0.4)  {$h$};
\node (4) at ( 0,-0.4)  {$\mu=(\mu_0\mathrm{Id}_{d_0},\ldots ,\mu_4\mathrm{Id}_{d_4})$};

\node (9) at (-2.1,0) {$\tau:$};

\node (5) at (1.8,0) {};
\node (6) at (3.5,0) {};
\node (7) at (1.8,-0.4) {};
\node (8) at (3.5,-0.4) {};

\draw [decoration={markings,mark=at position 1 with
    {\arrow[scale=1.2,>=stealth]{>}}},postaction={decorate}] (5)  -- node[above] {$\cong$} (6);
\draw [|-,decoration={markings,mark=at position 1 with
    {\arrow[scale=1.2,>=stealth]{>}}},postaction={decorate}] (7)  --  (8);
\end{tikzpicture}
\end{center}
such that $\tau(\mu)=h$ with $\alpha_i(h)=-\mu_i$ for all $0 \leq i \leq 4$. Hence an element $\mu \in Z$ can be identified with $\sum_{i=1}^4-\mu_i\Lambda_i^{\vee} \in \mathfrak{h}$, where $(\Lambda_i^{\vee})_i$ are the fundamental coweights. The Weyl group $W$ of type $D_4$ is generated by the $r_{\alpha_j^{\vee}}$'s with 
\vspace{-\topsep}
\begin{center}
$\begin{array}[t]{lcl}
r_{\alpha_j^{\vee}}(\Lambda_i^{\vee}) & = &\displaystyle \Lambda_i^{\vee}-\frac{2(\Lambda_i^{\vee},\alpha_j^{\vee})}{(\alpha_j^{\vee},\alpha_j^{\vee})}\alpha_j^{\vee}, \\
 & = & \raisebox{-.0\height}{$\left\lbrace \renewcommand{\arraystretch}{1.5}  \begin{array}[h]{l}
\Lambda_i^{\vee} \text{ if } i \neq j, \\
\Lambda_i^{\vee}-\alpha_i^{\vee} \text{ if } i=j.
\end{array}\right.$}
\end{array}$
\end{center}
\iffalse
Indeed we know that $<\alpha_i,\Lambda_j>=\displaystyle \frac{2(\alpha_i,\Lambda_j)}{(\Lambda_j,\Lambda_j)}=\delta_{ij}$. Furthermore $\begin{array}[t]{ccc}
\mathfrak{h} &  \leftrightarrow & \mathfrak{h}^* \\
\alpha_i^{\vee} & \leftrightarrow &\displaystyle \frac{2 \alpha_i}{(\alpha_i,\alpha_i)}
\end{array}$ by the Killing form $\kappa$. So $\begin{array}[t]{lcl}
\mathfrak{h} &  \leftrightarrow & \mathfrak{h}^* \\
\Lambda_i^{\vee} & \mapsto &\displaystyle \alpha_{\Lambda_i^{\vee}} \\
t_{\alpha_i} & \leftmapsto & \alpha_i
\end{array}$ with $\displaystyle t_{ \alpha_{\Lambda_i^{\vee}}}= \Lambda_i^{\vee}$. \\
Hence $\begin{array}[t]{lcl}
(\alpha_i^{\vee},\Lambda_j^{\vee})=(\displaystyle \frac{2 \alpha_i}{||\alpha_i||^2}, \alpha_{\Lambda_j^{\vee}}) & = & (\alpha_i, \alpha_{\Lambda_j^{\vee}}) \text{ because we are on ADE so } ||\alpha_i||^2=2, \\
 & =& \kappa(t_{\alpha_i},t_{\alpha_{\Lambda_i^{\vee}}}), \\
  & =& \kappa(t_{\alpha_i},\Lambda_j^{\vee}), \\
   & =& \alpha_i(\Lambda_j^{\vee}), \\
    & =& \delta_{ij} \text{ by definition of } \Lambda_j^{\vee}.
\end{array}$ \\
As $\displaystyle (\alpha_j^{\vee},\alpha_j^{\vee})=(\frac{2 \alpha_j}{||\alpha_j||^2},\frac{2 \alpha_j}{||\alpha_j||^2})=(\alpha_j,\alpha_j)=2$ because we are on ADE, we get $\displaystyle \frac{2(\Lambda_i^{\vee},\alpha_j^{\vee})}{(\alpha_j^{\vee},\alpha_j^{\vee})}=\delta_{ij}$. \\
\fi
The simple coroots can be expressed as
\begin{center}
$\raisebox{-.6\height}{$\left\lbrace \renewcommand{\arraystretch}{1.5}  \begin{array}[h]{l}
\alpha_1^{\vee}= 2\Lambda_1^{\vee}-\Lambda_2^{\vee},\\
\alpha_2^{\vee}=-\Lambda_1^{\vee} + 2\Lambda_2^{\vee}-\Lambda_3^{\vee}-\Lambda_4^{\vee} ,\\
\alpha_3^{\vee}=-\Lambda_2^{\vee}  + 2\Lambda_3^{\vee},\\
\alpha_4^{\vee}= -\Lambda_2^{\vee} + 2\Lambda_4^{\vee},
\end{array}\right.$}$
\end{center}
using the Cartan matrix of type $D_4$. Hence $r_{\alpha_1^{\vee}}(\mu)=\mu_1\Lambda_1^{\vee}-(\mu_1 + \mu_2)\Lambda_2^{\vee}-\mu_3\Lambda_3^{\vee}-\mu_4\Lambda_4^{\vee}$, which means $r_{\alpha_1^{\vee}}(\mu_1, \mu_2, \mu_3, \mu_4)=(-\mu_1, \mu_1 + \mu_2, \mu_3, \mu_4)$. By doing the same for the other simple reflections, it can be verified that the coefficients $\mathcal{A},\mathcal{B},\mathcal{C}$ and $\mathcal{D}$ are invariant by these transformations, and thus are invariant by $W$. 

The base we chose for $\mathfrak{h}/W$ is the one with flat coordinates. Therefore the coefficients $\mathcal{A},\mathcal{B},\mathcal{C}$ and $\mathcal{D}$ have to be expressed using $\psi_2, \psi_4, \psi_6$ and $\psi$ from Subsection~\ref{sub:flatD}. \\
Let $(\overline{e_i})_{1 \leq i \leq 4}$ be the dual base of the canonical base $(e_i)_{1 \leq i \leq 4}$ of $\mathfrak{h}^*$. It is known from \cite{Bou68} that $\Lambda_1^{\vee}= \overline{e_1}$, $\Lambda_2^{\vee}=\overline{e_1} + \overline{e_2}$, $\Lambda_3^{\vee}=\frac{1}{2}(\overline{e_1} + \overline{e_2} + \overline{e_3}-\overline{e_4})$ and $\Lambda_4^{\vee}=\frac{1}{2}(\overline{e_1} + \overline{e_2} + \overline{e_3} + \overline{e_4})$. Because of the correspondence $ \mu \longleftrightarrow \sum_{i=1}^4-\mu_i\Lambda_i^{\vee}=\sum_{i=1}^4\xi_i\overline{e_i}$, the $\mu_i$'s can be expressed as functions of the $\xi_j$'s, and using the expressions from Subsection~\ref{sub:flatD}, one finds

\begin{center}
$\raisebox{-.6\height}{$\left\lbrace  \renewcommand{\arraystretch}{1.3}  \begin{array}[h]{l}
 \mathcal{A}=-\frac{1}{2}\psi_2,\\
 \mathcal{B}=-\psi,\\
 \mathcal{C}=-\frac{1}{2}(\psi+\frac{1}{2}\psi_4),\\
 \mathcal{D}=\frac{1}{4}(\psi_6+\frac{1}{6}\psi_2 \psi_4+\psi \psi_2+\frac{1}{108}\psi_2^3).
\end{array}\right.$}$
\end{center}
By setting $x=p_{34}'$, $y=p_{03}'$ and $z=q_{034}'$, the equation of the deformation becomes:
\begin{center}
$\displaystyle z^2=xy(x + y)-\frac{1}{2}\psi_2xy-\psi y -\frac{1}{2}(\psi+\frac{1}{2}\psi_4)x + \frac{1}{4}(\psi_6+\frac{1}{6}\psi_2 \psi_4+\psi \psi_2+\frac{1}{108}\psi_2^3)$.
\end{center}
The coefficients are indeed invariants of $W=\mathfrak{S}_4 \ltimes (\mathbb{Z}/2\mathbb{Z})^3$. According to Section~\ref{sec:SlodowyCassens}, it is the equation of the pullback of the semiuniversal deformation of a simple singularity of type $D_4$.
\bigbreak

Set $ \renewcommand{\arraystretch}{1.3}  \begin{array}[t]{ll} 
X_\Gamma = \{(x, y, z, t_2,t_4,t_6,t) \in \cc^3 \times \mathfrak{h}/W \ | &  z^2=xy(x + y)-\frac{1}{2}t_2xy-ty -\frac{1}{2}(t+\frac{1}{2}t_4)x \\
                                                                                                               &+ \frac{1}{4}(t_6+\frac{1}{6}t_2t_4+t t_2+\frac{1}{108}t_2^3)\}.
\end{array}$ \\
Because 
\begin{center}
\begin{tikzpicture}[scale=1,  transform shape, baseline=-0.5ex]
\node (1) at ( 0,0) {$\mathfrak{h}$};
\node (2) at ( 4,0) {$\mathfrak{h}/W$};
\node (3) at ( 4,-0.4)  {$(\psi_2(\xi),\psi_4(\xi),\psi_6(\xi),\psi(\xi))$};
\node (4) at ( 0,-0.4)  {$ (\xi_1,\xi_2,\xi_3,\xi_4)$};

\node (9) at (-1.2,0) {$\pi:$};

\node (5) at (1,0) {};
\node (6) at (2,0) {};
\node (7) at (1,-0.4) {};
\node (8) at (2,-0.4) {};

\draw [decoration={markings,mark=at position 1 with
    {\arrow[scale=1.2,>=stealth]{>}}},postaction={decorate}] (5)  --  (6);
\draw [|-,decoration={markings,mark=at position 1 with
    {\arrow[scale=1.2,>=stealth]{>}}},postaction={decorate}] (7)  --  (8);
\end{tikzpicture}  and \begin{tikzpicture}[scale=1,  transform shape, baseline=-0.5ex]
\node (1) at ( 0,0) {$X_\Gamma$};
\node (2) at ( 3,0) {$ \mathfrak{h}/W$};
\node (3) at ( 3,-0.4)  {$ (t_2,t_4,t_6,t)$};
\node (4) at ( 0,-0.4)  {$(x, y, z, t_2,t_4,t_6,t)$};

\node (9) at (-1.6,0) {$\alpha:$};

\node (5) at (1.3,0) {};
\node (6) at (2.2,0) {};
\node (7) at (1.3,-0.4) {};
\node (8) at (2.2,-0.4) {};

\draw [decoration={markings,mark=at position 1 with
    {\arrow[scale=1.2,>=stealth]{>}}},postaction={decorate}] (5)  --  (6);
\draw [|-,decoration={markings,mark=at position 1 with
    {\arrow[scale=1.2,>=stealth]{>}}},postaction={decorate}] (7)  --  (8);
\end{tikzpicture},
\end{center}
it follows that

\begin{center}
$ \renewcommand{\arraystretch}{1.5}  \begin{array}[t]{rl}
X_\Gamma \times_{\mathfrak{h}/W} \mathfrak{h} = & \{((x, y, z, t_2,t_4,t_6,t),(\xi_1,\xi_2,\xi_3,\xi_4)) \in X_\Gamma \times \mathfrak{h} \ | \ \alpha(x, y, z, t_i, t)=\pi(\xi_1,\xi_2,\xi_3,\xi_4) \}, \\
  = & \{(x, y, z, t_i, t,\xi_j) \ | \ t_2=\psi_2(\xi), t_4=\psi_4(\xi), t_6=\psi_6(\xi), t=\psi(\xi) \text{ and } z^2=xy(x + y) \\
   & -\frac{1}{2}t_2xy-ty -\frac{1}{2}(t+\frac{1}{2}t_4)x + \frac{1}{4}(t_6+\frac{1}{6}t_2t_4+t t_2+\frac{1}{108}t_2^3)\}, \\
    = & \{(x, y, z,\xi_j) \ | \ z^2=xy(x + y)-\frac{1}{2}\psi_2(\xi)xy-\psi(\xi)y -\frac{1}{2}(\psi(\xi)+\frac{1}{2}\psi_4(\xi))x \\
     & + \frac{1}{4}(\psi_6(\xi)+\frac{1}{6}\psi_2(\xi)\psi_4(\xi)+\psi(\xi) \psi_2(\xi)+\frac{1}{108}\psi_2(\xi)^3)\}, \\
      = & \mu_{CS}^{-1}(Z)//G(\Gamma).
\end{array}$
\end{center}
Finally we obtain the following commutative diagram:
\begin{center}
\begin{tikzpicture}[scale=1,  transform shape]
\tikzstyle{point}=[circle,draw,fill]
\tikzstyle{ligne}=[thick]

\node (1) at ( 0,0) {$X_\Gamma \times_{\mathfrak{h}/W}\mathfrak{h}$};
\node (2) at ( 2,0) {$X_\Gamma$};
\node (3) at ( 2,-2)  {$\mathfrak{h}/W$};
\node (4) at ( 0,-2)  {$\mathfrak{h}$};

\node (9) at ( 1,-1) {$\circlearrowright$};

\draw [decoration={markings,mark=at position 1 with
    {\arrow[scale=1.2,>=stealth]{>}}},postaction={decorate}] (1)  -- node[above] {$\Psi$} (2);
\draw [decoration={markings,mark=at position 1 with
    {\arrow[scale=1.2,>=stealth]{>}}},postaction={decorate}] (2)  -- node[right] {$\alpha$} (3);
\draw [decoration={markings,mark=at position 1 with
    {\arrow[scale=1.2,>=stealth]{>}}},postaction={decorate}] (1)  -- node[left] {$\widetilde{\alpha}$} (4);
\draw [decoration={markings,mark=at position 1 with
    {\arrow[scale=1.2,>=stealth]{>}}},postaction={decorate}] (4)  -- node[below] {$\pi$} (3);
\end{tikzpicture}
\end{center}
with $\alpha$ being the semiuniversal deformation of $X_{\Gamma,0}:=\alpha^{-1}(0)=\{(x, y, z) \in \cc^3 \ | \ z^2=xy(x + y) \}$, which is a simple singularity of type $D_4$. 

The action of $\Omega=\mathbb{Z}/2\mathbb{Z}=<\sigma>$ on $\mu_{CS}^{-1}(Z)//G(\Gamma)$ is the natural $3 \leftrightarrow 4$ permutation. There is also a natural action of $\Omega$ on $\mathfrak{h}$ given by permuting $\alpha_3^\vee$ and $\alpha_4^\vee$, which corresponds to the permutation $\overline{e_4} \leftrightarrow -\overline{e_4}$. This action is carried to $\mathfrak{h}/W$ by defining $\sigma.(t_2,t_4,t_6,t)=(t_2,t_4,t_6,-t)$. 

Above the fixed points $(\mathfrak{h}/W)^\Omega$, define $X_{\Gamma, \Omega}=\alpha^{-1}((\mathfrak{h}/W)^\Omega)=\{(x, y, z, t_2,t_4,t_6,0) \in X_\Gamma\}$. It was previously shown that
\begin{center}
$\raisebox{-.6\height}{$\left\lbrace \renewcommand{\arraystretch}{1.3} \begin{array}[h]{lcl}
x & = & p_{34} + \frac{1}{4}(\mu_3-\mu_4)^2,\\
y & = & p_{03} + \frac{1}{4}(\mu_3-\mu_0)^2,\\
z & = & q_{034}-\frac{1}{2}(p_{03}(\mu_3-\mu_4) + p_{34}(\mu_3-\mu_0)  + \mu_3(\mu_2 + \mu_3)(\mu_1 + \mu_2 + \mu_3)),
\end{array}\right.$}$
\end{center}
and $\mu_3 \xleftrightarrow{\sigma}  \mu_4$. Because of the relations between degree 4 elements, we obtain the following formulae:
\begin{center}
 $\raisebox{-.6\height}{$\left\lbrace \renewcommand{\arraystretch}{1.3} \begin{array}[h]{rcl}
\sigma.x & = & x, \\
\sigma.y & = & -x-y + \frac{1}{2}t_2, \\
\sigma.z & = & -z.
\end{array}\right.$}$ 
\end{center}
Above the origin, the fibre has the equation $z^2=xy(x + y)$. Changing variables by $X  =  -4^{-1/3}x$ and $Y  =  -4^{1/6}(y + \dfrac{x}{2})$ leads to the equation $X(Y^2-X^2) + z^2=0$ with $\sigma.X  =  X$, $\sigma.Y  =  -Y$ and $\sigma.z  =  -z$. This shows that the action on the singularity is the same as the one in Subsection~\ref{sub:groupaction}. 

Finally, the pullback diagram is $\Omega$-equivariant with the natural action on the singularity, so the restriction $\alpha^{\Omega}:X_{\Gamma, \Omega} \rightarrow (\mathfrak{h}/W)^\Omega$ of $\alpha$ is $\Omega$-invariant and $(\alpha^{\Omega})^{-1}(0) =X_{\Gamma,0}$. Because of Theorem~\ref{thm:defrestriction}, the morphism $\alpha^{\Omega}$ is a semiuniversal deformation of a simple singularity of type $(D_{4},\mathbb{Z}/2\mathbb{Z})=C_3$.

\subsection{Semiuniversal deformation for the type \texorpdfstring{$G_2=(D_4,\mathfrak{S}_3)$}{Lg}}\label{G2}
Set $\Gamma=\mathcal{D}_2$ and $\Gamma'=\mathcal{O}$. The McKay quiver $Q$ is given in the previous subsection and the same notations are kept. The pullback diagram of the semiuniversal deformation $\alpha$ of the simple singularity $\cc^2/\Gamma$ of type $D_4$ has also been previously determined. 

The symmetry group is $\Omega=\mathfrak{S}_3=\langle \sigma, \rho \rangle$ with $\sigma^2=\rho^3=1$. The group $\Omega$ induces a natural action on $\mu_{CS}^{-1}(Z)//G(\Gamma)$ and permutes the vertices $1,3$ and $4$. There is also a natural action of $\Omega$ on $\mathfrak{h}$ given by 
\begin{center}
\raisebox{.3\height}{\begin{tikzpicture}[baseline, scale=1,  transform shape, decoration={markings, mark=at position 1 with {\arrow[scale=1.2,>=stealth]{>}}}]
\node (1) at ( 0,0) {$\alpha_1^\vee$};
\node (2) at ( 1,0)  {$\alpha_3^\vee$};
\node (3) at ( 2,0) {$\alpha_4^\vee$} ;
\node (6) at ( 1,-0.35) {$\rho$} ;

\draw [postaction={decorate}] (0.2,0)  --  (0.8,0);
\draw [postaction={decorate}] (1.2,0)  --  (1.8,0);
\draw [postaction={decorate}] (1.8,-0.2) to [out=-135, in=-45] (0.2,-0.2);
\draw [postaction={decorate}] (0.21,-0.21) -- (0.2,-0.2);
\end{tikzpicture}} and \raisebox{.3\height}{\begin{tikzpicture}[baseline, scale=1,  transform shape]
\tikzstyle{point}=[circle,draw,fill]
\tikzstyle{ligne}=[thick]
\tikzstyle{pointille}=[thick,dotted]

\node (1) at ( 0,0) {$\alpha_3^\vee$};
\node (2) at ( 1.5,0)  {$\alpha_4^\vee$};

\node (6) at ( 0.75,-0.2) {$\sigma$} ;

\draw [decoration={markings,mark=at position 1 with
    {\arrow[scale=1.2,>=stealth]{>}}},postaction={decorate}] (0.2,0)  --  (1.3,0);
\draw [decoration={markings,mark=at position 1 with
    {\arrow[scale=1.2,>=stealth]{>}}},postaction={decorate}] (1.3,0)  --  (0.2,0);

\end{tikzpicture}}. 
\end{center}
From the previous case $(D_4, \mathbb{Z}/2\mathbb{Z})$, it is known that $\sigma$ corresponds to the permutation $\overline{e_4} \leftrightarrow -\overline{e_4}$. It remains to compute the action of $\rho$ on the base $(\overline{e_i})_{1\leq i \leq 4}$ of $\mathfrak{h}$. 

By expressing the $\overline{e_i}$'s in terms of the $\alpha_i^\vee$'s, one deduces that the action of $\rho$ on the $\overline{e_i}$'s is given by 
\begin{center}
$\left\lbrace \renewcommand{\arraystretch}{1.3}\begin{array}[h]{l}
 \overline{e_1} \mapsto \frac{1}{2}(\overline{e_1} + \overline{e_2} + \overline{e_3}-\overline{e_4}),\\
 \overline{e_2} \mapsto \frac{1}{2}(\overline{e_1} + \overline{e_2}-\overline{e_3} + \overline{e_4}),\\
 \overline{e_3} \mapsto \frac{1}{2}(\overline{e_1}-\overline{e_2} + \overline{e_3} + \overline{e_4}),\\
 \overline{e_4} \mapsto \frac{1}{2}(\overline{e_1}-\overline{e_2}-\overline{e_3}-\overline{e_4}).
\end{array}\right.$ 
\end{center}
It was seen in Subsection~\ref{sub:flatD} that \begin{tikzpicture}[scale=1,  transform shape, baseline=-0.5ex]
\node (1) at ( 0,0) {$\mathfrak{h}$};
\node (2) at ( 4,0) {$\mathfrak{h}/W$};
\node (3) at ( 4,-0.4)  {$(\psi_2(\xi),\psi_4(\xi),\psi_6(\xi),\psi(\xi))$};
\node (4) at ( 0,-0.4)  {$ (\xi_1,\xi_2,\xi_3,\xi_4)$};

\node (9) at (-1.6,0) {$\pi:$};

\node (5) at (1,0) {};
\node (6) at (2,0) {};
\node (7) at (1,-0.4) {};
\node (8) at (2,-0.4) {};

\draw [decoration={markings,mark=at position 1 with
    {\arrow[scale=1.2,>=stealth]{>}}},postaction={decorate}] (5)  --  (6);
\draw [|-,decoration={markings,mark=at position 1 with
    {\arrow[scale=1.2,>=stealth]{>}}},postaction={decorate}] (7)  --  (8);
\end{tikzpicture} and thus 
\begin{center}
$\pi(\rho.(\xi_1,\xi_2,\xi_3,\xi_4)) =\pi \renewcommand{\arraystretch}{1.5}\begin{pmatrix} \frac{1}{2}(\xi_1 + \xi_2 + \xi_3 + \xi_4)  \\
 \frac{1}{2}(\xi_1 + \xi_2-\xi_3-\xi_4) \\
  \frac{1}{2}(\xi_1-\xi_2 + \xi_3-\xi_4) \\
   \frac{1}{2}(-\xi_1 + \xi_2 + \xi_3-\xi_4) 
   \end{pmatrix}  =\begin{pmatrix}\psi_2(\xi)  \\  -\frac{1}{2}\psi_4(\xi)-3\psi(\xi) \\ \psi_6(\xi)  \\  \frac{1}{4}\psi_4(\xi)-\frac{1}{2}\psi(\xi) \end{pmatrix}.$
   \end{center}
The action of $\rho$ on $\mathfrak{h}/W$ is therefore defined by $\rho.(t_2,t_4,t_6,t)=(t_2,  -\frac{1}{2}t_4-3t , t_6  ,  \frac{1}{4}t_4-\frac{1}{2}t )$, which makes $\pi$ $\mathbb{Z}/3\mathbb{Z}$-equivariant. It has already been shown that $\pi$ is equivariant with respect to $\sigma$, and so $\pi$ is $\Omega$-equivariant. 

Above the fixed points $(\mathfrak{h}/W)^\Omega$, define $X_{\Gamma, \Omega}=\alpha^{-1}((\mathfrak{h}/W)^{\Omega})=\{(x, y, z, t_2,0,t_6,0) \in X_\Gamma\}$. It was proved in the previous section that 
\begin{center}
$\raisebox{-.7\height}{$\left\lbrace  \renewcommand{\arraystretch}{1.3} \begin{array}[h]{lcl}
x & = & p_{34} + \frac{1}{4}(\mu_3-\mu_4)^2,\\
y & = & p_{03} + \frac{1}{4}(\mu_3-\mu_0)^2,\\
z & = & q_{034}-\frac{1}{2}(p_{03}(\mu_3-\mu_4) + p_{34}(\mu_3-\mu_0) + \mu_3(\mu_2 + \mu_3)(\mu_1 + \mu_2 + \mu_3)),
\end{array}\right.$}$
\end{center}
and the action of $\Omega$ on the $\mu_i$'s leads to
\begin{center}
$\left\lbrace \renewcommand{\arraystretch}{1.3} \begin{array}[h]{lcl}
\sigma.x & = & x, \\
\sigma.y & = & -x-y + \frac{1}{2}t_2, \\
\sigma.z & = & -z,
\end{array}\right.$ \quad and \quad  $\left\lbrace \renewcommand{\arraystretch}{1.3} \begin{array}[h]{lcl}
\rho.x & = & y, \\
\rho.y & = & -x-y + \frac{1}{2}t_2, \\
\rho.z & = & z.
\end{array}\right.$
\end{center}
The action of $\sigma$ on the singularity is known to be the correct one. The equation of the special fibre is $z^2=xy(x + y)$. By setting $X  =  -\frac{1}{\sqrt[3]{4}}x$ and $Y  =  -\sqrt[6]{4}(y + \frac{1}{2}x)$, one sees that $X(Y^2-X^2) + z^2=0$ with $\rho.X = \frac{1}{2}(Y-X)$, $\rho.Y =  -\frac{1}{2}(Y + 3X)$ and $\rho.z = z$. It is clear from Subsection~\ref{sub:groupaction} that the action on the singularity is the right one. 

The pullback diagram is finally $\Omega$-equivariant with the natural action on the singularity, hence the restriction $\alpha^{\Omega}:X_{\Gamma, \Omega} \rightarrow (\mathfrak{h}/W)^{\Omega}$ of $\alpha$ is $\Omega$-invariant and $(\alpha^{\Omega})^{-1}(0) =X_{\Gamma,0}$. Like before, one concludes that $\alpha^{\Omega}$ is a semiuniversal deformation of an inhomogeneous simple singularity of type $(D_4,\mathfrak{S}_3)=G_2$.

\subsection{Semiuniversal deformation for the type \texorpdfstring{$F_4=(E_6,\mathbb{Z}/2\mathbb{Z})$}{Lg}}\label{E6}
Set $\Gamma=\mathcal{T}$ and $\Gamma'=\mathcal{O}$. The McKay quiver $Q$ is 

\begin{center}
\begin{tikzpicture}[scale=0.7,  transform shape,>=angle 60]
\tikzstyle{point}=[circle,draw,fill]
\tikzstyle{ligne}=[thick]

\node (1) at ( -2,0) [point] {};
\node (2) at ( 0,0) [point]{};
\node (3) at ( 2,0) [point] {};
\node (4) at ( 4,0) [point]{};
\node (5) at ( 6,0) [point] {};
\node (6) at ( 2,-2) [point] {};
\node (7) at ( 2,-4) [point] {};

\node (8) at ( -2,0.5) [] {0};
\node (9) at ( 0,0.5) []{3};
\node (10) at ( 2,0.5) [] {6};
\node (11) at ( 4,0.5) []{5};
\node (12) at ( 6,0.5) [] {2};
\node (13) at ( 2.5,-2) [] {4};
\node (14) at ( 2.5,-4) [] {1};

\node (15) at ( -1,0.5) [] {$\varphi_0^a$};
\node (16) at ( -1,-0.5) [] {$\varphi_0^b$};

\node (17) at ( 1,0.5) [] {$\varphi_3^a$};
\node (18) at ( 1,-0.5) [] {$\varphi_3^b$};

\node (19) at ( 3,0.5) [] {$\varphi_5^a$};
\node (20) at ( 3,-0.5) [] {$\varphi_5^b$};

\node (21) at ( 5,0.5) [] {$\varphi_2^a$};
\node (22) at ( 5,-0.5) [] {$\varphi_2^b$};

\node (23) at ( 2.5,-1) [] {$\varphi_4^a$};
\node (24) at ( 1.5,-1) [] {$\varphi_4^b$};

\node (25) at ( 2.5,-3) [] {$\varphi_1^a$};
\node (26) at ( 1.5,-3) [] {$\varphi_1^b$};

\draw [decoration={markings,mark=at position 1 with
    {\arrow[scale=1.2,>=stealth]{>}}},postaction={decorate}] (-1.8,0.1)  --  (-0.2,0.1);
\draw [decoration={markings,mark=at position 1 with
    {\arrow[scale=1.2,>=stealth]{>}}},postaction={decorate}] (-0.2,-0.1) -- (-1.8,-0.1);
\draw [decoration={markings,mark=at position 1 with
    {\arrow[scale=1.2,>=stealth]{>}}},postaction={decorate}] (0.2,0.1)  --  (1.8,0.1);
\draw [decoration={markings,mark=at position 1 with
    {\arrow[scale=1.2,>=stealth]{>}}},postaction={decorate}] (1.8,-0.1) --  (0.2,-0.1) ;
\draw [decoration={markings,mark=at position 1 with
    {\arrow[scale=1.2,>=stealth]{>}}},postaction={decorate}]  (3.8,0.1) -- (2.2,0.1);
\draw [decoration={markings,mark=at position 1 with
    {\arrow[scale=1.2,>=stealth]{>}}},postaction={decorate}] (2.2,-0.1) -- (3.8,-0.1) ;
\draw [decoration={markings,mark=at position 1 with
    {\arrow[scale=1.2,>=stealth]{>}}},postaction={decorate}]  (5.8,0.1) -- (4.2,0.1) ;
\draw [decoration={markings,mark=at position 1 with
    {\arrow[scale=1.2,>=stealth]{>}}},postaction={decorate}]  (4.2,-0.1) -- (5.8,-0.1) ;
\draw [decoration={markings,mark=at position 1 with
    {\arrow[scale=1.2,>=stealth]{>}}},postaction={decorate}] (2.1,-1.8) -- (2.1,-0.2);
\draw [decoration={markings,mark=at position 1 with
    {\arrow[scale=1.2,>=stealth]{>}}},postaction={decorate}] (1.9,-0.2)  --  (1.9,-1.8);
\draw [decoration={markings,mark=at position 1 with
    {\arrow[scale=1.2,>=stealth]{>}}},postaction={decorate}] (2.1,-3.8) -- (2.1,-2.2);
\draw [decoration={markings,mark=at position 1 with
    {\arrow[scale=1.2,>=stealth]{>}}},postaction={decorate}] (1.9,-2.2)  --  (1.9,-3.8);
\end{tikzpicture}
\end{center}

The dimension vector of this quiver is $(1,1,1,2,2,2,3)$ and corresponds to the coordinates of the highest root of the root system of type $E_6$. The orientation is fixed so that all $\varphi^a$'s are positive and all $\varphi^b$'s are negative, which implies that the orientation is preserved by $\Omega$. By definition 
\vspace{-\topsep}
\begin{center}
 \begin{tabular}[h]{l}
 \scalebox{0.9}{$Z=\{ (\mu_0,\mu_1,\mu_2 ,\mu_3\mathrm{Id}_2,\mu_4\mathrm{Id}_2,\mu_5\mathrm{Id}_2,\mu_6\mathrm{Id}_3) \ | \ \mu_i \in \cc \text{ and }\mu_0 + \mu_1 + \mu_2 + 2(\mu_3 + \mu_4 + \mu_5) + 3\mu_6=0\}$} and \\
 \scalebox{0.95}{$\mu_{CS}(\varphi)= (-\varphi_0^b\varphi_0^a,-\varphi_1^b\varphi_1^a,-\varphi_2^b\varphi_2^a,\varphi_0^a\varphi_0^b - \varphi_3^b\varphi_3^a,\varphi_1^a\varphi_1^b - \varphi_4^b\varphi_4^a,\varphi_2^a\varphi_2^b - \varphi_5^b\varphi_5^a,\varphi_3^a\varphi_3^b+\varphi_4^a\varphi_4^b+\varphi_5^a\varphi_5^b)$}.
 \end{tabular}
\end{center}
Like in Section~\ref{C3}, the aim is to compute the traces of the oriented cycles of the quiver and find those which are linearly independent. Set 
\begin{center}
$\Phi(i_1i_2 \ldots i_k)=\mathrm{Tr}((\varphi_{i_1}^a\varphi_{i_1}^b)(\varphi_{i_2}^a\varphi_{i_2}^b)\ldots (\varphi_{i_k}^a\varphi_{i_k}^b))$ 
\end{center}
for $i_1,i_2, \ldots, i_k \in \{3,4,5\}$. Through lengthy computation, one can find that there are only three linearly independent elements, and they are of degree 3, 4 and 6 respectively. We choose $x =\Phi(4^25)$, $y =\Phi(4^25^2)$ and $z =\Phi(3^24^25^2)$. These are the generators of the ring of invariants. However, a unique relation has to link these generators as the latter ring is the coordinate ring of the pullback of the semiuniversal deformation of a simple singularity, and such a deformation is defined by a single equation (cf. \cite{KasSchle72}). The relation has the following form:

\begin{center}
$\renewcommand{\arraystretch}{1.5} \begin{array}[h]{rcl}
x^2z + y^3 + z^2 & = & a_{x^3}x^3 + a_{x^2y}x^2y + a_{xy^2}xy^2 + a_{x^2}x^2 + a_{xy}xy + a_{xz}xz  \\
 & & + a_{y^2}y^2 + a_{yz}yz + a_xx + a_y y + a_z z + a_0,
 \end{array}$
\end{center}
 with $a_0, a_x, a_y, a_z, a_{xz}, a_{yz}, a_{xy}, a_{x^2}, a_{x^3}, a_{y^2}, a_{x^2y}$ and $a_{xy^2}$ homogeneous polynomials in $\mu_1, \ldots, \mu_6$. The sheer size of these expressions is too much for them to be listed in this paper. Indeed, all together they contain 6147 terms.  

In order for the equation of the deformation to have the form predicted by Theorem~\ref{Kas-Schlessinger}, it requires a change of variables. Replacing $(x,y,z)$ by $(x + \alpha, y + \beta x + \gamma, z  + \delta x +  \epsilon y  + k x^2 + \mu)$ with
\begin{center}
$\raisebox{-.10\height}{$\left\lbrace \renewcommand{\arraystretch}{1.5} \begin{array}[h]{l}
k =-\frac{1}{2},\\
\beta =  \frac{1}{3}a_{xy^2},\\
\epsilon = \frac{1}{2}a_{yz},\\
\alpha =  -a_{x^3}- \frac{1}{3}a_{x^2y}a_{xy^2}- \frac{2}{27}a_{xy^2}^3 +  \frac{1}{2}a_{xz} +  \frac{1}{6}a_{yz}a_{xy^2},\\
\gamma =  -a_{xy^2}a_{x^3}- \frac{1}{3}a_{xy^2}^2a_{x^2y}- \frac{2}{27}a_{xy^2}^4 +  \frac{1}{2}a_{xy^2}a_{xz} +  \frac{1}{6}a_{xy^2}^2a_{yz} + a_{y^2} +  \frac{1}{4}a_{yz}^2,\\
\delta = \frac{1}{2}(a_{xz}-2\alpha + \frac{1}{3}a_{yz}a_{xy^2}),\\
\mu = \frac{1}{2}(a_{xz}\alpha + a_{yz}\gamma-\alpha^2 + a_z),
\end{array}\right.$}$
\end{center}
transforms the equation into
\begin{center}
$-\frac{1}{4}x^4 + y^3 + z^2 + A_{x^2y}x^2y + A_{x^2}x^2 + A_{xy}xy + A_xx + A_y y + A_0=0$ 
\end{center}
with new coefficients $A_{x^2y}$, $A_{x^2}$, $A_{xy}$, $A_x$, $A_y$ and $A_0$.

As in subsection~\ref{C3}, we use the isomorphism $\tau:Z \xrightarrow{\cong} \mathfrak{h}$ to identify an element $\mu \in Z$ with $\sum_{i=1}^6-\mu_i\Lambda_i^{\vee} \in \mathfrak{h}$, with $(\Lambda_i^{\vee})_{1 \leq i \leq 6}$ being the fundamental coweights of the root system of type $E_6$. The Weyl group $W$ of type $E_6$ is generated by the $r_{\alpha_1^{\vee}}, \ldots, r_{\alpha_6^{\vee}}$, and their actions on $(\mu_i)_{1 \leq i \leq 6}$ can be computed. Using a computer software (Maple 2015), one can explicitly verify that the coefficients $A_0$, $A_x$, $A_y$, $A_{x^2}$, $A_{xy}$, $A_{x^2y}$ are invariant by the $r_{\alpha_j^\vee}$'s and are thus $W$-invariant. The next step is to express them using the flat coordinates defined in Subsection~\ref{sub:flatE6}. 

According to our indexation of the vertices of the Dynkin diagram, the expressions of the fundamental weights are given below (cf. \cite{Bou68}):
\begin{center}
$ \renewcommand{\arraystretch}{1.5} \left\lbrace \begin{array}[h]{l}
\Lambda_1=\frac{1}{3}(4\alpha_1 + 2\alpha_2 + 3\alpha_3 + 5\alpha_4 + 4\alpha_5 + 6\alpha_6), \\
\Lambda_2=\frac{1}{3}(2\alpha_1 + 4\alpha_2 + 3\alpha_3 + 4\alpha_4 + 5\alpha_5 + 6\alpha_6), \\
\Lambda_3=\alpha_1 + \alpha_2 + 2\alpha_3 + 2\alpha_4 + 2\alpha_5 + 3\alpha_6, \\
\Lambda_4=\frac{1}{3}(5\alpha_1 + 4\alpha_2 + 6\alpha_3 + 10\alpha_4 + 8\alpha_5 + 12\alpha_6), \\
\Lambda_5=\frac{1}{3}(4\alpha_1 + 5\alpha_2 + 6\alpha_3 + 8\alpha_4 + 10\alpha_5 + 12\alpha_6), \\
\Lambda_6=2\alpha_1 + 2\alpha_2 + 3\alpha_3 + 4\alpha_4 + 5\alpha_5 + 6\alpha_6.
\end{array}\right.$
\end{center} 
Therefore it follows that 

\begin{center}
$ \renewcommand{\arraystretch}{1.5} 
\left\lbrace \begin{array}[h]{l}
\Lambda_1=(-\frac{\sqrt{6}}{6},\frac{\sqrt{2}}{2},\frac{\sqrt{6}}{6},-\frac{\sqrt{2}}{2},0,0)^T, \\
\Lambda_2=(0,0,\frac{\sqrt{6}}{6},-\frac{\sqrt{2}}{2},-\frac{\sqrt{6}}{6},\frac{\sqrt{2}}{2})^T, \\ 
\Lambda_3=(0,0,0,-\sqrt{2},0,0)^T, \\
\Lambda_4=(-\frac{\sqrt{6}}{3},0,\frac{\sqrt{6}}{3},-\sqrt{2},0,0)^T, \\
\Lambda_5=(0,0,\frac{\sqrt{6}}{3},-\sqrt{2},-\frac{\sqrt{6}}{3},0)^T, \\
\Lambda_6=(0,0,\frac{\sqrt{6}}{2},-\frac{3}{\sqrt{2}},0,0)^T,
\end{array}\right.$
\end{center}
using notations from Subsection~\ref{sub:flatE6}. Writing $(x_1,y_1, \ldots , y_3)^T=\sum_{i=1}^6\lambda_i \Lambda_i $, one finds
\begin{center}
$ \renewcommand{\arraystretch}{1.5}  \left\lbrace \begin{array}[h]{l}
x_1=-\frac{\sqrt{6}}{6}\lambda_1-\frac{\sqrt{6}}{3}\lambda_4, \\
y_1=\frac{\sqrt{2}}{2}\lambda_1, \\
x_2=\frac{\sqrt{6}}{6}\lambda_1 + \frac{\sqrt{6}}{6}\lambda_2 + \frac{\sqrt{6}}{3}\lambda_4 + \frac{\sqrt{6}}{3}\lambda_5 + \frac{\sqrt{6}}{2}\lambda_6, \\
y_2=-\frac{\sqrt{2}}{2}\lambda_1-\frac{\sqrt{2}}{2}\lambda_2-\sqrt{2}\lambda_3-\sqrt{2}\lambda_4-\sqrt{2}\lambda_5-\frac{3}{\sqrt{2}}\lambda_6, \\
x_3=-\frac{\sqrt{6}}{6}\lambda_2-\frac{\sqrt{6}}{3}\lambda_5, \\
y_3=\frac{\sqrt{2}}{2}\lambda_2.
\end{array}\right.$
\end{center} 
The isomorphism $\tau$ implies that $\mu_i=-\lambda_i$ for any $1 \leq i \leq 6$, and thus the coefficients of the equation can be expressed with the flat coordinates:
\begin{center}
$\raisebox{-.6\height}{$\left\lbrace \renewcommand{\arraystretch}{1.5}  \begin{array}[h]{l}
A_0= \frac{1}{576}(\psi_{12}(\mu)-\frac{1}{8}\psi_8(\mu)\psi_2(\mu)^2-\frac{1}{8}\psi_6(\mu)^2 + \frac{1}{96}\psi_6(\mu)\psi_2(\mu)^3-\psi_5(\mu)^2\psi_2(\mu)),\\
A_x= \frac{\sqrt{6}}{144}(-\psi_9(\mu) + \frac{1}{4}\psi_5(\mu)\psi_2(\mu)^2),\\
A_y= \frac{1}{48}(-\psi_8(\mu) + \frac{1}{4}\psi_6(\mu)\psi_2(\mu)-\frac{1}{192}\psi_2(\mu)^4),\\
A_{x^2}= \frac{1}{48}(\psi_6(\mu)-\frac{1}{8}\psi_2(\mu)^3),\\
A_{xy}= \frac{1}{2\sqrt{6}}\psi_5(\mu), \\
A_{x^2y}= -\frac{1}{4}\psi_2(\mu).
\end{array}\right.$}$
\end{center}

\noindent Set \\
\scalebox{0.9}{\begin{tabular}[t]{ll}
$X_\Gamma  =  \{(x, y, z, t_2,t_5,t_6,t_8,t_9,t_{12}) \in \cc^3 \times \mathfrak{h}/W \ | $   & \kern-1em $-\frac{1}{4}x^4 + y^3 + z^2-\frac{1}{4}t_2x^2y + \frac{1}{2\sqrt{6}}t_5xy + \frac{1}{48}(t_6-\frac{1}{8}t_2^3)x^2$   \\
 & \kern-1em $+ \frac{1}{48}(-t_8 + \frac{1}{4}t_6t_2-\frac{1}{192}t_2^4)y + \frac{\sqrt{6}}{144}(-t_9 + \frac{1}{4}t_5t_2^2)x $ \\
  &  \kern-1em $ + \frac{1}{576}(t_{12}-\frac{1}{8}t_8t_2^2-\frac{1}{8}t_6^2 + \frac{1}{96}t_6t_2^3-t_5^2t_2)=0\}$.
\end{tabular}}

\noindent We have 
\begin{center}
\begin{tabular}{l}
\begin{tikzpicture}[scale=1,  transform shape, baseline=-0.5ex]
\node (1) at ( 0,0) {$X_\Gamma$};
\node (2) at ( 4,0) {$ \mathfrak{h}/W$};
\node (3) at ( 4,-0.4)  {$ (t_2,t_5,t_6,t_8,t_9,t_{12})$};
\node (4) at ( 0,-0.4)  {$(x, y, z, t_2,t_5,t_6,t_8,t_9,t_{12})$};

\node (9) at (-2.2,0) {$\alpha:$};

\node (5) at (1.9,0) {};
\node (6) at (2.65,0) {};
\node (7) at (1.9,-0.4) {};
\node (8) at (2.65,-0.4) {};

\draw [decoration={markings,mark=at position 1 with
    {\arrow[scale=1.2,>=stealth]{>}}},postaction={decorate}] (5)  --  (6);
\draw [|-,decoration={markings,mark=at position 1 with
    {\arrow[scale=1.2,>=stealth]{>}}},postaction={decorate}] (7)  --  (8);
\end{tikzpicture} and \\
\begin{tikzpicture}[scale=1,  transform shape, baseline=-0.5ex]
\node (1) at ( 0,0) {$\mathfrak{h}$};
\node (2) at ( 4,0) {$\mathfrak{h}/W$};
\node (3) at ( 4,-0.4)  {$(\psi_2(\mu),\psi_5(\mu), \ldots ,\psi_{12}(\mu))$};
\node (4) at ( 0,-0.4)  {$ (\mu_1,\ldots ,\mu_6)$};
\node (10) at (6,0)  {.};

\node (9) at (-1.2,0) {$\pi:$};

\node (5) at (1,0) {};
\node (6) at (2,0) {};
\node (7) at (1,-0.4) {};
\node (8) at (2,-0.4) {};

\draw [decoration={markings,mark=at position 1 with
    {\arrow[scale=1.2,>=stealth]{>}}},postaction={decorate}] (5)  --  (6);
\draw [|-,decoration={markings,mark=at position 1 with
    {\arrow[scale=1.2,>=stealth]{>}}},postaction={decorate}] (7)  --  (8);
\end{tikzpicture}
\end{tabular}
\end{center}
As $Z \cong \mathfrak{h}$ and $(\mu_i)_i$ are coordinates on $Z$, we use the same coordinates on $\mathfrak{h}$. It follows that
\vspace{-\topsep}
\begin{center}
\resizebox{\textwidth}{!}{$ \renewcommand{\arraystretch}{1.5}  \begin{array}{r l l}
X_\Gamma \times_{\mathfrak{h}/W} \mathfrak{h}  = & \{((x, y, z, t_2,t_5,t_6,t_8,t_9,t_{12}),(\mu_1,\ldots,\mu_6)) \in X_\Gamma \times \mathfrak{h} \ | \ \alpha(x, y, z, t_i)=\pi(\mu_1,\ldots,\mu_6) \}, \\
  = & \begin{array}[t]{ll} 
          \{(x, y, z, t_i,\mu_j) \ | & t_2=\psi_2(\mu), t_5=\psi_5(\mu), t_6=\psi_6(\mu), t_8=\psi_8(\mu),t_9=\psi_9(\mu), \\
              &  t_{12}=\psi_{12}(\mu), \text{ and } -\frac{1}{4}x^4 + y^3 + z^2-\frac{1}{4}t_2x^2y + \frac{1}{2\sqrt{6}}t_5xy  \\
              &  + \frac{1}{48}(t_6-\frac{1}{8}t_2^3)x^2 + \frac{1}{48}(-t_8 + \frac{1}{4}t_6t_2-\frac{1}{192}t_2^4)y + \frac{\sqrt{6}}{144}(-t_9 + \frac{1}{4}t_5t_2^2)x \\
              & + \frac{1}{576}(t_{12}-\frac{1}{8}t_8t_2^2-\frac{1}{8}t_6^2 + \frac{1}{96}t_6t_2^3-t_5^2t_2)=0\},
              \end{array} \\
  = &  \mu_{CS}^{-1}(Z)//G(\Gamma). 
\end{array}$}
\end{center}
Finally we obtain
  
\begin{center}
\begin{tikzpicture}[scale=1,  transform shape]
\tikzstyle{point}=[circle,draw,fill]
\tikzstyle{ligne}=[thick]

\node (1) at ( 0,0) {$X_\Gamma \times_{\mathfrak{h}/W}\mathfrak{h}$};
\node (2) at ( 2,0) {$X_\Gamma$};
\node (3) at ( 2,-2)  {$\mathfrak{h}/W$};
\node (4) at ( 0,-2)  {$\mathfrak{h}$};

\node (9) at ( 1,-1) {$\circlearrowright$};

\draw [decoration={markings,mark=at position 1 with
    {\arrow[scale=1.2,>=stealth]{>}}},postaction={decorate}] (1)  -- node[above] {$\Psi$} (2);
\draw [decoration={markings,mark=at position 1 with
    {\arrow[scale=1.2,>=stealth]{>}}},postaction={decorate}] (2)  -- node[right] {$\alpha$} (3);
\draw [decoration={markings,mark=at position 1 with
    {\arrow[scale=1.2,>=stealth]{>}}},postaction={decorate}] (1)  -- node[left] {$\widetilde{\alpha}$} (4);
\draw [decoration={markings,mark=at position 1 with
    {\arrow[scale=1.2,>=stealth]{>}}},postaction={decorate}] (4)  -- node[below] {$\pi$} (3);
\end{tikzpicture}
\end{center}
with $\alpha$ being a semiuniversal deformation of \scalebox{0.97}{$X_{\Gamma,0}  =\alpha^{-1}(0)=\{(x, y, z) \in \cc^3 \ | \ -\frac{1}{4}x^4 + y^3 + z^2=0 \}$}, which is a simple singularity of type $E_6$. Like in previous sections, it can be checked that this diagram is $\Omega$-equivariant with the natural action on the singularity. It follows that the restriction $\alpha^{\Omega}:X_{\Gamma, \Omega} \rightarrow (\mathfrak{h}/W)^\Omega$ of $\alpha$ to $X_{\Gamma,\Omega}=\alpha^{-1}((\mathfrak{h}/W)^\Omega)$ is $\Omega$-invariant and $(\alpha^{\Omega})^{-1}(0)=X_{\Gamma,0}$. This implies that $\alpha^{\Omega}$ is a semiuniversal deformation of an inhomogeneous simple singularity of type $(E_{6},\mathbb{Z}/2\mathbb{Z})=F_4$.

\section{Quotient of the semiuniversal deformation of \texorpdfstring{$(\cc^2/\Gamma, \Omega)$}{Lg} by \texorpdfstring{$\Omega$}{Lg}}\label{sec:QuotientInhomogeneousDeformations}

\subsection{Objectives}

It was shown in the previous section that the restriction $\alpha^{\Omega}:X_{\Gamma,\Omega} \rightarrow (\mathfrak{h}/W)^{\Omega}$ over the fixed points $(\mathfrak{h}/W)^{\Omega}$ of a semiuniversal deformation of the simple singularity $\cc^2/\Gamma$ is a semiuniversal deformation of the inhomogeneous simple singularity of type $\Delta(\Gamma,\Gamma')$, and is thus $\Omega$-invariant. Hence $\Omega$ acts on each fibre of $\alpha^{\Omega}$ and the fibres can be quotiented. By construction, the special fibre of $\alpha^{\Omega}$ is $(\alpha^{\Omega})^{-1}(0) = X_{\Gamma,0}=\cc^2/\Gamma$. Therefore the fibre above the origin of the quotient map is $(\alpha^{\Omega})^{-1}(0)/\Omega=X_{\Gamma,0}/\Omega \cong (\cc^2/\Gamma)/(\Gamma'/\Gamma) \cong \cc^2/\Gamma'$, which is also a simple singularity. It follows that the family given by the quotient map $\overline{\alpha^{\Omega}}:X_{\Gamma,\Omega} /\Omega \rightarrow (\mathfrak{h}/W)^{\Omega}$ is a deformation of the simple singularity $\cc^2/\Gamma'$. 

In Proposition~\ref{propsubDynkin} P. Slodowy described the singular configurations of the fibres of a deformation of a simple singularity of type $\Delta(\Gamma)$ around the special fibre $\cc^2/\Gamma$ in terms of subdiagrams of the Dynkin diagram of type $\Delta(\Gamma)$. Furthermore, he gave a relation mentioned in Remark~\ref{h0andh1} between $(\mathfrak{h}/W)^{\Omega}$ and the quotient $\mathfrak{h}_0/W_0$. This raises two questions:

\begin{enumerate}
\item How is the map $\overline{\alpha^{\Omega}}$ related to the semiuniversal deformation of the simple singularity of type $\cc^2/\Gamma'$, and how to describe the base space $(\mathfrak{h}/W)^\Omega$ using a Cartan subalgebra and a Weyl group of type $\Delta(\Gamma')$? 
\item Can one describe the singularities in the fibres of $\overline{\alpha^{\Omega}}$ around the special fibre $\cc^2/\Gamma'$ in terms of sub-root systems of the root system of type $\Delta(\Gamma')$, like P. Slodowy did for the homogeneous case?
\end{enumerate}

The content of Section~\ref{sec:QuotientInhomogeneousDeformations} is only a first step in the search for answers to the previous questions. In order to investigate the nature and the regularity of the fibres of the quotient of a semiuniversal deformation of a simple singularity of inhomogeneous type, we use the explicit realisation of the map $\alpha^{\Omega}:X_{\Gamma,\Omega} \rightarrow (\mathfrak{h}/W)^{\Omega}$ obtained in the Section~\ref{sec:computations}. The quotient map $\overline{\alpha^{\Omega}}$ is computed for the cases $(A_{2r-1},\mathbb{Z}/2\mathbb{Z})$ ($r \geq 2$), $(D_4,\mathbb{Z}/2\mathbb{Z})$, $(D_4, \mathfrak{S}_3)$ and $(E_6,\mathbb{Z}/2\mathbb{Z})$. In particular, when the original singularity $\cc^2/\Gamma$ is of type $A_3$ or $D_4$, one notices that every fibre of the map $\overline{\alpha^{\Omega}}$ is singular.

\subsection{Case \texorpdfstring{$(A_{2r-1},\mathbb{Z}/2\mathbb{Z})$}{Lg}}\label{quotientBr}
Using the same notations as in Subsection~\ref{defA2r-1}, a semiuniversal deformation of a simple singularity of type $(A_{2r-1},\mathbb{Z}/2\mathbb{Z})$ is the projection $\alpha^\Omega:X_{\Gamma,\Omega} \rightarrow (\mathfrak{h}/W)^\Omega$ with
\vspace{-\topsep}
\begin{center}
 $X_{\Gamma, \Omega}=\{ (x, y, z, t_2,0,t_4,0,\ldots ,t_{2r}) \in \cc^3 \times \mathfrak{h}/W \ | \ \displaystyle z^{2r}+\sum_{i=1}^{r}f_{2i}(t_2, \ldots, t_{2r}) z^{2(r-i)}=xy\}$.
 \end{center}
\vspace{-\topsep}
The purpose of this subsection is to compute the quotient morphism $\overline{\alpha^\Omega}:X_{\Gamma,\Omega}/\Omega \rightarrow (\mathfrak{h}/W)^\Omega$ as well as study its regularity. 

\subsubsection{The morphism $\overline{\alpha^\Omega}$}

The action of $\Omega$ on $X_{\Gamma, \Omega}$ is $\sigma.(x, y, z,t_2,0,t_4,\ldots ,0,t_{2r})=((-1)^ry,(-1)^rx,-z,t_2,0,t_4,\ldots ,0,t_{2r})$. Let us compute the $\Omega$-invariant subring of the coordinate ring of $X_{\Gamma, \Omega}$. 

Set $p \in \cc[X_{\Gamma, \Omega}]$. Then $p=p(x, y, z,t_2,t_4,..,t_{2r})$ and $\sigma.p=p((-1)^ry,(-1)^rx,-z,t_2,t_4, \ldots ,t_{2r})$. As the $t_{2k}$ ($1 \leq k \leq r$) are invariant by the action of $\Omega$, they can be treated as constants and will be omitted in order to simplify the notations. We write $p=\sum_{k=0}^np_k(x,y)z^k \in \cc[x,y][z]$ and assume that $p$ is $\Omega$-invariant. Then by identifying the coefficients of the powers of $z$ in $\sigma.p$ and $p$, one obtains $p_i((-1)^ry,(-1)^rx)=(-1)^ip_i(x,y)$ for all $0 \leq i \leq n$. 

Assume $r$ to be even. Then $p_i$ is symmetric if $i$ is even and skew-symmetric otherwise. The polynomial $p$ can be rewritten in the following form 
\begin{center}
$p=\underbrace{p_0(x,y)}_{\text{symmetric}} + \underbrace{p_2(x,y)}_{\text{symmetric}}z^2 + \ldots  + \underbrace{p_1(x,y)}_{\text{skew-symmetric}}z+ \underbrace{p_3(x,y)}_{\text{skew-symmetric}}z^3 + \ldots $ \quad .
\end{center}
However, a skew-symmetric polynomial in two variables $(x,y)$ can be written as a product between $(x-y)$ and some symmetric polynomial. Hence for all $i$ odd, $p_i(x,y)=(x-y)\text{sym}_i(x,y)$ for some symmetric polynomial $\text{sym}_i$, and $p$ can be expressed as
\begin{center}
$p=\text{sym}_0(x,y) + z^2\text{sym}_2(x,y) + \ldots  + z(x-y)\text{sym}_1(x,y) + z^2z(x-y)\text{sym}_3(x,y) + \ldots $ \quad .
\end{center}
Therefore any $\Omega$-invariant polynomial is generated by $z^2,z(x-y)$ and symmetric polynomials in $(x,y)$, which are themselves generated by $(x + y)$ and $xy$. As the converse is trivial, we have proved
\begin{center}
\scalebox{0.95}{$\cc[X_{\Gamma, \Omega}]^{\Omega} = \cc[z^2,z(x-y),x + y, xy, t_2,\ldots ,t_{2r}]/(z^{2r} + f_2(t_2, \ldots, t_{2r})z^{2r-2} + \ldots f_{2r}(t_2, \ldots, t_{2r})=xy)$}.
\end{center}
By changing variables $X=x + y,Y=xy, Z=z^2$ and $W=iz(x-y)$, the quotient space can be written as
\begin{center}
$\renewcommand{\arraystretch}{1.5} \begin{array}[h]{rcl}
X_{\Gamma, \Omega}/\Omega & = & \{(X,Z,W,t_2,\ldots ,t_{2r}) \in \cc^3\times (\mathfrak{h}/W)^\Omega\ | \ Z(X^2-4Z^r) + W^2 \\
 & & -4 f_2(t_2, \ldots, t_{2r})Z^{r}-4 f_4(t_2, \ldots, t_{2r})Z^{r-1}-\ldots -4 f_{2r}(t_2, \ldots, t_{2r})Z=0\}.
\end{array}$
\end{center}
The quotient of the morphism $\alpha^{\Omega}$ is therefore 
\begin{center}
\begin{center}
\begin{tikzpicture}[scale=1,  transform shape, baseline=-0.5ex]
\node (1) at ( 0,0) {$X_{\Gamma, \Omega}/\Omega$};
\node (2) at ( 4.7,0) {$ (\mathfrak{h}/W)^\Omega$};
\node (3) at ( 4.7,-0.4)  {$(t_2,0,t_4,0,\ldots 0,t_{2r})$.};
\node (4) at ( 0,-0.4)  {$(X,Z,W,t_2,\ldots ,t_{2r})$};

\node (9) at (-2.3,0) {$\overline{\alpha^{\Omega}}:$};

\node (5) at (1.5,0) {};
\node (6) at (3.2,0) {};
\node (7) at (1.5,-0.4) {};
\node (8) at (3.2,-0.4) {};

\draw [decoration={markings,mark=at position 1 with
    {\arrow[scale=1.2,>=stealth]{>}}},postaction={decorate}] (5)  --  (6);
\draw [|-,decoration={markings,mark=at position 1 with
    {\arrow[scale=1.2,>=stealth]{>}}},postaction={decorate}] (7)  --  (8);
\end{tikzpicture}
\end{center}
\end{center}
It is a deformation of $(\overline{\alpha^{\Omega}})^{-1}(0)=\{(X,Z,W) \in \cc^3 \ | \ Z(X^2-4Z^r) + W^2=0\}$, which is a simple singularity of type $D_{r + 2}$. However, the deformation $\overline{\alpha^{\Omega}}$ is not semiuniversal. Indeed, according to Theorem~\ref{Brieskorn}, the base space of a semiuniversal deformation of a simple singularity of type $D_{r + 2}$ is $\mathfrak{h}_{D_{r + 2}}/W_{D_{r + 2}}$, which is of dimension $r + 2$. But here the base space is $(\mathfrak{h}/W)^\Omega$ and of dimension $r < r + 2$. Hence the deformation cannot be semiuniversal.

With the same type of arguments, a similar result can be obtained for $r$ odd.

\subsubsection{Example: regularity of the fibres of $\alpha^\Omega$ and $\overline{\alpha^\Omega}$ when $r=2$}

When $r=2$, the formulae are the following: 

$X_{\Gamma, \Omega}  =  \{ (x, y, z, t_2,t_4) \in \cc^3 \times \mathfrak{h}/W \ | \ \displaystyle z^{4}+f_2(t_2,t_4) z^{2}+f_4(t_2,t_4) =xy\}$, 

\scalebox{0.93}{$X_{\Gamma, \Omega}/\Omega  =  \{(X,Z,W,t_2,t_{4}) \in \cc^3 \times  (\mathfrak{h}/W)^\Omega \ | \ Z(X^2 - 4Z^2) + W^2  - 4f_2(t_2,t_4)Z^{2}  - 4f_4(t_2,t_4)Z=0 \}$}. 

\noindent For a fibre of $\alpha^\Omega$ to be singular, it requires $f_4(t_2,t_4)=0 \ \mathrm{ or } \ f_2(t_2,t_4)^2=4f_4(t_2,t_4)$.  

\begin{itemize}\setlength\itemsep{0.3pt}
\item If $f_4(t_2,t_4)=t_4+\frac{1}{8}t_2^2=0$ then $(t_2,t_4)=(t_2,-\frac{1}{8}t_2^2)$ and $(\alpha^\Omega)^{-1}(t_2,-\frac{1}{8}t_2^2)$ is singular at the origin.
\item If $f_2(t_2,t_4)^2=t_2^2=4f_4(t_2,t_4)=4(t_4+\frac{1}{8}t_2^2)$, then $(t_2,t_4)=(t_2,\frac{1}{8}t_2^2)$ and $(\alpha^\Omega)^{-1}(t_2,\frac{1}{8}t_2^2)$ is singular.
\end{itemize}

In any fibre $(\overline{\alpha^\Omega})^{-1}(t_2,t_4) \subset X_{\Gamma, \Omega}/\Omega$, the points $(X,W,Z)=(\pm 2\sqrt{f_4(t_2,t_4)},0,0)$ are singular. Hence all the fibres of $\overline{\alpha^\Omega}$ are singular and the following proposition is proved:

\begin{proposition}\label{A3singulier}
Every fibre of the deformation $\overline{\alpha^\Omega}:X_{\Gamma,\Omega}/\Omega \rightarrow (\mathfrak{h}/W)^\Omega$ of a simple singularity of type $D_4$ is singular.
\end{proposition}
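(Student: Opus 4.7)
The plan is to prove the proposition by a direct Jacobian computation on the explicit equation for $\overline{\alpha^\Omega}$ obtained just above. Write $F(X,Z,W;t_2,t_4)=Z(X^2-4Z^2)+W^2-4f_2(t_2,t_4)Z^2-4f_4(t_2,t_4)Z$ for the polynomial cutting out $X_{\Gamma,\Omega}/\Omega$ inside $\cc^3\times(\mathfrak{h}/W)^\Omega$. A point of a fibre $(\overline{\alpha^\Omega})^{-1}(t_2,t_4)$ is singular precisely when the three partial derivatives $\partial_XF$, $\partial_ZF$, $\partial_WF$ vanish simultaneously (while $F=0$ is satisfied), so the task reduces to exhibiting, for every $(t_2,t_4)$, at least one common zero of these four polynomials.

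First I would compute the partials: $\partial_XF=2XZ$, $\partial_WF=2W$, and $\partial_ZF=X^2-12Z^2-8f_2Z-4f_4$. The vanishing of the first two forces $W=0$ and $XZ=0$. The branch $Z=0$ is the decisive one: it immediately reduces the third equation to $X^2=4f_4(t_2,t_4)$, and plugging $(X,Z,W)=(X_0,0,0)$ into $F$ one sees $F=0\cdot(X_0^2-0)+0-0-0=0$, so any such point automatically lies on the fibre.

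Next I would exhibit the singular point uniformly in the parameters: for arbitrary $(t_2,t_4)\in(\mathfrak{h}/W)^\Omega$, algebraic closedness of $\cc$ furnishes $X_0\in\cc$ with $X_0^2=4f_4(t_2,t_4)$, and the point $P=(X_0,0,0)$ belongs to $(\overline{\alpha^\Omega})^{-1}(t_2,t_4)$ and satisfies $\partial_XF(P)=\partial_ZF(P)=\partial_WF(P)=0$. This proves the fibre is singular at $P$, establishing the proposition. The only case requiring a brief comment is $f_4(t_2,t_4)=0$, in which the two choices of $X_0$ collapse to the origin; the same verification still shows $(0,0,0)$ is a singular point on the fibre.

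No serious obstacle arises: the mechanism is that the defining equation has no constant term and no term linear in $X$ or $W$, so the locus $\{Z=W=0\}$ cuts out a line in each fibre on which singularities necessarily appear as soon as the quadratic $X^2-4f_4=0$ in $X$ is solved, which it always can be over $\cc$. The computation is entirely elementary and does not rely on the precise form of $f_2$; one could equally phrase the argument by observing that the projection of the singular locus of $\overline{\alpha^\Omega}$ to the base is surjective onto $(\mathfrak{h}/W)^\Omega$ because the candidate section $(t_2,t_4)\mapsto (2\sqrt{f_4(t_2,t_4)},0,0)$ already lands inside it.
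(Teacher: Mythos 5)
Your proof is correct and follows essentially the same route as the paper: the paper likewise exhibits the points $(X,W,Z)=(\pm 2\sqrt{f_4(t_2,t_4)},0,0)$ as singular points of every fibre of $\overline{\alpha^\Omega}$, merely stating them where you carry out the Jacobian computation explicitly. The only difference is presentational, since the paper asserts the singular points directly while you derive them from the vanishing of $\partial_X F$, $\partial_Z F$, $\partial_W F$ on the locus $Z=W=0$.
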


\subsection{Case \texorpdfstring{$(D_{4},\mathbb{Z}/2\mathbb{Z})$}{Lg}}
According to Subsection~\ref{C3}, a semiuniversal deformation of a simple singularity of type $(D_{4},\mathbb{Z}/2\mathbb{Z})$ is the projection $\alpha^\Omega:X_{\Gamma,\Omega} \rightarrow (\mathfrak{h}/W)^\Omega$ with

\begin{center}
 $X_{\Gamma, \Omega}=\{(x, y, z, t_2,t_4,t_6,0) \in \cc^3 \times \mathfrak{h}/W \ | \  z^2=xy(x + y) -\frac{1}{2}t_2xy  -\frac{1}{4}t_4x + \frac{1}{4}(t_6+\frac{1}{6}t_2t_4+\frac{1}{108}t_2^3)\}$. 
 \end{center}
 
\subsubsection{The morphism $\overline{\alpha^\Omega}$}

The following relations were obtained in Subsection~\ref{C3}:
\begin{center}
 $\renewcommand{\arraystretch}{1.2} \raisebox{-.6\height}{$\left\lbrace \begin{array}[h]{lcl}
\sigma.x & = & x, \\
\sigma.y & = & -x-y + \frac{1}{2}t_2, \\
\sigma.z & = & -z,\\
\sigma.t_i & = & t_i, \ i=2,4,6.
\end{array}\right.$}$ 
\end{center}
We change variable by setting $y'=\frac{1}{2}x + y-\frac{1}{4}t_2$ and obtain  
\begin{center}
$ \renewcommand{\arraystretch}{1.5}  \begin{array}[h]{lcll}
X_{\Gamma, \Omega}  & = & \{(x,y',z, t_2, t_4, t_6, 0) \in \cc^3 \times \mathfrak{h}/W \ | \  &  z^2= \frac{1}{4}t_6+\frac{1}{24}t_2t_4 +\frac{1}{432}t_2^3 +(-\frac{1}{16}t_2^2-\frac{1}{4}t_4)x \\
 & & & -\frac{1}{4}x^3+\frac{1}{4}t_2x^2+x(y')^2\} ,
 \end{array}$
\end{center}
with the action on $y'$ being $\sigma.y'  =  -y'$. 

Set $p \in \cc[X_{\Gamma, \Omega}]^{\Omega}$. As $x$ and the $t_i$'s are not altered by $\sigma$, they will be omitted from the notation. The action gives $\sigma.p(y',z)=p(-y',-z)=p(y',z)$, and it is then clear that $p$ is a polynomial in $y'^2$, $z^2$ and $y'z$. By defining $X=x$, $Y=y'^2$, $Z=z^2$, $W=y'z$, and with the following substitution: $Y  \rightarrow Y+\frac{1}{8}X^2-\frac{1}{8}Xt_2+\frac{1}{32}t_2^2+\frac{1}{8}t_4$, the equation of $X_{\Gamma, \Omega}/\Omega$ becomes
\begin{center}
$-\frac{1}{64}X^5+XY^2-W^2+A_{X^4}X^4+A_{X^3}X^3+A_{X^2}X^2+A_{X}X+A_{Y}Y+A_0=0$ $(\star)$
\end{center}
with
\begin{center}
 $\raisebox{-.4\height}{$\left\lbrace  \renewcommand{\arraystretch}{1.3}  \begin{array}[h]{l}
 A_{X^4}= \frac{t_2}{32},\\
A_{X^3}=-\frac{3}{128}t_2^2-\frac{1}{32}t_4, \\
A_{X^2}=\frac{7}{192}t_2t_4+\frac{1}{32}t_6+\frac{7}{864}t_2^3, \\
A_{X}=-\frac{1}{32}t_6t_2-\frac{5}{384}t_2^2t_4-\frac{35}{27648}t_2^4-\frac{1}{64}t_4^2, \\
A_{Y}=\frac{1}{4}t_6+\frac{1}{24}t_2t_4+\frac{1}{432}t_2^3, \\
A_0=\frac{1}{128}t_6t_2^2+\frac{1}{32}t_6t_4+\frac{11}{6912}t_2^3t_4+\frac{1}{192}t_2t_4^2+\frac{1}{13824}t_2^5.
\end{array}\right.$}$ 
\end{center}

Hence a fibre of $\overline{\alpha^\Omega}$ in $X_{\Gamma, \Omega}/\Omega$ is defined by $(\star)$. One notices that it is a subfamily of the semiuniversal deformation of a simple singularity of type $D_6$. Therefore the projection 
\begin{center}
$\overline{\alpha^\Omega}:X_{\Gamma, \Omega}/\Omega \rightarrow (\mathfrak{h}/W)^{\Omega}$
\end{center}
is a deformation of a simple singularity of type $D_6$. However $\mathrm{dim}(\mathfrak{h}/W)^\Omega=3 < 6 =\mathrm{dim} \ \mathfrak{h}_{D_6}/W_{D_6}$ so $\overline{\alpha^\Omega}$ is not semiuniversal.

\subsubsection{The discriminant of $\overline{\alpha^\Omega}$}

Let us determine the discriminant of $\overline{\alpha^\Omega}$, i.e. the elements $(t_2,t_4,t_6) \in  (\mathfrak{h}/W)^{\Omega}$ such that the fibre $(\overline{\alpha^\Omega})^{-1}(t_2,t_4,t_6)$ is singular. The fibre $(\overline{\alpha^\Omega})^{-1}(t_2,t_4,t_6)$ is defined by the equation $f(X, Y, W)= 0$ of $(\star)$, and it is singular if and only if the following system has a solution:
\begin{center}
 $\raisebox{-.8\height}{$\left\lbrace \renewcommand{\arraystretch}{1.5}  \begin{array}[h]{cl}
f(X, Y, W) & = 0, \\
 \frac{\partial f}{\partial X}(X, Y, W) & = 0, \\
 \frac{\partial f}{\partial Y}(X, Y, W) & = 0, \\
 \frac{\partial f}{\partial W}(X, Y, W) & = 0.
\end{array}\right.$}$ 
\end{center}
Let $X_s$ be a solution of $108X^3-108X^2t_2+(108t_4+27t_2^2)X-t_2^3-18t_2t_4-108t_6=0$, which exists for any $(t_2,t_4,t_6) \in (\mathfrak{h}/W)^\Omega$ because the base field is algebraically closed, and then define $Y_s=-\frac{1}{32}(4X_s^2-4X_st_2+t_2^2+4t_4)$. It follows that the point $(X_s,Y_s,0)$ is a singular point of $(\overline{\alpha^\Omega})^{-1}(t_2,t_4,t_6)$. The following result is then proved:

\begin{proposition}\label{D4Zsur2Zsingulier}
Every fibre of the deformation $\overline{\alpha^\Omega}:X_{\Gamma,\Omega}/\Omega \rightarrow (\mathfrak{h}/W)^\Omega$ of a simple singularity of type $D_6$ is singular.
\end{proposition}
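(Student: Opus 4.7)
The plan is to exhibit, for every point $(t_2,t_4,t_6)\in(\mathfrak{h}/W)^\Omega$, an explicit singular point of the fibre $(\overline{\alpha^\Omega})^{-1}(t_2,t_4,t_6)$. The most illuminating strategy is to realise such points as images of $\Omega$-fixed points in the fibres of $\alpha^\Omega$. A point $p=(x_0,y_0,z_0)$ of $F=(\alpha^\Omega)^{-1}(t_2,t_4,t_6)$ is fixed by $\sigma$ precisely when $z_0=0$ and $y_0=-\tfrac{1}{2}x_0+\tfrac{1}{4}t_2$; substituting these into the defining equation of $F$ reduces it to the cubic
\[
108x_0^3-108t_2 x_0^2+(27 t_2^2+108 t_4)x_0-t_2^3-18 t_2 t_4-108 t_6=0
\]
in $x_0$ alone, which admits at least one root $X_s\in\mathbb{C}$ by algebraic closedness. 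Under the coordinate change of Subsection~\ref{C3} (namely $y'=\tfrac{1}{2}x+y-\tfrac{1}{4}t_2$, $X=x$, $Y=y'^2+\tfrac{1}{8}X^2-\tfrac{1}{8}Xt_2+\tfrac{1}{32}t_2^2+\tfrac{1}{8}t_4$ and $W=y'z$), such a fixed point maps to the point $(X_s,Y_s,0)$ in the quotient with $Y_s=-\tfrac{1}{32}(4X_s^2-4X_st_2+t_2^2+4t_4)$.

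To show that $(X_s,Y_s,0)$ is singular in the quotient fibre, I would combine two observations. First, on the ambient $\mathbb{C}^3$ the involution acts with derivative $(dx,dy,dz)\mapsto(dx,-dx-dy,-dz)$, whose $+1$-eigenspace is the tangent direction to the fixed line. Second, if $g$ denotes the polynomial defining $F$, a short direct computation shows $\partial_y g\equiv 0$ along the fixed line, so at a smooth point $p$ of $F$ one has $T_pF\subseteq\{dx=0\}$, on which $\sigma$ acts as $-\mathrm{Id}$. The quotient $F/\Omega$ therefore acquires an $A_1$ singularity at $\pi(p)=(X_s,Y_s,0)$, which in particular forces the four singular-locus equations $f=\partial_X f=\partial_Y f=\partial_W f=0$ to be simultaneously satisfied at $(X_s,Y_s,0)$, where $f$ is the defining polynomial of $(\star)$; a direct but less transparent verification would solve $\partial_W f=-2W=0$ and $\partial_Y f=2XY+A_Y=0$ to recover $W=0$ and $Y=-A_Y/(2X)$, check that clearing denominators in the latter reproduces exactly the displayed cubic, and then confirm the remaining equations algebraically.

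The main obstacle is the degenerate locus in $(\mathfrak{h}/W)^\Omega$ where $F$ itself fails to be smooth at the fixed point, or where the cubic has a repeated root; there the tangent-space argument above is not directly valid. However, the identities $f(X_s,Y_s,0)=0$ and $\partial_X f(X_s,Y_s,0)=0$ are polynomial identities in $\mathbb{C}[t_2,t_4,t_6,X_s]/(\text{cubic})$ that hold on the non-empty open locus where $F$ is smooth at its fixed points; since the quotient ring is a domain (the cubic is generically irreducible in $X_s$ over $\mathbb{C}(t_2,t_4,t_6)$), these identities extend to all of $(\mathfrak{h}/W)^\Omega$, finishing the argument. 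I expect the symbolic bookkeeping in the identification of the cubic with the substituted $\partial_Y f=0$, and in the verification of $\partial_y g|_p=0$ along the fixed line, to be the most delicate computational step.
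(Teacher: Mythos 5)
Your proposal is correct and lands on exactly the same singular point as the paper, but it justifies it differently. The paper's proof is a bare Jacobian-criterion verification: it writes down the system $f=\partial_X f=\partial_Y f=\partial_W f=0$ for the equation $(\star)$, exhibits the cubic $108X^3-108t_2X^2+(27t_2^2+108t_4)X-t_2^3-18t_2t_4-108t_6=0$ and the point $(X_s,Y_s,0)$ with $Y_s=-\tfrac{1}{32}(4X_s^2-4X_st_2+t_2^2+4t_4)$, and asserts (without displayed computation) that the system is satisfied. You instead \emph{derive} the cubic as the equation of the $\sigma$-fixed locus $\{y'=z=0\}$ intersected with the fibre upstairs, and you get the singularity of the quotient for free from the fact that $d\sigma$ acts as $-\mathrm{Id}$ on the tangent plane of the fibre at a smooth fixed point (since $\partial_{y'}g=-2xy'$ and $\partial_z g=2z$ both vanish on the fixed line), so the image is an $A_1$ point. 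This explains where the paper's cubic comes from, identifies the type of the generic singularity, and your Zariski-density argument in the domain $\cc[t_2,t_4,t_6,X]/(\mathrm{cubic})$ correctly extends the conclusion over the degenerate parameters where the fibre upstairs is itself singular at the fixed point; the paper's direct algebraic route avoids that case distinction at the cost of an unexplained verification. Two small points of hygiene: your displayed coordinate change for $Y$ has a sign slip (it should be $Y=y'^2-\tfrac18 X^2+\tfrac18 Xt_2-\tfrac{1}{32}t_2^2-\tfrac18 t_4$, which is what makes your stated $Y_s$ come out right), and the phrase ``clearing denominators in $Y=-A_Y/(2X)$ reproduces the cubic'' should read that substituting $Y_s$ into $\partial_Yf=2XY+A_Y$ yields $-\tfrac{1}{432}$ times the cubic — which it does, so the substance is fine.
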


\subsection{Case \texorpdfstring{$(D_{4},\mathfrak{S}_3)$}{Lg}}
According to Subsection~\ref{G2}, a semiuniversal deformation of a simple singularity of type $(D_{4},\mathfrak{S}_3)$ is the projection $\alpha^\Omega:X_{\Gamma,\Omega} \rightarrow (\mathfrak{h}/W)^\Omega$ with
\begin{center}
$X_{\Gamma, \Omega}=\{(x, y, z, t_2,0,t_6,0) \in \cc^3 \times \mathfrak{h}/W \ | \  z^2=xy(x + y) -\frac{1}{2}t_2xy + \frac{1}{4}(t_6+\frac{1}{108}t_2^3)\}$.
\end{center}
 
\subsubsection{The morphism $\overline{\alpha^\Omega}$}
In Subsection~\ref{G2} we obtained 
\begin{center}
 $\left\lbrace  \renewcommand{\arraystretch}{1.2}  \begin{array}[h]{lcl}
\sigma.x & = & x,\\
\sigma.y & = & -x-y+\frac{1}{2}t_2,\\
\sigma.z & = & -z, \\
\sigma.t_i & = & t_i, \ i=2,6,
\end{array}\right.$ \quad and \quad  $\left\lbrace  \renewcommand{\arraystretch}{1.2} \begin{array}[h]{lcl}
\rho.x & = & y, \\
\rho.y & = & -x-y + \frac{1}{2}t_2, \\
\rho.z & = & z, \\
\rho.t_i & = & t_i, \ i=2,6.
\end{array}\right.$
\end{center}
Define $X=(-3-i\sqrt{3})x + (-3 + i\sqrt{3})y + t_2$ and $Y=(-3 + i\sqrt{3})x + (-3-i\sqrt{3})y + t_2$. The action of $\rho$ on the vector space generated by $(X, Y, z, t_2)$ becomes diagonal and so
\begin{center}
$ \renewcommand{\arraystretch}{1.3}  \begin{array}[t]{rcl} \cc[(\alpha^\Omega)^{-1}(t_2,t_6)]^{\mathbb{Z}/3\mathbb{Z}} & = & \cc[X^3,Y^3,XY,z,t_2,t_6]/( -z^2-\frac{1}{216}Y^3-\frac{1}{216}X^3-\frac{1}{432}t_2^3  +\frac{1}{72}XYt_2+\frac{1}{4}t_6),\\
 & = & \cc[\mathcal{X},\mathcal{Y},\mathcal{W},z, t_2, t_6]/(-z^2-\frac{1}{216}(\mathcal{X} + \mathcal{Y} + \frac{1}{2}t_2^3) + \frac{1}{72}\mathcal{W}t_2 + \frac{1}{4}t_6,\mathcal{X}\mathcal{Y}-\mathcal{W}^3).
\end{array}$
\end{center}
Set $\mathfrak{X} = \mathcal{X} + \mathcal{Y}$ and $\mathfrak{Y} = \mathcal{X}-\mathcal{Y}$. The invariant ring is 
\begin{center}
$\cc[(\alpha^\Omega)^{-1}(t_2,t_3)]^{\mathbb{Z}/3\mathbb{Z}} = \cc[\mathfrak{X},\mathfrak{Y},\mathcal{W},z, t_2, t_6]/(-z^2-\frac{1}{216}\mathfrak{X}-\frac{1}{432}t_2^3 + \frac{1}{72}\mathcal{W}t_2 + \frac{1}{4}t_6,\frac{1}{4}(\mathfrak{X}^2-\mathfrak{Y}^2)-\mathcal{W}^3)$,
\end{center}
with the action of $\mathbb{Z}/2\mathbb{Z}$ given by
\begin{center}
$\sigma.\mathfrak{X} = \mathfrak{X}$, $\sigma.\mathfrak{Y} = -\mathfrak{Y}$, $\sigma.\mathcal{W} = \mathcal{W}$, $\sigma.z=-z $ and $\sigma.t_i=t_i$ for $i=2,6$.
\end{center}
 Therefore \\
 $\cc[\alpha^{-1}(t_2,t_6)]^{\mathfrak{S}_3}  \renewcommand{\arraystretch}{1.3}   \begin{array}[t]{l} =((\cc[(\alpha^\Omega)^{-1}(t_2,t_6)])^{\mathbb{Z}/3\mathbb{Z}})^{\mathbb{Z}/2\mathbb{Z}}, \\
 =\scaleobj{0.95}{\cc[\mathfrak{X},\mathfrak{Y}^2,z^2,\mathfrak{Y}z,\mathcal{W},t_2,t_6]/(-z^2-\frac{1}{216}\mathfrak{X}-\frac{1}{432}t_2^3  + \frac{1}{72}\mathcal{W}t_2 + \frac{1}{4}t_6,  \frac{1}{4}(\mathfrak{X}^2-\mathfrak{Y}^2)-\mathcal{W}^3)}.
 \end{array}$ \\
After several other analytical changes of variables and substitutions, the equation of the quotient space turns into the form predicted by Theorem~\ref{Kas-Schlessinger}:
\begin{center}
\small{$X^3Y-11664Y^3+Z^2+(-\frac{11}{32}t_2^6-\frac{189}{4}t_2^3t_6-729t_6^2)Y+(-\frac{15}{16}t_2^4-81t_2t_6)XY+324t_2XY^2+(189t_2^3+5832t_6)Y^2=0$} $(\star \star)$
\end{center}
Hence the fibre of $\overline{\alpha^\Omega}$ above $(t_2,t_6)$ is defined by $(\star \star)$. It is a subfamily of the semiuniversal deformation of a simple singularity of type $E_7$. Therefore the projection 
\begin{center}
$\overline{\alpha^\Omega}:X_{\Gamma, \Omega}/\Omega \rightarrow (\mathfrak{h}/W)^{\Omega}$
\end{center}
is a deformation of a simple singularity of type $E_7$. However $\mathrm{dim}(\mathfrak{h}/W)^\Omega=2 < 7 =\mathrm{dim} \ \mathfrak{h}_{E_7}/W_{E_7}$ so $\overline{\alpha^\Omega}$ is not semiuniversal.

\subsubsection{The discriminant of $\overline{\alpha^\Omega}$}

The fibre $(\overline{\alpha^\Omega})^{-1}(t_2,t_6)$ is defined as the zero locus of the polynomial
\begin{center}
$\renewcommand{\arraystretch}{1.3} \begin{array}[h]{rl}
f(X, Y, Z) = & X^3Y-11664Y^3+Z^2+(-\frac{11}{32}t_2^6-\frac{189}{4}t_2^3t_6-729t_6^2)Y+(-\frac{15}{16}t_2^4-81t_2t_6)XY \\ & +324t_2XY^2+(189t_2^3+5832t_6)Y^2.
\end{array}$
\end{center}
The fibre is singular if and only if the following system has a solution:
\begin{center}
 $\raisebox{-.8\height}{$\left\lbrace \renewcommand{\arraystretch}{1.5}  \begin{array}[h]{rcl}
f(X, Y, Z) & = & 0, \\
 \frac{\partial f}{\partial X}(X, Y, Z) & = & 0, \\
 \frac{\partial f}{\partial Y}(X, Y, Z) & = & 0, \\
\frac{\partial f}{\partial Z}(X, Y, Z) & = & 0.
\end{array}\right.$}$ 
\end{center}
If $Y=Z=0$, this system is equivalent to the following equation:
\begin{center}
$X^3+(-\frac{15}{16}t_2^4-81t_2t_6)X-\frac{11}{32}t_2^6-\frac{189}{4}t_2^3t_6-729t_6^2=0$.
\end{center}
It is a polynomial of degree 3, which always has a solution $X_s$ because the base field is algebraically closed. Therefore $(X_s,0,0)$ is singular, and the proof of the next proposition is achieved.

\begin{proposition}\label{D4S3singulier}
Every fibre of the deformation $\overline{\alpha^\Omega}:X_{\Gamma,\Omega}/\Omega \rightarrow (\mathfrak{h}/W)^\Omega$ of a simple singularity of type $E_7$ is singular.
\end{proposition}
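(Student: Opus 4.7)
The plan is to use the explicit equation of the fibres of $\overline{\alpha^\Omega}$ obtained just above the statement, namely
\[
f(X,Y,Z) = X^3Y - 11664\,Y^3 + Z^2 + P(t_2,t_6)\,Y + Q(t_2,t_6)\,XY + 324\,t_2\,XY^2 + R(t_2,t_6)\,Y^2,
\]
where $P$, $Q$ and $R$ denote the coefficients written out in the excerpt. Singularity of the fibre $(\overline{\alpha^\Omega})^{-1}(t_2,t_6)$ is equivalent to the simultaneous vanishing of $f$ and its three partial derivatives, so the whole problem reduces to producing a solution of that Jacobian system for every value of $(t_2,t_6)$.

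The key observation I would exploit is that every monomial of $f$ is divisible either by $Y$ or by $Z^2$. Consequently, when one restricts to the slice $Y = Z = 0$, the equation $f = 0$ holds automatically, and the same is true for $\partial f/\partial Z = 2Z$ and for $\partial f/\partial X = 3X^2Y + Q(t_2,t_6)\,Y + 648\,t_2\,XY^2$, both of which are divisible by $Y$. Thus the only nontrivial equation one has to satisfy on this slice is $\partial f/\partial Y = 0$, which, after setting $Y = Z = 0$, collapses precisely to the cubic
\[
X^3 + Q(t_2,t_6)\,X + P(t_2,t_6) \;=\; X^3 + \Bigl(-\tfrac{15}{16}t_2^4 - 81\,t_2 t_6\Bigr) X - \tfrac{11}{32}t_2^6 - \tfrac{189}{4}t_2^3 t_6 - 729\,t_6^2 = 0.
\]

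Because $\mathbb{C}$ is algebraically closed, this monic cubic in $X$ admits a root $X_s$ for every $(t_2,t_6) \in (\mathfrak{h}/W)^\Omega$, and the point $(X_s,0,0)$ is then a singular point of $(\overline{\alpha^\Omega})^{-1}(t_2,t_6)$. This establishes the claim. There is no real obstacle in this argument: the only mildly subtle step is spotting the ansatz $Y = Z = 0$, which is forced by the fact that $f$ has no monomial in $X$ alone and $Z$ only enters through $Z^2$; once this is noticed, the reduction of the Jacobian system to a single cubic is automatic, and algebraic closure of $\mathbb{C}$ finishes the proof.
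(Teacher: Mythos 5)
Your argument is exactly the paper's: restrict to the slice $Y=Z=0$, observe that $f$, $\partial f/\partial X$ and $\partial f/\partial Z$ vanish there automatically, and reduce the Jacobian system to the monic cubic $X^3+(-\tfrac{15}{16}t_2^4-81t_2t_6)X-\tfrac{11}{32}t_2^6-\tfrac{189}{4}t_2^3t_6-729t_6^2=0$, which has a root over $\cc$. (Minor slip: $\partial/\partial X(324\,t_2XY^2)=324\,t_2Y^2$, not $648\,t_2XY^2$, but the term is still divisible by $Y$ so nothing changes.)
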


\subsection{Case \texorpdfstring{$(E_{6},\mathbb{Z}/2\mathbb{Z})$}{Lg}}\label{quotientF4}
According to Subsection~\ref{E6}, a semiuniversal deformation of a simple singularity of type $(E_{6},\mathbb{Z}/2\mathbb{Z})$ is given by the projection $\alpha^\Omega:X_{\Gamma,\Omega} \rightarrow (\mathfrak{h}/W)^\Omega$ with 
\vspace{-\topsep}
\begin{center}
$\renewcommand{\arraystretch}{1.5}  \begin{array}[h]{rcl}
X_{\Gamma, \Omega} & = & \{(X, Y, Z, t_2,0,t_6,t_8,0,t_{12}) \in \cc^3 \times \mathfrak{h}/W \ | \  -\frac{1}{4}X^4 + Y^3 + Z^2-\frac{1}{4}t_2X^2Y + \frac{1}{48}(t_6-\frac{1}{8}t_2^3)X^2 \\
 & &  + \frac{1}{48}(-t_8 + \frac{1}{4}t_6t_2-\frac{1}{192}t_2^4)Y  + \frac{1}{576}(t_{12}-\frac{1}{8}t_8t_2^2-\frac{1}{8}t_6^2 + \frac{1}{96}t_6t_2^3)=0\}. 
\end{array}$
\end{center} 
This morphism is $\Omega$-invariant and $\Omega$ acts on each of its fibres. The action of $\Omega$ on $X_{\Gamma, \Omega}$ is \begin{center}
 $\raisebox{-.8\height}{$\left\lbrace \renewcommand{\arraystretch}{1.2} \begin{array}[h]{l}
\sigma.X=-X, \\
\sigma.Y=Y,\\
\sigma.Z=-Z,\\
\sigma.t_i=t_i, \ i=2,6,8,12.
\end{array}\right.$}$ 
\end{center}
After analytically changing and substituting the variables, the equation of the quotient fibre becomes
\begin{center}
$\renewcommand{\arraystretch}{1.3} \begin{array}[h]{c} - \frac{1}{4}X^3 + XY^3 +Z^2-\frac{1}{4}t_2X^2Y + \frac{1}{48}(t_6-\frac{1}{8}t_2^3)X^2 + \frac{1}{48}(-t_8 + \frac{1}{4}t_6t_2-\frac{1}{192}t_2^4)XY  \\
+ \frac{1}{576}(t_{12}-\frac{1}{8}t_8t_2^2-\frac{1}{8}t_6^2 + \frac{1}{96}t_6t_2^3)X=0. \ (\star \star \star)
\end{array}$
\end{center}
Therefore the fibre of $\overline{\alpha^\Omega}$ above $(t_2,t_6,t_8,t_{12})$ is defined by $(\star \star \star)$. It is a subfamily of the semiuniversal deformation of a simple singularity of type $E_7$. It follows that the projection 
\begin{center}
$\overline{\alpha^\Omega}:X_{\Gamma, \Omega}/\Omega \rightarrow (\mathfrak{h}/W)^{\Omega}$
\end{center}
is a deformation of a simple singularity of type $E_7$. However $\mathrm{dim}(\mathfrak{h}/W)^\Omega=4 < 7 =\mathrm{dim} \ \mathfrak{h}_{E_7}/W_{E_7}$ so $\overline{\alpha^\Omega}$ is not semiuniversal.

\section{Perspectives}

Starting from finite subgroups $\Gamma  \lhd \Gamma'$ of $\mathrm{SU}_2$, the procedure by H. Cassens and P. Slodowy allows the construction of a semiuniversal deformation $\alpha:X_\Gamma \rightarrow \mathfrak{h}/W$ of the simple singularity $\cc^2/\Gamma$, and its restriction above the $\Omega$-fixed points of the base space gives a semiuniversal deformation $\alpha^\Omega:X_{\Gamma,\Omega} \rightarrow (\mathfrak{h}/W)^\Omega$ of the simple inhomogeneous singularity $(\cc^2/\Gamma,\Omega)$ with discriminant $\mathbb{D}_\Omega$. 

Let $\mathfrak{g}$ be a simply laced simple Lie algebra with root system $\Phi$. Denote by $\chi$ the adjoint quotient of $\mathfrak{g}$ and by $S_e$ a Slodowy slice to a subregular nilpotent element $e$ of $\mathfrak{g}$. Finally, let  $\mathbb{D}$ be the discriminant of $\restr{\chi}{S_e}$ and, for any root $\alpha \in \Phi$, let $H_{\alpha}$ be the kernel of $\alpha$. The situation is illustrated below:
\vspace{-\topsep}
\begin{center}
\begin{tikzpicture}[scale=0.9,  transform shape]
\node (1) at ( 2,2) {$S_e$};

\node (3) at (2,0) {$\mathfrak{h}/W$};
\node (4) at (2,-0.4) {$\bigcup$};
\node (5) at (2,-0.8) {$\mathbb{D}$};

\node (7) at (0,0) {$\mathfrak{h}$};
\node (8) at (0,-0.4) {$\bigcup$};
\node (9) at (0,-0.8) {$\bigcup_{\alpha \in \Phi^+}H_\alpha$};

\node(10) at (0.7,0) {};
\node(11) at (1.7,0) {};
\node(12) at (0.7,-0.8) {};
\node(13) at (1.7,-0.8) {};

\draw  [decoration={markings,mark=at position 1 with
    {\arrow[scale=1.2,>=stealth]{>}}},postaction={decorate}] (1)  -- node[right] {$\restr{\chi}{S_e}$} (3);
\draw  [decoration={markings,mark=at position 1 with
    {\arrow[scale=1.2,>=stealth]{>}}},postaction={decorate}] (10)  -- node[above] {$\pi$} (11);
\draw  [|-,decoration={markings,mark=at position 1 with
    {\arrow[scale=1.2,>=stealth]{>}}},postaction={decorate}] (12)  --  (13);
\end{tikzpicture}.
\end{center}
\vspace{-\topsep}
In \cite{Slo80}, P. Slodowy proved that the singularities appearing in $S_e$ above a point $\pi(h) \in \mathbb{D}$ can be determined by the hyperplanes arrangement of the $H_\alpha$'s containing $h$.

In Remark~\ref{h0andh1}, we mentioned that by setting $\mathfrak{h}_1=\mathfrak{h}^{\Omega}$ and $W_1=\{w \in W \ | \ w\gamma=\gamma w, \ \forall \gamma \in \Omega\}$, the natural map $\mathfrak{h}_1 \rightarrow (\mathfrak{h}/W)^{\Omega}$ induces a $\mathbb{G}_m$-equivariant isomorphism $f_1:\mathfrak{h}_1/W_1 \rightarrow (\mathfrak{h}/W)^{\Omega}$. Furthermore, it is known that $\mathfrak{h}_1$ and $W_1$ are a Cartan subalgebra and a Weyl group of type $\Delta(\Gamma,\Gamma')$ respectively (cf. \cite{Carter72} Section 13.3). Let $\pi_1 : \mathfrak{h}_1 \rightarrow \mathfrak{h}_1/W_1$ be the natural projection. According to Subsection~\ref{sub: Brieskorngeneralised}, the singular configuration of the fibre of $\alpha^\Omega:X_{\Gamma,\Omega} \rightarrow (\mathfrak{h}/W)^\Omega$ above a point $\pi_1(h_1)$ can be determined by the hyperplanes arrangement of the $H_\beta$'s containing $h_1$, where the $\beta$'s are the roots of a root system of type $\Delta(\Gamma,\Gamma')$. However, as seen in Example~\ref{ConterexamplewithA5}, this singular configuration cannot be determined by starting from a preimage of $h_1$ in a Cartan subalgebra of a Lie algebra of type $\Delta(\Gamma)$. 

Subsequently in Section~\ref{sec:QuotientInhomogeneousDeformations}, the quotient map $\overline{\alpha^\Omega}:X_{\Gamma,\Omega}/\Omega \rightarrow (\mathfrak{h}/W)^\Omega$ was computed and it was determined to be a non-semiuniversal deformation of the simple singularity $\cc^2/\Gamma'$ of type $\Delta(\Gamma')$. Let $\overline{\mathbb{D}_\Omega}$ denote the discriminant of this morphism. It is natural to wonder if the same reasoning can be applied to this map, i.e. is there a subalgebra $\mathfrak{h}_2$ of the Cartan subalgebra of type $\Delta(\Gamma')$ and a subgroup $W_2$ of the Weyl group of type $\Delta(\Gamma')$ such that there exists an isomorphism $f_2:\mathfrak{h}_2/W_2 \rightarrow (\mathfrak{h}/W)^{\Omega}$? If so, it would be interesting to know whether one can use the diagram
\vspace{-\topsep}
\begin{center}
\begin{tikzpicture}[scale=0.9,  transform shape]
\node (1) at (3,2) {$X_{\Gamma,\Omega}/\Omega$};

\node (3) at (3,0) {$(\mathfrak{h}/W)^\Omega$};
\node (4) at (3,-0.4) {$\bigcup$};
\node (5) at (3,-0.8) {$\overline{\mathbb{D}_\Omega}$};

\node (7) at (0,0) {$\mathfrak{h}_2/W_2$};
\node (8) at (0,-0.4) {$\bigcup$};
\node (9) at (0,-0.8) {$f_2^{-1}(\overline{\mathbb{D}_\Omega})$};
\node (10) at (-2,0) {$\mathfrak{h}_2$};

\node (12) at (1.55,-0.2) {$\cong$};

\node(13) at (0.7,0) {};
\node(14) at (2.4,0) {};
\node(15) at (0.7,-0.8) {};
\node(16) at (2.4,-0.8) {};

\draw  [decoration={markings,mark=at position 1 with
    {\arrow[scale=1.2,>=stealth]{>}}},postaction={decorate}] (1)  -- node[right] {$\overline{\alpha^\Omega}$} (3);
\draw  [decoration={markings,mark=at position 1 with
    {\arrow[scale=1.2,>=stealth]{>}}},postaction={decorate}] (13)  -- node[above] {$f_2$} (14);
\draw  [|-,decoration={markings,mark=at position 1 with
    {\arrow[scale=1.2,>=stealth]{>}}},postaction={decorate}] (15)  --  (16);
\draw  [decoration={markings,mark=at position 1 with
    {\arrow[scale=1.2,>=stealth]{>}}},postaction={decorate}] (10)  -- node[above] {$\pi_2$} (7);
\end{tikzpicture}
\end{center}
\vspace{-\topsep}
to describe the singular configurations of the fibres of $\overline{\alpha^\Omega}$ using the root system of type $\Delta(\Gamma')$.

\section*{Acknowledgements}
I would like to thank my advisor Kenji Iohara whose guidance and advice enabled me to obtain the results contained in this paper. My gratitude also goes to Marion Jeannin who helped me put this paper into its present form.
\addcontentsline{toc}{section}{Acknowledgements}

\addcontentsline{toc}{section}{References}
\bibliographystyle{smfplain}
\bibliography{bibliography}

\providecommand{\bysame}{\leavevmode ---\ }
\providecommand{\og}{``}
\providecommand{\fg}{''}
\providecommand{\smfandname}{and}
\providecommand{\smfedsname}{\'eds.}
\providecommand{\smfedname}{\'ed.}
\providecommand{\smfmastersthesisname}{M\'emoire}
\providecommand{\smfphdthesisname}{Th\`ese}
\begin{thebibliography}{10}

\bibitem{Bou68}
{\scshape N.~Bourbaki} -- \emph{\'{E}l\'ements de math\'ematique. {F}asc.
  {XXXIV}. {G}roupes et alg\`ebres de {L}ie. {C}hapitre {IV}: {G}roupes de
  {C}oxeter et syst\`emes de {T}its. {C}hapitre {V}: {G}roupes engendr\'es par
  des r\'eflexions. {C}hapitre {VI}: syst\`emes de racines}, Actualit\'es
  Scientifiques et Industrielles, No. 1337, Hermann, Paris, 1968.

\bibitem{Bries71}
{\scshape E.~Brieskorn} -- \emph{Singular elements of semi-simple algebraic
  groups}, Actes du {C}ongr\`es {I}nternational des {M}ath\'ematiciens ({N}ice,
  1970), {T}ome 2, Gauthier-Villars, Paris, 1971, 279--284.

\bibitem{Cara17}
{\scshape A.~Caradot} -- \emph{Singularity and {L}ie {T}heory}, Ph.D Thesis.
  Universit\'e de Lyon, 2017,
  \url{https://hal.archives-ouvertes.fr/tel-01556773v2}.

\bibitem{Carter72}
{\scshape R.~W. Carter} -- \emph{Simple groups of {L}ie type}, Wiley Classics
  Library, New York: John Wiley \& Sons, 1972.

\bibitem{CaSlo98}
{\scshape H.~Cassens {\normalfont \smfandname} P.~Slodowy} -- \emph{On
  {K}leinian singularities and quivers}, Singularities ({O}berwolfach, 1996),
  Progr. Math. \textbf{162}, Birkh\"auser, Basel, 1998, 263--288.

\bibitem{Cox91}
{\scshape H.~S.~M. Coxeter} -- \emph{Regular complex polytopes}, second
  \smfedname, Cambridge University Press, Cambridge, 1991.

\bibitem{DuVa34}
{\scshape P.~Du~Val} -- \emph{On isolated singularities of surfaces which do
  not affect the conditions of adjunction ({P}art {I},{II},{III})},
  Mathematical Proceedings of the Cambridge Philosophical Society \textbf{30},
  1934, 453--465.

\bibitem{Durf79}
{\scshape A.~H. Durfee} -- \emph{Fifteen characterizations of rational double
  points and simple critical points}, Enseign. Math. (2) \textbf{25}, 1979,
  no.~1-2, 131--163.

\bibitem{Frame51}
{\scshape J.~S. Frame} -- \emph{The classes and representations of the groups
  of {$27$} lines and {$28$} bitangents}, Ann. Mat. Pura Appl. (4) \textbf{32},
  1951, 83--119.

\bibitem{Ishii14}
{\scshape S.~Ishii} -- \emph{Introduction to singularities}, Springer, Tokyo,
  2014.

\bibitem{Kac90}
{\scshape V.~G. Kac} -- \emph{Infinite-dimensional {L}ie algebras}, third
  \smfedname, Cambridge University Press, Cambridge, 1990.

\bibitem{KasSchle72}
{\scshape A.~Kas {\normalfont \smfandname} M.~Schlessinger} -- \emph{On the
  versal deformation of a complex space with an isolated singularity}, Math.
  Ann. \textbf{196}, 1972, 23--29.

\bibitem{Kirby57}
{\scshape D.~Kirby} -- \emph{The structure of an isolated multiple point of a
  surface. {II}, {III}}, Proc. London Math. Soc. (3) \textbf{7}, 1957, 1--18,
  19--28.

\bibitem{Klein84}
{\scshape F.~Klein} -- \emph{Vorlesungen \"uber das {I}kosaeder und die
  {A}ufl\"osung der {G}leichungen vom f\"unften {G}rade}, Birkh\"auser Verlag,
  Basel; B. G. Teubner, Stuttgart, 1993.

\bibitem{Kron89}
{\scshape P.~B. Kronheimer} -- \emph{The construction of {ALE} spaces as
  hyper-{K}\"ahler quotients}, J. Differential Geom. \textbf{29}, 1989, no.~3,
  665--683.

\bibitem{Lamo86}
{\scshape K.~Lamotke} -- \emph{Regular solids and isolated singularities},
  Advanced Lectures in Mathematics, Friedr. Vieweg \& Sohn, Braunschweig, 1986.

\bibitem{LeBryPro90}
{\scshape L.~Le~Bruyn {\normalfont \smfandname} C.~Procesi} -- \emph{Semisimple
  representations of quivers}, Trans. Amer. Math. Soc. \textbf{317}, 1990,
  no.~2, 585--598.

\bibitem{McK80}
{\scshape J.~McKay} -- \emph{Graphs, singularities, and finite groups}, The
  {S}anta {C}ruz {C}onference on {F}inite {G}roups ({U}niv. {C}alifornia,
  {S}anta {C}ruz, {C}alif., 1979), Proc. Sympos. Pure Math. \textbf{37}, Amer.
  Math. Soc., Providence, R.I., 1980, 183--186.

\bibitem{Saito79}
{\scshape K.~Saito} -- \emph{On the periods of primitive integrals}, Lectures
  in Harvard University, unpublished, 1979.

\bibitem{Saito83}
\bysame , \emph{Period mapping associated to a primitive form}, Publ. Res.
  Inst. Math. Sci. \textbf{19}, 1983, no.~3, 1231--1264.

\bibitem{Saito93}
\bysame , \emph{On a linear structure of the quotient variety by a finite
  reflexion group}, Publ. Res. Inst. Math. Sci. \textbf{29}, 1993, no.~4,
  535--579.

\bibitem{Saito01}
\bysame , \emph{Primitive automorphic forms}, Mathematics unlimited---2001 and
  beyond, Springer, Berlin, 2001, 1003--1018.

\bibitem{SYS80}
{\scshape K.~Saito, T.~Yano {\normalfont \smfandname} J.~Sekiguchi} -- \emph{On
  a certain generator system of the ring of invariants of a finite reflection
  group}, Comm. Algebra \textbf{8}, 1980, no.~4, 373--408.

\bibitem{SekYan79}
{\scshape J.~Sekiguchi {\normalfont \smfandname} T.~Yano} -- \emph{The
  microlocal structure of weighted homogeneous polynomials associated with
  {C}oxeter systems. {I}}, Tokyo J. Math. \textbf{2}, 1979, no.~2, 193--219.

\bibitem{Slo80}
{\scshape P.~Slodowy} -- \emph{Simple singularities and simple algebraic
  groups}, Lecture Notes in Mathematics \textbf{815}, Springer, Berlin, 1980.

\bibitem{Tani80}
{\scshape T.~Tanisaki} -- \emph{Foldings of root systems and {G}abriel's
  theorem}, Tsukuba J. Math. \textbf{4}, 1980, no.~1, 89--97.

\bibitem{Yano80}
{\scshape T.~Yano} -- \emph{Free deformations of isolated singularities}, Sci.
  Rep. Saitama Univ. Ser. A \textbf{9}, 1980, no.~3, 61--70.

\end{thebibliography}

\end{document}